\documentclass{article}
\usepackage[utf8]{inputenc}
\usepackage{graphicx}
\usepackage{xcolor}
\usepackage{amssymb,amsmath,amsthm,mathrsfs,bm}
\usepackage{geometry}
\usepackage{biblatex}
\addbibresource{references.bib}
\newtheorem{theorem}{Theorem}
\newtheorem{problem}{Problem}
\newtheorem{definition}{Definition}
\newtheorem{lemma}[theorem]{Lemma}

\newtheorem{corollary}[theorem]{Corollary}

\geometry{a4paper, scale=0.7}
\title{On General 2-dimensional Lattice Spectra: Closedness, Hall's Ray, and Examples}
\author{Ruichong Zhang\\
Tsinghua University}
\date{June 2024}

\DeclareMathOperator{\diag}{diag}

\DeclareMathOperator{\sgn}{sgn}
\newcommand{\N}{\mathbb{N}}
\newcommand{\Z}{\mathbb{Z}}
\newcommand{\Q}{\mathbb{Q}}
\newcommand{\R}{\mathbb{R}}

\begin{document}

\maketitle

\begin{abstract}
    The Lagrange and Markov spectra have been studied since late 19th century, concerning badly approximable real numbers. The Mordell-Gruber spectrum has been studied since 1936, concerning the supremum of the area of a rectangle centered at the origin that contains no other points of a unimodular lattice. We develop techniques that incorporate unimodular lattices and integer sequences, providing the log-systole function which unifies four famous spectra. We compute the Mordell-Gruber spectrum in the two-dimensional case and generalize Perron's formulas behind some famous spectra. Furthermore, we generalize the sum of Cantor sets to prove that certain functions on cartesian product of two Cantor sets contain an interval. Combining the techniques, we prove closedness and existence of Hall's interval in several different applications.
\end{abstract}

\section{Introduction}

\subsection{The Lagrange and Markov Spectrum}
\textbf{The Lagrange Spectrum.} The Lagrange spectrum plays a central role in Diophantine Approximation, which concerns how "badly approximable" an irrational can be. The set of Lagrange spectrum in the context of number theory is shown as
$$\mathrm{L} := \left\{\limsup_{n \in \N} \left((n ||n\alpha||)^{-1}\right) \middle| \alpha \in \R \setminus \Q\right\} \quad ( \text{where } \lVert \_ \rVert := d(\_, \Z))$$
Or also
$$
\begin{aligned}
    \mathrm{L} &= \left\{ \left(\liminf_{k \to \infty} (q_k |q_k\alpha - p_k|) \right)^{-1} \middle| \alpha \in \R \setminus \Q \right\} \\
\end{aligned}
$$
Where $p_k / q_k$ is the $k$-th convergent of $\alpha$. \\
\textbf{The Markov Spectrum.}  Let $f(x,y) = ax^2 +bxy + cy^2$ be a quadratic form with determinant $\Delta_f = b^2 - 4ac = 1$. The Markov spectrum \cite{Cusick_Flahive_Markoff_Lagrange} is obtained by traversing all possible such quadratic forms:
$$
\mathrm{M} := \left\{ \left(\inf_{(x,y) \in \Z^2 \setminus \{(0,0)\}} |f(x,y)|\right)^{-1} \middle| \Delta_f = 1\right\}
$$
\textbf{Structure of Lagrange and Markov Spectra.} It's shown with Perron's formula \cite{Perron_Irrationalzahlen}
that Lagrange and Markov spectra can be computed via continuous fraction expansion. For example, let $\{a_n\} : \Z \to \N^+$ be a doubly infinite sequence, and let
$$
P_k^{[a]} = a_k + [a_{k-1}; \ a_{k-2}, a_{k-3}, \cdots] + [a_{k+1};\  a_{k+2}, a_{k+3}, \cdots]
$$
where we use continued fraction expansion of
$$[s_0;\  s_1, s_2, \cdots] = s_0 + \frac{1}{s_1+\frac{1}{s_2+\frac{1}{\cdots}}}$$
and the Lagrange and Markov spectra are obtained by taking different limits:
$$
\begin{aligned}
    \mathrm{L} &= \left\{\ \limsup_{k\in \Z} P_k^{[a]} \ \middle|\  a : \Z \to \N^+\right\}\\
    \mathrm{M} &= \left\{\ \sup_{k\in \Z}   P_k^{[a]} \ \middle|\  a : \Z \to \N^+\right\}\\
\end{aligned}
$$

It has been proved for the Lagrange and Markov spectra in \cite{Cusick_Flahive_Markoff_Lagrange} that both $\mathrm{L}$ and $\mathrm{M}$ are closed sets, and $\mathrm{L}$ is a proper subset of $\mathrm{M}$. 

The minimum of Lagrange and Markov spectra is $\sqrt{5}$, known as the Hurwitz's Constant. The lower part of Lagrange and Markov spectra the part smaller than 3, whereas $\mathrm{L} \cap [\sqrt{5}, 3) = \mathrm{M} \cap [\sqrt{5}, 3) $ is a discrete subset of $[\sqrt{5}, 3)$, so it is also known as the discrete part. 

The Hall's ray is a famous theorem stating that there exists a $K$ such that $[K, \infty] \subset \mathrm{L}$ and $[K, \infty] \subset \mathrm{M}$, which is elegantly shown by Hall in his 1947 \emph{Annals} paper \cite{Hall_Sum_Product_CF}. The best constant of $K$ has been given by Freiman \cite{freiman1975diophantine} in 1975, who showed that 
$$K_0 = \frac{2221564096 + 283748\sqrt{462}}{491993569} = 4.5278 \cdots$$
which is the minimum of $K$ for both $[K, \infty] \subset \mathrm{L}$ and $[K, \infty] \subset \mathrm{M}$. The part $\mathrm{L} \cap [K_0, \infty] = \mathrm{M} \cap [K_0, \infty] = [K_0, \infty]$ is called \emph{Hall's ray}.

It's worth noting that Freiman's proof involves more than 100 pages of computations and has never been verified. 

The Lagrange and Markov spectra restricted on the interval of $[\sqrt{5}, K_0)$ are called the intermediate part and have complex structures. This is the part where the Lagrange and Markov spectra differ: $\mathrm{M} \setminus \mathrm{L} \subset [\sqrt{5}, K_0)$. This part contains multiple intervals and gaps, of which the most famous gap is Perron's gap \cite{Perron_Irrationalzahlen}: $(\sqrt{12}, \sqrt{13}) \cap \mathrm{M} = \emptyset$.
Recently, it has been proved \cite{moreira2018geometricpropertiesmarkovlagrange, cerqueira2018continuityhausdorffdimensiongeneric} that the Hausdorff dimension of Lagrange spectrum and Markov spectrum intersected with half-lines are equal, and is a continuous function. Namely, $f(t) = \dim(\mathrm{L} \cap (-\infty, t)) = \dim(\mathrm{M} \cap (-\infty, t))$ is a continuous function with range $[0,1]$.
See \cite{Smirnov2023Freiman} for a comprehensive literature review about Lagrange and Markov spectra.


\subsection{Dirichlet and Mordell-Gruber Spectra}
\textbf{The Dirichlet Spectrum.} From \cite{Divis_Novak_Diophantine_Approximations}, the Dirichlet spectrum is defined as follows.

\begin{definition}
    The Dirichlet constant for a given irrational number $\alpha$ is defined as
    $$ \mu(\alpha) = \limsup_{t \in \N} t \cdot \left(\min_{p, q \in \N, 0<q \le t} |q\alpha - p| \right)$$
    The Dirichlet spectrum is defined as all possible values that the Dirichlet constant can take:
    $$ \mathrm{D} = \left\{ \mu(\alpha) | \alpha \in \R \setminus \Q \right\}$$
\end{definition}

An alternative definition is to use the second kind convergents:
$$\mu(\alpha) = \limsup_{k \in \N} q_{k+1} |q_k\alpha - p_k| $$
where $p_k/q_k$ is the $k$-th convergent of $\alpha$.

It is proved in \cite{Divis_Lagrange_Numbers} and \cite{Divis_Novak_Diophantine_Approximations} that 
$\mu(\alpha)$ has an alternative form:
\begin{theorem}
    Let $[a_0; a_1, \cdots]$ be the continued fraction expansion of $\alpha$, then the Dirichlet constant $\mu(\alpha)$ takes the following form:
    $$\mu(\alpha) = \limsup _{k \to \infty} \frac{1}{1+ \alpha_{k}^{[a]}\beta_{k+1}^{[a]}} $$
    where the notation $\alpha_{k}^{[a]} = [0; a_{k+1}, a_{k+2}, \cdots]$ and $\beta_{k}^{[a]} = [0; a_{k-1}, a_{k-2}, \cdots, a_1]$.
\end{theorem}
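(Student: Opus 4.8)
The plan is to start from the ``second-kind convergent'' form $\mu(\alpha) = \limsup_{k\in\N} q_{k+1}\,|q_k\alpha - p_k|$ recorded just above the statement, and to rewrite the single term $q_{k+1}|q_k\alpha - p_k|$ entirely in terms of the tails $\alpha_j^{[a]}$ and the reversed partial-quotient continued fractions $\beta_j^{[a]}$. Write $\alpha = [a_0; a_1, a_2, \dots]$, let $\xi_k = [a_k; a_{k+1}, a_{k+2}, \dots]$ denote the $k$-th complete quotient (so that $\alpha_k^{[a]} = 1/\xi_{k+1}$ directly from the definition of $\alpha_k^{[a]}$), and recall two classical continued-fraction identities: (i) $\alpha = \dfrac{p_k\xi_{k+1}+p_{k-1}}{q_k\xi_{k+1}+q_{k-1}}$, which together with $|p_kq_{k-1}-p_{k-1}q_k| = 1$ gives $|q_k\alpha - p_k| = \dfrac{1}{q_k\xi_{k+1}+q_{k-1}}$; and (ii) $\dfrac{q_{k-1}}{q_k} = [0; a_k, a_{k-1}, \dots, a_1] = \beta_{k+1}^{[a]}$, proved by a short induction using the recurrence $q_k = a_kq_{k-1}+q_{k-2}$ with the usual conventions $q_0 = 1$, $q_{-1} = 0$. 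I would also note at the outset that $\min_{p,q}|q\alpha - p|$, and hence $\mu(\alpha)$, depends only on $\alpha \bmod 1$, which explains why $a_0$ is absent from the target formula.

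Next I would assemble these. Since a $\limsup$ is insensitive to a finite shift of the index, $\mu(\alpha) = \limsup_{k} q_{k+1}|q_k\alpha - p_k| = \limsup_{k} q_k|q_{k-1}\alpha - p_{k-1}|$. Applying (i) with $k$ replaced by $k-1$ gives $q_k|q_{k-1}\alpha - p_{k-1}| = \dfrac{q_k}{q_{k-1}\xi_k + q_{k-2}}$. In the denominator I substitute $\xi_k = a_k + 1/\xi_{k+1}$ and then use $a_kq_{k-1}+q_{k-2} = q_k$, so that $q_{k-1}\xi_k + q_{k-2} = q_k + q_{k-1}/\xi_{k+1}$ and hence
$$q_k\,|q_{k-1}\alpha - p_{k-1}| \;=\; \frac{q_k}{\,q_k + q_{k-1}/\xi_{k+1}\,} \;=\; \frac{1}{\,1 + \dfrac{q_{k-1}}{q_k}\cdot\dfrac{1}{\xi_{k+1}}\,}.$$
Plugging in $1/\xi_{k+1} = \alpha_k^{[a]}$ (the definition) and $q_{k-1}/q_k = \beta_{k+1}^{[a]}$ (identity (ii)) yields the exact identity $q_k|q_{k-1}\alpha - p_{k-1}| = \dfrac{1}{1 + \alpha_k^{[a]}\beta_{k+1}^{[a]}}$ for every $k \ge 1$, and taking $\limsup$ proves the theorem.

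The computation is routine once (i) and (ii) are in hand, so the only genuine care needed is bookkeeping: checking that the shift between ``$q_{k+1}|q_k\alpha - p_k|$'' and ``$q_k|q_{k-1}\alpha - p_{k-1}|$'' is legitimately absorbed by the $\limsup$; confirming that $\beta_{k+1}^{[a]} = [0; a_k,\dots,a_1]$ equals $q_{k-1}/q_k$ and not, say, $q_k/q_{k+1}$, since an off-by-one here would change the statement; and verifying the small-$k$ degenerate cases of $\xi_k$ and of the finite reversed expansions, which in any event do not affect a $\limsup$. As a sanity check one can also run the chain of equalities in the reverse direction: clearing denominators in $\dfrac{1}{1+\alpha_k^{[a]}\beta_{k+1}^{[a]}}$ via $\alpha_k^{[a]}=1/\xi_{k+1}$ and $\beta_{k+1}^{[a]}=q_{k-1}/q_k$ produces $\dfrac{q_k\xi_{k+1}}{q_k\xi_{k+1}+q_{k-1}}$, which is precisely $q_k|q_{k-1}\alpha-p_{k-1}|$ by (i) after simplifying $q_{k-1}\xi_k+q_{k-2}$.
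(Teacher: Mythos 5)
Your computation is correct, and the exact pointwise identity
\[
q_k\,|q_{k-1}\alpha - p_{k-1}| \;=\; \frac{1}{1+\alpha_k^{[a]}\beta_{k+1}^{[a]}}
\]
does follow, as you argue, from $|q_{k-1}\alpha - p_{k-1}| = (q_{k-1}\xi_k + q_{k-2})^{-1}$ together with the complete-quotient recursion $\xi_k = a_k + 1/\xi_{k+1}$, the continuant recursion $q_k = a_kq_{k-1}+q_{k-2}$, and the mirror identity $q_{k-1}/q_k = [0;a_k,a_{k-1},\dots,a_1]$; the $\limsup$ is manifestly invariant under the index shift you use. The bookkeeping you flag (the factor $\beta_{k+1}^{[a]}$ rather than $\beta_k^{[a]}$, i.e.\ $q_{k-1}/q_k$ rather than $q_k/q_{k+1}$) is the one place where it would be easy to slip, and you get it right.

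Be aware, though, that the paper does not actually contain a proof of this statement: it is attributed directly to the cited works of Divi\v{s} and Divi\v{s}--Nov\'ak, and even the paper's reformulation in Theorem \ref{fourinone} (which writes $\mu(\alpha)$ as $\exp\left(2\limsup_t W(t;\Lambda)\right)$ for the associated lattice and evaluates the local maxima of the log-systole function via Lemma \ref{compute}) still cites the same references for the arithmetic link back to the definition of $\mu(\alpha)$ via $q_{k+1}|q_k\alpha-p_k|$. So there is no in-text argument to align yours against. What you supply is precisely the missing elementary verification: a direct continued-fraction manipulation, self-contained and at the level of the recurrences, in contrast to the paper's preferred geometric/dynamical viewpoint (pivots, index sequences, log-systole) which reproves the same formula conceptually once the lattice correspondence is in place but at the cost of the machinery of Sections~2. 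The two routes buy different things: yours is shorter and immediately verifiable; the paper's embeds the identity into the unified framework that also treats the Lagrange, Markov, and Mordell--Gruber spectra.
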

They also showed that $\mathrm{D}$ admits a counterpart of Hall's ray, whereas
$$\left[ \frac{83+18\sqrt{2}}{87+18\sqrt{2}} \approx 0.9645,\  1\right] \subset \mathrm{D}$$

The paper \cite{ivanov1982origin} has improved the bound quite a lot. However, to the best of our knowledge, the optimal bound (analog of Freiman's constant) for Hall's ray counterpart in Dirichlet spectrum has not been accurately computed. \\
\textbf{The Mordell-Gruber Spectrum.} 
Let $\Lambda \subset \R^2$ be a nondegenerate lattice, which admits a basis $\{A, B\}$. The lattice is then generated as $\Lambda = \Z A  + \Z B$, whose covolume is
$$\mathrm{covol}(\Lambda) = | \det(A;B) |$$
We call $\Lambda$ unimodular if it has a unit covolume, which is $\mathrm{covol}(\Lambda) = 1$.
 
The Mordell-Gruber Spectrum considers how large an admissible rectangle can be in a certain lattice.

\begin{definition} 
    Let $C$ be a rectangle centered at origin in $\R^2$, where $ C = [-x, x] \times [-y, y]$. 
    The rectangle $C$ is called admissible in a unimodular lattice $\Lambda$ if $C \cap \Lambda = \{(0,0)\}$, where $C$ contains no more lattice points than the origin.
\end{definition}

The Mordell constant of a unimodular lattice $\Lambda$, denoted $\kappa(\Lambda)$, is therefore
$$\kappa(\Lambda) := \sup_{[-x, x] \times [-y, y] \text{ admissible in }\Lambda} xy$$

The two-dimensional Mordell-Gruber spectrum \cite{Shapira_Weiss_Mordell_Gruber} is obtained by the set of all accessible values of $\kappa(\Lambda)$:
$$\mathrm{MG}_2 := \left\{ \kappa(\Lambda)\  \middle| \ \Lambda \text{ is a unimodular lattice in } \mathbb{R}^2\right\}$$

By Minkowski convex body theorem, $\mathrm{MG}_2 \subset (0,1]$, and it has been shown
that \cite{Szekeres_Lattice_Plane} $$\min (\mathrm{MG}_2) = \frac{5+\sqrt{5}}{10}$$

It's worth noting that the paper \cite{Shapira_Weiss_Mordell_Gruber} has not mentioned the Perron's formula or the Hall's ray, thus failed to prove the cardinality of $\mathrm{MG}_2$. Just like the relation of $\mathrm{L} \subset \mathrm{M}$ for Lagrange and Markov spectra, we will try to prove that $\mathrm{D} \subset \mathrm{MG}_2$ and show the analog of Hall's ray in this work.

\subsection{The Dual Problem}
From the dynamical system's point of view, all the four spectra above are corresponded with a single function, called the log-systole function. Here we adapt the concept of "systole" in geometry to study lattices under the evolution of diagonal flow. The four spectra correspond to taking different types of limits ($\liminf, \inf, \limsup, \sup$) of one log-systole function $W(t; \Lambda)$:
\begin{itemize}
    \item Lagrange spectrum $\mathrm{L}$ - $\liminf_t W(t; \Lambda)$;
    \item Markov spectrum $\mathrm{M}$ - $\inf_t W(t; \Lambda)$;
    \item Diriclet spectrum $\mathrm{D}$ - $\limsup_t W(t; \Lambda)$;
    \item Mordell-Gruber spectrum $\mathrm{MG}_2$ - $\sup_t W(t; \Lambda)$.
\end{itemize}
This is just a brief overview. In the next chapter, we will introduce the log-systole function and study its properties like local maxima and minima, as well as providing the corresponding Perron's formula (Theorem \ref{fourinone}). 

\subsection{Higher Dimensional Generalizations}
The above spectra, as well as some others, can be generalized to higher dimensions.

One generalization of Lagrange spectrum involves changing $n \lVert n\alpha \rVert$ to $n \lVert n\alpha \rVert \lVert n\beta \rVert$, which is a two dimensional simultaneous Diophantine approximation. The corresponding lattice is therefore 3-dimensional and could be generated by $(1,0,0), (0,1,0)$ and $(\alpha, \beta, 1)$. This is the famous Littlewood conjecture in Diophantine approximation: if $(\liminf_n (n \lVert n\alpha \rVert \lVert n\beta \rVert))^{-1}$ admits a Hall's ray, it would imply that the Littlewood conjecture is false. The existence of Hall's ray remains an open problem.

The Mordell-Gruber spectrum in higher dimensions is discussed in the paper \cite{Shapira_Weiss_Mordell_Gruber}, where the Mordell-Gruber spectrum for $n$-dimensional lattices is denoted by $\mathrm{MG}_n$. Since $\mathrm{MG}_n \subset \mathrm{MG}_{n+1}$, if we prove that $\mathrm{MG}_2$ admits an analog of Hall's ray, so does any $\mathrm{MG}_n$. However, a reduced version of Mordell-Gruber spectrum is also discussed in that paper, where the Mordell constant is taken over all "indecomposable" lattices of dimension $n$, denoted by
$$\hat{\mathrm{MG}}_n := \{ \kappa (\Lambda) \ | \ \Lambda \text{ is an indecomposable }n\text{-dimensional lattice}\}
$$
We indeed have $\hat{\mathrm{MG}}_2 = \mathrm{MG}_2$, but not necessarily $\hat{\mathrm{MG}}_n = \mathrm{MG}_n$. Therefore, it remains an open problem to prove or disprove the existence of Hall's ray in $\hat{\mathrm{MG}}_n$ for $n \ge 3$.

The Dirichlet spectrum can be generalized to higher dimensions as well. The paper 
\cite{akhunzhanov2021notedirichletspectrum}
is a comprehensive survey on the generalized Dirichlet spectrum for higher dimensions under arbitrary norm. The paper 
\cite{akhunzhanov2013twodimensionaldirichletspectrum}
discusses Dirichlet spectrum of two-dimensional approximation under the $\ell^2$ norm, or equivalently, the limit supremum volume of an admissible cylinder (shaped like ${x^2+y^2\le r^2} \times [-z,z]$) under the lattice in $\R^3$ generated by $(1,0,0), (0,1,0), (v_1, v_2, 1)$. It is shown that the spectrum $\mathrm{D}_2 = [0, 2/\sqrt{3}]$ which is an entire segment. However, the Dirichlet spectrum of $n$-dimensional approximation (admissible volume of $(n+1)$-dimensional lattices) for $n \ge 3$ is not well known. It would be interesting to show the existence of Hall's ray (segment) for this case.

\subsection{Some Notes on This Paper}
\textbf{Notations.} It's necessary to use different notations to distinguish between continued fraction expansions and sequences. We use round brackets to denote sequences: $(a_0, a_1, \cdots)$ or $(\cdots, a_{-1}, a_0, a_1, \cdots)$. Square brackets with colons are used for continued fractions:
$$[k_0; \ k_1, \  k_2, \cdots] = k_0+ \frac{1}{ k_1+ \frac{1}{ k_2+ \frac{1}{ \cdots }}}$$

Another important notation is the use of $\alpha$ and $\beta$. The notation $\alpha_t^{[a]}$ means the part of the sequence $\{a_n\}$ to the right of the offset $t$ (excluding $t$ itself) regarded as a continued fraction expansion:
$$\alpha_t^{[a]} := [0; \ a_{t+1}, a_{t+2}, a_{t+3}, \cdots] \in [0,1]$$
If there is only one sequence $a$ to be considered, we may use $\alpha_t = \alpha_t^{[a]}$ for convenience. Similarly, $\beta_t^{[a]}$ means the part of the sequence $\{a_n\}$ to the \emph{left} of offset $t$ (excluding $t$ itself) \emph{reversed and} regarded as a continued fraction expansion:
$$\beta_t^{[a]} := [0; \ a_{t-1}, a_{t-2}, a_{t-3}, \cdots] \in [0,1]$$
If $a: \N \to \N^+$ which has no term before $a_0$, then $\beta_t^{[a]}$ is a rational number:
$$\beta_t^{[a]} := [0; \ a_{t-1}, a_{t-2}, \cdots, a_0] \in \Q$$
Similarly, if only one sequence $a$ is considered, we use $\alpha_t = \alpha_t^{[a]}$. 
\\
\textbf{Important Theorems.} There are several important theorems in this work which might arouse interest:
\begin{enumerate}
    \item Theorem \ref{interval}: This theorem shows certain functions on the cartesian product of two certain Cantor sets has an interval range. This theorem is the key to prove the existence of Hall's ray in some spectra. The proof of the theorem is self-contained and does not rely on previous chapters.
    \item Theorem \ref{infclosed} and \ref{liminfclosed}: These two theorems prove the closedness of spectra given good continuity on a Perron function. The steps are mainly generalized from the book \cite{Cusick_Flahive_Markoff_Lagrange}.
    \item Theorem \ref{fourinone}: This theorem unifies four famous spectra with a single log-systole function.
\end{enumerate}
\ \\
\textbf{Caveats.} This paper is my undergraduate thesis at Tsinghua University. A translated Chinese version of this paper is available.

\section{Projective Invariance of 2-Dimensional Lattices}
Throughout this section, we develop a tool to study lattices called projective invariance, which corresponds positive integer sequences with lattices under certain flow action. 

We are particularly interested in \emph{bi-infinite} lattices, where a lattice has has no nontrivial points located on the axis. 
\begin{definition}
    A unimodular lattice $\Lambda$ is called bi-infinite if $\Lambda \cap \{x=0\} = \{(0,0)\}$ and $\Lambda \cap \{y=0\} = \{(0,0)\}$.
\end{definition}
One reason is that non-bi-infinite lattices have simpler structures and can be dealt directly.

\subsection{Pivots and Orders}
Given $\Lambda$ a unimodular lattice, first we consider a set of points that are sufficiently close to the origin, called pivots.
\begin{definition}
    Let $A = (x,y) \in \Lambda$ be a nonzero point in the lattice ($ (x,y) \ne (0,0)$). We define the rectangle box of $A$ as $R(A) := \left[-|x|, |x|\right] \times \left[-|y|, |y|\right]$. The point $A$ is called a pivot if there are no nonzero lattice points in the interior of $R(A)$, i.e., $\mathrm{Int}(R(A)) \ \cap \ \Lambda = \{ (0,0)\} $.
\end{definition}
It's obvious that $R(A) = R(-A)$, thus $A$ is a pivot if and only if $-A$ is a pivot. Without loss of generality, we only compute pivots above the x-axis. Define the pivot set
$$ \Pi(\Lambda) := \left\{ A \in \Lambda \cap \{y \ge 0\}\  \middle|\  A \text{ is a pivot}\right\}$$
and we can deduce some key properties about $\Pi(\Lambda)$.

\begin{lemma}
    The set $\Pi(\Lambda)$ has some key properties:
    \begin{enumerate}
        \item $\Pi(\Lambda) \ne \emptyset$.
        \item If $\Lambda$ is bi-infinite, then $\Pi(\Lambda)$ is a totally ordered set, where the order is induced by taking the $y$ component:
        $$ A < B \iff y_A < y_B$$
        \item If $\Lambda$ is bi-infinite, then the totally ordered set $(\Pi(\Lambda), < )$ is isomorphic to $\mathbb{Z}$.
        \item The order of $(\Pi(\Lambda), < )$ can be equivalently obtained by taking the absolute value of the $x$ coordinate:
        $$ A < B \iff y_A < y_B \iff |x_A| > |x_B|$$
    \end{enumerate}
\end{lemma}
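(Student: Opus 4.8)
The plan is to establish the four properties in sequence, since each builds on the previous ones. For property (1), I would first show that the set of "staircase" points near the origin is nonempty: among all nonzero lattice points in $\{y \ge 0\}$ with $y > 0$ minimal... actually, better, I would pick any nonzero point $A_0 \in \Lambda \cap \{y \ge 0\}$ and argue that among the finitely many nonzero lattice points inside $R(A_0)$ (finiteness follows from discreteness of $\Lambda$ and compactness of $R(A_0)$), one can choose a point $A$ whose box $R(A)$ is inclusion-minimal; such an $A$ has empty interior intersection with $\Lambda \setminus \{0\}$, hence is a pivot. (If $A_0$ already is a pivot, we are done.) This gives $\Pi(\Lambda) \ne \emptyset$.

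For properties (2) and (4), the key geometric observation is: if $A = (x_A, y_A)$ and $B = (x_B, y_B)$ are both pivots with $0 \le y_A \le y_B$, then I claim $|x_A| \ge |x_B|$, and the inequalities are simultaneously strict or simultaneously equalities. Indeed, if $y_A \le y_B$ and $|x_A| \le |x_B|$ with at least one strict (and using bi-infiniteness to rule out $x_A = 0$ or $y_A = 0$), then $A$ lies in the interior of $R(B)$ or $B$ lies in the interior of $R(A)$, contradicting that both are pivots — one has to check the boundary cases carefully, and this is where bi-infiniteness is used to exclude a point sitting on an axis. The case $y_A = y_B$ with $|x_A| = |x_B|$ forces $A = \pm B$, i.e., $A = B$ in $\{y \ge 0\}$ after the sign normalization. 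This simultaneously shows that $y_A < y_B \iff |x_A| > |x_B|$ (property 4) and that the $y$-order is a total order (property 2), since any two distinct pivots are comparable and antisymmetry is immediate.

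For property (3), I would show $(\Pi(\Lambda), <)$ is order-isomorphic to $\Z$ by verifying: (a) it has no maximum and no minimum, and (b) every element has an immediate successor and predecessor — equivalently, between consecutive pivots there are no further pivots, which is automatic, so it suffices to show the order is discrete and unbounded in both directions. Unboundedness above: given a pivot $A$, I would produce a pivot $B > A$ by taking a lattice point with strictly larger $y$-coordinate inside a suitable box and extracting an inclusion-minimal box as in part (1); unboundedness below (in $y$, equivalently large $|x|$) is symmetric by the $x$-$y$ duality from property (4). Local finiteness: for any pivot $A$, the set of pivots with $y$-coordinate in $[0, y_A]$ is finite because they all lie in the compact region $R(A') \cap \{y \ge 0\}$ for a pivot $A'$ slightly above $A$ — more carefully, pivots with bounded $y$ have $|x|$ bounded below but one needs them bounded above too; here I would invoke that a pivot with small $y$ and huge $|x|$ would contain a short lattice vector in its box, using Minkowski or just the existence of a fixed pivot. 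Combining (a) and (b) with the standard characterization of $\Z$ as the unique (up to isomorphism) countable totally ordered set that is discrete and unbounded in both directions finishes the proof.

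The main obstacle I anticipate is the careful boundary-case analysis in properties (2) and (4): the definition of pivot uses the \emph{open} interior of $R(A)$, so two distinct pivots are allowed to share an edge or corner of their boxes, and I must check that bi-infiniteness genuinely prevents the "ties" $|x_A| = |x_B|$ with $y_A < y_B$ (or vice versa) rather than just making them rare. A secondary subtlety is proving local finiteness in part (3) cleanly — ruling out an infinite sequence of pivots accumulating in $y$ — for which the cleanest route is probably to show directly that the $y$-coordinates of pivots form a discrete subset of $\R$ by exhibiting a uniform lower bound on the gap, using the lattice's discreteness and unimodularity.
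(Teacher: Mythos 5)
Your proofs of (1), (2), and (4) are essentially correct, and your handling of (1) is in fact cleaner than the paper's: you exploit that $R(A_0)\cap\Lambda$ is finite (compactness plus discreteness) and take an inclusion-minimal box, whereas the paper invokes Zorn's lemma with a poset of boxes and argues that a descending chain would create an accumulation point of $\Lambda$. For (2), the paper simply observes $y_A=y_B\Rightarrow A-B$ lies on the $x$-axis, contradicting bi-infiniteness; your combined (2)/(4) argument reaches the same place, though you should note that $|x_A|=|x_B|$ for two distinct pivots above the $x$-axis is ruled out by considering both $A-B$ and $A+B$ (one of which lands on the $y$-axis), not just "$A$ sitting on an axis." With that in hand, distinct pivots automatically have both coordinates strictly unequal, and the interior-of-box argument finishes (4) as you describe.

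Property (3) contains a genuine gap. You propose two fallback arguments for local finiteness, and both fail. First, the set of pivots with $y$-coordinate in $(0,y_A]$ is \emph{not} finite: these pivots have $|x|>|x_A|$ by your part (4), and the only constraint Minkowski gives on a pivot's box is $|x|\,y\le 1$, which permits $y\to 0$ with $|x|\to\infty$. Indeed, for any bi-infinite lattice the pivots march off to infinity in $|x|$ while their $y$-coordinates decrease toward zero, so there are infinitely many below any fixed pivot; the claim that a pivot with small $y$ and huge $|x|$ "would contain a short lattice vector in its box" is false. Second, there is no uniform lower bound on the gaps between consecutive pivot $y$-coordinates — the gaps shrink as the pivots approach the $x$-axis (already for the golden-ratio lattice). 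The correct finiteness statement is the \emph{interval} version, not the \emph{ray} version: for any two pivots $A<B$, the pivots $C$ with $A<C<B$ satisfy $|x_B|<|x_C|<|x_A|$ and $y_A<y_C<y_B$, so they all lie in the compact box $[-|x_A|,|x_A|]\times[y_A,y_B]$, hence are finitely many. That interval-finiteness, together with unboundedness above and below, is what yields the $\Z$-isomorphism (and is what the paper secures by explicitly constructing the immediate successor and predecessor via the supremum $y^*$ or $x^*$ of admissible box extents and then proving surjectivity of $\Psi$). If you replace your ray-finiteness claim by interval-finiteness, your approach goes through.
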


\begin{proof}
    \begin{enumerate}
        \item We prove this by applying Zorn's lemma. Assign $\Lambda \setminus \{(0,0)\}$ a poset structure of
        $$A \le_\Lambda B \iff R(A) \subset R(B), \quad \forall A, B \in \Lambda \setminus \{(0,0)\}$$
        then the minimal elements under $\le_\Lambda$ are pivots of $\Lambda$. If we can prove that every chain admits a lower bound, we are done. Otherwise, suppose 
        $$R(A_1) \supset R(A_2) \supset \cdots$$
        There exists a subsequence of $\{A_i\}$ in the same quadrant, WLOG the first quadrant. We denote the new sequence by $\{B_i\}$, then 
        $$|x_{B_1}| \ge |x_{B_2}|  \ge \cdots \ge 0$$
        $$|y_{B_1}| \ge |y_{B_2}|  \ge \cdots \ge 0$$
        We can remove the absolute values since all $B_i$'s are in the first quadrant. By monotonic convergence of sequences, we obtain that $\lim_{i \to \infty} (x_{B_i}, y_{B_i}) = (x_0, y_0)$, therefore $(x_0, y_0)$ is an accumulation point of $\Lambda$. However, $\Lambda$ is a closed set, a contradiction. Thus $\Pi(\Lambda)$ is nonempty.
        \item If $\Lambda$ is bi-infinite, then for two different points $A, B \in \Pi(\Lambda)$, their $y$ coordinates may not be equal. Suppose otherwise that $y_A = y_B$, then $y_{(A-B)} = 0$, which implies that $(A-B)$ is on the x-axis, a contradiction of the bi-infinite definition.
        \item We will construct by induction the following map:
        $$\Psi: \mathbb{Z} \to \Pi(\Lambda)$$
        First, we have just proved above that $\Pi(\Lambda)$ is not an empty set. We arbitrarily take $A_0 \in \Pi(\Lambda)$ and define $\Psi(0) = A_0$.

        Now we construct $\Psi(i)$ by induction on $i (i = 0,1, \cdots)$.

        Assume we have defined $\Psi(0), \Psi(1), \cdots, \Psi(i)$, and denote by $\Psi(i) = (x_i,y_i)$. Consider a rectangle $C_{(y,i)} := [-|x_i|, |x_i|] \times [0,y]$, and take $y^*$ be the supremum of $y$ such that the interior of $C_{(y,i)}$ does not contain lattice points, i.e., $$y^* = \sup_{\mathrm{Int} \left(C_{(y,i)}\right) \cap \Lambda = \emptyset } y $$
        
        Using Minkowski convex body theorem, the area of $C_{(y,i)}$ should be no more than $2$, therefore $y^*$ is finite. 
        We show that there is a unique pivot point at $(-|x_i|, |x_i|) \times \{y^*\}$. Since $y^*$ is the supremum, for any $\epsilon > 0$ the region $(-|x_i|, |x_i|) \times [y^*, y^*+\epsilon)$ contains lattice points. It is clear that $[-|x_i|, |x_i|] \times \{y^*\}$ contains an accumulation point of $\Lambda$. Using discreteness, $ \left([-|x_i|, |x_i|] \times \{y^*\} \right) \cap \Lambda \ne \emptyset$. We exclude two endpoints because $\Psi(i)$ and $-\Psi(i)$ already occupy x coordinate of $\{-|x_i|, |x_i|\}$. Using bi-infinitude again, we can show that the point at $(-|x_i|, |x_i|) \times \{y^*\}$ is unique, which we assign to $\Psi(i+1)$.

        After obtaining $\Psi(i) (i \in \N)$, we obtain inductively $\Psi(-i) (i \in \N)$ using similar techniques. This time, we consider a rectangle $D_{(x,i)} := [-|x|,|x|] \times [0,y_{-i}]$ and let 
        $$x^* = \sup_{\mathrm{Int} \left(D_{(x,i)}\right) \cap \lambda = \emptyset } |x| $$
        By analogy of Minkowski convex body theorem, $x^*$ is finite, and a pivot lies in the set $\{-x^*, x^*\} \times (0,y_{-i})$. We assign $\Psi(-i-1)$ to it, and all negative indices are set.

        Notice that by constructing rectangles, we also showed a side-product that $|x_j| > |x_{j+1}|$ and $y_j < y_{j+1}$, indicating that $\Psi$ is injective. It remains to show that $\Psi$ is surjective.

        Assume $P = (x_P,y_P) \in \Lambda$ is a pivot, and without losing of generality, $y_P >y_0$. If $P$ is none of $\Psi(i) (i \in \Z)$, we show that it's impossible.
        \begin{enumerate}
            \item If all $i \in N$ satisfies $y_i \le y_p$, then $\Psi(i) (i \in \N)$ is a bounded sequence and $\Lambda$ admits a nontrivial accumulation point, which is impossible.
            \item Otherwise, we can always find $i$ such that $y_i \le y_p$ and $y_{i+1} > y_P$.
            If $|x_i| \le |x_p|$, this contradicts that $\Psi(i)$ is a pivot. Therefore $|x_P| < |x_i|$ and $P$ lies in interior of the rectangle $[-|x_i|, |x_i|] \times [0, y_{i+1}]$, but this again contradicts the construction of $C(y_{i+1}, i)$ such that $C(y_{i+1}, i)$ contains no lattice points in the interior.
        \end{enumerate}
        We have thus proved that $\Psi$ is an order isomorphism.
        
        \item As we have showed above, $y_{i} < y_{i+1}$ and $|x_i| > |x_{i+1}|$. Thus $y_A < y_B \iff \Psi^{(-1)}(A) < \Psi^{(-1)}(B) \iff |x_A| > |x_B|$.
    \end{enumerate}  
\end{proof}

Note: There is also a concept of "stable pivots", as shown in this paper \cite{Knitter_Complexity_Domain_Approximation}. We avoid this concept by discussing bi-infinite lattices.

\subsection{The Alternating Principle, Bases and Indices}
To acquire a deeper understanding of the structure of pivots, we analyze the relations between pivots. We first study two consecutive pivots.
\begin{definition}
    For two pivots $A$ and $B$ in $\Lambda$, $A$ and $B$ are called consecutive if for order isomorphism $\Phi: \Pi(\Lambda) \to \Z$, $|\Phi(A) - \Phi(B)| = 1 $.
\end{definition}
It's easy to check that the definition is well-defined and independent of the choice of $\Phi$.

For consecutive pivots we have the following properties:
\begin{lemma}
    If $A = (x_A, y_A)$ and $B=(x_B, y_B)$ are two consecutive pivots of a unimodular bi-infinite lattice $\Lambda$, and further assume $0 < y_A < y_B$. Then we have:
    \begin{enumerate}
        \item The points $A$ and $B$ do not fall in the same quadrant.
        \item The points $A$ and $B$ form a basis of $\Lambda$.
    \end{enumerate}
\end{lemma}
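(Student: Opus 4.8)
The plan is to distill a single ``forbidden rectangle'' from the hypothesis that $A$ and $B$ are consecutive, and then deduce both parts by elementary coordinate bookkeeping. First the reductions: since $A,B\in\Pi(\Lambda)$ we have $y_A,y_B>0$, and since $\Lambda$ is bi-infinite neither point lies on an axis, so $x_A,x_B\ne 0$. The reflection $(x,y)\mapsto(-x,y)$ is an area-preserving automorphism of $\R^2$ carrying $\Lambda$ to a bi-infinite unimodular lattice, carrying pivots to pivots, preserving the order on the pivot set (hence the notion of consecutiveness), and carrying bases to bases; so I may assume $x_A>0$, and then the previous lemma gives $|x_A|>|x_B|$, hence $x_A>|x_B|$. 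The one fact I would record for reuse is: \emph{the open box $F:=(-x_A,x_A)\times(0,y_B)$ contains no point of $\Lambda\setminus\{(0,0)\}$.} Indeed, a nonzero $Q=(x_Q,y_Q)\in F$ dominates, in the order $R(\cdot)\subseteq R(\cdot)$, some pivot of $\Lambda$ (Zorn's lemma, as in the proof of the previous lemma), which after replacing it by its negative if necessary is a $Q^{*}\in\Pi(\Lambda)$ with $0<|x_{Q^{*}}|\le|x_Q|<x_A$ and $0<y_{Q^{*}}\le y_Q<y_B$. By part~4 of the previous lemma, $|x_{Q^{*}}|<|x_A|$ forces $y_{Q^{*}}>y_A$, so $A<Q^{*}<B$ in $\Pi(\Lambda)$, contradicting that $A,B$ are consecutive. (Equivalently, $F$ is the interior of the rectangle whose supremum height produced $B$ as the successor of $A$ in the construction of the previous lemma.)

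Part~1 is then immediate: if $A,B$ lay in the same quadrant then, since $x_A>0$, we would have $x_A>x_B>0$ and $0<y_A<y_B$, so $B-A\in\Lambda\setminus\{(0,0)\}$ would satisfy $|x_{B-A}|=x_A-x_B<x_A$ and $0<y_{B-A}=y_B-y_A<y_B$, i.e.\ $B-A\in F$ --- contradiction. Hence $x_B<0$; in particular $A$ and $B$ lie in different open quadrants, so $\det(A;B)=x_Ay_B-x_By_A>0$ and $\{A,B\}$ is linearly independent.

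For Part~2 I would show that $\Lambda':=\Z A+\Z B$ (a finite-index sublattice of $\Lambda$, by the linear independence just established) equals $\Lambda$, by checking that the half-open fundamental parallelogram $\Omega:=\{sA+tB:0\le s,t<1\}$ of $\Lambda'$ meets $\Lambda$ only in the origin, since $\Omega$ contains exactly $[\Lambda:\Lambda']$ points of $\Lambda$. Let $P=sA+tB\in\Omega\cap\Lambda$ be nonzero. Then $x_P\in(x_B,x_A)\subseteq(-x_A,x_A)$, while $0\le y_P<y_A+y_B$ and bi-infiniteness forces $y_P>0$; if $y_P<y_B$ then $P\in F$, a contradiction, so $y_P\ge y_B$. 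Put $Q:=P-B\in\Lambda$: then $0\le y_Q=y_P-y_B<y_A$ and $x_Q=x_P+|x_B|>0$. If $Q=(0,0)$ then $P=B\notin\Omega$, a contradiction; so $0<y_Q<y_A$, and since $A$ is a pivot and $|y_Q|<|y_A|$, the relation $Q\notin\mathrm{Int}\,R(A)$ forces $|x_Q|\ge x_A$, i.e.\ $x_Q\ge x_A$. But then $Q-A\in\Lambda$ has $0\le x_{Q-A}=x_Q-x_A<|x_B|<x_A$ and $-y_A<y_{Q-A}=y_Q-y_A<0$, so $Q-A\in\mathrm{Int}\,R(A)$, whence $Q-A=(0,0)$ and $P=A+B\notin\Omega$ --- again a contradiction. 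Therefore $\Omega\cap\Lambda=\{(0,0)\}$, so $[\Lambda:\Lambda']=1$ and $\{A,B\}$ is a basis of $\Lambda$. (Alternatively, Pick's theorem applied to the triangle $(0,0),A,B$ works: $F$ removes interior lattice points and interior points of the edge $\overline{AB}$, primitivity of the pivots $A,B$ removes interior points of $\overline{0A}$ and $\overline{0B}$, so the area is $0+\tfrac32-1=\tfrac12$ and $|\det(A;B)|=1$.)

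The coordinate inequalities are routine, but the case $y_P\ge y_B$ in Part~2 is the one genuinely delicate point: a single translation by $B$ (or by $A$) does not return $P$ to a region already known to be empty, so one needs the two-step descent $P\mapsto P-B\mapsto P-B-A$ together with the observation that the pivot property of $A$ pushes every lattice point of height below $y_A$ out to $|x|\ge x_A$. Keeping the strict versus non-strict inequalities straight at the boundary of $F$ and at the parameter endpoints $s,t\in\{0,1\}$ is where care is needed; using the half-open fundamental domain (rather than Pick's theorem) keeps this bookkeeping cleanest.
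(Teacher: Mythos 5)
Your proof is correct. Part~1 is essentially the paper's argument: both rely on the empty open box $(-|x_A|,|x_A|)\times(0,y_B)$ and observe that $B-A$ would land in it; your re-derivation of that emptiness via a pivot dominated by an alleged interior point (Zorn plus part~4 of the preceding lemma) is a harmless, self-contained substitute for the paper's appeal to the $\Psi(i+1)$ construction. Part~2, however, takes a genuinely different route. The paper bounds $|\det(A;B)|$ by Minkowski's convex body theorem: the parallelogram with vertices $\pm A,\pm B$ sits strictly inside a lattice-point-free box of area at most~$4$, giving $|\det(A;B)|<2$, hence $=1$. You instead show directly that the half-open fundamental parallelogram $\Omega$ of $\Z A+\Z B$ meets $\Lambda$ only at the origin, via the two-step descent $P\mapsto P-B\mapsto P-B-A$, using the forbidden box for the first step and the pivot property of $A$ (which pushes low points out to $|x|\ge x_A$) for the second; your strict/non-strict bookkeeping at the boundary of $\Omega$ checks out. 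The paper's version is shorter and leans on Minkowski; yours is more elementary and exhibits the index-one statement by explicit fundamental-domain emptiness rather than an area count. Your parenthetical Pick's-theorem alternative is in the same spirit as the paper's area argument but replaces Minkowski with Pick; it also works, though you would want to justify that the two edges $\overline{0A}$ and $\overline{0B}$ have no interior lattice points, which follows since a pivot is primitive.
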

\begin{proof}
    \begin{enumerate}
        \item From proof the previous section, we know that there are no more lattice points in the interior of rectangle $ C = [-|x_A|, |x_A|]\times [0,y_B]$, and $0 < y_A < y_B$ implies $|x_A| > |x_B|$. Now, assume that $A$ and $B$ fall in the same quadrant, so $x_A$ and $x_B$ have the same sign and $|x_A - x_B| < |x_A|$. Combining with $0 < y_B - y_A < y_B $ we know that $(B-A)$ lies in the interior of rectangle $C$, contradiction.
        \item Consider parallelogram $V$ with vertices $A, B, -A, -B$. Since $V \subset [-|x_A|, |x_A|]\times [-y_B,y_B]$ and $A, B, -A, -B$ do not lie on the vertices of the rectangle, we obtain the inequality by (Minkowski's Theorem)
        $$4 \cdot \mathrm{Area}(\Delta_{A, (0,0), B})\le \mathrm{Area}(V) < 4\cdot |x_A|\cdot y_B \le 4$$
        Therefore $\mathrm{Area}(\Delta_{A, (0,0), B}) < 1$. 
        We have
        $$\lvert\det(A;B)\rvert = 2\cdot \mathrm{Area}(\Delta_{A, (0,0), B}) < 2$$
        and $\lvert\det(A;B)\rvert \in \N$, where $\lvert\det(A;B)\rvert \ne 0$ since $A$ and $B$ lie in different quadrants.
        We conclude that $\lvert\det(A;B)\rvert =1$, signifying that $A$ and $B$ is a basis of $\Lambda$.
    \end{enumerate}
\end{proof}

Now let's consider an order isomorphism $\Psi: \Z \to \Pi(\Lambda)$, and denote $\Psi(j) = (x_j, y_j)$. Since $\Psi(j)$ and $\Psi(j+1)$ never fall in the same quadrant, if we restrict $\Pi(\Lambda)$ above the $x$-axis we can deduce that $\Psi(j-1)$ and $\Psi(j+1)$ fall in the same quadrant. Thus, all $\Psi(2t) (t \in \Z)$ falls in one quadrant and $\Psi(2t+1)$ falls in the other. This is the alternating principle of pivots.

This alternating principle enables computation of the relation between three consecutive pivots:  $\Psi(j-1)$, $\Psi(j)$ and $\Psi(j+1)$, and in fact, they form an elegant relation:

\begin{lemma}
    \label{indexing}
    For any $j\in \Z$, there exists a positive integer $a_j$, such that 
    $$ \Psi(j+1) -a_j \Psi(j) - \Psi(j-1) = 0 $$
    The sequence $\{a_j\}(j \in \Z)$ is called the index sequence.
\end{lemma}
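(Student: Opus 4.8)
The plan is to lean on the previous lemma, which tells us that two consecutive pivots form a basis of $\Lambda$. Since $\Psi(j-1)$ and $\Psi(j)$ are consecutive and satisfy $0 < y_{j-1} < y_j$, they form a $\Z$-basis of $\Lambda$; hence the lattice point $\Psi(j+1)$ has a unique expansion
$$\Psi(j+1) = a_j\,\Psi(j) + b_j\,\Psi(j-1), \qquad a_j, b_j \in \Z .$$
Everything then reduces to proving $b_j = 1$ and $a_j \ge 1$.

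To determine $b_j$, I would take the determinant of both sides against $\Psi(j)$. This gives $\det(\Psi(j);\Psi(j+1)) = b_j \det(\Psi(j);\Psi(j-1))$, and since both $\{\Psi(j-1),\Psi(j)\}$ and $\{\Psi(j),\Psi(j+1)\}$ are bases of the unimodular lattice, each determinant equals $\pm 1$; therefore $b_j \in \{+1,-1\}$.

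It remains to exclude $b_j = -1$ and to fix the sign of $a_j$, and for this I would use the alternating principle together with the strict monotonicities $|x_{j-1}| > |x_j| > |x_{j+1}|$ and $0 < y_{j-1} < y_j < y_{j+1}$ coming from the inductive construction of $\Psi$ (the bi-infinite hypothesis also ensures every $x_i \neq 0$, so each pivot lies strictly inside one of the two upper quadrants). By the alternating principle $x_{j-1}$ and $x_{j+1}$ share a sign $\sigma$ opposite to that of $x_j$. Reading off the first coordinate of the displayed identity and multiplying by $\sigma$ yields $|x_{j+1}| = -a_j|x_j| + b_j|x_{j-1}|$. If $b_j = -1$ this forces $-a_j|x_j| = |x_{j+1}| + |x_{j-1}| > 0$, so $a_j < 0$; but then the second coordinate reads $y_{j+1} = a_j y_j - y_{j-1} < 0$, contradicting $y_{j+1} > 0$. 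Hence $b_j = 1$, and now the second coordinate gives $a_j y_j = y_{j+1} - y_{j-1} > 0$, so $a_j > 0$ and, being an integer, $a_j \ge 1$ --- exactly the claimed relation.

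I expect the only delicate point to be the sign bookkeeping in the last step: one must invoke the alternating principle on the correct pair and be careful that the inequalities on $|x_i|$ and $y_i$ are strict (which they are, since the inductive construction always produced a strictly smaller $|x|$ and strictly larger $y$ at each step). Beyond that there is no real obstacle; all the heavy lifting --- Minkowski's theorem, the basis property, the alternating principle --- has already been done in the preceding lemmas.
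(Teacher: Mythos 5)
Your proof is correct and follows essentially the same route as the paper: both begin from the basis expansion $\Psi(j+1)=a_j\Psi(j)+b_j\Psi(j-1)$ guaranteed by the preceding lemma, use unimodularity (you via $\det(\Psi(j);\Psi(j+1))=b_j\det(\Psi(j);\Psi(j-1))$, the paper via the determinant of the $2\times 2$ change-of-basis matrix) to force the $\Psi(j-1)$-coefficient to $\pm 1$, and then combine the strict monotonicity of the $y_i$ with the alternating sign of the $x_i$ to pin down both signs. The only difference is cosmetic order of the sign bookkeeping (you argue $b_j=-1\Rightarrow a_j<0\Rightarrow y_{j+1}<0$, the paper first gets $b>0$ from the $y$-coordinates and then rules out $a=-1$ from the $x$-coordinates), so there is nothing substantive to flag.
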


\begin{proof}
    From the last lemma, we already know that $\{\Psi(j+1), \Psi(j)\}$ and $\{\Psi(j-1), \Psi(j)\}$ form bases.
    Consider the linear transformation between two bases:
    $$
    \begin{bmatrix}
        \Psi(j+1) \\ \Psi(j)
    \end{bmatrix}
    =
    \begin{bmatrix}
        a & b \\ 0 & 1
    \end{bmatrix}
    \begin{bmatrix}
        \Psi(j-1) \\ \Psi(j)
    \end{bmatrix}, \quad a,b \in \Z
    $$
    
    Taking the determinant, $a = \pm 1$. 
    
    Taking the y coordinate, $y_{j+1} = ay_{j-1} +by_j$ where $y_{j+1} > y_j > y_{j-1} > 0$, therefore $b>0$. 
    
    Then we take the x coordinate: $x_{j+1} = ax_{j-1} +bx_j$, but their signs satisfy $\sgn(x_{j+1}) = \sgn(x_{j+1}) = -\sgn(x_{j})$. If $a=-1$, then $\sgn(x_{j+1}) = \sgn(ax_{j-1}+bx_{j}) = \sgn(x_j)$, which leads to contradiction, so $a=1$.

    We let $a_j := b$ here and we obtain
    $$\Psi(j+1) = a_j \Psi(j) + \Psi(j-1) = b \Psi(j) + \Psi(j-1) $$
\end{proof}

\subsection{Invariance under Flow Action}
From the last lemma, we can obtain the sequence $\{a_j\}(j \in \Z)$ from a unimodular bi-infinite lattice $\Lambda$, but this sequence is not well-defined, because we don't know where $\Psi(0)$ and $a_0$ is. In fact, $\Psi(0)$ could be any point in the pivot set $\Pi(\Lambda)$. This could either be resolved by adding a starting point $\Psi(0)$ first, or by quotient of shifting the sequence.

\begin{lemma}
    \label{orderiso}
    For any unimodular bi-infinite lattice $\Lambda$ and a pivot $A \in \Pi(\Lambda)$, there exists a unique order isomorphism $\Psi: \Z \to \Pi(\Lambda)$, such that $\Psi(0) = A$. Also, this admits a unique index sequence $\{a_j\} (j \in \Z)$ from lemma above. We will denote the index sequence by $(\Lambda; A)$.
\end{lemma}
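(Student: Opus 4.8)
The plan is to obtain $\Psi$ by normalizing at $0$ the order isomorphism already produced in the structure lemma for $\Pi(\Lambda)$, and then to read off uniqueness from the rigidity of $(\Z,<)$. First I would invoke part (3) of the earlier lemma: since $\Lambda$ is bi-infinite, $(\Pi(\Lambda),<)$ is order-isomorphic to $\Z$, so fix \emph{some} order isomorphism $\Psi_0 : \Z \to \Pi(\Lambda)$. Given the prescribed pivot $A \in \Pi(\Lambda)$, set $k := \Psi_0^{-1}(A)$ and define $\Psi(j) := \Psi_0(j+k)$. Translation by $k$ is an order automorphism of $\Z$, so $\Psi$ is again an order isomorphism onto $\Pi(\Lambda)$, and by construction $\Psi(0) = A$. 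This disposes of existence.

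For uniqueness, suppose $\Psi$ and $\Psi'$ are both order isomorphisms $\Z \to \Pi(\Lambda)$ with $\Psi(0) = \Psi'(0) = A$. Then $\sigma := (\Psi')^{-1}\circ \Psi$ is an order-preserving bijection of $\Z$ fixing $0$, and the key elementary fact I would establish is that every order-preserving bijection of $\Z$ is a translation $n \mapsto n+c$. Indeed strict monotonicity gives $\sigma(n+1) \ge \sigma(n)+1$; if this inequality were ever strict, some integer strictly between $\sigma(n)$ and $\sigma(n+1)$ would be omitted from the image (as $\sigma(m)\le \sigma(n)$ for $m\le n$ and $\sigma(m)\ge \sigma(n+1)$ for $m\ge n+1$), contradicting surjectivity; hence $\sigma(n) = n + \sigma(0) = n$ and $\Psi = \Psi'$.

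Finally, for the index sequence I would simply apply Lemma \ref{indexing} to this unique $\Psi$, obtaining positive integers $a_j$ with $\Psi(j+1) = a_j\Psi(j) + \Psi(j-1)$ for all $j \in \Z$. Each $a_j$ is uniquely determined: by the previous lemma $\{\Psi(j-1),\Psi(j)\}$ is a basis of $\Lambda$, so $\Psi(j) \ne 0$ and the coefficient in $a_j\Psi(j) = \Psi(j+1) - \Psi(j-1)$ is forced. Hence the sequence $(a_j)_{j\in\Z}$ depends only on $\Psi$, which depends only on the pair $(\Lambda, A)$, which legitimizes the notation $(\Lambda; A)$.

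I do not expect a genuine obstacle here: the statement is essentially a normalization-plus-rigidity observation built on the two preceding lemmas. The only place calling for a little care is the characterization of the order-automorphism group of $\Z$, where one must genuinely use surjectivity and not merely monotonicity; everything else is a direct appeal to results already in hand.
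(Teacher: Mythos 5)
Your proof is correct and follows essentially the same route as the paper's: translate a fixed order isomorphism of $\Z$ onto $\Pi(\Lambda)$ so that $0\mapsto A$, and deduce uniqueness from the fact that an order automorphism of $\Z$ fixing $0$ is the identity. You simply spell out the rigidity of $\mathrm{Aut}(\Z,<)$ and the uniqueness of the $a_j$, which the paper takes as obvious.
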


\begin{proof}
    First, there exists an isomorphism $\Phi_0: \Pi(\Lambda) \to \Z$. For existence, define $\Psi(a) := \Phi_0^{(-1)} (a + \Phi_0(A))$. For uniqueness, suppose there are two different isomorphisms $\Psi_1, \Psi_2: \Z \to \Pi(\Lambda)$, then $\Psi_1^{(-1)} \circ \Psi_2$ is an order automorphism of $\Z$ that satisfies $\Psi_1^{(-1)} \circ \Psi_2 (0) = 0$, so $\Psi_1^{(-1)} \circ \Psi_2 = \mathrm{Id}$.
\end{proof}

Next, we consider the action
$$
g_t := \begin{bmatrix}
    \exp(t) & 0 \\ 0 & \exp(-t)
\end{bmatrix}
\quad (t \in \R)
$$
that sends lattice $\Lambda$ to $g_t \Lambda$.
It's obvious that $g_t$ is a flow (a one-parameter group) of parameter $t$, that satisfies $g_s g_t = g_{s+t}$. We show that pivots and indices are equivariant under flow of $g$.
\begin{lemma}
    Let $\Lambda$ be a unimodular bi-infinite lattice, and $t \in \R$ an arbitrary parameter, then:
    \begin{enumerate}
        \item $g_t \Lambda$ is a unimodular bi-infinite lattice;
        \item Equivariance: $g_t \Pi(\Lambda) = \Pi(g_t \Lambda)$;
        \item The index sequence of $(\Lambda; A)$ is equal to the index sequence $(g_t \Lambda; g_t A)$.
    \end{enumerate}
\end{lemma}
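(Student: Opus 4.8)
The plan is to exploit that $g_t$ is an invertible linear map that is diagonal with strictly positive entries and unit determinant, so it is simultaneously a group automorphism of $\R^2$, a homeomorphism, a volume-preserving map, and a map that preserves each coordinate axis together with the sign of each coordinate. Each of the three claims then follows by transporting the relevant structure through $g_t$; the proof is essentially bookkeeping with the functoriality of ``apply $g_t$''.

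For (1): writing $\Lambda = \Z A + \Z B$ for a basis $\{A,B\}$, we have $g_t\Lambda = \Z(g_tA)+\Z(g_tB)$, and $|\det(g_tA; g_tB)| = |\det g_t|\cdot|\det(A;B)| = 1$, so $g_t\Lambda$ is a unimodular lattice. Since $\exp(t)>0$ and $\exp(-t)>0$, $g_t$ maps $\{x=0\}$ onto itself and $\{y=0\}$ onto itself, so a point of $g_t\Lambda$ lies on an axis if and only if its preimage in $\Lambda$ does; hence $g_t\Lambda$ is bi-infinite.

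For (2): the key identity is $g_t R(A) = R(g_tA)$ for every nonzero $A=(x,y)$, since $g_tA = (e^{t}x, e^{-t}y)$ and $g_t\big([-|x|,|x|]\times[-|y|,|y|]\big) = [-e^{t}|x|, e^{t}|x|]\times[-e^{-t}|y|, e^{-t}|y|] = R(g_tA)$. As $g_t$ is a homeomorphism it carries $\mathrm{Int}(R(A))$ onto $\mathrm{Int}(R(g_tA))$, so $\mathrm{Int}(R(A))\cap\Lambda = \{(0,0)\}$ if and only if $\mathrm{Int}(R(g_tA))\cap g_t\Lambda = \{(0,0)\}$; that is, $A$ is a pivot of $\Lambda$ iff $g_tA$ is a pivot of $g_t\Lambda$. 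Since $g_t$ also preserves the half-plane $\{y\ge 0\}$, this gives $g_t\Pi(\Lambda) = \Pi(g_t\Lambda)$.

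For (3): because $\exp(-t)>0$, $g_t$ is strictly order-preserving for the order $A<B\iff y_A<y_B$ on pivots, so if $\Psi\colon\Z\to\Pi(\Lambda)$ is the unique order isomorphism with $\Psi(0)=A$ from Lemma \ref{orderiso}, then $\Psi' := g_t\circ\Psi\colon\Z\to\Pi(g_t\Lambda)$ is an order isomorphism with $\Psi'(0)=g_tA$, hence by the uniqueness clause of Lemma \ref{orderiso} it is the one defining $(g_t\Lambda; g_tA)$. Applying the linear map $g_t$ to the relation $\Psi(j+1) = a_j\Psi(j)+\Psi(j-1)$ of Lemma \ref{indexing} yields $\Psi'(j+1) = a_j\Psi'(j)+\Psi'(j-1)$, and since the index sequence is uniquely determined by this relation, $(g_t\Lambda; g_tA)$ has index sequence $\{a_j\}$, equal to that of $(\Lambda; A)$. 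The only point needing a little care is the equality $g_tR(A)=R(g_tA)$ and the passage to interiors in the definition of a pivot, but this is immediate from $g_t$ being a linear homeomorphism with positive diagonal entries; there is no real obstacle.
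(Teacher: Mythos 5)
Your proof is correct and follows essentially the same route as the paper's: it exploits that $g_t$ is a determinant-one linear homeomorphism with positive diagonal entries, hence preserves the axes, satisfies $g_tR(A)=R(g_tA)$, preserves the $y$-ordering of pivots, and carries the defining recurrence of the index sequence forward. The only difference is that you spell out the rectangle identity and the uniqueness clause of Lemma~\ref{orderiso} a bit more explicitly than the paper does, but the underlying argument is identical.
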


\begin{proof}
    \begin{enumerate}
        \item Note that $\det(g_t) = 1$ and for any point $K$, $K$ is on the axis iff $g_t K$ is on the axis.
        \item By definition of pivot, a point $B = (x,y)$ is a pivot in $\Lambda$ iff $(-|x|,|x|) \times (-|y|,|y|) \cap \Lambda = \{ (0,0)\}$, which is equivalent to 
        $$\left(-e^t|x|,e^t|x|\right) \times \left(-e^{-t}|y|,e^{-t}|y|\right)\  \cap \ \diag\left(e^t, e^{-t}\right)\Lambda = \{ (0,0)\}$$
        \item Note that orders are invariant: 
        $$ \forall C, D \in \Pi(\Lambda), \quad C < D \iff y_C < y_D \iff g_t C < g_t D$$
        therefore $g_t (\Psi (i))$ is equal to $\Psi_{g_t \Lambda}(i)$, and the relation $\Psi(j+1) = a_j \Psi(j) + \Psi(j-1)$ is equivariant under flow of $g_t$.
    \end{enumerate}
\end{proof}

Are there more actions that preserves the equivariance relation of pivots and indices of $\Lambda$? The answer is yes, and we can find another couple of actions:

\begin{lemma}
    Let $V_x = \diag(1,-1)$ and $V_y = \diag(-1, 1)$ be the reflection along x and y axis, respectively. Then
    \begin{enumerate}
        \item $V_x \Lambda$ and $V_y \Lambda$ are unimodular bi-infinite lattices;
        \item Pivots are equivariant: $V_x \Pi(\Lambda) = \Pi(V_x \Lambda)$, $V_y \Pi(\Lambda) = \Pi(V_y \Lambda)$;
        \item The index sequence of $(\Lambda; A)$ is equal to the index sequences $(V_x \Lambda; V_x A)$ and $(V_y \Lambda; V_y A)$.
    \end{enumerate}
\end{lemma}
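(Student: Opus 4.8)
The plan is to mimic, essentially word for word, the proof just given for the diagonal flow $g_t$, using only that $V_x = \diag(1,-1)$ and $V_y = \diag(-1,1)$ are unimodular diagonal matrices with entries in $\{\pm 1\}$. For part (1): $|\det V_x| = |\det V_y| = 1$, so covolume $1$ is preserved; and a point $(x,y)$ lies on $\{x=0\}$ (resp.\ $\{y=0\}$) if and only if $V_x(x,y)$ and $V_y(x,y)$ do, since these maps only flip signs of coordinates, so bi-infiniteness transfers.

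For part (2) the key observation is that the box $R(A) = [-|x|,|x|] \times [-|y|,|y|]$ depends on $A=(x,y)$ only through $(|x|,|y|)$, which both $V_x$ and $V_y$ leave unchanged; hence $R(V_xA) = R(A) = V_x R(A)$, and likewise for $V_y$. Therefore $\mathrm{Int}(R(V_xA)) \cap V_x\Lambda = V_x\!\left(\mathrm{Int}(R(A)) \cap \Lambda\right)$, which equals $\{(0,0)\}$ precisely when $A$ is a pivot of $\Lambda$; so $V_x$ (resp.\ $V_y$) carries the full set of pivots of $\Lambda$ bijectively onto that of $V_x\Lambda$ (resp.\ $V_y\Lambda$). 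Since $A$ is a pivot iff $-A$ is, and $\Pi(\cdot)$ records one representative per antipodal pair (in $\{y\ge0\}$, using bi-infiniteness to stay off the axis), this gives $\Pi(V_y\Lambda) = V_y\Pi(\Lambda)$ at once because $V_y$ fixes $\{y\ge0\}$; for $V_x$, which exchanges the half-planes, I would compose with the antipode ($-V_x = V_y$) to get $\Pi(V_x\Lambda) = -V_x\Pi(\Lambda) = V_y\Pi(\Lambda)$. At the level of antipodal classes both statements read $\Pi(V_\bullet\Lambda) = V_\bullet\Pi(\Lambda)$.

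For part (3) the point is that the three-term relation of Lemma \ref{indexing} is \emph{linear}. For $V_y$: it preserves the second coordinate, hence the order $A<B \iff y_A<y_B$, so if $\Psi\colon\Z\to\Pi(\Lambda)$ is the order isomorphism with $\Psi(0)=A$ from Lemma \ref{orderiso}, then $V_y\circ\Psi$ is the order isomorphism of $\Pi(V_y\Lambda) = V_y\Pi(\Lambda)$ with value $V_yA$ at $0$; applying the linear map $V_y$ to $\Psi(j+1) = a_j\Psi(j)+\Psi(j-1)$ yields $(V_y\Psi)(j+1) = a_j(V_y\Psi)(j)+(V_y\Psi)(j-1)$, so the index sequence is literally $\{a_j\}$. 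For $V_x$: since $V_x = -V_y$ and $V_y$ still preserves the $y$-order, $\Xi(j) := V_y\Psi(j)$ is again an order isomorphism $\Z\to V_y\Pi(\Lambda) = \Pi(V_x\Lambda)$, now with $\Xi(0) = V_yA = -V_xA$, the upper representative of the starting pivot $V_xA$; the same linear computation gives index sequence $\{a_j\}$. Hence $(\Lambda;A)$, $(V_x\Lambda;V_xA)$ and $(V_y\Lambda;V_yA)$ share the index sequence $\{a_j\}$.

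The only thing requiring care — the "main obstacle", modest as it is — is the half-plane bookkeeping for $V_x$: since $V_x$ swaps $\{y\ge0\}$ and $\{y\le0\}$, to land back inside the set $\Pi$ one must pass through the antipodal involution, as above, which is harmless exactly because $R(A) = R(-A)$ and because the recurrence is antipode-invariant ($-\Psi$ satisfies the same three-term relation). Everything else is purely formal: the algebra of $\pm1$ diagonal matrices acting on axis-parallel boxes, on the $y$-order of pivots, and on a linear three-term recurrence.
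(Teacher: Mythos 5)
Your proof is correct and follows essentially the same route as the paper's: pivotness depends only on $(|x|,|y|)$ so it is invariant under coordinate sign-flips, the $V_y$ case is immediate because $V_y$ preserves the half-plane $\{y\ge 0\}$ and the $y$-ordering, and the $V_x$ case reduces to $V_y$ via $V_x=-V_y$ together with $-\Lambda=\Lambda$. You make the antipodal bookkeeping explicit (noting that $V_x\Pi(\Lambda)$ lands in the lower half-plane and must be replaced by $-V_x\Pi(\Lambda)=V_y\Pi(\Lambda)$ to agree with $\Pi(V_x\Lambda)$), which the paper dispatches only with a terse remark about $\diag(-1,-1)$ fixing $\Lambda$.
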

\begin{proof}
    \begin{enumerate}
    \item Obviously.
    \item Recall that a point $B = (x,y)$ is a pivot in $\Lambda$ iff $(-|x|,|x|) \times (-|y|,|y|) \cap \Lambda = \{ (0,0)\}$. All the reflections of $B$ remains on the vertices of the box of $\{\pm x, \pm y\}$.
    \item It is obvious that the index sequence of $(\Lambda; A)$ is equal to $(V_y\Lambda; V_y A)$, since we haven't made any assumptions on the signs of x coordinates of pivots. To prove that $V_x$ is also an equivariant action, note that $V_x = -V_y$ and $\Lambda = -\Lambda = \diag(-1,-1) \Lambda$, where $\diag(-1,-1)$ keeps the whole lattice invariant.
    \end{enumerate}
\end{proof}

We define the equivariant group $G$ as follows:
\begin{definition}
    The equivariant group $G$ is the subgroup of $\mathrm{GL}_2(\R)$ generated by these elements:
    $$
    \begin{bmatrix}
        1 & 0 \\ 0 & -1
    \end{bmatrix}, \quad
    \begin{bmatrix}
        -1 & 0 \\ 0 & 1
    \end{bmatrix}, \quad
    \begin{bmatrix}
        \exp(t) & 0 \\ 0 & \exp(-t)
    \end{bmatrix} (t \in \R)
    $$
\end{definition}
It's obvious that this group is isomorphic to $\left(\Z/2\Z\right)^2 \times \R$, and the action of $G$ keeps the index sequence of $(\Lambda; A)$ equivariant, i.e., for any $h \in G$, the index sequences of $(\Lambda; A)$ and $(g\Lambda, gA)$ are equivariant under an action of $g$.

It is possible to use the action of $G$ on $(\Lambda; A)$ to reduce the lattice into a certain canonical form to simplify the computation. We choose $h \in G$ such that $hA$ locates on the ray of $x+y=0 (x<0)$.
\begin{lemma}
    \label{lem7}
    For any point $A \in \R^2$ not on the axis, there exists a unique $h \in G$ such that $hA \in \{ (x,-x) | x<0\}$.
\end{lemma}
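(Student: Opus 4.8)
The plan is to decompose the problem into first achieving the correct quadrant, then scaling along the diagonal flow, and finally checking uniqueness. Fix a point $A = (p, q)$ with $pq \neq 0$. First I would apply exactly one of the four sign matrices $\diag(\pm 1, \pm 1)$ to move $A$ into the second open quadrant $\{x < 0, y > 0\}$; concretely, take $\varepsilon_1 = -\sgn(p)$ and $\varepsilon_2 = \sgn(q)$ and use $\diag(\varepsilon_1, \varepsilon_2)$. Note $\diag(\varepsilon_1,\varepsilon_2)$ lies in $G$: it is a product of the two generating reflections $V_x, V_y$ and possibly $\diag(-1,-1) = V_x V_y$ together with the identity $g_0$. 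After this step $A$ has been sent to $A' = (-|p|, |q|)$ with $|p|, |q| > 0$.

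Next I would apply the diagonal flow $g_t = \diag(e^t, e^{-t})$ to $A'$, sending it to $(e^t(-|p|), e^{-t}|q|)$. I want this to lie on the ray $\{(x, -x) : x < 0\}$, i.e.\ I need the $x$-coordinate negative (automatic, since $-e^t |p| < 0$) and the $y$-coordinate equal to the negative of the $x$-coordinate: $e^{-t}|q| = e^t|p|$, equivalently $e^{2t} = |q|/|p|$, so $t = \tfrac12 \log(|q|/|p|)$, which exists and is unique as a real number because $|q|/|p| > 0$. Composing, the element $h = g_t \circ \diag(\varepsilon_1, \varepsilon_2) \in G$ satisfies $hA \in \{(x,-x): x < 0\}$, establishing existence.

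For uniqueness, suppose $h_1 A, h_2 A$ both lie on the ray. Since $G \cong (\Z/2\Z)^2 \times \R$ is abelian, write $h = h_2 h_1^{-1} = \diag(\delta_1, \delta_2) g_s$ with $\delta_i \in \{\pm 1\}$ and $s \in \R$; then $h$ fixes the ray $\{(x,-x): x<0\}$ setwise, since it maps $h_1 A$ to $h_2 A$, both on the ray — but I should be a little careful and instead argue that $h$ maps the specific nonzero point $h_1 A = (u, -u)$ (with $u < 0$) to another point $(v, -v)$ with $v < 0$. Applying $\diag(\delta_1, \delta_2) g_s$ gives $(\delta_1 e^s u, -\delta_2 e^{-s} u)$, and requiring this to have the form $(v, -v)$ forces $\delta_1 e^s u = \delta_2 e^{-s} u$; since $u \neq 0$ this gives $\delta_1 e^{2s} = \delta_2$, and as $e^{2s} > 0$ and $\delta_1, \delta_2 \in \{\pm1\}$ we get $\delta_1 = \delta_2$ and $e^{2s} = 1$, hence $s = 0$ and $\delta_1 = \delta_2$, i.e.\ $h = \mathrm{Id}$ or $h = \diag(-1,-1)$. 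But $\diag(-1,-1)$ sends $(u,-u)$ with $u<0$ to $(-u, u)$ which has negative $y$-coordinate, hence is not on the ray $\{(x,-x): x<0\}$ unless it equals the original, which it does not (it has positive first coordinate). So $h = \mathrm{Id}$ and $h_1 = h_2$.

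I do not expect any serious obstacle here; the only point requiring slight care is bookkeeping the identification $G \cong (\Z/2\Z)^2 \times \R$ and making sure the sign-matrix component and the flow component are genuinely independent, so that matching both coordinates of $hA$ to the ray pins down both components. One subtlety worth stating explicitly is that $\diag(-1,-1)$, although it acts trivially on lattices (as used earlier), does \emph{not} act trivially on individual points, so the uniqueness of $h$ as an element of $G$ really does hold even though the induced action on $\Lambda$ would not distinguish $h$ from $\diag(-1,-1)h$. I would phrase the write-up in exactly the three beats above: reduce to the second quadrant via a sign matrix, solve $e^{2t} = |q|/|p|$ for the unique flow parameter, then verify uniqueness by the short computation on $(u,-u)$.
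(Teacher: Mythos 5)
Your proof is correct and follows essentially the same route as the paper: the paper simply writes down $h = \diag(-\sgn(x_0), \sgn(y_0))\, g_t$ with $t = \tfrac12(\ln|y_0| - \ln|x_0|)$, which is exactly your sign-matrix-then-flow decomposition (the factors commute, so the order is immaterial). Your explicit uniqueness check via the action on $(u,-u)$ is a welcome bit of extra care that the paper leaves implicit, but it is the same underlying argument.
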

\begin{proof}
    Suppose $A = (x_0, y_0)$, then $h$ is uniquely determined as
    $$h = \begin{bmatrix}
        -\sgn(x_0) & 0 \\ 0 & \sgn(y_0)
    \end{bmatrix}
    g_t
    $$
    where $ t= \frac12 \left( \ln (|y_0|) - \ln(|x_0|) \right)$.
\end{proof}

Now consider the lattice $\Lambda$ with a pivot $A_0 = (x_0, y_0) = (-y_0, y_0)$ at the ray $x+y=0 (x<0)$. We aim to discover more specific information about pivots under this setting.

\begin{lemma}
    \label{lattiprop}
    The lattice $\Lambda$ has the following properties:
    \begin{enumerate}
        \item All lattice points lie on the lines $y = -x + {k}/{y_0} \ (k \in \Z)$;
        \item Both the next pivot of $A_0$, denoted by $A_1 = (x_1, y_1)$, and the precedent pivot of $A_0$, denoted by $A_{-1} = (x_{-1}, y_{-1})$, locate in the first quadrant $\{(x,y) | x<0, y>0\}$ and on the line $y = -x + 1/{y_0}$.
    \end{enumerate}
\end{lemma}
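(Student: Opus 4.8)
The plan is to deduce both parts from one fact: $A_0$ together with either adjacent pivot is a basis of $\Lambda$ (the consecutive-pivots lemma), combined with the normalization $x_0+y_0=0$. Everything is then read off from the linear functional $\ell(x,y):=x+y$, which annihilates $A_0$.

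\textbf{Part 1.} Since $(\Pi(\Lambda),<)\cong\Z$, the pivot $A_0$ has an immediate successor $A_1$ and predecessor $A_{-1}$, and $\{A_0,A_1\}$ is a basis of $\Lambda$. Writing an arbitrary $P\in\Lambda$ as $P=mA_0+nA_1$ with $m,n\in\Z$ gives $\ell(P)=m\,\ell(A_0)+n\,\ell(A_1)=n\,\ell(A_1)$, since $\ell(A_0)=-y_0+y_0=0$. The basis condition gives $1=|\det(A_0;A_1)|=|x_0y_1-y_0x_1|=y_0\,|x_1+y_1|$, so $\ell(A_1)=\pm1/y_0$; hence $\ell(P)\in(1/y_0)\Z$ for every $P\in\Lambda$, i.e. every lattice point lies on some line $x+y=k/y_0$ ($k\in\Z$), equivalently $y=-x+k/y_0$. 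This part is routine.

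\textbf{Part 2, the line.} That $y_1>y_0>0$ and $0<y_{-1}<y_0$ follows at once from the order isomorphism $\Psi$ and bi-infiniteness (a pivot with zero $y$-coordinate would lie on the $x$-axis). By Part 1 each of $A_1,A_{-1}$ lies on a line $x+y=k/y_0$, and I claim $k=1$ in both cases. For $A_1$, the determinant computation already gives $\ell(A_1)=\pm1/y_0$, so $k=\pm1$; the value $k=-1$ is excluded because admissibility of the square $R(A_0)=[-y_0,y_0]^2$ together with Minkowski's theorem forces $(2y_0)^2\le4$, hence $y_0\le1$ and $|x_1|<|x_0|=y_0\le1/y_0$, whereas $x_1+y_1=-1/y_0$ with $y_1>0$ would give $|x_1|>1/y_0$. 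So $\ell(A_1)=1/y_0$. For $A_{-1}$, Lemma \ref{indexing} at $j=0$ reads $A_1=a_0A_0+A_{-1}$, so $\ell(A_{-1})=\ell(A_1)-a_0\,\ell(A_0)=1/y_0$. Hence $A_1$ and $A_{-1}$ both lie on $y=-x+1/y_0$.

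\textbf{Part 2, the quadrant, and the main obstacle.} Since $A_1$ and $A_{-1}$ are each consecutive with $A_0$, the alternating principle of pivots puts them in one common half-quadrant among $\{x<0,\,y>0\}$ and $\{x>0,\,y>0\}$; the correct one is then pinned down by combining the line equation $x+y=1/y_0$ with the constraints $|x_i|<y_0$, $y_i>0$ and the position of $A_0$, from which one reads off $x_1,x_{-1}<0$, so both points lie in $\{(x,y)\mid x<0,\ y>0\}$. I expect this last step --- getting the alternating principle, the Minkowski bound $y_0\le1$, and the index relation to interact correctly so as to fix the sign of $x_1$ and $x_{-1}$ --- to be the delicate point and the one most prone to a sign slip, so I would verify it against an explicit lattice (e.g. one whose index sequence is eventually constant) before finalizing.
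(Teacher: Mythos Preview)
Your Part~1 is correct and essentially the paper's argument. Your determination of the line in Part~2 is also correct, though more laborious than necessary: the Minkowski bound $y_0\le1$ and the index relation $A_1=a_0A_0+A_{-1}$ both work, but the paper bypasses them entirely.

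The real problem is the quadrant argument. First, note that the statement as printed contains a typo: the set written $\{(x,y)\mid x<0,\,y>0\}$ is the quadrant containing $A_0$ itself, and the intended claim (consistent with the paper's own proof and with the later Lemma~\ref{recall}) is that $A_1,A_{-1}$ lie in the \emph{opposite} upper quadrant $\{x>0,\,y>0\}$. You have been led by the typo into trying to prove $x_1,x_{-1}<0$, which is false.

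Second, your argument for the sign is not valid even on its own terms. You invoke the alternating principle only to say that $A_1$ and $A_{-1}$ share a quadrant, but the principle says more: each is consecutive with $A_0$, hence each lies in the quadrant \emph{different} from $A_0$'s. Since $A_0=(-y_0,y_0)$ has $x_0<0$, this immediately forces $x_1,x_{-1}>0$, and no further work is needed. Your attempt to pin down the sign from $x+y=1/y_0$, $|x_i|<y_0$, $y_i>0$ does not succeed: for $A_1$ these constraints give only $-y_0<x_1<y_0$ and $1/y_0-y_0<y_1<1/y_0+y_0$, which does not fix $\operatorname{sgn}(x_1)$; and for $A_{-1}$ the inequality $|x_{-1}|<y_0$ is simply wrong (the precedent pivot has $|x_{-1}|>|x_0|=y_0$).

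The paper's proof runs in the opposite order from yours and is much shorter: the alternating principle gives the quadrant $\{x>0,\,y>0\}$ directly; then, of the two candidate lines $y=-x\pm1/y_0$, only $y=-x+1/y_0$ meets that quadrant (since $x+y>0$ there), so the line is determined for free. Your Minkowski step and the index relation are correct but unnecessary once the argument is reordered.
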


\begin{proof}
\begin{enumerate}
    \item Another basis vector $B = (x_B, y_B)$ must satisfy $| \det (A; B)| = 1$, so $y_0 |y_B+x_B| = 1$ and $y_B+x_B = \pm 1/y_0$. Thus $mA+nB$ lies on the line of $ y = -x + \pm n/y_0$.
    \item By the previous alternating principle, we know that they are both in the first quadrant, and since $\{A_0, A_{-1}\}$ and $\{A_0, A_1\}$ are two sets of bases, we deduce that $A_{-1}$ and $A_1$ are either on the line of $y=-x+1/y_0$ or $y=-x-1/y_0$. Because $\{y=-x-1/y_0\}$ does not pass the first quadrant, both $A_{-1}$ and $A_1$ must be on the line of $y=-x+1/y_0$.
\end{enumerate}
\end{proof}

It is also a similar case to restrict $A_0$ on another ray $y=x (x>0)$, where we obtain similar results. Under this setting, $h \in G$ is also unique.

\subsection{Establishing the Correspondence}
From the last section, we know how to obtain the index sequence giving a lattice $\Lambda$ and a starting point. Now, we prove that it's possible to reconstruct the lattice from the index sequence and a starting point. We will provide an explicit formula to show the relationship between these lattices and continued fractions.

\begin{lemma}
    \label{recall}
    Let $\{a_n\}\  (n \in \Z, a_n \in \N^+)$ be any positive integer sequence over $\Z$. Then there exists a unimodular bi-infinite lattice $\Lambda \subset \R^2$, denoted as $\Lambda ( \{ a_n \} )$, and a point $A_0$, denoted as $A_0 ( \{ a_n \} )$, such that
    \begin{enumerate}
        \item $A_0$ is a pivot point: $A_0 \in \Pi(\Lambda)$;
        \item $A_0$ is on the ray: $A_0 \in \{(x, -x)| x<0\}$;
        \item The index sequence $(\Lambda; A_0)$ is exactly $\{a_n\}$.
    \end{enumerate}
\end{lemma}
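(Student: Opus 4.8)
\emph{Proposal.} The plan is to write $\Lambda(\{a_n\})$ and $A_0(\{a_n\})$ down explicitly from the two real numbers attached to the right and left tails of $\{a_n\}$, check unimodularity and bi-infiniteness directly, and deduce the index sequence from the best-approximation theory of continued fractions together with Lemmas~\ref{indexing} and~\ref{orderiso}. Since $\diag(-1,1)\in G$ sends $(c,c)$ to $(-c,c)\in\{(x,-x):x<0\}$ for $c>0$, and $\Pi$ and index sequences are $G$-equivariant, it suffices to build a unimodular bi-infinite lattice $\Lambda_0$ with a pivot $B_0\in\Pi(\Lambda_0)$ on the ray $\{(c,c):c>0\}$ and with $(\Lambda_0;B_0)=\{a_n\}$, and then set $\Lambda:=\diag(-1,1)\,\Lambda_0$, $A_0:=\diag(-1,1)\,B_0$.

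\emph{The construction.} Put $\lambda := [a_0;\, a_1, a_2, a_3, \dots] \in (1,\infty)$ and $\mu := -[0;\, a_{-1}, a_{-2}, a_{-3}, \dots] \in (-1,0)$; both are irrational, so $\lambda-\mu>0$. Let
\[ \Lambda_0 := \Big\{\, \tfrac{1}{\sqrt{\lambda-\mu}}\,(x - \lambda y,\; x - \mu y) \;:\; (x,y) \in \Z^2 \,\Big\}, \qquad B_0 := \tfrac{1}{\sqrt{\lambda-\mu}}\,(1,1), \]
with $B_0$ coming from $(x,y)=(1,0)$. Since $\det\begin{pmatrix}1 & -\lambda\\ 1 & -\mu\end{pmatrix}=\lambda-\mu$, $\Lambda_0$ is unimodular, and it is bi-infinite because a nonzero lattice point has vanishing first coordinate only if $\mu\in\Q$ and vanishing second coordinate only if $\lambda\in\Q$. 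Define $(x_j,y_j)\in\Z^2$ by $(x_0,y_0)=(1,0)$, $(x_{-1},y_{-1})=(0,1)$ and $(x_{j+1},y_{j+1})=a_j(x_j,y_j)+(x_{j-1},y_{j-1})$; for $j\ge 0$ these are the convergents of $\lambda$ and for $j\le 0$ the (signed) convergents of $\mu$. Set $B_j := \tfrac{1}{\sqrt{\lambda-\mu}}(x_j-\lambda y_j,\ x_j-\mu y_j)$, so that $B_{j+1}=a_jB_j+B_{j-1}$ for all $j$ by construction. One checks from the alternating-sign behaviour of convergents that $x_j-\mu y_j>0$ for every $j$, so each $B_j$ has positive second coordinate; that the first coordinates of $B_j$ alternate in sign; that the second coordinates of $B_j$ strictly increase with $j$; and that $|x_j-\lambda y_j|\to 0$, $|x_j-\mu y_j|\to\infty$ as $j\to+\infty$ (and conversely as $j\to-\infty$), so that the $B_j$ run off to infinity with directions tending to the two coordinate axes.

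\emph{The crux.} It remains to show $\Pi(\Lambda_0)=\{B_j:j\in\Z\}$: given this, $j\mapsto B_j$ is the order isomorphism of Lemma~\ref{orderiso}, and $B_{j+1}=a_jB_j+B_{j-1}$ identifies the index sequence as $\{a_n\}$ by Lemma~\ref{indexing}, finishing the proof. Writing $B_j=(u_j,v_j)$, it suffices to prove, for each $j$, that
\[ \mathrm{Int}\big([-|u_j|,\,|u_j|]\times[0,\,|v_{j+1}|]\big)\cap\Lambda_0=\emptyset, \]
since this rectangle contains $R(B_j)\cap\{y\ge0\}$ (as $|v_j|<|v_{j+1}|$), whence $B_j$ is a pivot, and it also forbids any pivot whose order lies strictly between $B_j$ and $B_{j+1}$ (such a pivot would, by the ordering properties of $\Pi$ established earlier, have $|x|$-coordinate $<|u_j|$, hence $\pm$ it would lie in the rectangle). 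A lattice point $\tfrac{1}{\sqrt{\lambda-\mu}}(x-\lambda y,x-\mu y)$ in the interior would satisfy $|x-\lambda y|<|x_j-\lambda y_j|$ and $|x-\mu y|<|x_{j+1}-\mu y_{j+1}|$; for $j\ge 0$ the first inequality, being a strictly better second-kind approximation to $\lambda$ than the convergent $(x_j,y_j)$, forces $|y|$ to be at least as large as the next convergent denominator, which pins $x/y$ so close to $\lambda$ that $|x-\mu y|$ exceeds $|x_{j+1}-\mu y_{j+1}|$, a contradiction; the symmetric argument with the convergents of $\mu$ covers $j<0$, and the finitely many small-$|j|$ cases near the centre $B_0$ are checked directly.

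\emph{Main obstacle.} The entire difficulty is the crux: transcribing the empty-rectangle condition into the correct finite list of continued-fraction inequalities for $\lambda$ and $\mu$ and verifying them sharply. This is the sup-norm (Mordell) analogue of the classical product-norm (Markov) computation, so the technique is standard, but one must account for \emph{every} generalized convergent of $\lambda$ and of $\mu$ that can possibly enter a given box, and for the interaction near $B_0$. An alternative that avoids the explicit formula is to realize $\Lambda_0$ as a limit, in the space of unimodular lattices, of the quadratic lattices attached to the periodic truncations of $\{a_n\}$ (normalized by $g_t$ so that the distinguished pivot stays on the ray), at the cost of proving a uniform lower bound on the systole of those truncations.
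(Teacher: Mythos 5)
Your construction is the paper's construction in disguise. With $\lambda=a_0+\alpha_0$ and $\mu=-\beta_0$ one has $\lambda-\mu=a_0+\alpha_0+\beta_0$, so $B_0=\frac{1}{\sqrt{\lambda-\mu}}(1,1)$ and $B_{-1}=\frac{1}{\sqrt{\lambda-\mu}}(-\lambda,-\mu)$ become, after $\diag(-1,1)$, precisely the paper's $A_0$ and $A_{-1}$; the recursion $B_{j+1}=a_jB_j+B_{j-1}$ is the same. So the content of the lemma that you need to establish is exactly the same as in the paper, namely that $\Pi(\Lambda_0)=\{\pm B_j\}$ with the given order.

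Where you differ is the verification. The paper proves it by an inductive renormalization: at each step it applies a diagonal $g_k$ putting $A_k$ on the ray $\{y=\pm x\}$, observes that the formulas for $g_k\cdots g_1 A_{k-1},\, A_k,\, A_{k+1}$ have the same shape with indices shifted, and then verifies a single explicit box $C_k$ is empty, using only $0<\alpha_k,\beta_k<1$ and the structure of Lemma~\ref{lattiprop}. You instead want to control, for each fixed $j$, the rectangle $[-|u_j|,|u_j|]\times[0,|v_{j+1}|]$ directly via second-kind best approximation to $\lambda$ and $\mu$. That route is perfectly viable, and arguably more transparent about where continued fractions enter; but your write-up of it is not yet a proof. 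Two concrete things are missing in the "crux". First, after the best-approximation theorem gives $|y|\ge q_j$, the jump to "pins $x/y$ so close to $\lambda$ that $|x-\mu y|$ exceeds $|x_{j+1}-\mu y_{j+1}|$" must be made precise; what is actually needed is $x-\mu y=(x-\lambda y)+(\lambda-\mu)y$ together with the identity $|p_{j-1}-\lambda q_{j-1}|\,q_j+|p_j-\lambda q_j|\,q_{j-1}=1$ (hence $|x_j-\lambda y_j|+|x_{j+1}-\lambda y_{j+1}|\le 1<\lambda-\mu$) to make the $|y|\ge q_j+1$ case work, and a separate argument (identifying the only minimizer) for the boundary case $|y|=q_j$. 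Second, the "interaction near $B_0$" and the negative-$j$ side need to be written out, since for $j\le 0$ the relevant best approximations are to $\mu$, with the roles of the two coordinates exchanged, and for $j=0$ the rectangle is large and the best-approximation framework does not apply directly (you acknowledge both of these but do not resolve them).

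In short: same construction, genuinely different verification strategy, but the verification is left as a sketch at exactly the point you yourself flag as the whole difficulty. If you carry out the computation indicated above you will have an alternative proof; as written, the lemma is not yet established.
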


\begin{proof}
    For convenience, we define these two sequences using continued fraction:
    $$\alpha_n := [0; a_{n+1}, a_{n+2}, \cdots] = \frac{1}{a_{n+1}+\frac{1}{a_{n+2} + \frac{1}{\cdots}}}$$
    $$\beta_n := [0; a_{n-1}, a_{n-2}, \cdots] = \frac{1}{a_{n-1}+\frac{1}{a_{n-2} + \frac{1}{\cdots}}}$$
    It's easy to see that both $u_n$ and $v_n$ are irrational and lie in the open interval $(0,1)$.
    We set the vector $A_0$ as
    $$A_0 = \left(\frac{-1}{\sqrt{a_0+\beta_0+\alpha_0}}, \frac{1}{\sqrt{a_0+\beta_0+\alpha_0}}\right)$$
    and another point $A_{-1}$ as
    $$A_{-1} = \left(\frac{a_0+\alpha_0}{\sqrt{a_0+\beta_0+\alpha_0}}, \frac{\beta_0}{\sqrt{a_0+\beta_0+\alpha_0}}\right)$$
    and let $\Lambda = \Z A_0 + \Z A_{-1}$.

    It is directly verifiable that $|\det(A_0; A_{-1})| = 1$, so $\Lambda$ is unimodular.

    Also, $\beta_0$ and $a_0+\alpha_0$ are irrational numbers, suggesting that none of $mA+nA_{-1}$ lies on the axis except $0$, implying that $\Lambda$ is bi-infinite.
    
    Consider the box 
    $$C_0 := \left[-\frac{a_0+\alpha_0}{\sqrt{a_0+\beta_0+\alpha_0}}, \frac{a_0+\alpha_0}{\sqrt{a_0+\beta_0+\alpha_0}} \right] \times \left[\frac{-1}{\sqrt{a_0+\beta_0+\alpha_0}}, \frac{1}{\sqrt{a_0+\beta_0+\alpha_0}} \right]$$
    All lattice points in $C$ must satisfy $|x+y| \le \frac{a_0+\alpha_0+1}{\sqrt{a_0+\beta_0+\alpha_0}}$, so the component of $A_{-1}$ is in $\{-1,0,1\}$. A simple computation shows that only $\pm A_0, \pm A_{-1}, 0$ belong to the box, and $\pm A, \pm A_{-1}$ lie on the boundary, which concludes that $A_0, A_{-1}$ are two consecutive pivots. 

    Now let 
    $$A_1 = A_{-1} + a_0 A_0 = \left(\frac{\alpha_0}{\sqrt{a_0+\beta_0+\alpha_0}}, \frac{a_0+\beta_0}{\sqrt{a_0+\beta_0+\alpha_0}}\right)$$
    One can similarly (by analogy of above since the formulas are symmetric) deduce that  $A_0, A_{1}$ are two consecutive pivots. 

    Now, we find $A_2, A_3, \cdots$ by induction.

    Let 
    $$g_{1} = \diag \left( \sqrt{\frac{a_0+\beta_0}{\alpha_0}}, \sqrt{\frac{\alpha_0}{a_0+\beta_0}}\right)$$

    Then $g_1$ pulls $A_1$ on $y=x (x\ge 0)$. We have
    $$
    \begin{aligned}
        g_1 A_1 &= \left( \frac{\sqrt{\alpha_0(a_0+\beta_0)}}{\sqrt{a_0+\beta_0+\alpha_0}}, \frac{\sqrt{\alpha_0(a_0+\beta_0)}}{\sqrt{a_0+\beta_0+\alpha_0}} \right) \\
        &= \left( \frac{1}{\sqrt{\frac{1}{a_0+\beta_0}+\frac{1}{\alpha_0}}},  \frac{1}{\sqrt{\frac{1}{a_0+\beta_0}+\frac{1}{\alpha_0}}}\right) \\
        &= \left( \frac{1}{\sqrt{\beta_1 + a_1 + \alpha_1}},  \frac{1}{\sqrt{\beta_1 + a_1 + \alpha_1}}\right) \\
    \end{aligned}
    $$
    and
    $$
    \begin{aligned}
        g_1 A_0 &= \left( \frac{-\sqrt{(a_0+\beta_0)}}{\sqrt{\alpha_0(a_0+\beta_0+\alpha_0)}}, \frac{\sqrt{\alpha_0}}{\sqrt{(a_0+\beta_0)(a_0+\beta_0+\alpha_0)}} \right) \\
        &= \left( \frac{1}{\alpha_0} \frac{1}{\sqrt{\beta_1 + a_1 + \alpha_1}},  \frac{1}{(a_0+\beta_0)} \frac{1}{\sqrt{\beta_1 + a_1 + \alpha_1}} \right) \\
        &= \left(- \frac{a_1+\alpha_1}{\sqrt{\beta_1 + a_1 + \alpha_1}}, \frac{\beta_1}{\sqrt{\beta_1 + a_1 + \alpha_1}} \right) \\
    \end{aligned}
    $$

    Similarly set

    $$g_1 A_2 = \left(- \frac{\alpha_1}{\sqrt{\beta_1 + a_1 + \alpha_1}}, \frac{a_1+\beta_1}{\sqrt{\beta_1 + a_1 + \alpha_1}} \right)$$
    
    Since $0 < \beta_1, \alpha_1 < 1$, it is easily verifiable that 

    $$C_1 := \frac{1}{\sqrt{a_0+\beta_0+\alpha_0}} \left(\ \left[-1,1 \right] \times \left[ -a_1-\beta_1,\  a_1+\beta_1\right]\  \right)$$
    has no nonzero lattice points in the interior of it. Therefore $A_1, A_2$ are two consecutive pivots, and the index here is exactly $a_1$ because $g_1(A_2 - A_0 - a_1 A_1) = 0$.

    By induction, if we have (note that products are group actions on the left and indices come with decreasing order)
    $$
    \begin{aligned}
        \left(\prod_{k=2j-1}^{1} g_k\right) A_{2j-2} &= \left(- \frac{a_{2j-1}+\alpha_{2j-1}}{\sqrt{\beta_{2j-1} + a_{2j-1} + \alpha_{2j-1}}}, \frac{\beta_{2j-1}}{\sqrt{\beta_{2j-1} + a_{2j-1} + \alpha_{2j-1}}} \right) \\
        \left(\prod_{k=2j-1}^{1} g_k\right) A_{2j-1} &= \left(\frac{1}{\sqrt{\beta_{2j-1} + a_{2j-1} + \alpha_{2j-1}}}, \frac{1}{\sqrt{\beta_{2j-1} + a_{2j-1} + \alpha_{2j-1}}} \right) \\
    \end{aligned}
    $$
    and $A_{2j-2}, A_{2j-1}$ are two consecutive pivots, then set $A_{2j} = A_{2j-2} +a_{2j-1}A_{2j-1}$ and
    $$
    \begin{aligned}
        \left(\prod_{k=2j-1}^{1} g_k\right) A_{2j} &= \left(- \frac{\alpha_{2j-1}}{\sqrt{\beta_{2j-1} + a_{2j-1} + \alpha_{2j-1}}}, \frac{a_{2j-1}+\beta_{2j-1}}{\sqrt{\beta_{2j-1} + a_{2j-1} + \alpha_{2j-1}}} \right)
    \end{aligned}
    $$
    Then it's possible to show that $A_{2j-1}, A_{2j}$ are consecutive pivots with index $a_{2j-1}$ and set
    $$g_{2j} = \diag \left( \sqrt{\frac{a_{2j-1}+\beta_{2j-1}}{\alpha_{2j-1}}}, \sqrt{\frac{\alpha_{2j-1}}{a_{2j-1}+\beta_{2j-1}}} \right)$$

    then
    $$
    \begin{aligned}
        \left(\prod_{k=2j}^{1} g_k\right) A_{2j-1} &= \left(\frac{a_{2j}+\alpha_{2j}}{\sqrt{\beta_{2j} + a_{2j} + \alpha_{2j}}}, \frac{\beta_{2j}}{\sqrt{\beta_{2j} + a_{2j} + \alpha_{2j}}} \right) \\
        \left(\prod_{k=2j}^{1} g_k\right) A_{2j} &= \left(-\frac{1}{\sqrt{\beta_{2j} + a_{2j} + \alpha_{2j}}}, \frac{1}{\sqrt{\beta_{2j} + a_{2j} + \alpha_{2j}}} \right) \\
    \end{aligned}
    $$
    then $A_{2j+1} = A_{2j-1} +a_{2j}A_{2j}$ and
    $$
    \begin{aligned}
        \left(\prod_{k=2j}^{1} g_k\right) A_{2j+1} &= \left(\frac{\alpha_{2j}}{\sqrt{\beta_{2j} + a_{2j} + \alpha_{2j}}}, \frac{a_{2j}+\beta_{2j}}{\sqrt{\beta_{2j} + a_{2j} + \alpha_{2j}}} \right)
    \end{aligned}
    $$

    All the above formulas hold for all $j \in \N^+$. 

    Now the patterns are obvious and one can compute similarly for the $j \le 0$ part by analogy, producing the exact index sequence of $\{a_n\}$.
\end{proof}

At this point, we can establish one of our master theorems. 

\begin{theorem}
    \label{corre}
    There is a one-to-one correspondence between
    \begin{equation}
        \label{lat3}
        \left\{(\Lambda, A_0) \ \middle |  \  \Lambda \text{ is a unimodular bi-infinite lattice, } A_0 \in \{ y= -x | y>0 \} \right\}
    \end{equation}
    and
    \begin{equation}
        \label{seq}
        \left\{\ \{ a_n\} \ \middle| \  a_n \in \N^+, \forall n \in \Z \right\}
    \end{equation}
\end{theorem}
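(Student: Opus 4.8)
The plan is to exhibit two mutually inverse maps between the sets \eqref{lat3} and \eqref{seq}. (A pair $(\Lambda,A_0)$ in \eqref{lat3} is tacitly required to have $A_0\in\Pi(\Lambda)$; a point of the ray that is not a pivot of $\Lambda$ need not even lie in $\Lambda$, and carries no index data.) Sending $(\Lambda,A_0)$ to its index sequence $(\Lambda;A_0)$ is well defined by Lemma~\ref{orderiso}, and lands in \eqref{seq} since indices are positive integers (Lemma~\ref{indexing}); sending $\{a_n\}$ to the pair $\bigl(\Lambda(\{a_n\}),A_0(\{a_n\})\bigr)$ of Lemma~\ref{recall} lands in \eqref{lat3} by properties (1)--(2) of that lemma. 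The composite $\{a_n\}\mapsto\bigl(\Lambda(\{a_n\}),A_0(\{a_n\})\bigr)\mapsto\bigl(\Lambda(\{a_n\});A_0(\{a_n\})\bigr)$ is the identity of \eqref{seq} by property (3) of Lemma~\ref{recall}. So everything reduces to the opposite composite, i.e.\ to the uniqueness statement: \emph{a unimodular bi-infinite lattice together with a marked pivot on the ray $\{y=-x,\,y>0\}$ is recovered from its index sequence.}

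For this, fix $(\Lambda,A_0)$ in \eqref{lat3} with index sequence $\{a_n\}$, let $\Psi:\Z\to\Pi(\Lambda)$ be the order isomorphism with $\Psi(0)=A_0$ (Lemma~\ref{orderiso}), and write $\Psi(n)=(x_n,y_n)$. Since $A_0$ is on the ray, $A_0=(-c,c)$ for some $c>0$; by Lemma~\ref{lattiprop}, $A_{-1}:=\Psi(-1)$ lies in the first quadrant on the line $y=-x+1/c$, so $x_{-1}>0$ and $x_{-1}+y_{-1}=1/c$. Because $\{A_0,A_{-1}\}$ is a basis of $\Lambda$ (consecutive pivots form a basis) and $\Psi(n+1)=a_n\Psi(n)+\Psi(n-1)$ for all $n$ (Lemma~\ref{indexing}), the whole pivot sequence — hence $\Lambda=\Z A_0+\Z A_{-1}$ — is determined once $c$ and $x_{-1}$ are. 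To find these, note that the alternating principle (opposite-quadrant signs) turns $x_{n+1}=a_nx_n+x_{n-1}$ into $|x_{n-1}|=a_n|x_n|+|x_{n+1}|$, while positivity of the $y_n$ gives $y_{n+1}=a_ny_n+y_{n-1}$ directly; both hold for every $n\in\Z$, and since $a_n\ge1$ they force $|x_n|$ strictly decreasing and $y_n$ strictly increasing. Writing $\rho_n=|x_n|/|x_{n-1}|\in(0,1)$, the first recursion becomes $\rho_n=1/(a_n+\rho_{n+1})$; iterating, $\rho_n=[0;a_n,a_{n+1},\dots,a_{n+k},\rho_{n+k+1}]$, and since $\rho_m\in(0,1)$ for all $m$ this pins $\rho_n=[0;a_n,a_{n+1},\dots]$, so $|x_{-1}|=c/\rho_0=c(a_0+\alpha_0)$. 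Writing $\sigma_n=y_{n-1}/y_n\in(0,1)$, the second becomes $\sigma_n=1/(a_{n-1}+\sigma_{n-1})$; iterating backward and using $\sigma_m\in(0,1)$ for all $m$ pins $\sigma_n=[0;a_{n-1},a_{n-2},\dots]$, so $y_{-1}=c\sigma_0=c\beta_0$. Substituting into $x_{-1}+y_{-1}=1/c$ gives $c=(a_0+\alpha_0+\beta_0)^{-1/2}$, whereupon $A_0$ and $A_{-1}$ coincide with the points $A_0(\{a_n\})$ and $A_{-1}(\{a_n\})$ of Lemma~\ref{recall}; by the recursion the full pivot sequences agree, so $\Lambda=\Lambda(\{a_n\})$ and $A_0=A_0(\{a_n\})$, as needed. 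Thus the two maps are mutually inverse.

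The main obstacle is precisely the uniqueness inside the recursions $\rho_n=1/(a_n+\rho_{n+1})$ and $\sigma_n=1/(a_{n-1}+\sigma_{n-1})$: each alone, as a recursion indexed by $\Z$, admits a one-parameter family of solutions, and what singles out the continued-fraction value is that $\rho_n$ (resp.\ $\sigma_n$) must lie in $(0,1)$ for \emph{all} $n\in\Z$ — equivalently, that the pivot sequence is constrained at both ends, with $|x_n|\to0$ as $n\to+\infty$ and $y_n\to0$ as $n\to-\infty$. Everything else is routine: the sign bookkeeping along the ray that places $A_{-1}$ in the first quadrant, and transporting the alternating principle and the consecutive-pivot basis property (both established earlier in this section) into the scalar recursions above.
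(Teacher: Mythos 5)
Your proof is correct and rests on the same ingredients as the paper's (Lemmas~\ref{indexing}, \ref{orderiso}, \ref{lattiprop}, \ref{recall} and the alternating principle), following the same $\mathscr{F}$/$\mathscr{G}$ decomposition. The only difference is one of packaging: you establish $\mathscr{G}\circ\mathscr{F}=\mathrm{Id}$ directly by pinning the coordinate ratios $\rho_n,\sigma_n$ to their continued-fraction values via the ``constrained at both ends of $\Z$'' argument, whereas the paper proves surjectivity of $\mathscr{G}$ by reading $a_0,\alpha_0,\beta_0$ straight off the coordinates of $A_0,A_{-1}$ and matching Lemma~\ref{recall}'s formulas---two routes that are interchangeable once $\mathscr{F}\circ\mathscr{G}=\mathrm{Id}$ is in hand.
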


\begin{proof}
    The map from the former to latter, named $\mathscr{F}$, is done with Lemma \ref{indexing}, and the map from the latter to former, denoted by $\mathscr{G}$, is the above Lemma \ref{recall}.

    The proof of Lemma \ref{recall} already shows that $\mathscr{F} \circ \mathscr{G} = \mathrm{Id}$ in the space of index sequences. Now we only need to prove that $\mathscr{G}$ is a surjective map.

    For any lattice $(\Lambda, A_0)$ such that $ \Lambda $ is a unimodular bi-infinite lattice, and $A_0 \in \{ y= -x | y>0 \}$,
    we choose $A_{-1}$ such that $y_{A_{-1}} < y_{A_0}$ and $A_0, A_{-1}$ are two consecutive pivots. By Lemma \ref{lattiprop} we know that $A_{-1}$ belongs to both the first quadrant and the line $y = -x + 1/{y_{A_0}}$.
    
    Set 
    $$\beta_0 = \frac{y_{A_{-1}}}{ y_{A_0}}$$
    and 
    $$a_0 = \left\lfloor \frac{|x_{A_{-1}}|}{|x_{A_0}|}\right\rfloor, \quad \alpha_0 = \frac{|x_{A_{-1}}|}{|x_{A_0}|} - a_0$$
    then there exists a constant $k>0$ such that $A_0 = k(-1,1)$ and $A_{-1} = k(a_0+\alpha_0, \beta_0)$. By the unimodularity constraint and computing the determinant, we prove that
    $$
    k = \left(\det \begin{bmatrix}
        -1 & 1\\ a_0+\alpha_0 & \beta_0
    \end{bmatrix}\right)^{-1/2} = \frac{1}{\sqrt{\beta_0 + a_0 + \alpha_0}}
    $$
    Therefore, 
    $$A_0 = \left(\frac{-1}{\sqrt{a_0+\beta_0+\alpha_0}}, \frac{1}{\sqrt{a_0+\beta_0+\alpha_0}}\right)$$
    and
    $$B = \left(\frac{a_0+\alpha_0}{\sqrt{a_0+\beta_0+\alpha_0}}, \frac{\beta_0}{\sqrt{a_0+\beta_0+\alpha_0}}\right) $$
    It's worth noting that $a_0+\alpha_0 \notin \Q$ and $\beta_0 \notin \Q$. 

    Finally, we obtain sequence $\{a_n\}$ by expanding $\beta_0$ and $\alpha_0$:
    
    $$
    \begin{aligned}
        &a_0 = a_0 \\
        &[0; \ a_{-1}, a_{-2}, \cdots] := \beta_0\\
        &[0; \ a_1, a_2, \cdots] := \alpha_0\\
    \end{aligned}
    $$

    This index sequence belongs to the preimage of $(\Lambda, A_0)$ under $\mathscr{G}$, which completes the proof that $\mathscr{G}$ is surjective.
\end{proof}

\subsection{A More General Correspondence}

A natural question arises as we pose Theorem \ref{corre}: Can we consider a more general setting under which pivots are not fixed on a ray? The answer is that the additional constraints on the ray can be replaced by certain equivalence of group actions. Now we pose the general correspondence as follows:

\begin{theorem}
    \label{corre2}
    There is a one-to-one correspondence between
    \begin{equation}
        \label{lat}
        \left\{(\Lambda, A_0) \ \middle |  \  \Lambda \text{ is a unimodular bi-infinite lattice} \right\} \Big/ \sim_1
    \end{equation}
    and
    \begin{equation}
        \label{seq2}
        \left\{\ \{ a_n\} \ \middle| \  a_n \in \N^+, \forall n \in \Z \right\}
    \end{equation}
    Where $\sim_1$ literally means belonging to the same coset under the action of $G$, or written as
    $$(\Lambda_1, A_{01}) \sim_1 (\Lambda_2, A_{02}) \ \iff \ \exists g \in G\ \  g (\Lambda_1, A_{01}) = (\Lambda_2, A_{02})$$
\end{theorem}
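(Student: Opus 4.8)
The plan is to build on Theorem~\ref{corre} by showing that the ray constraint in \eqref{lat3} is exactly a slice transversal to the $G$-action on the larger set \eqref{lat}. First I would observe that for any unimodular bi-infinite lattice $\Lambda$ and any pivot $A_0 \in \Pi(\Lambda)$ with $y_{A_0}>0$, Lemma~\ref{lem7} provides a \emph{unique} $h \in G$ with $hA_0 \in \{(x,-x)\mid x<0\} = \{y=-x\mid y>0\}$; moreover by the equivariance lemmas $h\Lambda$ is again unimodular bi-infinite and $hA_0$ is a pivot of $h\Lambda$. Hence every $\sim_1$-class in \eqref{lat} has a representative of the form required by \eqref{lat3}. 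So I get a well-defined surjection from the set \eqref{lat3} onto the quotient \eqref{lat}$/\sim_1$.

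Next I would check this map is injective, i.e. that the representative in the ray-normalized form is unique within each $G$-orbit. Suppose $(\Lambda_1,A_{01})$ and $(\Lambda_2,A_{02})$ both satisfy the ray condition and $g(\Lambda_1,A_{01})=(\Lambda_2,A_{02})$ for some $g\in G$. Then $g$ sends the point $A_{01}\in\{y=-x,\ y>0\}$ to $A_{02}\in\{y=-x,\ y>0\}$. By the uniqueness clause of Lemma~\ref{lem7} applied to $A_{01}$ (taking $A_{02}$ as the target), the only element of $G$ carrying $A_{01}$ into that ray is the identity; since $g$ does so, $g=\mathrm{Id}$, whence $(\Lambda_1,A_{01})=(\Lambda_2,A_{02})$. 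This shows the normalized representative is unique, so the surjection above is a bijection between \eqref{lat3} and \eqref{lat}$/\sim_1$.

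Finally I would compose: Theorem~\ref{corre} already gives a bijection $\mathscr{F}$ between the set \eqref{lat3} and the set of index sequences \eqref{seq}=\eqref{seq2}. Composing with the bijection \eqref{lat3} $\to$ \eqref{lat}$/\sim_1$ just established yields the desired one-to-one correspondence between \eqref{lat}$/\sim_1$ and \eqref{seq2}. Concretely, the map assigns to a class $[(\Lambda,A_0)]$ the index sequence of $(h\Lambda; hA_0)$ for the normalizing $h$; one should note this is independent of the chosen representative precisely because of the injectivity step, and that it is consistent with the equivariance lemmas, which assert the index sequence is a $G$-invariant of $(\Lambda,A_0)$.

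The main obstacle — really the only substantive point — is the injectivity/uniqueness step: one must be careful that $G$ acts freely enough on pivots that the ray condition pins down a unique orbit representative. This hinges entirely on the uniqueness assertion in Lemma~\ref{lem7} (together with the observation that $\mathrm{diag}(-1,-1)\in G$ fixes every lattice but also fixes the ray-normalized point, so it does not produce a spurious second representative: indeed $\mathrm{diag}(-1,-1)$ moves a point on $\{y=-x,y>0\}$ off that ray, so it never arises as such a $g$). Everything else is bookkeeping with the already-established equivariance of pivots and index sequences under $G$.
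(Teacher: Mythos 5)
Your proposal is correct and follows essentially the same route as the paper: it uses Lemma~\ref{lem7} to identify the quotient~\eqref{lat} by $\sim_1$ with the ray-normalized set~\eqref{lat3}, and then composes with the bijection of Theorem~\ref{corre}. The only difference is that you spell out the injectivity step (uniqueness of the ray-normalized representative within each $G$-orbit, including the observation about $\diag(-1,-1)$), which the paper leaves implicit in its one-line appeal to Lemma~\ref{lem7}.
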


\begin{proof}
    This is directly obtained by Lemma \ref{lem7} that 
    $$
    \begin{aligned}
        &\left\{(\Lambda, A_0) \ \middle |  \  \Lambda \text{ is a unimodular bi-infinite lattice} \right\} \Big/ \sim_1 \\
        &\cong \left\{(\Lambda, A_0) \ \middle |  \  \Lambda \text{ is a unimodular bi-infinite lattice, } A_0 \in \{ y= -x | y>0 \} \right\}
    \end{aligned}
    $$
    And the rest part is obtained with Theorem \ref{corre}.
\end{proof}

The above lemma still treats $A_0$ as a special starting point for construction. What if we start from an arbitrary pivot? We show that is still possible to eliminate $A_0$ by quotient of another group action:

\begin{theorem}
    \label{corre3}
    There is a one-to-one correspondence between
    \begin{equation}
        \label{lat2}
        \left\{\Lambda \ \middle |  \  \Lambda \text{ is a unimodular bi-infinite lattice} \right\} \Big/ \sim_1
    \end{equation}
    and
    \begin{equation}
        \label{seq3}
        \left\{\ \{ a_n\} \ \middle| \  a_n \in \N^+, \forall n \in \Z \right\} \Big/ \sim_2
    \end{equation}
    Where $\sim_1$ is defined the same as above:
    $$\Lambda_1 \sim_1 \Lambda_2 \ \iff \ \exists g \in G\ \  g\Lambda_1 = \Lambda_2$$
    and $\sim_2$ is defined as the equivalence under shift of sequences:
    $$\{a_n\} \sim_2 \{b_n\} \ \iff \ \exists j \in \Z \quad \forall n \in \Z \quad a_n=b_{n+j} $$
\end{theorem}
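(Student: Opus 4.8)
The plan is to descend the bijection $\mathscr{F}$ of Theorem \ref{corre2} --- between $\sim_1$-classes of pairs $(\Lambda, A_0)$ and index sequences, with inverse $\mathscr{G}$, where $\mathscr{G}(\{a_n\}) = (\Lambda(\{a_n\}), A_0(\{a_n\}))$ is the construction of Lemma \ref{recall} --- to the coarser quotients on both sides. I would define the candidate map on representatives by $[\{a_n\}]_{\sim_2} \mapsto [\Lambda(\{a_n\})]_{\sim_1}$, i.e.\ simply forget the basepoint produced by $\mathscr{G}$, and then verify it is well defined, surjective, and injective. All three reduce to a single \emph{key lemma}: re-basing a lattice from one pivot to another changes the index sequence by exactly a shift, and conversely every shift arises in this way. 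Here $\mathscr{F}(\Lambda, A)$ denotes the index sequence of $(\Lambda, A)$, which by Theorem \ref{corre2} depends only on the $\sim_1$-class.

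To prove the key lemma, fix a unimodular bi-infinite $\Lambda$ and a pivot $A_0$, let $\Psi : \Z \to \Pi(\Lambda)$ be the unique order isomorphism with $\Psi(0) = A_0$ from Lemma \ref{orderiso}, and let $\{a_n\}$ be its index sequence. For $j \in \Z$ the shifted map $n \mapsto \Psi(n+j)$ is again an order isomorphism $\Z \to \Pi(\Lambda)$ sending $0$ to the pivot $\Psi(j)$, so by the uniqueness clause of Lemma \ref{orderiso} it must be \emph{the} order isomorphism attached to $(\Lambda, \Psi(j))$; plugging it into the defining recursion $\Psi(k+1) = a_k\Psi(k) + \Psi(k-1)$ of Lemma \ref{indexing} shows the index sequence of $(\Lambda, \Psi(j))$ is $\{a_{n+j}\}$. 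Since $\Pi(\Lambda) = \{\Psi(j) : j \in \Z\}$, the set $\{\,\mathscr{F}(\Lambda, A) : A \in \Pi(\Lambda)\,\}$ is exactly the $\sim_2$-class of $\{a_n\}$, independent of the choice of $A_0$. Finally, the equivariance lemmas give $g\,\Pi(\Lambda) = \Pi(g\Lambda)$ and $\mathscr{F}(\Lambda, A) = \mathscr{F}(g\Lambda, gA)$ for every $g \in G$, so this $\sim_2$-class depends only on $[\Lambda]_{\sim_1}$.

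Granting the key lemma, the rest is bookkeeping with $\mathscr{F}$. Well-definedness: the pairs $(\Lambda(\{a_n\}), \Psi(j))$ and $\mathscr{G}(\{a_{n+j}\})$ both have index sequence $\{a_{n+j}\}$ --- the former by the key lemma, the latter since $\mathscr{F}\circ\mathscr{G} = \mathrm{Id}$ --- hence lie in the same $\sim_1$-class by injectivity of $\mathscr{F}$, so $\Lambda(\{a_n\}) \sim_1 \Lambda(\{a_{n+j}\})$. Surjectivity: given $[\Lambda]_{\sim_1}$, pick any pivot $A_0$ and set $\{a_n\} = \mathscr{F}(\Lambda, A_0)$; then $\mathscr{G}(\{a_n\})$ and $(\Lambda, A_0)$ have the same index sequence, so $\Lambda(\{a_n\}) \sim_1 \Lambda$. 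Injectivity: if $g\,\Lambda(\{a_n\}) = \Lambda(\{b_n\})$ with $g\in G$, then $g^{-1}A_0(\{b_n\})$ is a pivot of $\Lambda(\{a_n\})$, say $\Psi(j)$, and the key lemma gives $\mathscr{F}(\Lambda(\{a_n\}), \Psi(j)) = \{a_{n+j}\}$ while equivariance gives $\mathscr{F}(\Lambda(\{a_n\}), \Psi(j)) = \mathscr{F}(\Lambda(\{b_n\}), A_0(\{b_n\})) = \{b_n\}$, so $\{b_n\} = \{a_{n+j}\} \sim_2 \{a_n\}$. The one delicate step --- the main obstacle, such as it is --- is the uniqueness argument inside the key lemma: it is exactly what pins the shifted order isomorphism to the canonical one at the new basepoint, hence what makes ``re-basing'' and ``shifting'' the same operation. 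Everything downstream is formal, using only the $G$-equivariance established earlier and the bijectivity of $\mathscr{F}$.
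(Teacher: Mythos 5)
Your proposal is correct and follows essentially the same route as the paper: both descend the bijection of Theorem \ref{corre2} to the coarser quotients by showing that re-basing at a different pivot $\Psi(j)$ produces exactly the shifted index sequence, which is precisely what pairs the $\sim_2$-classes of sequences with the basepoint-forgetting $\sim_1$-classes of lattices. (Your computation that the new index sequence is $\{a_{n+j}\}$ is in fact the right sign; the paper writes $\{a_{n-j}\}$, an immaterial slip since the full shift orbit is the same.)
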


\begin{proof}
    First, we recall a trivial fact:

    For any objects $\mathcal{P}$, $\mathcal{Q}$ equivalence $\sim$ on $\mathcal{P}$, $\sim'$ on $\mathcal{Q}$ and isomorphism $\mathscr{R}: \mathcal{P} \to \mathcal{Q}$, if $\mathscr{R}$ is isomorphism on every coset (i.e., $\mathscr{R}$ restricted on $\Tilde{a}$, the coset of $a \in \mathcal{P}$, gives isomorphism between $\tilde{a}$ and $\widetilde{\mathscr{R}(a)}'$ ), then $R$ induces isomorphism between $\mathcal{P}/\sim$ and $\mathcal{Q}/\sim'$.

    From Theorem \ref{corre2}, we consider equivalence in
    $$\left\{(\Lambda, A_0) \ \middle |  \  \Lambda \text{ is a unimodular bi-infinite lattice} \right\} \Big/ \sim_1$$
    where two objects are equivalent iff they have the same lattice, no matter where the pivot $A_0$ is. We show that Theorem \ref{corre2} provides isomorphism on every equivalence class of it.

    Consider an equivalence class of $(\Lambda, A_0) / \sim_1$. The pivot in the equivalence class can be chosen arbitrarily, so the equivalence class looks like:

    $$
    \left\{(\Lambda, A_j) / \sim_1 \ \middle| \ j \in \Z \right\}
    $$

    where $A_j = \Psi(j) \in \Pi(\Lambda)$ and $\Psi: \Z \to \Pi(\Lambda)$ is the order isomorphism given in Lemma \ref{orderiso}. 

    Suppose that the index sequence of $(\Lambda, A_0) / \sim_1$ is $\{a_n\}$. From Lemma \ref{indexing} we know that $A_j = \Psi(j)$ corresponds to index $a_j$ in this lattice, so $(\Lambda, A_j) / \sim_1$ has index sequence $\{a_{n-j}\}$. 

    Unsurprisingly, all $\{a_{n-j}\} \ (j \in \Z)$ exactly forms the equivalence class of $\{a_n\}$ under $\sim_2$. And so we establish that correspondence of Theorem \ref{corre2} is isomorphism on every equivalence class, concluding the proof.
\end{proof}

Theorem \ref{corre2} and \ref{corre3} are especially useful in both number theory and dynamical systems. We will use them later to prove important properties on the spectra of numbers and lattices.

\subsection{Projective Invariance of Mono-infinite Lattices}

The set of bi-infinite unimodular lattices is interesting, but that does not cover all cases we are interested in. 

\begin{definition}
    A unimodular lattice $\Lambda \subset \R^2$ is called mono-infinite if $\Lambda$ has nonzero points on the $x$ axis, but not on the $y$ axis.
\end{definition}

We can similarly define pivots and indices, and indeed, this mono-infinite case is deeply related to Diophantine approximation of numbers.

Instead of defining pivots at first, we derive the canonical form of mono-infinite lattices.

\begin{lemma}
    Let $G_1 = \diag (\exp(t), \exp(-t)) \ (t \in \R)$ be the diagonal group. If $\Lambda$ is a unimodular mono-infinite lattice, then in the coset $\{g_t \Lambda\}$, there exists a unique $t$ such that $g_t \Lambda$ is generated by $(1,0)$ and $(-\alpha, 1) \ (\alpha \in (\R \setminus \Q) \cap (0,1) )$.
\end{lemma}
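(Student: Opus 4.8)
The plan is to pin down $t$ using the one-dimensional sublattice on the $x$-axis, then complete a basis and normalize the remaining coordinate modulo the lattice.

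Since $\Lambda$ is mono-infinite, $L := \Lambda \cap (\R \times \{0\})$ is a nonzero discrete subgroup of the line $\R \times \{0\}$, hence $L = \Z \cdot (c,0)$ for a unique $c>0$ (the smallest positive $x$-intercept). The flow acts on this line by $g_t(x,0) = (e^t x, 0)$, so $g_t\Lambda \cap (\R\times\{0\}) = \Z\cdot(e^t c, 0)$. On the other hand, any lattice of the form $\Z(1,0)+\Z(-\alpha,1)$ meets the $x$-axis exactly in $\Z(1,0)$. Therefore, if $g_t\Lambda = \Z(1,0)+\Z(-\alpha,1)$ for some $\alpha$, we must have $e^t c = 1$, i.e. $t = -\ln c$; this already forces uniqueness of $t$. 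It remains to show this value works.

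Put $t_0 = -\ln c$ and $\Lambda' = g_{t_0}\Lambda$, so $\Lambda' \cap (\R\times\{0\}) = \Z(1,0)$; in particular $(1,0)$ is primitive in $\Lambda'$, since a proper lattice divisor of it would give a shorter positive $x$-intercept. A primitive vector of a rank-two lattice extends to a basis, so $\Lambda' = \Z(1,0)+\Z(p,q)$ for some $(p,q)$, and $|q| = |\det| = \mathrm{covol}(\Lambda') = 1$; replacing $(p,q)$ by $-(p,q)$ if necessary we may take $q=1$. Subtracting a suitable integer multiple of $(1,0)$, we may also assume $p \in (-1,0]$, and set $\alpha := -p \in [0,1)$, so that $\Lambda' = \Z(1,0)+\Z(-\alpha,1)$. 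Finally we use that $\Lambda'$ has no nonzero point on the $y$-axis to see $\alpha$ is irrational: if $\alpha = 0$ then $(0,1)\in\Lambda'$, and if $\alpha = a/b$ with $\gcd(a,b)=1$, $b\ge 2$, then $b(-\alpha,1)+a(1,0) = (0,b)\in\Lambda'$, both contradicting mono-infiniteness. Hence $\alpha \in (\R\setminus\Q)\cap(0,1)$, as required. (The same $y$-axis computation shows $\alpha$ is unique: the level set $\{(m-\alpha,1): m\in\Z\}$ has a unique element with $x$-coordinate in $(-1,0)$ once $\alpha$ is irrational.)

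This argument is essentially bookkeeping and I do not expect a genuine obstacle; the one subtlety to handle carefully is the triple appeal to the mono-infinite hypothesis — the nonzero $x$-intercepts produce the normalizing parameter $t_0$, while the \emph{absence} of $y$-intercepts is precisely what prevents $\alpha$ from being an integer and what forces it to be irrational — so I would keep each of those uses explicit rather than conflating them.
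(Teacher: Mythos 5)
Your argument is correct and follows essentially the same route as the paper: normalize by the shortest positive $x$-intercept to pin down $t$, use the determinant to force the second basis vector onto $y=1$, reduce its $x$-coordinate modulo $1$, and use the absence of $y$-axis points to rule out $\alpha$ rational. You are somewhat more explicit than the paper about why $t$ is unique and why $\alpha$ is irrational, which is a reasonable amount of added detail.
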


\begin{proof}
    Let $a > 0$ be the minimum number such that $(a,0) \in \Lambda$, and let $t = -\ln (a)$, then $(1,0)$ is one basis vector of $g_t \Lambda$. By computing the determinant, we know that another basis vector has $y$ coordinate of $\pm 1$, and all points in $g_t \Lambda$ have integer $y$ coordinates.

    Now consider points on the line $y=1$, there exists a point whose $x$ coordinate falls in $(-1, 0]$. But since there are no points on the $y$ axis except $(0,0)$, the $x$ coordinate must fall in $(-1, 0)$. So the next point $(-\alpha, 1)$ is obtained. Obviously, $\alpha$ is irrational.
\end{proof}

We can define pivots in this case, but the definitions are a bit different.

\begin{definition}
    For a unimodular mono-infinite lattice $\Lambda$ with basis $(1,0)$ and $(-\alpha, 1)$, For a point $A = (x, y) \in \Lambda$, if the rectangle $C = [-|x|, |x|] \times [-|y|, |y|]$ contains only the lattice points of $\{ A, 0, -A\}$, then $A$ is called a pivot. We further add two special pivots: $(1,0)$ and $(-\alpha, 1)$ are pivots. The pivot set is still denoted by $\Pi(\Lambda)$.
\end{definition}

It's trivial to check that this definition, excluding two special pivots, coincide with the bi-infinite case of pivots.

(Note: By forcing two pivots, we continue to avoid the concept of "stable pivots" shown in \cite{Knitter_Complexity_Domain_Approximation}. This coercion is actually advantageous in preserving the alternating principle that two consecutive pivots fall in different quadrants.)

Still, the order of pivots is similarly defined.

\begin{definition}
    Pivots are ordered by their absolute value of $y$ coordinates from small to large. Specifically, we define $A_{-1} = (1,0)$ and $A_{0} = (-\alpha, 1)$, and $A_1, A_2, \cdots$ are remaining pivots from small to large. 
\end{definition}

\begin{lemma}
    If $\Lambda$ is mono-infinite, the pivot set $(\Pi(\Lambda), <)$ is order isomorphic to $\N$.
\end{lemma}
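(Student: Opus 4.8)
The plan is to mirror, one-sidedly, the argument that gave $(\Pi(\Lambda),<)\cong\Z$ in the bi-infinite case, accounting for the mild irregularity introduced by the forced pivots $(1,0)$ and $(-\alpha,1)$. I use the normalized basis, so $\Lambda=\Z(1,0)+\Z(-\alpha,1)=\{(m-n\alpha,\,n)\mid m,n\in\Z\}$; in particular every lattice point has integer $y$-coordinate, $\Lambda$ meets the $y$-axis only at $0$, and its points on the $x$-axis are exactly $(k,0)$ with $k\in\Z\setminus\{0\}$. Since $A$ is a pivot iff $-A$ is, I work with one representative of each antipodal pair, i.e.\ on $\{y>0\}\cup\{(1,0)\}$. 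First I would show that distinct pivots there have distinct $|x|$: if $|x_P|=|x_Q|$ then $x_P=\pm x_Q$, irrationality of $\alpha$ rules out $x_P=x_Q$ with $P\ne Q$, and the case $x_P=-x_Q$ forces $P=-Q$, hence $y_P=y_Q=0$, so $P,Q$ are nonzero integer multiples of $(1,0)$; since $R((k,0))=[-|k|,|k|]\times\{0\}$ contains $(1,0)$, pivothood gives $|k|=1$. Thus $(1,0)$ is the unique pivot on the $x$-axis, the $|x|$-values of pivots are pairwise distinct, and ordering by $|y|$ ascending with ties broken by decreasing $|x|$ (equivalently, simply by decreasing $|x|$) is a genuine total order; this is the order $<$ of the statement.

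Next I would locate the bottom and build the rest as a sequence of type $\omega$. Any pivot $A$ with $|y_A|\ge 1$ has $|x_A|<1$, for otherwise $(1,0)\in R(A)\setminus\{A,0,-A\}$; hence $|x_A|<1=|x_{(1,0)}|$, so $A_{-1}=(1,0)$ is the first element and nothing lies below it. For the remaining pivots I would run the inductive ``sliding-strip'' step of the bi-infinite proof: given a pivot $A_i=(x_i,y_i)$ with $y_i\ge 1$, put $y^*=\sup\{y : \mathrm{Int}([-|x_i|,|x_i|]\times[0,y])\cap\Lambda=\emptyset\}$, which is finite by Minkowski's convex body theorem; discreteness of $\Lambda$ then forces a lattice point on $[-|x_i|,|x_i|]\times\{y^*\}$, and the absence of nonzero points on the $y$-axis shows its $x$-coordinate lies in $(-|x_i|,|x_i|)$, so it is a pivot with strictly smaller $|x|$, namely the successor of $A_i$. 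This never halts, so $\Pi(\Lambda)$ is infinite; moreover the strictly decreasing sequence of $|x|$-values of the pivots cannot accumulate at any $c>0$ (that would yield an accumulation point of $\Lambda$, contradicting discreteness), so it is of order type $\omega$. With $A_{-1}=(1,0)$ at the bottom, $(\Pi(\Lambda),<)$ then has order type $1+\omega=\omega$, i.e.\ is order isomorphic to $\N$.

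The one place the argument genuinely departs from the bi-infinite template --- and the main point to get right --- is that the ``next pivot'' need not be unique: when $2|x_i|\ge 1$ the segment $[-|x_i|,|x_i|]\times\{y^*\}$ may carry two lattice points differing by $(1,0)$, i.e.\ two pivots at the same height. This occurs around $A_0=(-\alpha,1)$ precisely when $\alpha\ge 1/2$, where both $(-\alpha,1)$ (forced) and $(1-\alpha,1)$ lie in $\Pi(\Lambda)$. I would dispose of this using the tie-break above (list the two in decreasing $|x|$) together with the observation that after any such double occurrence both new $|x|$-values are $<1/2$, so $2|x|<1$ thereafter and no further coincidence can arise; hence the global order type is unaffected and remains $\omega$. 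As a sanity check, the pivots $A_{-1},A_0,A_1,\dots$ are exactly the points arising from the best rational approximations of $\alpha$, whose denominators form an increasing sequence tending to infinity.
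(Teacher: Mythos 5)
Your proof is correct and follows essentially the same path as the paper: fix $A_{-1}=(1,0)$ and $A_0=(-\alpha,1)$ by the forced-pivot convention, then run the same sliding-strip induction from $A_0$ onward, using the uniqueness of the lattice point on the upper edge (and the one degenerate height tie at $y=1$ when $\alpha>1/2$, where $(1-\alpha,1)$ becomes $A_1$). One small slip in wording: after the tie it is not true that \emph{both} new $\lvert x\rvert$-values are $<1/2$ (the forced pivot $(-\alpha,1)$ has $\lvert x\rvert=\alpha>1/2$); what is true and what your argument needs is that the later of the two, $(1-\alpha,1)$, has $\lvert x\rvert=1-\alpha<1/2$, so every subsequent pivot has $\lvert x\rvert<1/2$ and no further coincidence can occur.
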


\begin{proof}
    The proof is an analogy of Lemma \ref{orderiso}.

    We construct the map $\Psi: (\N \cup \{-1\}) \to \Pi(\Lambda)$ by induction: $\Psi(-1) = (1,0), \Psi(0) = (-\alpha, 1)$, and if $\Psi(i) = (x_i, y_i) (i \ge 0)$ is defined, we take $C_{(y,i)} := [-|x_i|, |x_i|] \times [0,y]$, and let 
    $$y^* = \sup_{\mathrm{Int} \left(C_{(y,i)}\right) \cap \Lambda = \emptyset } y $$

    It can be shown similarly that the upper edge of $C_{(y^*,i)}$, $(-|x_i|, |x_i|) \times \{y^*\}$, contains a unique lattice point. (By analogy we show that it contains a lattice point; if $|x_i| \le 1/2$, then it contains at most one lattice point; if $|x_i| > 1/2$, this only happens when $i=0$ and $1/2 < \alpha < 1$, and $(1-\alpha, 1)$ is the unique point)

    Let $\Psi(i+1)$ be that point, and $\Psi$ is defined by induction. The proof that $\Psi$ is surjective is no different from above. Thus we can prove that $\Psi$ gives the isomorphism from $(\N \cup \{-1\})$ to $\Phi(\Lambda)$. Also, $(\N \cup \{-1\})$ is order isomorphic to $\N$ by a shift.
\end{proof}

The indices and alternating principle can also be established. We use the convention that $A_{i} = \Psi(i-1) (i \in \N)$ are the pivots, and $a_i (i \in \N)$ satisfies that $A_{i+1} - A_{i-1} = a_i A_i$.  Under this convention, all $A_{2j} (j \in \N)$ are located in the second quadrant, and $A_{2j-1} (j \in \N)$ are located in the first quadrant.

The correspondence theorem is stated as follows:

\begin{theorem}
    \label{corre4}
    There is a one-to-one correspondence between
    \begin{equation}
        \label{lat1}
        \left\{\Lambda \ \middle |  \  \Lambda \text{ is a unimodular mono-infinite lattice} \right\} \Big/ \sim_0
    \end{equation}
    and
    \begin{equation}
        \label{seq1}
        \left\{\ \{ a_n\} \ \middle| \  a_n \in \N^+, \forall n \in \N \right\} 
    \end{equation}
    Where $\sim_0$ is defined as
    $$\Lambda_1 \sim_0 \Lambda_2 \ \iff \ \exists g = \diag(e^{t}, e^{-t}), \ \  g\Lambda_1 = \Lambda_2$$
\end{theorem}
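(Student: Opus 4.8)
The plan is to reduce the statement to the classical continued fraction algorithm via the canonical form for mono-infinite lattices established just above. By that canonical form lemma, every $\sim_0$-class of unimodular mono-infinite lattices contains a \emph{unique} representative of the form $\Lambda_\alpha := \Z(1,0)+\Z(-\alpha,1)$ with $\alpha\in(\R\setminus\Q)\cap(0,1)$; hence the set in \eqref{lat1} is in canonical bijection with $(\R\setminus\Q)\cap(0,1)$ through $[\Lambda_\alpha]\mapsto\alpha$. On the other hand, the set in \eqref{seq1} is in canonical bijection with that very same set through the continued fraction expansion $\alpha=[0;\,a_1,a_2,a_3,\dots]$, which is the standard bijection between $(\R\setminus\Q)\cap(0,1)$ and the positive-integer sequences. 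Thus the whole content of the theorem is the identity
\[
\text{index sequence of }\Lambda_\alpha \;=\; (a_1,a_2,a_3,\dots),\qquad \text{where } \alpha=[0;\,a_1,a_2,a_3,\dots],
\]
up to the reindexing convention fixed just before the statement. I would then define $\mathscr{G}$ to be $(a_1,a_2,\dots)\mapsto[\Lambda_{[0;a_1,a_2,\dots]}]$ and $\mathscr{F}$ to be the index-sequence map read off the canonical representative, and check that they are mutually inverse.

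To establish the index identity I would argue exactly as in the proof of Lemma \ref{recall}, peeling off one continued fraction digit at a time. The two forced pivots are $A_0=(1,0)$ and $A_1=(-\alpha,1)$, which form a basis. Writing $1/\alpha=a_1+\alpha_1$ with $a_1=\lfloor 1/\alpha\rfloor\ge 1$ and $\alpha_1=[0;a_2,a_3,\dots]$, a direct computation gives $A_0+a_1A_1=(1-a_1\alpha,\,a_1)=(\alpha\alpha_1,\,a_1)$; a Minkowski-style area estimate on the explicit box $[-\alpha\alpha_1,\alpha\alpha_1]\times[-a_1,a_1]$ together with the alternating-quadrant principle (used to rule out a pivot with $y$-coordinate strictly between $1$ and $a_1$, since any such candidate contains $A_1$ in its interior) shows that $A_2:=A_0+a_1A_1$ is the pivot immediately following $A_1$, so its index is $a_1$. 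Applying the diagonal element $g_t$ that sends $A_2$ onto the line $y=x$ places the triple $A_1,A_2,A_3$ into the normalized shape governing $\alpha_1$, and the induction then proceeds verbatim, producing pivots satisfying $A_{n+1}-A_{n-1}=a_nA_n$ whose indices are the successive digits $a_1,a_2,\dots$. Equivalently, and perhaps more transparently, one may invoke the classical identification of the pivots of $\Z(1,0)+\Z(-\alpha,1)$ with the convergent vectors $\pm(q_k\alpha-p_k,\,q_k)$ ordered by the increasing denominators $q_k$, under which the convergent recurrences $q_{k+1}=a_{k+1}q_k+q_{k-1}$ and $p_{k+1}=a_{k+1}p_k+p_{k-1}$ become precisely $A_{i+1}-A_{i-1}=a_iA_i$.

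Granting the identity, bijectivity is immediate: $\mathscr{F}$ is well defined on $\sim_0$-classes since it is computed from the canonical representative; it is injective because equal index sequences force equal continued fraction digits, hence equal $\alpha$, hence the same canonical lattice; and it is surjective because any positive-integer sequence is the digit sequence of some $\alpha\in(\R\setminus\Q)\cap(0,1)$, for which $\Lambda_\alpha$ has the prescribed index sequence. I expect the only real work --- and the main obstacle --- to be the base of the induction: handling the two \emph{forced} special pivots $(1,0)$ and $(-\alpha,1)$ carefully, in particular the regime $1/2<\alpha<1$ (that is, $a_1=1$) already flagged in the preceding lemma, where the generic box argument must be adjusted; and verifying that the $A_n$ exhaust all of $\Pi(\Lambda_\alpha)$, which is exactly where the mono-infinite hypothesis (no nonzero lattice points on the $y$-axis, equivalently $\alpha$ irrational) is used, guaranteeing that every digit $a_n$ is a finite positive integer so that the recursion neither terminates nor degenerates.
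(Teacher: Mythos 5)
Your proposal is correct and follows essentially the same route as the paper: reduce via the canonical-form lemma to the lattice $\Z(1,0)+\Z(-\alpha,1)$, then replay the diagonal-normalization and pivot-recursion computation of Lemma~\ref{recall} one continued-fraction digit at a time, with only a cosmetic index shift between your labeling ($A_0=(1,0)$, $A_1=(-\alpha,1)$, digits $a_1,a_2,\dots$) and the paper's ($A_{-1}=(1,0)$, $A_0=(-\alpha_{-1},1)$, digits $a_0,a_1,\dots$). The alternative you sketch -- identifying pivots with the convergent vectors $\pm(q_k\alpha-p_k,\,q_k)$ -- is precisely the content of the theorem that immediately follows Theorem~\ref{corre4} in the paper (pivots correspond to best approximants of the second kind), so it is the same argument repackaged rather than a genuinely different route.
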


Note that this equivalence does not involve the action of $\diag(-1,1)$ because we forced $(1,0)$ and $(-\alpha, 1)$ to be pivots, but $(1-\alpha, 1)$ is not necessarily a pivot.

\begin{proof}
    We already know how to compute indices, and we denote by $\mathscr{F}$ the map from lattice to indices. Now we recover the lattice:

    For simplicity, we still use the notation that
    $$\alpha_n := [0; a_{n+1}, a_{n+2}, \cdots] = \frac{1}{a_{n+1}+\frac{1}{a_{n+2} + \frac{1}{\cdots}}}$$
    But for $\beta$ a bit different:
    $$\beta_n := [0; a_{n-1}, a_{n-2}, \cdots, a_0] \ (n \ge 1); \quad \beta_0 = 0$$
    
    Suppose that $\{a_i\}_{i \ge 0}$ is a index sequence. We 
    let $\Lambda$ be the lattice with basis $\{ (1,0), (-\alpha_{-1}, 1)\}$. We show that $\Lambda$ has the index sequence of $\{a_i\}_{i \ge 0}$.

    Let $h = \diag\left( \alpha_{-1} ^{-1/2}, \alpha_{-1} ^{1/2}\right)$, then
    $$h A_0 = \left(\frac{-1}{\sqrt{a_0+\beta_0+\alpha_0}}, \frac{1}{\sqrt{a_0+\beta_0+\alpha_0}} \right) \quad \text{(note that } \beta_0 = 0 \text{)}$$
    $$h A_{-1} = \left(\frac{-1}{\sqrt{\alpha_{-1}}}, \ 0 \right) =  \left(\frac{a_0+\alpha_0}{\sqrt{a_0+\beta_0+\alpha_0}},\  0\right)$$
    and $$hA_1 = \left(\frac{\alpha_0}{\sqrt{a_0+\beta_0+\alpha_0}}, \frac{a_0 + \beta_0}{\sqrt{a_0+\beta_0+\alpha_0}} \right)$$

    The rest ($A_2, A_3, \cdots$) can be computed similarly as above Lemma \ref{recall}.
\end{proof}

\subsection{Importance in Number Theory}

Pivots not only corresponds to indices, but also relate closely to the approximant of the second kind.

Set $\alpha$ an irrational in $(0,1)$. Recall that for $k ,l \in \N^+$, $(l/k)$ is called a $\alpha$'s best approximant of second kind if $|k \alpha - l| < |m \alpha - n|$ holds for all $m, n \in \N^+, m < k$.

The following theorem tells us that any pivot not on the axis of a mono-infinite lattice has a best approximant of second kind behind it.
\begin{theorem}
    If $\Lambda = \Z (1,0) + \Z (-\alpha, 1)$ is a mono-infinite lattice, where $\alpha$ is an irrational in $(0,1)$, and let its pivot sequence be $A_{-1} = (1,0), \ A_0 = (-\alpha, 1),\  A_1, A_2, \cdots$. Then every $A_j (j \ge 0)$ induces a best approximant of second kind. Specifically, if $A_j = p(1,0) + q(-\alpha, 1)$, then $p/q$ is a best approximant of second kind, and vice versa.
\end{theorem}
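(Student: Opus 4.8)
The claim is a clean correspondence between pivots of the mono-infinite lattice $\Lambda = \Z(1,0) + \Z(-\alpha,1)$ and best approximants of the second kind of $\alpha$. I would prove both directions by a geometric characterization of ``best approximant of second kind'' in terms of the lattice: a point $P = p(1,0) + q(-\alpha,1) = (p - q\alpha, q)$ with $q \ge 1$ corresponds to the fraction $p/q$, and the quantity $|q\alpha - p|$ is exactly $|x_P|$, while $q = y_P$. So the defining inequality $|k\alpha - l| < |m\alpha - n|$ for all $0 < m < k$ translates directly into: $|x_P| < |x_Q|$ for every lattice point $Q = (x_Q, y_Q)$ with $0 < y_Q < y_P$. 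In other words, $P$ is a best approximant of the second kind if and only if no lattice point lies in the open strip $\{0 < y < y_P\}$ with $|x| \le |x_P|$ — equivalently, in the open rectangle $(-|x_P|, |x_P|) \times (-y_P, y_P)$ (the part with $y \le 0$ being handled by symmetry $Q \mapsto -Q$ and by the fact that $(1,0)$ is the only point on the $x$-axis with $|x| \le 1$, which forces $|x_P| \le 1$ automatically for any candidate). That is precisely the pivot condition from the mono-infinite definition.

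First I would set up this dictionary carefully: given $A_j = (p - q\alpha, q)$ with $j \ge 0$, we have $q = y_{A_j} \ge 1$ (since $A_0 = (-\alpha,1)$ is the lowest pivot above the axis and heights strictly increase along the pivot sequence), and $|q\alpha - p| = |x_{A_j}|$. I would note that for $A_0$ itself, $p = 0, q = 1$, and $0/1$ is vacuously a best approximant of the second kind (no $m$ with $0 < m < 1$), matching the convention that $A_0$ is a forced pivot. Second, for the forward direction ($A_j$ a pivot $\Rightarrow$ $p/q$ a best approximant), suppose $0 < m < q$ and consider the lattice point $R = (m\alpha - n, m)$ for the optimal $n$; if $|m\alpha - n| \le |q\alpha - p| = |x_{A_j}|$ then $R$ (or $-R$, to get $y > 0$; but $m > 0$ already) lies in the closed rectangle $R(A_j)$, and since $R \ne \pm A_j$ (different $y$-coordinate, as $0 < m < q$), this contradicts $A_j$ being a pivot unless $R$ is on the boundary — and $|x_R| < |x_{A_j}|$ strictly would still contradict it, while $|x_R| = |x_{A_j}|$ needs the strict-inequality refinement, which I'd get from the bi-infinite-style argument that pivots have no lattice points in the \emph{closed} rectangle except the endpoints with the extreme $x$-coordinate. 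Third, for the converse ($p/q$ a best approximant $\Rightarrow$ $A = (p-q\alpha, q)$ is a pivot): I need that no nonzero lattice point other than $\pm A$ lies in $R(A)$. A lattice point $Q = (r - s\alpha, s)$ in $R(A)$ has $|s| \le q$; the cases $s = 0$ give $Q = \pm(1,0)$ only if $q \ge 1$ forces $|x_A| \ge$ something — actually one must check $|x_A| < 1$, which follows because $|q\alpha - p| < |1\cdot\alpha - \lfloor\alpha\rceil| < 1$ by the best-approximant property applied with $m=1$ (valid when $q > 1$; when $q = 1$, $p/q = 0/1 = A_0$, handled separately). For $0 < |s| < q$, WLOG $s > 0$, minimality of $|r - s\alpha|$ over $n$ would be violated or we get $|r-s\alpha| \ge |p - q\alpha|$ by the best-approximant inequality, contradicting $Q$ being in the \emph{interior}; the boundary case $|s| = q$, $Q \ne \pm A$ forces $|x_Q| \ge |x_A|$ so $Q$ is outside the interior too.

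The main obstacle, I expect, is the careful bookkeeping at the boundary of the rectangle and the handedness of the inequalities — the definition of best approximant of the second kind as stated uses a strict ``$<$'' over $m < k$, and the pivot definition uses ``interior contains only $\{A, 0, -A\}$'', so one must be scrupulous about whether equality $|m\alpha - n| = |q\alpha - p|$ can occur (it cannot, since $\alpha$ is irrational and $(m,n) \ne (q,p)$ would give $m\alpha - n = \pm(q\alpha - p)$, i.e. $(m \mp q)\alpha = n \mp p \in \Z$, forcing $m = \pm q$). This irrationality argument cleans up all the edge cases and also shows the correspondence is a genuine bijection: distinct pivots give distinct heights $q$, hence distinct fractions $p/q$, and conversely. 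I would close by remarking that the alternating-quadrant principle for the $A_j$ matches the familiar sign alternation $(-1)^j$ of $q_j\alpha - p_j$ for the classical convergents, which is consistent with — and in fact re-proves — the fact that the best approximants of the second kind of $\alpha$ are exactly its convergents $p_j/q_j$.
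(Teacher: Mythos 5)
Your proposal is correct and follows essentially the same approach as the paper: translate the pivot condition on $A_j$ (no other lattice points in the box $[-|x_{A_j}|,|x_{A_j}|]\times[0,y_{A_j}]$) into the second-kind best-approximant condition on $p/q$, arguing in both directions. The paper's version is terser and glosses over the boundary cases you isolate explicitly (equality $|m\alpha-n|=|q\alpha-p|$, the heights $n=0$ and $n=q$, and the forced starting pivot $A_0$), but your irrationality argument ruling out equality and your separate handling of $A_0$ are correct refinements of the same argument rather than a different route.
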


\begin{proof}
    If $A_j = p(1,0) + q(-\alpha, 1) = (p-q\alpha, q)$ is a pivot, then there exists no points in the rectangle of $C = [-|p-q\alpha|, |p-q\alpha|] \times [0, q]$ except $(0,0)$ and $(p-q\alpha, q)$. therefore for any $m, n \in \N^+$, if $n<q$, then $(m-n\alpha, n)$ does not belong to the rectangle $C$, so $|m-n\alpha| > |p-q\alpha|$.

    Vice versa, if $p/q$ is a best approximant of second kind, then consider $(p-q\alpha, q)$ and rectangle $C = [-|p-q\alpha|, |p-q\alpha|] \times [0, q]$. Any point $(m-n\alpha, n)$ in $C$ except $(0,0)$ and $(p-q\alpha, q)$ must satisfy $n < q$, and since $|m-n\alpha| > |p-q\alpha|$, the point does not exist. And so we have proved the correspondence.
\end{proof}

It's worth noting that we can also write $|p-q\alpha|$ as $\lVert q\alpha \rVert$, where $\lVert x \rVert$ means the distance from $x$ to integers $\Z$.

How about bi-infinite lattices? We show that pivots actually relate to the best approximant of two irrationals.

Consider a pair $(\Lambda, A_0)$ where $\Lambda$ is a unimodular bi-infinite lattice, and $A_0$ is a pivot on $y=-x \ (x<0)$, whose index sequence is $\{a_n\}$. By the correspondence of Theorem \ref{corre} we know that
$$A_0 = \left(\frac{-1}{\sqrt{a_0+\beta_0+\alpha_0}}, \frac{1}{\sqrt{a_0+\beta_0+\alpha_0}}\right)$$
$$A_{-1} = \left(\frac{a_0+\alpha_0}{\sqrt{a_0+\beta_0+\alpha_0}}, \frac{\beta_0}{\sqrt{a_0+\beta_0+\alpha_0}}\right)$$

\begin{theorem}
    \label{baosk}
    Pivots of $\Lambda$ correspond to best approximants of second kind of either $\alpha_{-1}$ or $\beta_{0}$. Specifically, $A_k (k\ge 0)$ corresponds to best approximants of second kind of $\alpha_{-1}$, and $A_k (k< 0)$ corresponds to that of $\beta_{0}$.
\end{theorem}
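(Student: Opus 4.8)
The plan is to reduce the bi-infinite statement to the mono-infinite theorem already proved, by cutting the lattice $\Lambda$ along the line $y = -x$ into two "halves" and recognizing each half as (a flowed copy of) a mono-infinite lattice. Concretely, I would start from the explicit coordinates of $A_0$ and $A_{-1}$ given just above the statement. The key observation is that the sublattice structure to the right of the offset $0$ (the pivots $A_k$, $k \ge 0$) only involves the continued fraction data $a_1, a_2, \dots$ assembled into $\alpha_0$, together with the "tail" quantity $\beta_0$ which behaves, from the point of view of those pivots, like a fixed positive real that plays the role of the $(1,0)$-direction in a mono-infinite lattice. Symmetrically, the pivots $A_k$ with $k < 0$ involve $\beta_0$ (built from $a_{-1}, a_{-2}, \dots$) together with $\alpha_0$ playing the analogous passive role. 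So I would set up two auxiliary mono-infinite lattices: $\Lambda^{+}$ with basis $\{(1,0), (-\alpha_{-1}', 1)\}$ where $\alpha_{-1}'$ is the mono-infinite "$\alpha$" associated (via Theorem \ref{corre4}) to the one-sided sequence $(a_0, a_1, a_2, \dots)$, and $\Lambda^{-}$ associated to the reversed one-sided sequence $(a_{-1}, a_{-2}, \dots)$ after suitable reindexing.

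The main steps, in order, would be: (1) write down, using Lemma \ref{recall} and the induction therein, the explicit coordinates $A_k = (p_k - q_k \alpha_{-1},\, q_k) \cdot (\text{scaling})$ for $k \ge 0$ after applying the unique $g_t \in G_1$ that normalizes the relevant half to the mono-infinite canonical form — this is exactly the $g_1, g_2, \dots$ chain already constructed in the proof of Lemma \ref{recall}; (2) observe that applying this $g_t$ sends the pivot set $\{A_k : k \ge 0\}$ of $\Lambda$ bijectively and order-preservingly onto the pivot set of the mono-infinite lattice $\Z(1,0) + \Z(-\alpha_{-1},1)$ where $\alpha_{-1} = [0; a_0, a_1, a_2, \dots]$, using the pivot-equivariance lemmas for $g_t$; (3) invoke the already-proved mono-infinite theorem (the one stating $A_j = p(1,0) + q(-\alpha,1)$ is a pivot iff $p/q$ is a best approximant of the second kind to $\alpha$) to conclude that each $A_k$, $k \ge 0$, corresponds to a best approximant of the second kind of $\alpha_{-1}$; (4) run the mirror-image argument for $k < 0$, using the reflection/flow actions in $G$ to map that half onto the mono-infinite lattice built from $\beta_0 = [0; a_{-1}, a_{-2}, \dots]$, so those pivots correspond to best approximants of the second kind of $\beta_0$.

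The delicate point — and the step I expect to be the main obstacle — is step (2): checking that the notion of "pivot" for the half-lattice genuinely agrees on the nose with the mono-infinite notion after flowing. In the bi-infinite lattice a point $A_k$ ($k \ge 1$) is a pivot iff $\mathrm{Int}(R(A_k)) \cap \Lambda = \{(0,0)\}$, and the competing lattice points could in principle come from the "other side" (the $A_j$ with $j < 0$, or more generally points with the opposite sign of $x$-coordinate), not just from the half we're modeling as mono-infinite. I would handle this by using the alternating principle together with Lemma \ref{lattiprop}(1): all lattice points lie on the family of parallel lines $y = -x + k/y_0$, and a point strictly inside $R(A_k)$ for $k \ge 1$ would have $|y| < y_k$ and $|x| < |x_k|$, which via the line constraint and the monotonicity $|x_j| > |x_{j+1}|$, $y_j < y_{j+1}$ forces it to already be among $\{A_0, A_1, \dots, A_{k-1}\}$ (up to sign) — hence the interior-emptiness condition is controlled entirely by the one-sided data, exactly as in the mono-infinite case with $A_{-1} = (1,0)$ replaced by $A_0$ here. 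Once this compatibility is nailed down, steps (1), (3), (4) are routine: (1) and (4) are the symmetric halves of the calculation already carried out in Lemma \ref{recall}, and (3) is a direct citation of the mono-infinite theorem.
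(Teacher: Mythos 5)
Your overall strategy — apply a diagonal flow to put the lattice in a convenient form, split by the sign of $k$, and reduce the pivot condition to the best-approximant-of-second-kind condition on one of $\alpha_{-1}$ or $\beta_0$ — points in the right direction, but step (2) contains a gap that I do not think can be repaired as stated. There is no $g_t \in G_1$ that ``normalizes the relevant half to the mono-infinite canonical form.'' The paper applies $g = \diag\left(\tfrac{1}{\sqrt{a_0 + \alpha_0}},\, \sqrt{a_0 + \alpha_0}\right)$ and obtains
$$gA_0 = \sqrt{\tfrac{1}{1+\alpha_{-1}\beta_0}}\,(-\alpha_{-1}, 1), \qquad gA_{-1} = \sqrt{\tfrac{1}{1+\alpha_{-1}\beta_0}}\,(1, \beta_0),$$
so $g\Lambda$ is still a bi-infinite lattice with $A_{-1}$ having a nonzero $y$-coordinate $\beta_0$; no diagonal flow sends this to the mono-infinite lattice $\Z(1,0) + \Z(-\alpha_{-1},1)$ because that would require collapsing $\beta_0$ to $0$, which is not achievable by an element of $G_1$. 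Consequently you cannot invoke the pivot-equivariance lemma for $g_t$ at that step: that lemma says $g_t\Pi(\Lambda) = \Pi(g_t\Lambda)$, which relates a lattice to its own flow orbit, but $\Lambda^+$ is a genuinely different lattice, not in the orbit of $\Lambda$.

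Once you treat $\Lambda^+ = \Z(1,0) + \Z(-\alpha_{-1},1)$ as a separate auxiliary lattice, the statement you need — that $q_kA_0 + p_kA_{-1}$ is a pivot of $\Lambda$ (for $k\ge0$) if and only if $(p_k - q_k\alpha_{-1}, q_k)$ is a pivot of $\Lambda^+$ — is not automatic. The $x$-coordinates of corresponding points agree (both equal $m - n\alpha_{-1}$ up to a common scalar), but the $y$-coordinates are $m\beta_0 + n$ in $g\Lambda$ versus $n$ in $\Lambda^+$, so ``being inside the pivot rectangle'' means different things in the two lattices. Proving that the two pivot conditions nevertheless agree requires exactly the kind of positivity-and-bound analysis that the paper carries out directly; your proposed handling of the ``delicate point'' is also off, since the competing pivots $A_0, \dots, A_{k-1}$ have $|x_j| > |x_k|$ and hence cannot lie inside $R(A_k)$ — they do not enter the argument at all. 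The paper sidesteps the auxiliary lattice entirely: working in $g\Lambda$, it writes a generic lattice point as (a scalar times) $(m - n\alpha_{-1},\, m\beta_0 + n)$, uses the fact that for $k\ge0$ both $p,q$ are nonnegative with $|p - q\alpha_{-1}| \le \alpha_{-1} < 1$, and shows any $m,n \in \N^+$ with $n < q$ satisfying $|m - n\alpha_{-1}| \le |p - q\alpha_{-1}|$ would force $m > p$ and hence $m - n\alpha_{-1} \ge 1$, contradicting the bound. This directly establishes that $p/q$ is a best approximant of the second kind, which is what your detour through $\Lambda^+$ was hoping to prove by citation.
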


\begin{proof}
We set
$$
\begin{aligned}
    g &= \diag\left( \frac{1}{\sqrt{a_0 + \alpha_0}}, \ \sqrt{a_0 + \alpha_0}\right) \\
    gA_0 &= \left(\frac{-1}{\sqrt{(a_0 + \alpha_0)(a_0+\beta_0+\alpha_0)}}, \  \frac{\sqrt{a_0 + \alpha_0}}{\sqrt{a_0+\beta_0+\alpha_0}}\right) \\
    &= \sqrt{\frac{1}{1+\frac{\beta_0}{a_0+\alpha_0}}} \left(\frac{-1}{a_0+\alpha_0}, 1\right) \\
    &= \sqrt{\frac{1}{1+\alpha_{-1} \beta_0}} \left(-\alpha_{-1}, 1\right) \\
    gA_{-1} &= \left(\frac{\sqrt{a_0 + \alpha_0}}{\sqrt{a_0+\beta_0+\alpha_0}}, \  \frac{\beta_0\sqrt{a_0 + \alpha_0}}{\sqrt{a_0+\beta_0+\alpha_0}}\right) \\
    &= \sqrt{\frac{1}{1+\alpha_{-1} \beta_0}} \left(1, \beta_0\right) \\
\end{aligned}
$$
We already know that $gA_0$ and $gA_{-1}$ form a basis of $\Lambda$.

Consider pivot $A_k = (x_k, y_k) \in \Lambda$ or $gA_k = (gx_k, gy_k) \in g\Lambda$. As $gA_k$ is a pivot, there are no points inside rectangle $gC_k = [-|gx_k|, |gx_k|] \times [-|gy_k|, |gy_k|]$ except $gA_k, 0, -gA_k$. We further assume that $A_k = qA_0+pA_{-1}, p, q \in \Z$. Then
$$ 
gA_k = \sqrt{\frac{1}{1+\alpha_{-1} \beta_0}} \left(p-q\alpha_{-1}, p\beta_0+q\right)
$$

If we consider $k\ge 0$, then $|p-q\alpha_{-1}|$ no larger than $|\alpha_{-1}|$, but $|p\beta_0+q|$ is large. With our convention that $p\beta_0+q$ is always positive, both $p$ and $q$ are nonnegative.

We show that $p/q$ is a second kind best approximant. Since there are no other points inside the rectangle of $C_k$, we show that
for any $m, n \in \N^+$, if $n<q$, then $\sqrt{\frac{1}{1+\alpha_{-1} \beta_0}}(m-n\alpha_{-1}, m\beta_0+n)$ does not belong to the rectangle $C_k$.

For $n<q$, we show that $m-n\alpha_{-1} \notin [-|p-q\alpha_{-1}|, |p-q\alpha_{-1}|]$. Suppose otherwise, then $m\beta_0+n \notin [0, p\beta_0+q]$. This indicates that $m>p$, and $m-n\alpha_{-1} \ge (1+\alpha_{-1}) + p-q\alpha_{-1} \ge (1+\alpha_{-1}) \ge 1 > \alpha_{-1} > |p-q\alpha_{-1}|$, a contradiction occurs.

Similarly, we can consider pivots of $k < 0$, which corresponds to best approximants of $\beta_0$.

\end{proof}

\subsection{The Log-systole Function: Four Spectra, One Function}
In this section, we adapt the concept of "systole" in geometry to study lattices. We equip $\R^2$ with the $\ell^\infty$ norm, and denote the norm by $\lVert \_ \rVert_\infty$. This notation differs from the previous $\lVert \_ \rVert$ by a $\infty$, where the latter means the distance to $\Z$.

\begin{definition}
    Given $\Lambda \subset \R^2$ a lattice, the systole of $\Lambda$, $\mathrm{sys}(\Lambda)$, is defined as the smallest $\ell^\infty$ norm of lattice points, or,
    $$\mathrm{sys}(\Lambda) = \min_{A \in \Lambda, A \ne (0,0)} \lVert A \rVert_\infty$$
\end{definition}

\begin{lemma}
    \label{systole}
    For any unimodular $\Lambda$ we have
    \begin{enumerate}
        \item $\mathrm{sys}(\Lambda)$ is well-defined;
        \item $\mathrm{sys}  (\Lambda) = \sup \left\{r\ge 0 \ | \ [-r,r]\times [-r,r] \cap \Lambda = (0,0)\right\}$;
        \item $0 < \mathrm{sys}(\Lambda) \le 1$.
        \item If $\Lambda$ is also bi-infinite, then $\lVert A \rVert_\infty = \mathrm{sys}  (\Lambda)$ implies that $A$ is a pivot.
    \end{enumerate}
\end{lemma}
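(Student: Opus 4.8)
The plan is to prove the four items in order, each following quickly from Minkowski's convex body theorem or from the basic geometry of lattices.

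For item (1), I would first note that $\Lambda$ is a discrete closed subset of $\R^2$, so any ball $\{A : \lVert A\rVert_\infty \le R\}$ contains only finitely many lattice points. Picking any nonzero $A_1 \in \Lambda$ and setting $R = \lVert A_1\rVert_\infty$, the minimum is attained over the finite nonempty set $\{A \in \Lambda \setminus \{(0,0)\} : \lVert A\rVert_\infty \le R\}$, so $\mathrm{sys}(\Lambda)$ is well-defined and positive. For item (2), I would argue by a direct unwinding of definitions: the square $[-r,r]^2$ is exactly the closed $\ell^\infty$-ball of radius $r$, so $[-r,r]^2 \cap \Lambda = \{(0,0)\}$ holds precisely when every nonzero lattice point has $\ell^\infty$-norm $> r$, i.e. when $r < \mathrm{sys}(\Lambda)$; taking the supremum over such $r$ gives $\mathrm{sys}(\Lambda)$ (the supremum is $\mathrm{sys}(\Lambda)$ itself, using that the minimum in the definition is attained, so the closed square of radius $\mathrm{sys}(\Lambda)$ does contain a nonzero point on its boundary but none in its interior).

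For item (3), positivity is from (1); the upper bound $\mathrm{sys}(\Lambda) \le 1$ is Minkowski: the square $(-r,r)^2$ has area $4r^2$, and if it contained no nonzero lattice point with $r > 1$ we would have a symmetric convex body of area $> 4 = 4\,\mathrm{covol}(\Lambda)$ containing only the origin, contradicting Minkowski's theorem; hence $\mathrm{sys}(\Lambda) \le 1$ by (2). For item (4), suppose $\Lambda$ is bi-infinite and $\lVert A\rVert_\infty = \mathrm{sys}(\Lambda)$ for some nonzero $A = (x,y)$; since $\Lambda$ is bi-infinite, $x \ne 0$ and $y \ne 0$, so $\lVert A\rVert_\infty = \max(|x|,|y|)$ with both coordinates nonzero. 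The rectangle $R(A) = [-|x|,|x|]\times[-|y|,|y|]$ is contained in the closed square of radius $\lVert A\rVert_\infty = \mathrm{sys}(\Lambda)$, so its interior is contained in the open square of that radius, which by (2) contains no nonzero lattice point; therefore $\mathrm{Int}(R(A)) \cap \Lambda = \{(0,0)\}$, which is exactly the definition of $A$ being a pivot.

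I do not expect any serious obstacle here; the one point requiring a little care is the precise handling of open versus closed squares in items (2) and (4) — in particular, making sure that in (4) the interior of $R(A)$ really sits inside the \emph{open} square of radius $\mathrm{sys}(\Lambda)$ (this uses $|x|, |y| \le \mathrm{sys}(\Lambda)$, so $\mathrm{Int}(R(A)) \subseteq (-|x|,|x|) \times (-|y|,|y|) \subseteq (-\mathrm{sys}(\Lambda), \mathrm{sys}(\Lambda))^2$), and that bi-infiniteness is exactly what rules out the degenerate case $x = 0$ or $y = 0$ where $R(A)$ would be a degenerate rectangle and the pivot condition would be vacuous or ill-posed.
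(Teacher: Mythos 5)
Your proposal is correct and follows essentially the same route as the paper's proof: discreteness for well-definedness, unwinding the closed-square condition against the attained minimum for item (2), Minkowski on the open square of area $4r^{2}$ for the upper bound, and reading off the pivot condition from $\mathrm{Int}(R(A)) \subseteq (-\mathrm{sys}(\Lambda),\mathrm{sys}(\Lambda))^{2}$ for item (4). The paper dispatches item (4) with a one-line remark, so your more explicit open/closed bookkeeping and the observation that bi-infiniteness rules out $x=0$ or $y=0$ are a welcome but not substantively different elaboration.
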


\begin{proof}
    \begin{enumerate}
        \item It's obvious that $\Lambda$ is a discrete set, under which $\lVert A \rVert_\infty$ has a minimum.
        \item Suppose that $A$ satisfies $\lVert A \rVert_\infty = \mathrm{sys}  (\Lambda)$. If we take the square ($\ell^\infty$ ball) $C = [-\lVert A \rVert_\infty, \lVert A \rVert_\infty] \times [-\lVert A \rVert_\infty, \lVert A \rVert_\infty]$, then $A$ is on the border of $C$, and there are no points inside $C$ except $(0,0)$. Therefore for any $\ell^\infty$ ball $C(0,r)$, $r < \lVert A \rVert_\infty$ if and only if $C(0,r) \cap \Lambda = \{(0,0)\}$. Therefore
        $$\sup \ \left\{ r \ | \ C(0,r) \cap \Lambda = \{(0,0)\}\right\} = \lVert A \rVert_\infty = \mathrm{sys}  (\Lambda)$$
        \item By Minkowski convex body theorem, the area of $C(0,r)$ is no more than $4$, so $4r^2 \le 4$ and $r \le 1$. Because $(0,0)$ is a discrete point in $\Lambda$, $\mathrm{sys}  (\Lambda) > 0$.
        \item By directly applying the definition of pivot.
    \end{enumerate}
\end{proof}

Now consider a function called the log-systole function:

\begin{definition}
    Given $\Lambda \subset \R^2$ a lattice, and $g_t = \diag (e^t, e^{-t})$ the diagonal action. The log-systole function of $\Lambda$, denoted by $W(t; \Lambda) : \R \to \R$, is defined as
    $$W(t; \Lambda) := \ln (\mathrm{sys}(g_t \Lambda))$$
\end{definition}

\begin{lemma}
    \label{sysfun}
    The function $W(t; \Lambda)$ for $t$ is a piecewise linear function with slope $\pm 1$. 
\end{lemma}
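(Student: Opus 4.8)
The plan is to show that $W(t;\Lambda)$ is locally given by $\ln\lVert g_t A\rVert_\infty$ for a fixed lattice point $A$, and that each such summand is piecewise linear with slope $\pm 1$. First I would observe that for a lattice point $A=(x,y)$ with both coordinates nonzero, $\lVert g_t A\rVert_\infty = \max(e^t|x|, e^{-t}|y|)$, so $\ln\lVert g_t A\rVert_\infty = \max(t + \ln|x|,\ -t + \ln|y|)$: this is the maximum of two affine functions of $t$ with slopes $+1$ and $-1$, hence a piecewise linear function of $t$ with slope $\pm 1$ (a single ``corner'' at $t = \tfrac12(\ln|y| - \ln|x|)$, pointing downward). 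If $A$ lies on an axis the function is globally affine with slope $\pm 1$, which is still of the required form; but in fact by Lemma \ref{systole}(4), when $\Lambda$ is bi-infinite the systole is always attained at a pivot, which is off both axes, so the generic formula applies.

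Next I would argue that $W(t;\Lambda)$ is, locally in $t$, the minimum over finitely many lattice points of the functions $\ln\lVert g_t A\rVert_\infty$. Fix $t_0$ and set $M = \mathrm{sys}(g_{t_0}\Lambda) \le 1$. For $t$ in a bounded neighborhood $[t_0 - \delta, t_0 + \delta]$, the operator norm of $g_t g_{t_0}^{-1}$ is bounded by $e^\delta$, so any lattice point realizing the systole of $g_t\Lambda$ has $g_{t_0}$-image of $\ell^\infty$ norm at most $e^\delta M \le e^\delta$; by discreteness of $\Lambda$ there are only finitely many such points $A_1,\dots,A_N$ (their $g_{t_0}$-images lie in a fixed compact square). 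Hence on $[t_0-\delta, t_0+\delta]$ we have $W(t;\Lambda) = \min_{1\le i\le N} \ln\lVert g_t A_i\rVert_\infty$, a minimum of finitely many functions each of which is piecewise linear with slope $\pm 1$.

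Finally I would conclude: a minimum of finitely many piecewise linear functions, all of whose pieces have slope $+1$ or $-1$, is again piecewise linear with all slopes in $\{+1,-1\}$ (the breakpoints of the minimum form a locally finite set — the breakpoints of the individual functions together with the finitely many crossing points — and on each resulting subinterval the minimum agrees with one of the $\ln\lVert g_t A_i\rVert_\infty$, hence has slope $\pm 1$). Since $t_0$ was arbitrary and the breakpoints are locally finite, $W(\,\cdot\,;\Lambda)$ is globally piecewise linear with slope $\pm 1$.

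The main obstacle is the local-finiteness bookkeeping: one must be careful that the ``finitely many relevant lattice points'' genuinely control the systole throughout the whole neighborhood (not just at $t_0$), which is why the uniform bound $\lVert g_t A\rVert_\infty \ge e^{-\delta}\lVert g_{t_0} A\rVert_\infty$ and the resulting compactness argument are needed; once that is in place the piecewise-linear-with-slope-$\pm1$ conclusion is a routine stability property of the class of such functions under finite $\min$. One should also note explicitly that because slopes are always $\pm 1$, consecutive linear pieces meeting at a breakpoint either form an upward corner (slope $-1$ then $+1$, a local minimum) or a downward corner (slope $+1$ then $-1$, a local maximum), a fact that will be convenient later; but for the present lemma only the slope claim is required.
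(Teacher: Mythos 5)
Your proof is correct and rests on essentially the same observation as the paper: near any $t_0$ the systole is realized by one of finitely many lattice points, and for each fixed $A=(x,y)$ one has $\ln\lVert g_t A\rVert_\infty = \max\bigl(t+\ln|x|,\ -t+\ln|y|\bigr)$, a piecewise linear function of $t$ with slopes $\pm1$. The paper's version just computes $\partial W/\partial t$ at a generic $t$ by casing on which boundary face of the square $[-u_t,u_t]^2$ carries lattice points, leaving the local finiteness implicit; your compactness-and-discreteness argument (bounding the $g_{t_0}$-images of systole realizers uniformly over a $\delta$-interval) supplies that missing justification, so your write-up is a slightly more rigorous rendition of the same idea rather than a genuinely different route.
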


\begin{proof}
For simplicity, we denote $\mathrm{sys}(g_t \Lambda)$ by $u_t$. We take the square $C(0, u_t)$. There are points either on the horizontal border $\{u_t, u_t\} \times [-u_t, u_t]$ or the vertical ones $[-u_t, u_t] \times \{-u_t, u_t\}$. 
\begin{itemize}
    \item If there are points only on the horizontal border, suppose $g_t A = (x,y)$ is on the border where WLOG $x>0$. then $x = u_t = e^t x_0$ and $|y| < u_t$.
    $$
    \frac{\partial}{\partial t} \left( W(t; \Lambda) \right) = \frac{\partial}{\partial t} \left( \ln (e^t x_0 ) \right) = \frac{\partial}{\partial t} \left( t+\ln x_0 \right) = 1
    $$
    \item Similarly, if there are points only on the vertical border, then  $\frac{\partial}{\partial t} \left( W(t; \Lambda) \right) = -1$.
    \item If there are points on both the horizontal and the vertical border:
    \begin{itemize}
        \item If points are located at the corner $(u_t, u_t)$ or $(-u_t, u_t)$, then $W(t; \Lambda)$ admits a minimal point here.
        \item If points are not located at the corner, then both $\{u_t, u_t\} \times (-u_t, u_t)$ and $(u_t, u_t) \times\{-u_t, u_t\}$ contains points. In this case, $W_t$ achieves a maximal point.
    \end{itemize}
\end{itemize}
\end{proof}

Later, we will relate the minimal and maximal points to the spectra of lattices.

\begin{lemma}
    Let $\Lambda$ be a bi-infinite unimodular lattice. Then
    \begin{enumerate}
        \item The minimal points of $W(\_; \Lambda)$ correspond to pivots.
        \item The maximal points of $W(\_; \Lambda)$ correspond to the box of two consecutive pivots.
    \end{enumerate}
\end{lemma}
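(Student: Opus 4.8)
The plan is to deduce both claims from the classification of breakpoints of $W(\,\cdot\,;\Lambda)$ carried out in the proof of Lemma~\ref{sysfun}, translating the geometric conditions there into statements about pivots via Lemma~\ref{systole}(4) and the flow-equivariance of $\Pi$. Write $u_t = \mathrm{sys}(g_t\Lambda)$ and $C_t = [-u_t,u_t]^2$. By that classification, a breakpoint of $W$ is a \emph{minimal} point exactly when some lattice point of $g_t\Lambda$ lies at a corner $(\pm u_t,\pm u_t)$ of $C_t$, and a \emph{maximal} point exactly when $g_t\Lambda$ meets both a vertical edge $\{\pm u_t\}\times(-u_t,u_t)$ and a horizontal edge $(-u_t,u_t)\times\{\pm u_t\}$ with no lattice point at a corner. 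In every such configuration a lattice point on $\partial C_t$ has $\ell^\infty$-norm $u_t = \mathrm{sys}(g_t\Lambda)$, hence is a pivot of $g_t\Lambda$ by Lemma~\ref{systole}(4); applying $g_{-t}$ and the equivariance of pivots, it is $g_{-t}$ of a pivot of $\Lambda$.

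\textbf{Part (1).} If $t$ is a minimal point, the corner lattice point is $g_tA$ for a pivot $A$ of $\Lambda$ that is unique up to the sign $A\mapsto -A$: two distinct corners of $C_t$ differ by a vector on a coordinate axis, which is excluded by bi-infiniteness. Conversely, given a pivot $A=(x_A,y_A)$, put $t=\tfrac12(\ln|y_A|-\ln|x_A|)$; then $|x_{g_tA}|=|y_{g_tA}|=:r$ and, since $\mathrm{Int}\,R(A)\cap\Lambda=\{(0,0)\}$, the square $g_tR(A)=[-r,r]^2$ is empty in $g_t\Lambda$, so $r=\mathrm{sys}(g_t\Lambda)$ and $g_tA$ sits at a corner of $C_t$; by the classification $t$ is a minimal point. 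These two assignments are mutually inverse, giving a bijection between minimal points of $W(\,\cdot\,;\Lambda)$ and $\Pi(\Lambda)$.

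\textbf{Part (2).} If $t$ is a maximal point, choose $g_tA$ on a vertical edge (so $|x_{g_tA}|=u_t>|y_{g_tA}|$) and $g_tB$ on a horizontal edge (so $|y_{g_tB}|=u_t>|x_{g_tB}|$); both are pivots of $g_t\Lambda$, and each is the unique edge point up to sign — now using, besides bi-infiniteness, that the $|x|$-coordinate is strictly monotone along $\Pi$. Passing to the representatives above the $x$-axis and using part 4 of the ordering lemma, the pivot through $g_tA$ has the larger $|x|$ (hence the smaller $y$-coordinate), so it precedes the one through $g_tB$; any pivot strictly between them would have $|x|<u_t$ and $y$-coordinate in $(0,u_t)$, i.e. would lie in $\mathrm{Int}\,C_t$ — impossible. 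Hence $g_tA,g_tB$ are consecutive pivots of $g_t\Lambda$, and $g_t^{-1}(C_t)=[-|x_A|,|x_A|]\times[-|y_B|,|y_B|]$ is precisely the box of the consecutive pivots $A,B$ of $\Lambda$. Conversely, for consecutive pivots $A,B$ with $0<|y_A|<|y_B|$ (hence $|x_A|>|x_B|$), the lemma on consecutive pivots gives that the box $R=[-|x_A|,|x_A|]\times[-|y_B|,|y_B|]$ is empty in $\Lambda$; taking $t=\tfrac12(\ln|y_B|-\ln|x_A|)$ turns $g_tR$ into a square $C_t$, with $g_tA$ on a vertical edge and $g_tB$ on a horizontal edge, neither at a corner since $|y_A|<|y_B|$ and $|x_B|<|x_A|$; so $t$ is a maximal point. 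Again the assignments are inverse, giving a bijection between maximal points and the boxes of consecutive pairs of pivots.

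The whole argument is mechanical once Lemma~\ref{sysfun} and the structural results on $\Pi(\Lambda)$ are available; the two places that need real care are the uniqueness up to sign of the boundary lattice points — where bi-infiniteness and the strict monotonicity of $|x|$ along $\Pi$ are both used — and, in Part (2), checking that the two edge-pivots at a maximal point are genuinely \emph{consecutive}, which is exactly where the ``interior of the empty square $C_t$'' observation is decisive.
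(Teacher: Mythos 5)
Your proof is correct and follows exactly the route the paper indicates — the paper's proof is the one-line remark that the lemma is ``a combination of Lemma~\ref{systole} and \ref{sysfun},'' and you have supplied the careful spell-out of that combination, including the uniqueness-up-to-sign of boundary points via bi-infiniteness and the interior-of-$C_t$ argument for consecutiveness. Nothing to add.
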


\begin{proof}
    The results are a combination of Lemma \ref{systole} and \ref{sysfun}.
\end{proof}

It's therefore interesting to compute the value of the minimal and the maximal points. The formulas are called Perron's formula.

\begin{lemma}
    \label{compute}
    Let $\Lambda$ be a unimodular bi-infinite lattice, and $A_0$ a pivot. The sequence of pivots is denoted by $\cdots, A_{-1}, A_0, A_1, \cdots$, with corresponding indices ${a_n} (n \in \Z)$. Then:
    \begin{enumerate}
        \item The local minimal point of $W(\_; \Lambda)$ at $A_j$ is 
        $$\ln \left( \frac{1}{\sqrt{a_j +\beta_j + \alpha_j}}\right)$$
        \item The local maximal point of $W(\_; \Lambda)$ between $A_{j-1}$ and $A_j$ is
        $$\ln \left( \frac{1}{\sqrt{1 +\beta_j\alpha_{j-1}}}\right)$$
    \end{enumerate}
\end{lemma}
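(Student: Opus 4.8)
The plan is to reduce both computations to the explicit canonical coordinates established in Lemma \ref{recall} (equivalently Theorem \ref{corre}), and then read off the systole directly from those coordinates. Recall that by the equivariance of the diagonal flow (and of the reflections), the value $W(t;\Lambda)$ at a local extremum is an invariant of the pair $(\Lambda, A_j)$ up to the action of $G$; in particular it does not change if we replace $\Lambda$ by any $g\Lambda$ with $g \in G$. So for the minimal point associated to the pivot $A_j$, I would first apply the element $h \in G$ of Lemma \ref{lem7} that moves $A_j$ onto the ray $\{(x,-x) : x<0\}$, shift indices so that $A_j$ plays the role of $A_0$, and invoke Lemma \ref{recall}: in these coordinates
$$A_j = \left(\frac{-1}{\sqrt{a_j+\beta_j+\alpha_j}}, \ \frac{1}{\sqrt{a_j+\beta_j+\alpha_j}}\right),$$
which lies on the corner $(-u,u)$ of the $\ell^\infty$-ball of radius $u = 1/\sqrt{a_j+\beta_j+\alpha_j}$. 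By Lemma \ref{systole}, part 2, and the fact (from the proof of Lemma \ref{recall}, via the box $C_0$) that no nonzero lattice point lies in the interior of $[-u,u]^2$, we get $\mathrm{sys}(\Lambda') = u$ at this value of the flow parameter, and since this is exactly the configuration where a lattice point sits at a corner, it is the local minimal point of $W(\_;\Lambda')$. Taking logarithms gives $\ln(1/\sqrt{a_j+\beta_j+\alpha_j})$, and equivariance transports this back to $\Lambda$.

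For the local maximal point between $A_{j-1}$ and $A_j$, I would instead use the normalization from the proof of Theorem \ref{baosk}: after an appropriate element of $G$, the basis of (a flow-translate of) $\Lambda$ becomes
$$gA_{j} = \sqrt{\tfrac{1}{1+\alpha_{j-1}\beta_j}}\,(-\alpha_{j-1}, 1), \qquad gA_{j-1} = \sqrt{\tfrac{1}{1+\alpha_{j-1}\beta_j}}\,(1, \beta_j),$$
so that the box $R$ spanned by $gA_{j-1}$ and $gA_j$ has half-widths $\sqrt{\tfrac{1}{1+\alpha_{j-1}\beta_j}}$ in both the $x$ and $y$ directions (the $x$-extent coming from $gA_j$ shifted by $gA_{j-1}$, the $y$-extent from $gA_{j-1}$ shifted by $gA_j$, using $\alpha_{j-1}, \beta_j \in (0,1)$). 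Thus $gA_j$ and $gA_{j-1}$ (and their sum) sit on the non-corner parts of the horizontal and vertical borders respectively of the $\ell^\infty$-ball of radius $r = 1/\sqrt{1+\alpha_{j-1}\beta_j}$, while the consecutive-pivot property guarantees no nonzero lattice point in the interior — this is exactly the "maximal point" configuration of Lemma \ref{sysfun}. Hence $\mathrm{sys} = r$ there and the local maximum of $W$ equals $\ln(1/\sqrt{1+\alpha_{j-1}\beta_j})$.

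The routine but slightly delicate point is to confirm that in each normalized picture the $\ell^\infty$-ball of the claimed radius genuinely contains no nonzero lattice point in its interior and that the lattice points realizing the systole sit in the claimed positions (corner for the minimum, mid-edges for the maximum) — i.e. that the extremum is attained exactly at the flow time $t=0$ of the normalization, not merely bounded by it. For the minimum this follows from the box argument already carried out for $C_0$ in Lemma \ref{recall} together with the alternating/consecutive-pivot structure; for the maximum it follows from the property that $A_{j-1}, A_j$ are consecutive pivots (so the open parallelogram they span, hence the inscribed square, is free of lattice points) combined with the bounds $\alpha_{j-1},\beta_j<1$ which place those two points strictly between the corners. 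I expect this verification — keeping track of which coordinate dominates the $\ell^\infty$ norm as the flow parameter varies near the extremum, so as to pin down that $t=0$ is the extremum and identify its type — to be the main obstacle; everything else is bookkeeping with the formulas from Lemma \ref{recall} and Theorem \ref{baosk} plus the equivariance lemmas.
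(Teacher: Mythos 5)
Your proposal is essentially the same route the paper takes: the paper's entire proof is the one-line remark that the computations follow Lemma \ref{recall} and Theorem \ref{baosk}, and you spell out exactly that — normalize by the $G$-action so $A_j$ sits on the canonical ray (min case) or so $A_{j-1},A_j$ sit in the Theorem \ref{baosk} coordinates (max case), read off the $\ell^\infty$-systole, classify the extremum type via Lemma \ref{sysfun}, and transport back by equivariance. One small slip in the verification step: for the maximum you argue that the square $[-r,r]^2$ is free of nonzero interior lattice points because "the open parallelogram they span, hence the inscribed square, is free of lattice points," but the containment goes the wrong way — the symmetric parallelogram with vertices $\pm A_{j-1},\pm A_j$ sits \emph{inside} that square, not around it, so emptiness of the parallelogram does not imply emptiness of the square. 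The correct justification is the one that is implicit elsewhere in your proposal: the box $[-|x_{A_{j-1}}|,|x_{A_{j-1}}|]\times[-|y_{A_j}|,|y_{A_j}|]$ has no nonzero interior lattice points by the construction of consecutive pivots (the $C_{(y^*,i)}$ argument in the ordering lemma, reflected through the $x$-axis, together with bi-infiniteness on $y=0$), and in the Theorem \ref{baosk} normalization that box is precisely $[-r,r]^2$. With that one replacement the argument is complete and coincides with the paper's.
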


\begin{proof}
    Computations follow Lemma \ref{recall} and Theorem \ref{baosk}. \footnote{Based on the procedure of Lemma \ref{recall}, we can also obtain a side-product: an algorithm to efficiently compute $W(t; \Lambda)$ for a given $t \in \R$, in time complexity $O(|t|)$ and space complexity $O(1)$. Since it's not directly related to our topic, this algorithm is left for the reader as an exercise.}
\end{proof}


And finally we have the theorem that corresponds all four types of spectra using a single function:
\begin{theorem}
    \label{fourinone}
    The four spectra are obtained through one log-systole function in the following way:
    \begin{enumerate}
        \item Lagrange spectrum: 
        $$\mathrm{L} = \left\{\exp(-2\liminf_t W(t; \Lambda))\middle| \Lambda \text{ is a unimodular bi-infinite lattice}\right\} $$
        \item Markov spectrum: $$\mathrm{M} = \left\{\exp(-2\inf_t W(t; \Lambda))\middle| \Lambda \text{ is a unimodular bi-infinite lattice}\right\} $$
        \item Dirichlet spectrum: $$\mathrm{D} = \left\{\exp(2\limsup_t W(t; \Lambda))\middle| \Lambda \text{ is a unimodular bi-infinite lattice}\right\} $$
        \item Mordell-Gruber Spectrum: $$\mathrm{MG}_2 = \left\{\exp(2\sup_t W(t; \Lambda))\middle| \Lambda \text{ is a unimodular bi-infinite lattice}\right\}$$
    \end{enumerate}
\end{theorem}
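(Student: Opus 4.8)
The plan is to transport the whole problem across the sequence--lattice dictionary of Theorem~\ref{corre} (equivalently Theorems~\ref{corre2} and~\ref{corre3}), and then to read off the four extremal quantities from the piecewise-linear shape of $W$. The first observation is that $W(\,\cdot\,;\Lambda)$ changes only by a translation under the equivariant group $G$: the reflections $V_x,V_y$ are $\ell^\infty$-isometries commuting with every $g_s$, so $\mathrm{sys}(g_sV_\bullet\Lambda)=\mathrm{sys}(g_s\Lambda)$, while $\mathrm{sys}(g_sg_t\Lambda)=\mathrm{sys}(g_{s+t}\Lambda)$; hence $W(s;h\Lambda)=W(s+c_h;\Lambda)$ for each $h\in G$ and a constant $c_h$. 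Therefore $\inf_t W$, $\sup_t W$, $\liminf_{t\to\infty}W$ and $\limsup_{t\to\infty}W$ are $G$-invariant, and by Theorem~\ref{corre2} they depend only on the index sequence $\{a_n\}$ attached to $\Lambda$; a shift of $\{a_n\}$ changes none of the four (they are themselves shift-invariant in $n$), and by Lemma~\ref{recall} every bi-infinite positive-integer sequence occurs, so the theorem reduces to evaluating the four quantities in terms of $\{a_n\}$.

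Next I would combine Lemma~\ref{sysfun} with Lemma~\ref{compute}. By the former, $W$ is continuous and piecewise linear with slopes $\pm1$; bi-infiniteness forces $\Pi(\Lambda)\cong\Z$, so $W$ has infinitely many critical points in both directions and is monotone between consecutive ones. By the latter, the local minima sit at the pivots $A_j$ with value $-\tfrac12\ln(a_j+\alpha_j+\beta_j)$ and the local maxima sit between $A_{j-1}$ and $A_j$ with value $-\tfrac12\ln(1+\alpha_{j-1}\beta_j)$. Hence the global infimum of $W$ is the infimum of its local-minimum values, the global supremum is the supremum of its local-maximum values, a $\liminf$ as $t\to\infty$ sees only the local minima (with $j\to\infty$), and a $\limsup$ only the local maxima; exponentiating,
\begin{align*}
    \exp(-2\inf_t W) &= \sup_{j}\,(a_j+\alpha_j+\beta_j), &
    \exp(-2\liminf_{t\to\infty} W) &= \limsup_{j\to\infty}\,(a_j+\alpha_j+\beta_j),\\
    \exp(2\sup_t W) &= \sup_{j}\frac{1}{1+\alpha_{j-1}\beta_j}, &
    \exp(2\limsup_{t\to\infty} W) &= \limsup_{j\to\infty}\frac{1}{1+\alpha_{j-1}\beta_j}.
\end{align*}
The quantities $a_j+\alpha_j+\beta_j=[a_j;a_{j+1},\dots]+[0;a_{j-1},a_{j-2},\dots]$ are exactly Perron's characteristic $P_j^{[a]}$, so the first line produces $\mathrm{M}$ and $\mathrm{L}$ via the Perron formulas recalled in the introduction; the last expression matches the Divis--Novak formula for the Dirichlet constant after the reindexing $j=k+1$ and the standard replacement of the one-sided $\beta_{k+1}=[0;a_k,\dots,a_1]$ by the two-sided $[0;a_k,a_{k-1},\dots]$ (they differ by $o(1)$ as $k\to\infty$, so the $\limsup$ is unchanged), producing $\mathrm{D}$; the passage between bi-infinite and one-sided tails needed here is routine.

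The only substantial new point is the Mordell--Gruber identity $\kappa(\Lambda)=\exp\!\big(2\sup_t W(t;\Lambda)\big)$, which I would prove for every unimodular $\Lambda$: given $x,y>0$ put $t=\tfrac12\ln(y/x)$, so that $g_t\big([-x,x]\times[-y,y]\big)=[-r,r]^2$ with $r=\sqrt{xy}$; by Lemma~\ref{systole}(2) this square meets $g_t\Lambda$ only in the origin iff $r<\mathrm{sys}(g_t\Lambda)$, hence the rectangle is admissible in $\Lambda$ iff $xy<\mathrm{sys}(g_t\Lambda)^2=e^{2W(t;\Lambda)}$. Taking the supremum over all admissible rectangles (every aspect ratio $e^{2t}$ occurs, and $xy$ can be pushed up to $\mathrm{sys}(g_t\Lambda)^2$) gives $\kappa(\Lambda)=\sup_t e^{2W(t;\Lambda)}=\exp(2\sup_t W)$. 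Combined with the table above this yields $\{\exp(2\sup_t W(t;\Lambda))\mid\Lambda\ \text{bi-infinite}\}\subseteq\mathrm{MG}_2$; for the reverse inclusion one checks that the non-bi-infinite unimodular lattices add nothing new --- a rational lattice has $\kappa=1$, already realized by any index sequence with $a_{2m}\to\infty$, while a mono-infinite lattice has $\kappa$ computed by the same identity from its one-sided index sequence (Theorem~\ref{corre4}), which extends to a bi-infinite sequence attaining the same supremum.

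I expect the genuine obstacles to be exactly this last bookkeeping: fixing the precise meaning of the asymptotic limits $\liminf_t W$ and $\limsup_t W$ (whether over $t\to+\infty$ or $|t|\to\infty$, which in the end does not matter, since the ``missing'' direction contributes only the minimal spectral values $\sqrt5$ for $\mathrm{L}$ and $(5+\sqrt5)/10$ for $\mathrm{D}$), and verifying that mono-infinite and rational lattices contribute no extra Mordell constants to $\mathrm{MG}_2$. Everything else is the bookkeeping of Lemmas~\ref{sysfun} and~\ref{compute} carried through the correspondence, together with the classical Perron and Divis--Novak identities.
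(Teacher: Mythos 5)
Your proposal is correct and follows essentially the same route as the paper: read off the local extrema of $W$ from Lemma~\ref{compute}, appeal to the classical Perron and Divis--Novak formulas for $\mathrm{L},\mathrm{M},\mathrm{D}$, and establish $\kappa(\Lambda)=\exp\bigl(2\sup_tW(t;\Lambda)\bigr)$ directly via the substitution $t=\tfrac12\ln(y/x)$ applied to admissible rectangles. The only presentational difference is that the paper rederives the Markov case from the quadratic-form definition (by simultaneously reducing the form to $xy$ and reading $[x,y]\bm A$ as a lattice) whereas you invoke Perron's formula for $\mathrm{M}$ directly, and the paper handles the non-bi-infinite lattices by the same observations you sketch ($\inf W=-\infty$, respectively $\kappa=1$ realized by $a_n=|n|+1$).
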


\begin{proof}
    \ \\
    \begin{enumerate}
        \item Lagrange spectrum:
        Using the last lemma, Note that
        $$
        \begin{aligned}
            \exp \left(-2\liminf_t W(t; \Lambda)\right) &= 
            \exp \left(-2\liminf_{t_j \text{ is local minimal point}} W(t_j; \Lambda)\right) \\
            & (\text{the declension to } \limsup \text{ is because } e^{-2x} \text{ is decreasing for } x) \\
            &= \limsup_{j \in \Z} (a_j +\beta_j + \alpha_j) \quad (\{a_n\} \text{ is a bi-infinite sequence})
        \end{aligned}
        $$
        The rest part is well-known: see \cite{Cusick_Flahive_Markoff_Lagrange} for a proof. 
        \item Markov spectrum: recall that
        $$
        \mathrm{M} := \left\{ \left(\inf_{(x,y) \in \Z^2 \setminus \{(0,0)\}} |f(x,y)|\right)^{-1} \middle| \Delta_f = 1\right\}
        $$
        where $f(x,y) = ax^2+bxy+cy^2$. Reconsider $f$ as a matrix form:
        $$f = [x, y]\begin{bmatrix}
            a & b/2 \\ b/2 & c
        \end{bmatrix}[x, y]^{\mathrm{T}}$$
        and rewrite it by simultaneous transformation: There exists $\bm{A} \in \mathbf{SL}_2(\R)$ such that
        $$f = [x, y] \bm{A} \begin{bmatrix}
            0 & 1/2 \\ 1/2 & 0
        \end{bmatrix}\bm{A}^{\mathrm{T}}[x, y]^{\mathrm{T}}$$
        Reconsider $[x, y] \bm{A} \ ((x,y) \in \Z^2)$ as a unimodular lattice, then
        $$
        \begin{aligned}
            \mathrm{M} &= \left\{ \left(\inf_{P = (x_P,y_P) \in \Lambda \setminus \{(0,0)\}} |x_Py_P|\right)^{-1} \middle| \Lambda \text{ unimodular}\right\} \\
            &= \left\{ \left(\inf_{t \in \R} \exp(2W (t; \Lambda)) \right)^{-1} \middle| \Lambda \text{ unimodular}\right\} \\
            &= \left\{ \exp(-2 \inf_{t \in \R} W (t; \Lambda)) \middle| \Lambda \text{ unimodular}\right\} \\
        \end{aligned}
        $$
        If $\Lambda$ is not bi-infinite, then $\inf_{t \in \R} W(t; \Lambda) = -\infty$, and $\infty$ is usually considered as an element of the Markov spectrum. So
        $$
        \begin{aligned}
            \mathrm{M} &= \left\{ \exp(-2 \inf_{t \in \R} W (t; \Lambda)) \middle| \Lambda \text{ unimodular and bi-infinite}\right\} \\
        \end{aligned}
        $$
        \item Dirichlet spectrum: Note that
        $$
        \begin{aligned}
            \exp(2\limsup_t W(t; \Lambda)) &= 
            \exp \left(2\limsup_{t_j \text{ is local maximal point}} W(t_j; \Lambda)\right) \\
            &= \limsup_{j \in \Z} \left(\frac{1}{1 +\beta_j\alpha_{j-1}} \right) \quad (\{a_n\} \text{ is a bi-infinite sequence})
        \end{aligned}
        $$
        For the rest part, see \cite{Divis_Novak_Diophantine_Approximations} for a proof.
        \item Mordell-Gruber spectrum: Note that
        $$
        \begin{aligned}
            \exp(2\sup_t W(t; \Lambda)) &= \limsup_t \ (\mathrm{sys}(g_t \Lambda))^2 \\
            &= \sup_t \sup_{[-x,x] \times [-x,x] \text{ admissible in } g_t \Lambda} (x^2) \\
            &= \sup_{(t\in \R,x>0): [-e^t x, e^t x] \times [-e^{-t}x, e^{-t}x] \text{ admissible in }\Lambda} (x^2) \\
            &= \kappa(\Lambda)
        \end{aligned}
        $$
        If a lattice is not bi-infinite, the Mordell constant is trivial: $\kappa(\Lambda) = 1$. Thus it's sufficient to discuss the bi-infinite case:
        $$\mathrm{MG}_2 = \left\{\exp(2\sup_t W(t; \Lambda))\middle| \Lambda \text{ is a unimodular bi-infinite lattice}\right\} \cup \{1\}$$
        Also, a Mordell constant of $1$ is indeed achievable by bi-infinite lattice. This is achieved via the index sequence of $a_n = |n| +1$. So
        $$\mathrm{MG}_2 = \left\{\exp(2\sup_t W(t; \Lambda))\middle| \Lambda \text{ is a unimodular bi-infinite lattice}\right\}$$
    \end{enumerate}
\end{proof}

\subsection{Projective Invariance of All Planar Unimodular Lattices}
We have so long discussed about projective invariance of bi-infinite and mono-infinite lattices, but how about general lattices? 

\begin{lemma}
    For any general planar unimodular lattice $\Lambda$, the log-systole function $W(t; \Lambda)$ admits a local maximal point.
\end{lemma}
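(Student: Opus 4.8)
The plan is to reduce everything to two facts that hold for every planar unimodular lattice, so that no split into bi-infinite / mono-infinite / degenerate cases is needed. First, by Lemma \ref{sysfun} the map $t\mapsto W(t;\Lambda)$ is continuous and piecewise linear with every piece of slope $+1$ or $-1$ (and with isolated breakpoints, since on any compact $t$-interval only finitely many lattice vectors can realise the systole). Second, since $g_t\Lambda$ is again unimodular, Lemma \ref{systole}(3) gives $0<\mathrm{sys}(g_t\Lambda)\le 1$ for all $t$, so $W(t;\Lambda)=\ln\mathrm{sys}(g_t\Lambda)\le 0$; in particular $W(\,\cdot\,;\Lambda)$ is finite and bounded above.

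The argument is then a short slope count. At a breakpoint the two one-sided slopes differ and both lie in $\{+1,-1\}$, so each breakpoint is either a local minimum of $W$ (slopes $-1$ then $+1$) or a local maximum (slopes $+1$ then $-1$); the latter is exactly a \emph{local maximal point} in the sense of the proof of Lemma \ref{sysfun}. Suppose for contradiction that $W$ has no local maximal point, so every breakpoint is a local minimum. Then on the linear piece strictly between two consecutive breakpoints the (constant) slope would have to be $+1$, being just to the right of a local minimum, and simultaneously $-1$, being just to the left of a local minimum — a contradiction. Hence $W$ has at most one breakpoint, so $W(t;\Lambda)$ is one of $t+c$, $-t+c$, or $|t-t_0|+c$ for constants $c,t_0$. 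Every such function is unbounded above, contradicting $W\le 0$. Therefore $W$ has a breakpoint where the slope passes from $+1$ to $-1$, which is the desired local maximal point.

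I do not expect a genuine technical obstacle; the only things to state carefully are the two "free" inputs above (continuity and piecewise-linearity from Lemma \ref{sysfun}, and the Minkowski bound $\mathrm{sys}\le 1$ from Lemma \ref{systole}), together with the observation — which is really the content of the lemma — that boundedness above plus slopes confined to $\pm 1$ already forces a $(+1\to -1)$ turn. As an optional sanity check one may exhibit the maximum by hand: if $\Lambda$ contains nonzero axis points $(a,0)$ and $(0,b)$ then $\mathrm{sys}(g_t\Lambda)\le\min(e^{t}|a|,e^{-t}|b|)\to 0$ as $t\to\pm\infty$, so $W\to-\infty$ at both ends and attains a global maximum; and if $\Lambda$ is bi-infinite or mono-infinite then $\Pi(\Lambda)$ is infinite and, by the earlier identification of local maxima of $W$ with boxes of consecutive pivots, a local maximum sits between any two consecutive pivots. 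The slope argument makes this case analysis unnecessary.
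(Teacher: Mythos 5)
Your proof is correct, and it takes a genuinely different route from the paper. The paper disposes of the lemma by inheriting the existence of local maxima from the earlier pivot analysis in the bi-infinite and mono-infinite cases, and then separately treating the remaining ``finite'' case (nonzero lattice points on both axes) by exhibiting the $t$ for which $g_t\Lambda = \Z(1,0)+\Z(-\alpha,1)$ with $\alpha\in\Q$ and observing this $t$ is a local maximum. Your argument instead bypasses all case analysis: it uses only (i) piecewise linearity of $W$ with slopes in $\{+1,-1\}$ and isolated breakpoints (Lemma~\ref{sysfun}, whose proof does not use bi-infinitude), and (ii) the Minkowski bound $\mathrm{sys}\le 1$ giving $W\le 0$ (Lemma~\ref{systole}), and derives a contradiction from assuming all breakpoints are minima. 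The slope-count contradiction (a piece between two consecutive minima would need slope $+1$ and $-1$ simultaneously) forces $W$ to have at most one breakpoint if there is no maximum, and every such function is unbounded above, violating (ii). What your approach buys is uniformity and independence from the pivot machinery--it would work for any $W$ obeying (i) and (ii)--at the cost of a small extra check that breakpoints are locally finite (which you correctly justify: on a compact $t$-window the systole-realising vectors lie in a bounded region, hence are finitely many). Both proofs are sound; yours is a touch more self-contained and would survive if the bi-infinite/mono-infinite chapters were removed.
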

\begin{proof}
    This is obvious, since we have already discussed mono-infinite and bi-infinite lattices. For the finite case, there is a $t$ such that $g_t \Lambda$ is generated by $(1,0) and (-\alpha, 1)$ where $\alpha \in \Q$. The $t$ here is a local maximum of $W(t; \Lambda)$.
\end{proof}
At a local maximal point of the log-systole function $W(t; \Lambda)$, the lattice $g_t \Lambda$ has certain structure: If there is some $t\in \R$ such that $W(t; \Lambda)$ is a local maximum, we consider the square $[-w, w]^2$ where $w = W(t; \Lambda)$. There are at least 2 linear independent points on the upper and right edges respectively of $w$ (otherwise $W$ is differentiable with derivative $\pm 1$, contradicting with the local maximum). Suppose the two points are $A$ and $B$, where $A$ is on the upper side of the square.
\begin{lemma}
    \label{arbitraryreconstruction}
    For $A, B$ we have:
    \begin{itemize}
        \item $A$ and $B$ form a basis. 
        \item the formulas are either:
        $$A = \frac{(-\alpha, 1)}{\sqrt{1+\alpha\beta}}, \quad B = \frac{(1, \beta)}{\sqrt{1+\alpha\beta}}$$
        or:
        $$A = \frac{(\alpha, 1)}{\sqrt{1+\alpha\beta}}, \quad B = \frac{(-1, \beta)}{\sqrt{1+\alpha\beta}}$$
        where
        $$\alpha, \beta \in [0,1], \quad w = \frac{1}{\sqrt{1+\alpha\beta}}$$
    \end{itemize}
\end{lemma}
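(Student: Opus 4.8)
The plan is to analyze the local geometry at a maximal point $t$ of $W(\_;\Lambda)$, using the square $S = [-w,w]^2$ with $w = e^{W(t;\Lambda)}$ (I will write $w$ for the systole $\mathrm{sys}(g_t\Lambda)$ itself, not its logarithm). By the paragraph preceding the statement, $S$ contains no nonzero lattice point in its interior, and there is at least one lattice point on the (open) upper edge $(-w,w)\times\{w\}$ and at least one on the (open) right edge $\{w\}\times(-w,w)$; call these $A$ and $B$. (Reflecting by $\diag(1,-1)$ or $\diag(-1,1)$ if necessary, which does not change the hypotheses, I may assume the second coordinate of $B$ is $\ge 0$; the two displayed sign patterns record the choice of whether $A$ is on the upper-left or upper-right portion of the edge.) Writing $A = (-a', w)$ and $B = (w, b')$ with $0 \le a' < w$ and $0 \le b' < w$ (the first case), set $\alpha = a'/w \in [0,1)$ and $\beta = b'/w \in [0,1)$, so that $A = w(-\alpha,1)$ and $B = w(1,\beta)$.

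First I would prove that $\{A,B\}$ is a basis. The argument mirrors the second part of the consecutive-pivots lemma: the parallelogram with vertices $A,B,-A,-B$ is contained in $S$ and its four vertices lie on the boundary of $S$ but not at the corners, so by Minkowski's theorem applied to the open parallelogram (which contains no nonzero lattice point, as it lies in $S$), one gets $|\det(A;B)| = 2\,\mathrm{Area}(\Delta_{A,0,B}) < 2\cdot\text{(area of }S)/4 \cdot 2 = \ldots$; more carefully, $\mathrm{Area}(\text{parallelogram}) \le 4 a' b' /\ldots$ — the clean way is: the parallelogram is inscribed in $S$ with vertices off the corners, hence has area strictly less than $\mathrm{Area}(S) = 4w^2 \le 4$, so $|\det(A;B)| < 2$; and $|\det(A;B)| = w^2(\alpha\beta + 1) \ge w^2 > 0$, so $\det(A;B) = \pm 1$ and $A,B$ generate a sublattice of index $1$, i.e. a basis. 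The non-vanishing of the determinant also forces $A \ne \pm B$, consistent with $A$ being on the horizontal and $B$ on the vertical edge.

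Next, from $|\det(A;B)| = 1$ and $\det\!\begin{pmatrix} -\alpha & 1 \\ 1 & \beta\end{pmatrix} = -\alpha\beta - 1$, I get $w^2(1+\alpha\beta) = 1$, i.e. $w = 1/\sqrt{1+\alpha\beta}$, which is exactly the claimed normalization; substituting back gives $A = (-\alpha,1)/\sqrt{1+\alpha\beta}$ and $B = (1,\beta)/\sqrt{1+\alpha\beta}$. The other sign pattern arises in the reflected case and is handled identically. It remains to confirm $\alpha,\beta \in [0,1]$, which is immediate from $0 \le a' < w$, $0 \le b' < w$ (I even get the strict bound $\alpha,\beta<1$, so stating $[0,1]$ is safe).

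The main obstacle — really the only subtle point — is being careful about uniqueness and position of $A$ and $B$: a priori there could be several lattice points on the upper edge (and on the right edge), and one must argue that after choosing one on each edge the pair is already a basis, so no further points can in fact lie strictly inside the edges (if $A'$ were another point on the upper edge, $A - A'$ would be a horizontal nonzero lattice vector of length $< 2w$, hence $= \pm(\text{something})$, and one checks this either forces a point into $\mathrm{Int}(S)$ or contradicts $\{A,B\}$ being a basis). I would also need to rule out the degenerate possibility $\alpha\beta$-data giving $A$ or $B$ at a corner, but a corner point would give a lattice point with both coordinates equal to $\pm w$, forcing (together with the basis property) $\alpha$ or $\beta$ to be $1$, a boundary case that the statement explicitly allows. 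Everything else is the same Minkowski-area bookkeeping already used for consecutive pivots, so I expect the write-up to be short.
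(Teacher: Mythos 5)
Your proposal is correct, and it establishes the basis claim by a genuinely different route from the paper's. To see that $\{A,B\}$ is a basis, the paper applies Pick's theorem to the triangle $\Delta_{(0,0),A,B}$: since the closed square $S$ has no nonzero lattice point in its interior, the triangle has no lattice points other than its three vertices, so Pick gives $\mathrm{Area}(\Delta) = 3/2 - 1 = 1/2$ and $|\det(A;B)|=1$ in one step. You instead inscribe the centrally symmetric parallelogram $P$ with vertices $A,B,-A,-B$ in $S$ and bound its area: since the vertices of $P$ lie on the edges of $S$ but not at its corners, the four corner triangles $S\setminus P$ are nondegenerate, so $\mathrm{Area}(P) = 2|\det(A;B)| < \mathrm{Area}(S) = 4w^2 \le 4$, whence $|\det(A;B)| < 2$ and thus $=1$. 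This is exactly the technique the paper uses for its earlier consecutive-pivots lemma, so your argument is well within the paper's toolbox; the Pick route is marginally cleaner because it handles "no lattice points on the triangle's edges" and "no interior lattice points" in a single invocation, whereas the Minkowski/area route needs the explicit observation that the four corner triangles have positive area. Everything downstream (the not-same-quadrant / reflection normalization, the covolume equation $w^2(1+\alpha\beta)=1$, and the discussion of corner degeneracies that you correctly flag but correctly deem benign) agrees with the paper, and your remark that under your parametrization one actually obtains $\alpha,\beta<1$ strictly does no harm since $[0,1]$ is the weaker statement.
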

\begin{proof}
    \begin{itemize}
        \item First consider the triangle formed by $(0,0)$, $A$ and $B$. The triangle has no points inside and on its edges, so by Pick's theorem it has an area of $1/2$, which implies that $A$ and $B$ form a basis. 
        \item Note that $A$ and $B$ are not in the same quadrant, otherwise $A-B$ has a smaller $\ell^\infty$ distance. so it must be one of the two cases: $A = w(-\alpha, 1), B=(1, \beta)$ or $A = w(\alpha, 1), B=(-1, \beta)$. The value of $w$ is obtained by the restriction of covolume.
    \end{itemize}
\end{proof}

How do we obtain corresponding sequences of lattices? A simple way is to compute the continued fraction expansions of $\alpha$ and $\beta$, and stick them head-to-head. Namely, let
$$
\begin{aligned}\relax
    [0;\  a_0,\  a_1,\  \cdots] &= \alpha \\
    [0;\  a_{-1},\  a_{-2},\  \cdots] &= \beta
\end{aligned}
$$
and let $a = (\cdots,\  a_{-2},\  a_1; \ a_0,\  a_1,\  \cdots)$, where each $a_i \in \N^+ \cup \{ \infty \}$. 

\begin{definition}
    \label{expansionseq}
    We call a sequence $(a_0,\  a_1,\  \cdots)$ an "expansion sequence" if:
    \begin{itemize}
        \item all $a_i \in \N^+ \cup \{ \infty \}$;
        \item Either all $a_i < \infty$, or there exists $a_{i+1} = \infty$ and $a_0, a_1, \cdots, a_i < \infty$. We do not define terms beyond $a_{i+1}$ when $a_{i+1} = \infty$.
    \end{itemize}
\end{definition}

There is a natural surjection from the set of all expansion sequences to the set of all numbers in $[0,1]$, by taking the formula of continued fraction expansion. However, this map is not bijective, like $(3,1,\infty)$ and $(4, \infty)$ both corresponds to the number $1/4$.
\begin{theorem}
    \label{surjcorre}
    There's a surjection from
    $$\left\{ a: \Z \to \N^+ \cup \{ \infty \} \middle| \text{both } (a_0, a_1, \cdots) \text{ and } (a_{-1}, a_{-2}, \cdots) \text{ are expansion sequences}\right\}$$
    to
    $$\left\{\Lambda \ \middle |  \  \Lambda \text{ is a unimodular lattice} \right\} \Big/ \sim_1$$
    where $\sim_1$ is the quotient under action $g_t$.
\end{theorem}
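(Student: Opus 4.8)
The plan is to make the surjection explicit and then verify surjectivity via the normal form of Lemma \ref{arbitraryreconstruction}. Given an admissible $a$ — i.e. both $(a_0,a_1,\dots)$ and $(a_{-1},a_{-2},\dots)$ are expansion sequences — put $\alpha := [0;\,a_0,\,a_1,\dots]\in[0,1]$ and $\beta := [0;\,a_{-1},\,a_{-2},\dots]\in[0,1]$, each read through the natural surjection from expansion sequences onto $[0,1]$ recalled just above (so $\alpha$ is rational precisely when $(a_0,a_1,\dots)$ ends in $\infty$, and $\alpha=0$ precisely when that sequence is $(\infty)$, likewise for $\beta$). Send $a$ to the $\sim_1$-class of the lattice $\Lambda_{\alpha,\beta}$ with basis
$$
\frac{(-\alpha,1)}{\sqrt{1+\alpha\beta}},\qquad \frac{(1,\beta)}{\sqrt{1+\alpha\beta}},
$$
the first of the two normal forms of Lemma \ref{arbitraryreconstruction}; call this map $\Theta$. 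It is well defined: the displayed basis has determinant of absolute value $1$, so $\Lambda_{\alpha,\beta}$ is a genuine unimodular lattice and composing with the $g_t$-quotient yields a single-valued map (non-uniqueness of expansion sequences at rationals only costs injectivity, which is not claimed).

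For surjectivity, fix a unimodular lattice $\Lambda$. By the lemma asserting that $W(\cdot;\Lambda)$ has a local maximum, choose such a $t_0$; then $g_{t_0}\Lambda\sim_1\Lambda$ and, by Lemma \ref{arbitraryreconstruction}, $g_{t_0}\Lambda$ is one of the two normal forms with parameters $\alpha_0,\beta_0\in[0,1]$. If it is the first form then, using surjectivity of the expansion-sequence map onto $[0,1]$, pick expansion sequences $(a_0,a_1,\dots)$ and $(a_{-1},a_{-2},\dots)$ representing $\alpha_0$ and $\beta_0$ (an $\infty$-terminator when the value is rational, the bare sequence $(\infty)$ when it is $0$), assemble them into an admissible $a$, and conclude $\Theta(a)=[\Lambda_{\alpha_0,\beta_0}]_{\sim_1}=[g_{t_0}\Lambda]_{\sim_1}=[\Lambda]_{\sim_1}$.

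Hence the crux is that $t_0$ can always be chosen so that $g_{t_0}\Lambda$ takes the \emph{first} normal form — note that $\mathrm{diag}(-1,1)$ interchanges the two forms but is not in the group generated by the $g_t$, so the discrepancy cannot be repaired after the fact. For bi-infinite $\Lambda$ this is the alternating principle: by Lemma \ref{compute} and the indexing of Lemma \ref{indexing}, consecutive local maxima of $W(\cdot;\Lambda)$ are the squares associated with the boxes of $\{A_{j-1},A_j\}$ and $\{A_j,A_{j+1}\}$, and the point called $A$ in Lemma \ref{arbitraryreconstruction} — the pivot whose $y$-coordinate saturates the box when it is rescaled to a square — is $A_j$, respectively $A_{j+1}$; since consecutive pivots lie in opposite quadrants, the sign of the $x$-coordinate of this point alternates, so the two normal forms alternate along the doubly infinite list of local maxima and the first form certainly occurs. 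For mono-infinite and finite $\Lambda$ one instead uses the explicit models $\Lambda=\Z(1,0)+\Z(-\alpha,1)$ with $\alpha\in(\R\setminus\Q)\cap(0,1)$, respectively $\alpha\in\Q\cap[0,1)$ (cf. Theorem \ref{corre4}): at the relevant local maximum the open top edge of the square contains $(-\alpha,1)$, giving a first-form reading with $\beta_0=0$ and $\alpha_0=\alpha$ (and $\alpha_0=0$ as well when $\Lambda\sim_1\Z^2$) — exactly the situation where the expansion sequence for $\beta$, and possibly for $\alpha$, terminates in $\infty$. The main obstacle is therefore this first-form selection; granting it, surjectivity follows at once, while bijectivity genuinely fails because of the ambiguity of expansion sequences at rationals (e.g. $(3,1,\infty)$ versus $(4,\infty)$) together with the freedom in the choice of local maximum.
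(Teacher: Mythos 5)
The paper's own proof is essentially a one-liner — "set $\alpha=[0;a_0,a_1,\dots]$, $\beta=[0;a_{-1},a_{-2},\dots]$ and apply Lemma~\ref{arbitraryreconstruction}" — and it simply defines the map $\Theta$ via the first normal form without addressing surjectivity. You go considerably further, and you correctly spot a genuine subtlety the paper skates over: Lemma~\ref{arbitraryreconstruction} produces \emph{two} normal forms, $\Theta$ only ever hits the first, and the reflection $\diag(-1,1)$ that interchanges them is not generated by the $g_t$. Your resolution — that along the doubly infinite list of local maxima of $W(\cdot;\Lambda)$ the top-edge pivot alternates quadrants by the alternating principle, so the first normal form is always realized at some $t_0$ — is a clean and correct repair for bi-infinite lattices, and it is strictly more careful than what the paper actually argues.

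There is one case your dichotomy does not cover. Lattices with nonzero points on the $y$-axis but not on the $x$-axis are not "mono-infinite'' in the paper's sense (that term is reserved for the $x$-axis situation), they are not bi-infinite, and under a quotient by $g_t$ alone they are \emph{not} $\sim_1$-equivalent to anything of the form $\Z(1,0)+\Z(-\alpha,1)$, since the pair (has $x$-axis points, has $y$-axis points) is a $g_t$-invariant. So your "explicit model" step skips them. The omission is repairable by the same local-maximum argument you already use: at a local maximum of such a lattice the primitive $y$-axis vector sits (after rescaling) at the midpoint of the top edge, giving $\alpha_0=0$, $\beta_0\in(0,1)$ irrational, hence first form with $\alpha$'s expansion sequence being the bare $(\infty)$; but that case analysis should appear. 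Finally, it is worth flagging that the paper's statement of $\sim_1$ here ("the quotient under action $g_t$") conflicts with every earlier use of $\sim_1$ (Theorems~\ref{corre2}, \ref{corre3}), where it denotes the quotient by the full group $G$ including the reflections; under the latter reading the two normal forms are trivially equivalent and the entire normal-form-selection issue you spent most of your effort on vanishes. Since $\Theta$'s only downstream use is to lift reflection-invariant spectral quantities to sequences, the $G$-quotient interpretation is harmless and likely what was intended — but your reading matches what is literally written, and under that reading your argument, once the $y$-axis case is added, is the proof the theorem actually needs.
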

\begin{proof}
    Just set
    $$
    \begin{aligned}
        \alpha &= [0;\  a_0,\  a_1,\  \cdots] \\
        \beta &= [0;\  a_{-1},\  a_{-2},\  \cdots]
    \end{aligned}
    $$
    and apply the above lemma \ref{arbitraryreconstruction} to get the lattice.
\end{proof}
Although being a surjection, not a bijection, this allows us to lift the spectra on lattices to the discussions on sequences, which simplifies certain discussions like closedness and Hall's ray. See chapters below for applications.


\section{The Mordell-Gruber Spectrum}

\subsection{The Mordell Constant in Another View}
We know that
$$
\begin{aligned}
    \kappa(\Lambda) &:= \sup_{[-x, x] \times [-y, y] \text{ admissible in }\Lambda} xy \\
    &= \exp (2 \sup_{t \in \R} W(t; \Lambda))
\end{aligned}
$$
and is equivariant under the diagonal flow action.

Using Theorem \ref{fourinone}, we can use the index sequence to study this case:
\begin{equation}
    \label{eq:mg2}
    \kappa(\Lambda) = \sup_{j \in \Z} \left( \frac{1}{1 +\beta_j\alpha_{j-1}}\right)
\end{equation}
which allows us to compute the Mordell constant efficiently. 

By using the equivalence we have established, we translate our discussion into solely bi-infinite sequences throughout this chapter.

\subsection{The Lower Part}
Using the formula \ref{eq:mg2}, we can compute both the lower bound and the lowest few points of the Mordell-Gruber spectrum.
\begin{theorem}
    The minimum of $\kappa(\Lambda)$ is
    $$ \min _{\Lambda \text{ unimodular}} \kappa(\Lambda) = \frac{5+\sqrt{5}} {10}$$
    and is only achievable when the index sequence is all $1$ (i.e. $\forall i \in \Z \quad a_i = 1$).
\end{theorem}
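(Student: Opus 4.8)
The plan is to work entirely with bi-infinite index sequences $\{a_n\}$ via the formula $\kappa(\Lambda) = \sup_{j\in\Z}\bigl(1+\beta_j\alpha_{j-1}\bigr)^{-1}$ from equation \eqref{eq:mg2}, where $\alpha_{j-1} = [0;a_j,a_{j+1},\dots]$ and $\beta_j = [0;a_{j-1},a_{j-2},\dots]$. Minimizing $\kappa$ means minimizing $\sup_j (1+\beta_j\alpha_{j-1})^{-1}$, i.e.\ \emph{maximizing} $\inf_j (1+\beta_j\alpha_{j-1})$, i.e.\ forcing the product $\beta_j\alpha_{j-1}$ to be as large as possible \emph{simultaneously} for every $j$. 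So the real statement to prove is: $\sup_j (1+\beta_j\alpha_{j-1})^{-1} \ge \tfrac{10}{5+\sqrt5}$ for every sequence, with equality iff $a_n\equiv 1$.

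First I would record the extremal computation: when $a_n\equiv 1$ for all $n$, both $\alpha_{j-1}$ and $\beta_j$ equal the golden-ratio value $\gamma = [0;1,1,1,\dots] = \tfrac{\sqrt5-1}{2}$, so $\beta_j\alpha_{j-1} = \gamma^2 = \tfrac{3-\sqrt5}{2}$, giving $1+\gamma^2 = \tfrac{5-\sqrt5}{2}$ and hence $(1+\gamma^2)^{-1} = \tfrac{2}{5-\sqrt5} = \tfrac{5+\sqrt5}{10}$, constant in $j$; thus $\kappa = \tfrac{5+\sqrt5}{10}$ in this case. This shows the minimum, if attained, is at most $\tfrac{5+\sqrt5}{10}$.

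Next, for the lower bound, I would argue that for any bi-infinite sequence there exists an index $j$ with $\beta_j\alpha_{j-1} \le \gamma^2$, which immediately gives $\kappa(\Lambda)\ge (1+\gamma^2)^{-1} = \tfrac{5+\sqrt5}{10}$. The key elementary fact is that $x\mapsto [0;x,a_2,a_3,\dots]$-type truncations and the map $a\mapsto [0;a,\theta]=\tfrac1{a+\theta}$ behave monotonically in each partial quotient, and crucially $[0;1,1,1,\dots]=\gamma$ is the \emph{largest} value a continued fraction $[0;b_1,b_2,\dots]$ with $b_i\in\N^+$ can have only in a restricted sense — actually the largest such value is $[0;1,2,1,2,\dots]$-type expressions approach, so one must be careful. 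The clean route: consider any $j$; if $a_j\ge 2$ then $\alpha_{j-1}=[0;a_j,\dots]\le \tfrac{1}{2}<\gamma$... this alone is not enough since $\beta_j$ could be close to $1$. So instead I would look at a position where a partial quotient $\ge 2$ occurs, say $a_k\ge 2$, and examine the two neighboring products $\beta_k\alpha_{k-1}$ and $\beta_{k+1}\alpha_k$ — one of them picks up $a_k$ in the $\alpha$ slot and the other in the $\beta$ slot, and a short estimate shows at least one of these two products is $\le\gamma^2$. If instead \emph{all} $a_n=1$ we are in the extremal case. This case-split (some $a_k\ge 2$ vs.\ all $a_n=1$) is the structural heart of the argument, and verifying that one of the two neighboring products drops to $\le\gamma^2$ is the main obstacle: it requires a genuine (if short) inequality comparing $[0;a_k,\mathbf{u}]\cdot[0;\mathbf{v}]$ and $[0;\mathbf{u}]\cdot[0;a_k,\mathbf{v}]$ against $\gamma^2$, using $a_k\ge 2$ and the universal bounds $0<\alpha,\beta<1$ together with the sharper bound that any $[0;b_1,b_2,\dots]<[0;1,1,1,\dots]=\gamma$ unless the expansion is all ones.

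Finally, for the equality/uniqueness claim I would show that if $\kappa(\Lambda)=\tfrac{5+\sqrt5}{10}$ then $\beta_j\alpha_{j-1}\ge\gamma^2$ for \emph{all} $j$ (since the sup of $(1+\beta_j\alpha_{j-1})^{-1}$ equals $(1+\gamma^2)^{-1}$ forces each term $\le (1+\gamma^2)^{-1}$, i.e.\ each product $\ge\gamma^2$). But $\alpha_{j-1}<\gamma$ and $\beta_j<\gamma$ would then be impossible unless... here I'd use that $\beta_j<1$ always, so $\alpha_{j-1}>\gamma^2$; pushing this through the recursion $\alpha_{j-1}=\tfrac{1}{a_j+\alpha_j}$ forces $a_j+\alpha_j<\gamma^{-2}=\tfrac{2}{3-\sqrt5}=\tfrac{3+\sqrt5}{2}\approx2.618$, so $a_j=1$ and $\alpha_j<\tfrac{1+\sqrt5}{2}-1=\gamma$; iterating in both directions forces $a_n=1$ for all $n$. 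Combined with the correspondence theorems (Theorem \ref{corre3}), this pins down the lattice up to the action of $G$, completing the proof. I expect the bidirectional propagation argument to be routine once the one-step inequality is in hand; the one-step neighboring-product estimate is where the real work lies.
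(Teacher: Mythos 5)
Your broad strategy matches the paper's: work with bi-infinite index sequences via \eqref{eq:mg2}, compute $\kappa = \tfrac{5+\sqrt5}{10}$ in the all-ones case, and then, for any sequence with some $a_k\ge 2$, exhibit a $j$ with $\beta_j\alpha_{j-1}<\gamma^2$. But you explicitly leave the central inequality unproven, labeling it ``the main obstacle$\dots$ where the real work lies,'' so the proposal does not actually establish the lower bound. The paper closes this gap with a short, concrete case analysis on the position of a maximal entry $a_0$: if $a_0\ge 3$ then $\alpha_{-1}\le\tfrac13$ and $\beta_0<1$ give the product $<\tfrac13<\gamma^2$; if $a_0=2$ (so all entries lie in $\{1,2\}$) a slightly finer continued-fraction bound gives the product $<0.31<\gamma^2$; and if $\{a_j\}$ is unbounded the product can be forced arbitrarily small. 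That analysis is what you would need to supply.

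Two specific errors in what you did write. First, the auxiliary claim at the end of your key-inequality paragraph is false: it is not true that $[0;b_1,b_2,\dots]<\gamma$ unless the expansion is all ones — for instance $[0;\overline{1,2}]=\sqrt3-1\approx 0.732>\gamma$, a fact you yourself flag one sentence earlier, so the argument as stated rests on a premise you know to be wrong. Second, in the uniqueness argument, from $\alpha_{j-1}>\gamma^2$ you derive $a_j+\alpha_j<\gamma^{-2}=\tfrac{3+\sqrt5}{2}\approx 2.618$ and then conclude $a_j=1$; but this inequality only gives $a_j\le 2$, and when $a_j=1$ the induced bound $\alpha_j<\gamma^{-2}-1\approx 1.618$ is vacuous (since always $\alpha_j<1$), so the propagation does not force $a_j=1$ at the next step and does not iterate. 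The clean route to uniqueness, as in the paper, is that the strict inequality established in the non-all-ones case already yields $\kappa(\Lambda)>\tfrac{5+\sqrt5}{10}$ there, so equality forces $a_n\equiv 1$ with no separate propagation argument needed.
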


\begin{proof}
    It's by direct computation that when all $a_i = 1$, then the Mordell constant is
    $$\frac{1}{1+\left( \frac{\sqrt{5}-1}{2} \right)^2} = \frac{5+\sqrt{5}} {10}$$
    Otherwise, consider the largest value of $\{a_j\}$. If $\{a_j\}$ is unbounded, then $\beta_j\alpha_{j-1}>0$ can be arbitrarily small, which means $\kappa(\Lambda) = 1$. Now WLOG suppose $a_0 \ge 2$ is the largest integer in $\{a_j\}$, we prove that $$\beta_{0} \alpha_{-1} < \left( \frac{\sqrt{5}-1}{2} \right)^2$$
    \begin{enumerate}
        \item If $a_0 \ge 3$: then $$\beta_{0} \alpha_{-1} < 1/3 \cdot 1 < 0.334 < \left( \frac{\sqrt{5}-1}{2} \right)^2$$
        \item If $a_0 = 2$: then $\alpha_{-1} < [0; \overline{1,2}]$ and $\beta_0 < [0; 2, \overline{2,1}]$. So
        $$\beta_{0} \alpha_{-1} < [0; \overline{1,2}]\cdot [0; 2, \overline{2,1}] <  \frac{4}{\sqrt{3}}-2 < 0.31 < \left( \frac{\sqrt{5}-1}{2} \right)^2$$
    \end{enumerate}
\end{proof}

Although this theorem is already proved in 1937 by Szekeres \cite{Szekeres_Lattice_Plane}, it serves as an example of showing the effectiveness of our above tools, which enables us to complete the proof in a few lines.

It's also feasible to study the spectrum in the region $[\frac{5+\sqrt{5}}{10}, \frac{1+\sqrt{5}}{4}]$. We have the following theorem:
\begin{theorem}
    When and only when $\kappa(\Lambda) \in [\frac{5+\sqrt{5}}{10}, \frac{1+\sqrt{5}}{4}]$, the index sequence is one of these, up to a shift:
    \begin{itemize}
        \item $(\overline{1})$;
        \item $(\overline{1}, \ 2, \  \overline{1})$;
        \item $(\overline{2,\ 1^{(k)}})$, where $k$ is an odd number.
    \end{itemize}
\end{theorem}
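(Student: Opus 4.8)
## Proof Proposal

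The plan is to characterize exactly which bi-infinite index sequences $\{a_n\}$ yield a Mordell constant $\kappa(\Lambda) \le \frac{1+\sqrt 5}{4}$, using the formula $\kappa(\Lambda) = \sup_{j \in \Z} \frac{1}{1 + \beta_j \alpha_{j-1}}$ from equation \eqref{eq:mg2}. Equivalently, writing $\phi = \frac{\sqrt 5 - 1}{2}$, the condition $\kappa(\Lambda) \le \frac{1+\sqrt 5}{4}$ translates into the uniform lower bound $\beta_j \alpha_{j-1} \ge \frac{1}{\phi} - 1 = \phi$ ... wait, let me recompute: $\frac{1}{1+\beta_j\alpha_{j-1}} \le \frac{1+\sqrt5}{4}$ means $1 + \beta_j\alpha_{j-1} \ge \frac{4}{1+\sqrt5} = \sqrt5 - 1 = 2\phi$, so $\beta_j \alpha_{j-1} \ge 2\phi - 1 = \sqrt5 - 2$ for every $j \in \Z$. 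So the first step is to set $c_0 := \sqrt 5 - 2 = (\frac{\sqrt5-1}{2})^2 = \phi^2$ and reformulate the theorem as: $\kappa(\Lambda) \le \frac{1+\sqrt5}{4}$ iff $\beta_j \alpha_{j-1} \ge \phi^2$ for all $j$, while $\kappa(\Lambda) \ge \frac{5+\sqrt5}{10}$ is automatic by the previous theorem.

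Next I would extract the pointwise constraints this places on the sequence. Since $\alpha_{j-1} = [0; a_j, a_{j+1}, \dots]$ and $\beta_j = [0; a_{j-1}, a_{j-2}, \dots]$, and each of these lies in $(0,1)$ with $[0; k, \dots] < 1/k$, a large entry $a_j$ forces $\alpha_{j-1}$ small, hence $\beta_j$ large, hence (recursing) constrains $a_{j-1}$. Concretely: if some $a_j \ge 3$ then $\alpha_{j-1} < 1/3$ and $\beta_j < 1$, giving $\beta_j \alpha_{j-1} < 1/3 < \phi^2 \approx 0.382$ — contradiction. So \emph{every entry is $1$ or $2$}. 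Then I would show two consecutive $2$'s are impossible: if $a_j = a_{j+1} = 2$ then $\alpha_{j-1} = [0;2,2,\dots] < [0;2,1] = 1/3$... actually I need the bound more carefully — $\alpha_{j-1} = [0; 2, a_{j+1}, \dots]$ with $a_{j+1}=2$ gives $\alpha_{j-1} < [0;2,1,\dots]$, and pairing with the best possible $\beta_j = [0; a_{j-1}, \dots] < [0;1,2,2,\dots]$ one checks $\beta_j\alpha_{j-1} < \phi^2$. The upshot of this step: the $2$'s are isolated, so the sequence is a concatenation of blocks of the form $2\, 1^{(k)}$ (a $2$ followed by $k \ge 1$ ones) or is all $1$'s, or has finitely many $2$'s.

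The heart of the argument is then a \emph{parity} constraint forcing each run of $1$'s between consecutive $2$'s to have odd length, which is where the genuine work lies. The mechanism: suppose $a_j = 2$, and look backward at the run $a_{j-1} = a_{j-2} = \dots = a_{j-m} = 1$, $a_{j-m-1} = 2$. The tail $\beta_j = [0; 1^{(m)}, 2, \dots]$ is, via the standard recursion for finite continued fraction truncations, alternately above or below the golden value $\phi$ depending on the parity of $m$ — for $m$ even $\beta_j$ is pushed to the side that makes $\beta_j$ too small when combined with a small $\alpha_{j-1}$ coming from $a_j = 2$ and whatever follows. I would make this precise by comparing $[0;1^{(m)}, 2, \dots]$ with the purely periodic $[0;\overline{1}] = \phi$ using that the convergents of $[0;\overline1]$ straddle $\phi$ with alternating sign, and that appending a larger partial quotient ($2$ instead of $1$) at position $m+1$ moves the value in a controlled direction. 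Pairing the resulting inequality for $\beta_j$ with the symmetric one for $\alpha_{j-1}$ (looking forward through the run after the $2$) gives $\beta_j \alpha_{j-1} < \phi^2$ precisely when some run has even length. Conversely, for $(\overline{2, 1^{(k)}})$ with $k$ odd, a direct computation of the periodic continued fractions $\beta_j, \alpha_{j-1}$ at each offset verifies $\beta_j \alpha_{j-1} \ge \phi^2$ everywhere, with equality forcing $\kappa = \frac{1+\sqrt5}{4}$ at the relevant offset. Finally I would handle the sequences with only finitely many $2$'s: a single $2$ in a sea of $1$'s gives exactly $(\overline1, 2, \overline1)$, and one checks its Mordell constant is $\frac{1+\sqrt5}{4}$ (attained in the limit or at the symmetric offset), while two or more isolated $2$'s reduce to the run-length analysis above — any finite even run between them, or the semi-infinite $1$-runs flanking them, must be compatible, and a short case check shows no new sequences appear.

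The main obstacle I anticipate is the parity computation: tracking how the sign of $\beta_j - \phi$ and $\alpha_{j-1} - \phi$ depends on run-length parity, and verifying that the product $\beta_j\alpha_{j-1}$ dips below $\phi^2$ exactly in the even case, requires careful bookkeeping with continued-fraction monotonicity (the map $x \mapsto [0; s_1, \dots, s_n, x]$ is increasing or decreasing according to the parity of $n$). A clean way to organize this is to introduce, for a fixed run of $1$'s of length $m$ bordered by $2$'s, explicit Möbius transformations for $\beta_j$ and $\alpha_{j-1}$ as functions of the neighboring tails, reduce the inequality $\beta_j\alpha_{j-1} \ge \phi^2$ to a single rational inequality in those tails, and check it is violated for even $m$ and satisfied for odd $m$ — but I expect the bulk of the page count to be exactly this computation, plus the endpoint verification that $(\overline1)$, $(\overline1,2,\overline1)$, and $(\overline{2,1^{(k)}})$ for odd $k$ all land in $[\frac{5+\sqrt5}{10}, \frac{1+\sqrt5}{4}]$ rather than above it.
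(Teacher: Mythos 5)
Your overall architecture matches the paper's: reformulate $\kappa(\Lambda)\le\frac{1+\sqrt5}{4}$ as a uniform lower bound on $\beta_j\alpha_{j-1}$, exclude large partial quotients, show $2$'s cannot be adjacent, then establish the run-length parity constraint by comparing tails to $[0;\overline1]$ and finish with the endpoint verifications. That said, there is a concrete arithmetic error at the very first step that then invalidates the crude part of your case analysis.

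You derive, correctly, that the condition is $\beta_j\alpha_{j-1}\ge\sqrt5-2$ for every $j$. But you then identify $\sqrt5-2$ with $\phi^2=\bigl(\frac{\sqrt5-1}{2}\bigr)^2$. These are different numbers: $\phi^2=1-\phi=\frac{3-\sqrt5}{2}\approx0.382$, whereas $\sqrt5-2\approx0.236$. (The constant $\phi^2$ is what $\beta_j\alpha_{j-1}$ equals for the all-ones sequence, i.e.\ at the minimum $\frac{5+\sqrt5}{10}$ of the spectrum; it is not the threshold for the endpoint $\frac{1+\sqrt5}{4}$.) Your exclusion of an entry $a_j\ge3$ then reads ``$\beta_j\alpha_{j-1}<\tfrac13<\phi^2$, contradiction,'' which only works against the inflated threshold. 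Against the correct threshold, $\tfrac13>\sqrt5-2$, so this bound excludes nothing, and in fact neither does the analogous crude bound for your ``no two consecutive $2$'s'' step taken at offset $j$ (you need to look at offset $j+1$, where both factors are of the form $[0;2,\cdots]$, to get a product below $\sqrt5-2$). The paper instead uses sharper two-term continued-fraction bounds for $a_0\in\{3,4\}$; the crude $1/a_0$ bound suffices only for $a_0\ge5$. Once you correct the constant, the $a_j=3$ and $a_j=4$ cases genuinely require a finer estimate than the one-line argument you give, and you should be aware that this is exactly the regime where the bookkeeping is delicate (the best one-offset bound for $a_0=3$ lands near the threshold, so one has to be careful to choose the right offset or to track several of them).

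With the constant corrected and the offset chosen properly for the consecutive-$2$'s step, the rest of your plan — organizing the parity argument around the sign of the convergents of $[0;\overline1]$ and checking the three families at the end — is essentially the same route the paper takes, and I see no further structural obstacle.
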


\begin{proof}
    Recall a lemma of comparing two continued fractions:

    Let $S = (s_0; s_1, s_2, \cdots)$ and $T = (t_0; t_1, \cdots)$ be two continued fractions, and let $i$ be the smallest number such that $s_i < t_i$, and $s_j = t_j$ for every $j < i$. If $i$ is even, then $S<T$, otherwise $S>T$.

    The next step is to find infimum of $\beta_j\alpha_{j-1}$, because
    $$\kappa(\Lambda) \le \frac{1+\sqrt{5}}{4} \iff \inf_{j \in \Z} \beta_j\alpha_{j-1} \ge \sqrt{5} -2$$
    
    WLOG let $a_0$ be the largest number in $\{a_n\}$, since $\kappa(\Lambda)$ is obviously $1$ if $\{a_n\}$ is unbounded.

    \begin{itemize}
        \item $a_0 \ge 5$: $\alpha_{-1} \le \frac{1}{5}$, $\alpha_{-1}\beta_0 \le \frac{1}{5} < \sqrt{5} -2$, which is impossible.
        \item $a_0 = 4$: $\alpha_{-1} \le [0; 4,5]$ and $\beta_{-1} \le [0; 1,5]$, $\alpha_{-1}\beta_0 \le \frac{25}{126} < \sqrt{5} -2$, which is also impossible.
        \item $a_0 = 3$: $\alpha_{-1} \le [0; 3,4]$ and $\beta_{-1} \le [0; 1,4]$, $\alpha_{-1}\beta_0 \le \frac{16}{65} < \sqrt{5} -2$, which is also impossible.
        \item $a_0 = 1$: As discussed above, all $a_n =1$, and $\kappa(\Lambda)  = \frac{5+\sqrt{5}}{10}$.
        \item $a_0 = 2$: Then all $a_n$ must be either $1$ or $2$. 
        \begin{itemize}
            \item If there is only one $2$ in the sequence: The sequence is therefore $(\overline{1}, 2, \overline{1})$ with $\inf_{j \in \Z} \beta_j\alpha_{j-1} = \beta_0 \alpha_{-1} = [0; \overline{1}]\cdot [0; 2, \overline{1}] = \sqrt{5} -2$ and $\kappa(\Lambda)  = \frac{1+\sqrt{5}}{4}$.
            \item If there are at least two terms of $2$ in the sequence:
            First we show that there can't exist two consecutive $2$'s: Assume $a_{-1} = a_0 = 2$, then $\alpha_{-1} < [0; 2,3] = \frac{3}{7}$, so is $\beta_0$, and $\alpha_{-1} \beta_0 < \frac{9}{49} < \sqrt{5} - 2$.
            \begin{itemize}
                \item If there exists $j$ such that $a_j=2$ and for all $i<j$, $a_i = 1$: WLOG let $j=0$. 
                $$ ( \overline {1}, 2, 1^{(k)}, 2, \cdots) (k \in \N^+)$$
                \begin{itemize}
                    \item If $k$ is odd: then $\alpha_{-1} = [0; 2, 1^{(k)}, 2, \cdots] < [0; 2, \overline{1}] = \frac{3-\sqrt{5}}{2}$, but $\beta_0 = \frac{\sqrt{5}-1}{2}$, $\alpha_{-1}\beta_0 < \sqrt{5} -2$, impossible.
                    \item If $k$ is even: then $\alpha_{0} = [0; 1^{(k)}, 2, \cdots] < [0; \overline{1}] = \frac{\sqrt{5}-1}{2}$ and $\beta_1 = [0; 2, \overline{1}] = \frac{3-\sqrt{5}}{2}$. So $\alpha_0\beta_1 < \sqrt{5} -2$, impossible.
                \end{itemize}
                \item If there exists $j$ such that $a_j=2$ and for all $i>j$, $a_i = 1$:
                $$ (\cdots, 2, 1^{(k)}, 2, \overline {1}) (k \in \N)$$
                The discussion is same as above by swapping $\alpha$ and $\beta$.
                \item The remaining sequences look like:
                $$ (\cdots, 2, 1^{(k_1)}, 2(*), 1^{(k_2)}, 2, \cdots) (k_1, k_2 \in \N)$$
                Where WLOG let $a_0$ be the position of $*$ in the sequence.
                
                We prove that $k_1 = k_2$ and is an odd number.

                \begin{itemize}
                    \item If $k_1$ is odd, $k_2$ is even: 
                    $$\beta_1 = [0; 2, 1^{(k_1)}, 2, \cdots] < \frac{3-\sqrt{5}}{2}$$
                    $$\alpha_0 = [0; 1^{(k_2)}, 2, \cdots] < \frac{\sqrt{5}-1}{2}$$
                    $$\alpha_0 \beta_1 < \sqrt{5}-2 $$
                    \item If $k_1$ is odd, $k_2$ is even: similar as above.
                    \item If $k_1$ and $k_2$ are odd but not equal: WLOG $k_1 \ge k_2 + 2$, then
                    $$\beta_0 = [0; 1^{(k_1)}, 2, \cdots] < [0; 1^{(k_2+2)}, 3]$$
                    $$\alpha_{-1} = [0; 2, 1^{(k_2)}, 2, 1, \cdots] < [0; 2, 1^{(k_2)}, 2, 2]$$
                    $$\alpha_{-1} \beta_0 < [0; 1^{(k_2+2)}, 3] \cdot [0; 2, 1^{(k_2)}, 2, 2] < \sqrt{5}-2 $$
                    \item If $k_1$ and $k_2$ are even but not equal: WLOG $k_1 \ge k_2+2$, then
                    $$\alpha_{0} = [0; 1^{(k_2)}, 2, 1, \cdots] < [0; 1^{(k_2)}, 2, 2]$$
                    $$\beta_1 = [0;2, 1^{(k_1)}, 2, \cdots] < [0; 2, 1^{(k_2+2)}, 3]$$
                    $$\alpha_{0} \beta_1 < [0; 2, 1^{(k_2+2)}, 3] \cdot [0; 1^{(k_2)}, 2, 2] < \sqrt{5}-2 $$
                    (For a detailed proof of the above two inequalities, see \cite{Lesca_Diophantine_Approximations}.)
                    \item If all $k_1 = k_2$, but are even: 
                    $$\beta_0 = [0; \overline {1^{(k)}, 2}], \quad \alpha_{-1} = [0; \overline {2, 1^{(k)}}]$$
                    $$\alpha_{-1} \beta_0 = [0; \overline {1^{(k)}, 2}] [0; \overline {2, 1^{(k)}}]$$
                    In this case, we rewrite $\alpha_{-1} \beta_0$ as
                    $$\alpha_{-1} \beta_0 = \frac{h_k}{2+h_k}, \quad \text{where } h_k = [0; \overline {1^{(k)}, 2}] \in (0,1)$$
                    as a monotonic function about $h_k$.
                    Since $\{h_{2j}\}_j$ is a monotonic increasing sequence with limit $\frac{\sqrt{5}-1}{2}$, we obtain that $\alpha_{-1} \beta_0 < \sqrt{5} -2$.
                    \item If all $k_1 = k_2$, but are odd: This is possible, see below for a verification.
                \end{itemize}
            \end{itemize}
        \end{itemize}
    \end{itemize}
    Now, it's also necessary to verify that these 3 kinds of sequences do satisfy the condition above. 
    \begin{itemize}
        \item If the sequence is $(\overline{1})$: Obviously.
        \item If the sequence is $(\overline{1}, \ 2, \  \overline{1})$, where WLOG $a_0 = 2$. We have verified that $\beta_0 \alpha_{-1} = [0; \overline{1}]\cdot [0; 2, \overline{1}] = \sqrt{5} -2$, while for a different offset, $[0; \overline{1}] \cdot [0; 1^{(l)}, 2, \overline{1}] > [0; \overline{1}]\cdot [0; 2, \overline{1}] = \sqrt{5} -2$.
        \item $(\overline{2,\ 1^{(k)}})$, where $k$ is an odd number and WLOG $a_0 = 2$.
        Similarly, 
        $$\alpha_{-1} \beta_0 = \frac{h_k}{2+h_k}, \quad \text{where } h_k = [0; \overline {1^{(k)}, 2}] \in (0,1)$$
        Since $\{h_{2j-1}\}_j$ is a monotonic decreasing sequence with limit $\frac{\sqrt{5}-1}{2}$, we obtain that $\alpha_{-1} \beta_0 > \sqrt{5} -2$.

        To verify that the infimum $\inf_{j \in \Z} \beta_j\alpha_{j-1} = \alpha_{-1} \beta_0$, consider a different offset $\alpha_j \beta_{j+1}$, where WLOG $j = 0,1,2, \cdots, k-1$. 
        
        For $j=0$, we have $\alpha_0 = \beta_0$ and $\beta_1 = \alpha_{-1}$. 
        
        For $j = 1, 2, \cdots, k-1$:
        $$\alpha_j = [0;\  1^{(k-j)}, \ \overline{2, \ 1^{(k)}}]$$
        $$\beta_{j+1} = [0;\  1^{(j)},\  \overline{2, \ 1^{(k)}}]$$
        \begin{itemize}
            \item If $j$ is odd, then $k-j$ is even. 
            $$\alpha_j = [0;\  1^{(k-j)}, \ \overline{2, \ 1^{(k)}}] > [0;\ \overline{2, \ 1^{(k)}}] = \alpha_{-1}$$
            $$\beta_{j+1} = [0;\  1^{(j)},\  \overline{2, \ 1^{(k)}}] > [0;\  1^{(k)},\  \overline{2, \ 1^{(k)}}] = \beta_{0}$$
            $$\alpha_{j}\beta_{j+1} > \alpha_{-1}\beta_{0}$$
            \item If $j$ is even, then $k-j$ is odd. Similarly, $\alpha_j > \beta_0$ and $\beta_{j+1} > \alpha_{-1}$.
        \end{itemize}
    \end{itemize}
\end{proof}

Based on the above theorem, the first few terms in the Mordell-Gruber spectrum are:
\begin{itemize}
    \item $(\overline{1})$: $\kappa(\Lambda) = \frac{5+\sqrt{5}}{10} \approx 0.7236$
    \item $(\overline{2,\ 1})$: $\kappa(\Lambda) = \frac{3+\sqrt{3}}{6} \approx 0.7887$
    \item $(\overline{2,\ 1,\  1,\  1})$: $\kappa(\Lambda) = \frac{4+\sqrt{6}}{8} \approx 0.8062$
    \item $\cdots$
    \item $(\overline{1}, \ 2, \  \overline{1})$: $\kappa(\Lambda) = \frac{1+\sqrt{5}}{4} \approx 0.8090$
\end{itemize}

The closed form of the Mordell constant of $(\overline{2,\ 1^{(2t-1)}})$ is computed in the following procedure.

Let $\{F_n\}$ denote the Fibonacci numbers, i.e., $F_0 = 0, F_1 = 1, F_{n+1} = F_n + F_{n-1} (n \in \N^+)$.

Set $x_t = [1; \ \overline{1^{(t-1)},\  2, \ 1}]$, then $x_t$ is a quadratic rational, and is the unique positive root of the following quadratic equation (by simple induction):
$$x_t = \frac{F_{2t+2} x_t + F_{2t}}{F_{2t+1}x_t + F_{2t-1}}, \quad \text{or } F_{2t+1} x_t^2 - 2F_{2t}x_t + F_{2t} = 0$$
and
$$\kappa(\Lambda) = \frac{1}{1+\frac{1}{x_t}\cdot \frac{1}{2+\frac{1}{x_t}}} = \frac{2x_t+1}{2x_t+2} = \frac{1}{2} + \frac{1}{2} \cdot \frac{x_t}{x_t+1}$$
where
$$\frac{x_t}{x_t+1} = \frac{F_{2t} + \sqrt{F_{2t}^2 + F_{2t}F_{2t+1}}}{F_{2t} + F_{2t+1} + \sqrt{F_{2t}^2 + F_{2t}F_{2t+1}}} = \sqrt{\frac{F_{2t}}{F_{2t}+F_{2t+1}}} = \sqrt{\frac{F_{2t}}{F_{2t+2}}}$$
So
$$\kappa(\Lambda) = \frac{1+\sqrt{\frac{F_{2t}}{F_{2t+2}}}}{2} \quad (t = 1, 2, \cdots)$$

In summary, we have the following theorem:
\begin{theorem}
    The spectrum $\mathrm{MG}_2$ satisfies the following:
    \begin{itemize}
        \item $\min (\mathrm{MG}_2) = \frac{5+\sqrt{5}}{10}$;
        \item The lower part:
        $$\mathrm{MG}_2 \cap \left(\frac{5+\sqrt{5}}{10} , \frac{1+\sqrt{5}}{4}\right) = \left\{ \frac{1+\sqrt{\frac{F_{2t}}{F_{2t+2}}}}{2}\ \middle| \ t = 1, 2, \cdots \right\}$$
        \item The smallest accumulation point of $\mathrm{MG}_2$ is $\frac{1+\sqrt{5}}{4} \in \mathrm{MG}_2$.
    \end{itemize}
\end{theorem}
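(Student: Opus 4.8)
The plan is to assemble this summary theorem from the two preceding theorems of this subsection together with the explicit Fibonacci computation already displayed, so that essentially no new mathematics is needed; the work consists of checking that the pieces fit and that the endpoints and the discreteness are bookkept correctly. The first bullet is literally the statement of the first theorem of the subsection, so I would simply cite it.

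For the second bullet I would invoke the characterization theorem: $\kappa(\Lambda)\in[\tfrac{5+\sqrt5}{10},\tfrac{1+\sqrt5}{4}]$ precisely when the index sequence is, up to a shift, one of $(\overline{1})$, $(\overline{1},2,\overline{1})$, or $(\overline{2,1^{(k)}})$ with $k$ odd. The sequence $(\overline{1})$ yields $\kappa=\tfrac{5+\sqrt5}{10}$ and $(\overline{1},2,\overline{1})$ yields $\kappa=\tfrac{1+\sqrt5}{4}$, i.e.\ exactly the two endpoints of the closed interval; writing $k=2t-1$, the family $(\overline{2,1^{(2t-1)}})$ yields, by the Fibonacci computation preceding the theorem, $\kappa(\Lambda)=\tfrac12\bigl(1+\sqrt{F_{2t}/F_{2t+2}}\bigr)$. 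Intersecting with the \emph{open} interval therefore removes the two endpoint sequences and leaves precisely $\{\tfrac12(1+\sqrt{F_{2t}/F_{2t+2}}):t\ge 1\}$, provided each such value genuinely lies strictly inside $(\tfrac{5+\sqrt5}{10},\tfrac{1+\sqrt5}{4})$.

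That provision is the one short computation I would actually carry out. Using a standard Fibonacci identity (Catalan's identity gives $F_{2t+2}^2-F_{2t}F_{2t+4}=F_2^2=1>0$), the ratio $F_{2t}/F_{2t+2}$ is strictly increasing in $t$; since consecutive Fibonacci ratios converge to $\varphi=\tfrac{1+\sqrt5}{2}$, its supremum is $\varphi^{-2}=\tfrac{3-\sqrt5}{2}$; and already at $t=1$ it equals $F_2/F_4=\tfrac13$. Because $x\mapsto\tfrac12(1+\sqrt x)$ is increasing, the values $c_t:=\tfrac12(1+\sqrt{F_{2t}/F_{2t+2}})$ are strictly increasing, pairwise distinct, satisfy $c_1=\tfrac{3+\sqrt3}{6}>\tfrac{5+\sqrt5}{10}$, and satisfy $c_t<\tfrac12(1+\sqrt{\varphi^{-2}})=\tfrac12(1+\varphi^{-1})=\tfrac{1+\sqrt5}{4}$. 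Hence every $c_t$ lies in the open interval, which completes the second bullet.

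For the third bullet, the same computation shows $c_t\uparrow\tfrac{1+\sqrt5}{4}$, so $\tfrac{1+\sqrt5}{4}$ is an accumulation point of $\mathrm{MG}_2$, and it belongs to $\mathrm{MG}_2$ since it is $\kappa(\Lambda)$ for the lattice with index sequence $(\overline{1},2,\overline{1})$. To see it is the \emph{smallest} accumulation point, I would observe that by the first two bullets $\mathrm{MG}_2\cap[\tfrac{5+\sqrt5}{10},\tfrac{1+\sqrt5}{4})=\{\tfrac{5+\sqrt5}{10}\}\cup\{c_t:t\ge 1\}$, which is a strictly increasing sequence together with its isolated minimum, hence has no accumulation point inside $[\tfrac{5+\sqrt5}{10},\tfrac{1+\sqrt5}{4})$; therefore no point below $\tfrac{1+\sqrt5}{4}$ can be an accumulation point of $\mathrm{MG}_2$. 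I do not expect any genuine obstacle: all substantive content lives in the two preceding theorems, and the only care required is confirming that the two boundary index sequences contribute exactly the excluded endpoints and that $F_{2t}/F_{2t+2}$ is monotone with limit $\varphi^{-2}$.
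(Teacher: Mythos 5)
Your proposal is correct and matches the paper's intent exactly: the paper presents this result as a direct summary ("In summary, we have the following theorem") of the two preceding theorems and the displayed Fibonacci computation, with no separate proof, and your assembly of those pieces --- identifying the endpoints with the sequences $(\overline{1})$ and $(\overline{1},2,\overline{1})$, placing the $(\overline{2,1^{(2t-1)}})$ family strictly inside the open interval, and deducing discreteness below $\tfrac{1+\sqrt5}{4}$ --- is precisely what the paper leaves implicit. Your use of Catalan's identity to confirm the monotonicity of $F_{2t}/F_{2t+2}$ is a clean, self-contained way to verify what the paper earlier established via the monotonicity of $h_k = [0;\overline{1^{(k)},2}]$, so it is a harmless minor variation rather than a different route.
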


Note: In 1972, Divis \cite{Divis_Lagrange_Numbers} studied a very similar spectrum, where $2+\sqrt{5}$ (corresponding to $\frac{1+\sqrt{5}}{4}$ in our case) doesn't belong to the spectrum. It's worth noting that only one-sided infinite sequences are taken into account in the spectrum of Divis. Since we discuss bi-infinite sequences, it's worth reconsidering if the Mordell-Gruber spectrum (in the bi-infinite sense) is closed, as pointed out in a revision of \cite{Shapira_Weiss_Mordell_Gruber}. We will prove later that the Mordell-Gruber spectrum is actually closed.

\subsection{Existence of Hall's Ray or Segment}
In the Lagrange Spectrum $\mathrm{L}$ \cite{Cusick_Flahive_Markoff_Lagrange}, it's proved that there exists a constant $C$, such that $[ C, \infty] \subset \mathrm{L}$. The part is called \emph{Hall's ray}, which was proved in Hall's famous paper \cite{Hall_Sum_Product_CF}. The minimum of $C$ such that $[ C, \infty] \subset \mathrm{L}$ is called \emph{Freiman's Constant}.

We are interested in whether the Mordell-Gruber spectrum admits a Hall's ray. Since we already know that $\mathrm{MG}_2 \subset \left[\frac{5+\sqrt{5}}{10}, 1\right]$, we prove that there exists a segment $[C, 1] \subset \mathrm{MG}_2$.

Consider a special case of the sequence $\{a_n\}$, such that $a_{-1} \ge 5,\  a_0 \ge 5$, and for all $t \notin {0,1}$, $a_t \le 4$.

\begin{lemma}
    In this case, $\inf_{j \in \Z} \beta_j\alpha_{j-1} = \beta_{0} \alpha_{-1}$.
\end{lemma}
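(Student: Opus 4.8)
The plan is to show that $\beta_j\alpha_{j-1}\ge\beta_0\alpha_{-1}$ for every $j\in\Z$, with strict inequality unless $j=0$, so that the infimum is attained (uniquely) at $j=0$. Throughout I will use the one–step recursions $\alpha_{j-1}=1/(a_j+\alpha_j)$ and $\beta_{j}=1/(a_{j-1}+\beta_{j-1})$, together with the elementary observation that a continued fraction $[0;c_1,c_2,\dots]=1/(c_1+[0;c_2,\dots])$ lies in $\bigl(1/(c_1+1),\,1/c_1\bigr)$, hence is $<1/5$ when $c_1\ge 5$ and $>1/5$ when $c_1\le 4$. In particular $\beta_0=[0;a_{-1},a_{-2},\dots]<1/5$ and $\alpha_{-1}=[0;a_0,a_1,\dots]<1/5$, so $\beta_0\alpha_{-1}<1/25$.

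First I dispose of the ``far'' offsets $j\notin\{-1,0,1\}$. If $j\ge 2$, then both $a_{j-1}\le 4$ (since $j-1\ge 1$) and $a_j\le 4$ (since $j\ge 2$), so $\beta_j=1/(a_{j-1}+\beta_{j-1})>1/5$ and $\alpha_{j-1}=1/(a_j+\alpha_j)>1/5$, whence $\beta_j\alpha_{j-1}>1/25>\beta_0\alpha_{-1}$. The case $j\le -2$ is identical: now $a_{j-1}\le 4$ because $j-1\le -3$ and $a_j\le 4$ because $j\le -2$, giving again $\beta_j\alpha_{j-1}>1/25>\beta_0\alpha_{-1}$.

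It remains to treat the two ``mixed'' offsets. For $j=1$, write $\beta_1=1/(a_0+\beta_0)$ and $\alpha_{-1}=1/(a_0+\alpha_0)$; then $\beta_1\alpha_0\ge\beta_0\alpha_{-1}$ is, after clearing denominators, equivalent to $\alpha_0(a_0+\alpha_0)\ge\beta_0(a_0+\beta_0)$, i.e.\ to $\alpha_0\ge\beta_0$, because $x\mapsto x(a_0+x)$ is strictly increasing on $(0,\infty)$. This holds strictly, since $\alpha_0=[0;a_1,a_2,\dots]>1/5$ (as $a_1\le 4$) while $\beta_0<1/5$. Symmetrically, for $j=-1$ write $\alpha_{-2}=1/(a_{-1}+\alpha_{-1})$ and $\beta_0=1/(a_{-1}+\beta_{-1})$; then $\beta_{-1}\alpha_{-2}\ge\beta_0\alpha_{-1}$ reduces to $\beta_{-1}(a_{-1}+\beta_{-1})\ge\alpha_{-1}(a_{-1}+\alpha_{-1})$, i.e.\ $\beta_{-1}\ge\alpha_{-1}$, which holds strictly because $\beta_{-1}=[0;a_{-2},a_{-3},\dots]>1/5>\alpha_{-1}$. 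Combining the three cases yields $\inf_{j\in\Z}\beta_j\alpha_{j-1}=\beta_0\alpha_{-1}$.

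I do not anticipate a genuine obstacle; the only mildly delicate point is the pair of mixed offsets, where the crude size estimate is too weak ($\beta_1<1/5$ and $\alpha_0<1$ give only $\beta_1\alpha_0<1/5$) and must be replaced by the exact reduction to $\alpha_0\ge\beta_0$ (resp.\ $\beta_{-1}\ge\alpha_{-1}$) via monotonicity of $x\mapsto x(c+x)$. The gap at the value $1/5$ between entries $\le 4$ and entries $\ge 5$ is precisely what forces those comparisons in the right direction. (I also note that the hypothesis should read ``$a_t\le 4$ for all $t\notin\{-1,0\}$'', consistent with $a_{-1},a_0\ge 5$ and with the claimed minimizer $\beta_0\alpha_{-1}$.)
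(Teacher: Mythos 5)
Your proof is correct, and it correctly flags the typo in the hypothesis (which should read $a_t\le 4$ for all $t\notin\{-1,0\}$). The overall decomposition into far offsets ($|j|\ge 2$) and mixed offsets ($j=\pm1$) is the same as the paper's, and the far-offset case is handled identically — both factors being continued fractions with leading entry $\le 4$ exceed $1/5$, so the product exceeds $1/25>\beta_0\alpha_{-1}$. Where you genuinely improve on the paper is the treatment of $j=\pm1$. The paper sandwiches $\beta_1\alpha_0$ between two explicit finite continued-fraction products, namely $\beta_1\alpha_0>[0;5]\,[0;a_0,a_{-1}]$ and $\beta_0\alpha_{-1}<[0;a_0,5]\,[0;a_{-1},5]$, and then verifies the numerical inequality $5(a_0+1/a_{-1})<(a_0+\tfrac15)(a_{-1}+\tfrac15)$ relating the two bounds. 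You instead use the recursions $\beta_1=1/(a_0+\beta_0)$, $\alpha_{-1}=1/(a_0+\alpha_0)$ to reduce $\beta_1\alpha_0\ge\beta_0\alpha_{-1}$ \emph{exactly} to $\alpha_0(a_0+\alpha_0)\ge\beta_0(a_0+\beta_0)$, i.e.\ to $\alpha_0\ge\beta_0$ by monotonicity of $x\mapsto x(a_0+x)$, which then falls out immediately from $\alpha_0>1/5>\beta_0$. This avoids any ad hoc numerical verification and makes crisply visible which structural feature of the hypotheses ($a_1\le 4$ versus $a_{-1}\ge5$) is actually doing the work; the paper's route is more of a bookkeeping bound and works but is less transparent about where the slack comes from.
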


\begin{proof}
    First, we check that
    $$\beta_0 \alpha_{-1} = [0;\  a_0,\  a_1,\  \cdots][0;\  a_{-1},\  a_{-2},\  \cdots] < [0;\ a_0,\  5][0;\  a_{-1},\  5]$$
    $$\beta_{1} \alpha_{0} = [0;\  a_1,\ a_2, \  \cdots][0;\ a_0, \  a_{-1},\  \cdots] > [0; 5][0;\  a_{0},\  a_{-1}]$$
    To check that $[0; 5] [0;\  a_{0},\  a_{-1}] > [0;\ a_0,\  5] [0;\  a_{-1},\  5]$, it suffices to check 
    $$5\left(a_0 + \frac{1}{a_{-1}}\right) < \left(a_0 + \frac15 \right)\left(a_{-1} + \frac15 \right)$$
    which is obvious when $a_0, a_{-1} \ge 5$. 

    Similarly, $[0; 5] [0;\  a_{-1},\  a_{0}] > [0;\ a_0,\  5] [0;\  a_{-1},\  5]$, which implies $\beta_{-1} \alpha_{-2} > \beta_0 \alpha_{-1}$.

    For other offsets, 
    $$\beta_j \alpha_{j-1} > [0;\ 5] [0;\ 5] = \frac{1}{25} > [0;\ a_0,\  5] [0;\  a_{-1},\  5]$$
    Which finishes the proof.
\end{proof}

\begin{lemma}
    In the above case, $\beta_0 \alpha_{-1}$ can be any value in $\left(0, \frac{4}{83+18\sqrt{2}}\right]$.
\end{lemma}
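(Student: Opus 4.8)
The plan is to realize every value in the interval $\left(0, \frac{4}{83+18\sqrt 2}\right]$ as $\beta_0\alpha_{-1} = [0;\,a_0,a_1,\dots]\cdot[0;\,a_{-1},a_{-2},\dots]$ subject to the standing constraints $a_{-1},a_0\ge 5$ and $a_t\le 4$ for $t\notin\{0,1\}$. Since the two tails $[0;\,a_0,a_1,\dots]$ and $[0;\,a_{-1},a_{-2},\dots]$ are determined by \emph{disjoint} halves of the index sequence, I can choose them independently: the first half $(a_0,a_1,a_2,\dots)$ must satisfy $a_0\ge 5$, $a_t\le 4$ for $t\ge 2$, and the left half $(a_{-1},a_{-2},\dots)$ must satisfy $a_{-1}\ge 5$, $a_t\le 4$ for $t\le -2$ (the value $a_1$ is unconstrained, so it also lies in $\{1,\dots,4\}$ by the same token; this does not matter since $a_1\ge 1$ always). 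Thus it suffices to understand the set
$$
E := \left\{ [0;\,a_0,a_1,a_2,\dots]\ \middle|\ a_0\ge 5,\ a_j\le 4\ \text{for } j\ge 2,\ a_j\in\N^+ \right\},
$$
and then show $E\cdot E = \left(0,\frac{4}{83+18\sqrt2}\right]$, because $\beta_0$ ranges over (a set containing) $E$ and $\alpha_{-1}$ ranges over (a set containing) $E$ as well. Actually only the first digit is pinned to be $\ge 5$; the digit in position $1$ is free, so the relevant factor sets are slightly larger than $E$, but taking the two tails both of the restricted form $E$ already covers the claimed interval, and the reverse inclusion $\beta_0\alpha_{-1}\le \frac{4}{83+18\sqrt2}$ follows from the extremal computation below. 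So the real content is: determine $\max E$ and $\inf E$ (or rather the closure of $E$), then prove $E$ is an interval-like set whose pairwise product fills the target.

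The key computation is the endpoint. The largest element of $E$ is obtained by making all the constrained digits as small as allowed: $a_0 = 5$ and then $a_j$ as small as possible for $j\ge 1$, i.e. $a_j = 1$ for all $j\ge 1$, giving $\max E = [0;\,5,\overline 1] = [0;5,1,1,1,\dots] = \frac{1}{5 + \frac{1}{\varphi}}$ where $\varphi = \frac{1+\sqrt5}{2}$; a short calculation gives $[0;5,\overline1] = \frac{2}{9+\sqrt5}$. Hmm — let me instead match the stated constant: we need $\max(E\cdot E) = \frac{4}{83+18\sqrt2}$, and $\sqrt{\frac{4}{83+18\sqrt2}} = \frac{2}{\sqrt{83+18\sqrt2}}$, and $83+18\sqrt2 = (9+\sqrt2)^2 - (2\cdot 9\sqrt2 - 18\sqrt2)\dots$ — in any case the right comparison is with $[0;5,\overline{1,4}]$ or similar: to \emph{maximize} a continued fraction $[0;a_0,a_1,a_2,\dots]$ with $a_0$ fixed we want $a_1$ large; with $a_1$ unconstrained, $a_1\to\infty$ would push the value up, but then we would be maximizing the \emph{other} tail simultaneously and the Lemma's reduction $\inf_j\beta_j\alpha_{j-1}=\beta_0\alpha_{-1}$ forces a trade-off — so the honest statement is that, under the constraints, the supremum of $\beta_0\alpha_{-1}$ over \emph{admissible} sequences is attained at the specific periodic sequence with $a_{-1}=a_0=5$ and the pattern $\overline{\dots,1,4,\dots}$ in the tails, evaluating to $\frac{4}{83+18\sqrt2}$. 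I would nail this by writing $\alpha_{-1},\beta_0$ as the roots of explicit quadratics coming from the eventually-periodic expansions, multiply, and simplify.

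The main obstacle is the \emph{surjectivity} onto the whole interval $\left(0,\frac{4}{83+18\sqrt2}\right]$, not just the endpoint. For this I would use the now-standard Cantor-set product technology (the paper's Theorem \ref{interval} is precisely designed for "certain functions on the cartesian product of two Cantor sets contain an interval"): the set of admissible tail values forms a Cantor set $K$ (a self-similar set cut out by the digit restriction $a_j\le 4$, with $a_0\ge 5$ scaling it down), the map $(x,y)\mapsto xy$ is smooth with non-vanishing partial derivatives on $K\times K$ away from $0$, and the thickness of $K$ is large enough that Newhouse-type / Hall-type arguments give that $K\cdot K$ contains an interval $[0^+, \max]$. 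Concretely: (1) show $K$ (tails with all digits $\le 4$) has thickness $>1$, hence $K+K\supseteq$ an interval, hence by the $\log$ trick $K\cdot K\supseteq$ an interval $[m, M^2]$ for the appropriate $m$; (2) show $m$ can be taken to be $0$ because $K$ contains values arbitrarily close to $0$ (take $a_1$ huge) while retaining a fixed large element, so the product set accumulates at $0$ and, being a union of scaled copies, actually reaches down to $0$; (3) verify $M^2 = \frac{4}{83+18\sqrt2}$ from step (2) of this proposal. Step (1) — the thickness estimate for the digit-$\le 4$ Cantor set and the reduction of the product to the sum via $\log$ so that Theorem \ref{interval} applies verbatim — is where the work concentrates; everything else is bookkeeping with quadratic surds.
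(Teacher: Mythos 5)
The paper proves this lemma in one line: it is a direct citation of Hall's 1947 theorem (the paper writes ``This is exactly shown by Hall at Theorem 3.2, where we take minimum of $F(5,4)$''). You chose instead to reconstruct the argument; that is a genuinely different route, and one that is in the spirit of what Hall's cited theorem actually proves, so it is worth comparing. Your reduction is correct: $\alpha_{-1}=[0;\,a_0,a_1,\dots]$ and $\beta_0=[0;\,a_{-1},a_{-2},\dots]$ depend on disjoint blocks of the index sequence, so one is free to choose each tail independently inside the set of continued fractions whose first digit is at least $5$ and whose remaining digits are at most $4$, and the previous lemma already guarantees that whatever value $\beta_0\alpha_{-1}$ you realize this way is achieved as the $\inf$. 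Your endpoint identification is also ultimately right once you patch it: the maximizer is $[0;\,5,\overline{4,1}]=\tfrac{2}{9+\sqrt2}$, whose square is $\tfrac{4}{(9+\sqrt2)^2}=\tfrac{4}{83+18\sqrt2}$. But note that your first attempt $[0;\,5,\overline1]$ came from the wrong optimization: to maximize $[0;\,5,a_1,a_2,\dots]$ you must \emph{minimize} $[0;\,a_1,a_2,\dots]$, which under $a_j\le 4$ means taking $a_1$ as \emph{large} as allowed, not as small; similarly, to approach $0$ you send $a_0\to\infty$ (the leading digit, which is only bounded below), not $a_1$, which is bounded above by $4$.

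The place where your sketch is genuinely incomplete is the surjectivity step. The factor set $E=\{[0;\,a_0,a_1,\dots]: a_0\ge 5,\ a_j\le 4\ (j\ge1)\}$ is \emph{not} a single Cantor set with endpoints $0$ and $\max E$; it is the disjoint countable union $\bigcup_{n\ge5}\frac{1}{n+F(4)}$ of scaled copies of $F(4)$, accumulating only at $0$. Theorem \ref{interval} of this paper applies to a single pair of Cantor sets sitting inside a rectangle, so it gives you an interval from each pair $\frac{1}{n_1+F(4)}\times\frac{1}{n_2+F(4)}$; to finish you must check that these intervals overlap as $n_1,n_2$ vary so that their union fills $\bigl(0,\tfrac{4}{83+18\sqrt2}\bigr]$. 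That overlap estimate is exactly the content of Hall's argument for $F(N,k)$, and omitting it leaves a real hole (the lengths of the intervals shrink like $1/n_i$ while the shifts also shrink like $1/n_i$, so the overlap is a quantitative comparison, not a triviality). Also be careful invoking ``thickness $>1$'': the aperture ratio $\mathrm{Ap}(F(4))\le\frac{10\sqrt2+2}{21}$ used in this paper is phrased for the \emph{sum}; passing to the product either by the logarithm map or by applying Theorem \ref{interval} directly to $g(\alpha,\beta)=\alpha\beta$ requires verifying the gradient-ratio condition on each piece, which is easy but must be stated. In short: your route is viable and instructive where the paper's is a black-box citation, but as written it is missing the overlap argument across the countably many components and has a couple of arithmetic reversals that you caught only by matching the given constant.
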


\begin{proof}
    This is exactly shown by Hall \cite{Hall_Sum_Product_CF} at Theorem 3.2, where we take minimum of $F(5,4)$.
\end{proof}

Combining the above lemmas, we prove the Hall's segment:
\begin{theorem}
    Mordell-Gruber Spectrum contains a Hall's segment
    $$\left[ \frac{83+18\sqrt{2}}{87+18\sqrt{2}} \approx 0.9645,\  1\right] \subset \mathrm{MG}_2$$
\end{theorem}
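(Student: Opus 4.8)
The plan is to reduce the statement to the two lemmas just proved by inverting the relation between $\kappa(\Lambda)$ and $\inf_{j}\beta_j\alpha_{j-1}$. From formula \ref{eq:mg2}, together with the fact that $x\mapsto \frac{1}{1+x}$ is continuous and strictly decreasing on $[0,\infty)$, we have $\kappa(\Lambda)=\sup_{j\in\Z}\frac{1}{1+\beta_j\alpha_{j-1}}=\frac{1}{1+\inf_{j\in\Z}\beta_j\alpha_{j-1}}$. Hence, for a prescribed target value $v$, constructing a unimodular bi-infinite lattice with $\kappa(\Lambda)=v$ is equivalent to constructing a bi-infinite positive-integer index sequence $\{a_n\}$ with $\inf_{j\in\Z}\beta_j\alpha_{j-1}=\frac{1-v}{v}$.

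First I would record that the substitution $v\mapsto s(v):=\frac{1-v}{v}$ maps the half-open interval $\left[\frac{83+18\sqrt{2}}{87+18\sqrt{2}},\,1\right)$ monotonically onto $\left(0,\,\frac{4}{83+18\sqrt{2}}\right]$: plugging in $v=\frac{83+18\sqrt{2}}{87+18\sqrt{2}}$ gives $s=\frac{(87+18\sqrt{2})-(83+18\sqrt{2})}{83+18\sqrt{2}}=\frac{4}{83+18\sqrt{2}}$, while $s\to 0^+$ as $v\to 1^-$. So it suffices to realize every $s\in\left(0,\frac{4}{83+18\sqrt{2}}\right]$ as an infimum $\inf_{j}\beta_j\alpha_{j-1}$, and to treat the endpoint $v=1$ (that is, $s=0$) separately.

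For $s$ in the half-open interval I would use exactly the family of sequences from the two preceding lemmas: those with $a_{-1}\ge 5$, $a_0\ge 5$, and $a_t\le 4$ for all $t\notin\{0,1\}$. The second lemma (Hall's theorem applied to $F(5,4)$) guarantees that $\beta_0\alpha_{-1}$ ranges over all of $\left(0,\frac{4}{83+18\sqrt{2}}\right]$ within this family, and the first lemma guarantees that for such sequences $\inf_{j\in\Z}\beta_j\alpha_{j-1}=\beta_0\alpha_{-1}$. Choosing the sequence so that $\beta_0\alpha_{-1}=s$ then produces a lattice with $\kappa(\Lambda)=\frac{1}{1+s}=v$, which sweeps out all of $\left[\frac{83+18\sqrt{2}}{87+18\sqrt{2}},1\right)$. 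For $v=1$ I would invoke the lattice already exhibited in the proof of Theorem \ref{fourinone}, the one with index sequence $a_n=|n|+1$, whose Mordell constant is $1$; more generally any unbounded index sequence makes $\beta_j\alpha_{j-1}$ arbitrarily small, forcing $\kappa(\Lambda)=1$. Combining the two cases yields $\left[\frac{83+18\sqrt{2}}{87+18\sqrt{2}},1\right]\subset\mathrm{MG}_2$.

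I do not expect a genuine obstacle here: all the analytic substance — namely that the product of the two Cantor-type sets of continued fractions associated with $F(5,4)$ fills the interval $\left(0,\frac{4}{83+18\sqrt{2}}\right]$ — is imported wholesale from Hall's paper through the cited lemma, and this theorem makes no claim that the constant is optimal (an analog of Freiman's constant for $\mathrm{MG}_2$ is not addressed). The one point that requires care is the matching at the top endpoint: since $s(v)\to 0$ leaves the range covered by the Cantor-set lemma, the value $v=1$ must be supplied by the separate unbounded-sequence construction rather than by the $F(5,4)$ family.
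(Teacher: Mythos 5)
Your proposal is correct and follows exactly the route the paper takes: invert $\kappa(\Lambda) = \frac{1}{1+\inf_j \beta_j\alpha_{j-1}}$, apply the two preceding lemmas to realize every $s\in\left(0,\frac{4}{83+18\sqrt{2}}\right]$ as an infimum $\beta_0\alpha_{-1}$ within the $F(5,4)$-type family, and supply the endpoint $v=1$ by a separate unbounded-sequence construction. The paper merely writes ``Combining the above lemmas'' and gives no further detail, so you have simply made the implicit bookkeeping explicit.
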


\begin{corollary}
    Mordell-Gruber Spectrum is an uncountable set.
\end{corollary}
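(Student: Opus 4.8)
The plan is to read off uncountability directly from the Hall's segment just established, so essentially no new work is required. First I would note that the constant appearing in the previous theorem, $C = \frac{83+18\sqrt{2}}{87+18\sqrt{2}}$, satisfies $C < 1$, since $83 + 18\sqrt{2} < 87 + 18\sqrt{2}$; hence $[C,1]$ is a \emph{nondegenerate} closed interval of $\R$, i.e. it has nonempty interior. This is the only point that deserves an explicit sentence, and it is immediate.

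Next I would invoke the classical fact that every nondegenerate real interval has the cardinality of the continuum, and in particular is uncountable. Concretely, the affine map $x \mapsto C + (1-C)x$ is a bijection of $[0,1]$ onto $[C,1]$, and $[0,1]$ is uncountable by Cantor's diagonal argument; transporting this along the bijection shows $[C,1]$ is uncountable. Finally, since the preceding theorem gives $[C,1] \subseteq \mathrm{MG}_2$, and a set containing an uncountable subset is itself uncountable, we conclude that $\mathrm{MG}_2$ is uncountable.

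There is no real obstacle here: the corollary is a one-line consequence of the Hall's segment theorem together with the uncountability of intervals. If one wished to avoid even citing the previous theorem, the same conclusion would follow from Theorem \ref{interval} applied to the relevant Cantor-set product, but that is unnecessary overkill for this statement.
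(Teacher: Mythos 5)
Your proof is correct and matches the paper's (implicit) reasoning exactly: the corollary is stated without proof immediately after the Hall's segment theorem, precisely because it is the one-line observation you spell out, namely that $\mathrm{MG}_2$ contains the nondegenerate interval $[C,1]$ and is therefore uncountable.
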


Of course, the bound can be substantially improved, and even analog of Freiman's constant is computable. But we will not focus on details of computing the constant in this paper.

\begin{problem}
    Compute the exact infimum value of $C$ such that $[C,1] \subset \mathrm{MG}_2$.
\end{problem}

\subsection{Closedness of the Spectrum}
In this section, we prove that $\mathrm{MG}_2$ is a closed set.

Before proving that, we need a lemma about the convergence rate of Diophantine approximants.

\begin{lemma}
    \label{localcontrol}
    Suppose $x = [0;\  a_1,\  a_2, \ \cdots]$, where $0 \le x \le 1$, and $\eta_n$ be an arbitrary number in $[[0;\ a_1, \ a_2, \ \cdots, \ a_n], [0;\ a_1, \ a_2, \ \cdots,\  a_n +1] ]$ or $[[0;\ a_1, \ a_2, \ \cdots, \ a_n +1], [0;\ a_1, \ a_2, \ \cdots,\  a_n] ]$. Then $|\eta_n - x| \le 2^{-n}$.
\end{lemma}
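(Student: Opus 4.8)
The plan is to identify the interval with endpoints $[0;a_1,\dots,a_n]$ and $[0;a_1,\dots,a_n+1]$ as the exact range of the map $\xi\mapsto[0;a_1,\dots,a_n,\xi]$ for $\xi\in[1,\infty]$, observe that $x$ itself is one such value, and then estimate the length of this interval by $2^{-n}$ using the convergent recurrences.

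First I would fix notation: let $p_k/q_k=[0;a_1,\dots,a_k]$ be the convergents, with the usual conventions $p_{-1}=1,\ q_{-1}=0,\ p_0=0,\ q_0=1$, and recurrences $p_k=a_kp_{k-1}+p_{k-2}$, $q_k=a_kq_{k-1}+q_{k-2}$; recall the determinant identity $p_kq_{k-1}-p_{k-1}q_k=(-1)^{k-1}$. The standard identity $[0;a_1,\dots,a_{n-1},a_n+t]=\dfrac{p_n+tp_{n-1}}{q_n+tq_{n-1}}$ shows that the two endpoints of the interval $I_n$ in the statement are $p_n/q_n$ (the case $t=0$, i.e.\ $\xi=\infty$) and $\dfrac{p_n+p_{n-1}}{q_n+q_{n-1}}$ (the case $t=1$, i.e.\ $\xi=1$). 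Writing $x=[0;a_1,\dots,a_n,\alpha_{n+1}]$ with $\alpha_{n+1}=[a_{n+1};a_{n+2},\dots]\ge 1$, the M\"obius formula $x=\dfrac{\alpha_{n+1}p_n+p_{n-1}}{\alpha_{n+1}q_n+q_{n-1}}$ is monotone in $\alpha_{n+1}$ (its derivative in $\alpha_{n+1}$ has constant sign $(-1)^{n-1}$), so as $\alpha_{n+1}$ runs over $[1,\infty]$ the value $x$ sweeps exactly $I_n$; in particular $x\in I_n$. Since $\eta_n\in I_n$ by hypothesis, $|\eta_n-x|$ is at most the length of $I_n$, which by the determinant identity equals $\dfrac{1}{q_n(q_n+q_{n-1})}$.

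It then remains to show $q_n(q_n+q_{n-1})\ge 2^n$. I would prove this by induction on $n$. For $n=1$ the left-hand side is $a_1(a_1+1)\ge 2$. For the inductive step, $q_n=a_nq_{n-1}+q_{n-2}\ge q_{n-1}+q_{n-2}$, so $q_n(q_n+q_{n-1})\ge (q_{n-1}+q_{n-2})(q_n+q_{n-1})$, and moreover $q_n+q_{n-1}\ge 2q_{n-1}+q_{n-2}\ge 2q_{n-1}$; hence $q_n(q_n+q_{n-1})\ge 2q_{n-1}(q_{n-1}+q_{n-2})\ge 2\cdot 2^{n-1}=2^n$. This yields $|\eta_n-x|\le 1/(q_n(q_n+q_{n-1}))\le 2^{-n}$.

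There is no real obstacle here; the estimate is essentially routine. The only points deserving a little care are (i) verifying that $x$ genuinely lies in the \emph{closed} interval $I_n$ even in degenerate situations (rational $x$, or $a_{n+1}$ being the final partial quotient), which the monotonicity-of-the-M\"obius-map argument handles uniformly, and (ii) pinning down the orientation of $I_n$ so that its length is indeed $1/(q_n(q_n+q_{n-1}))$, the parity sign $(-1)^{n-1}$ being irrelevant once we pass to absolute values.
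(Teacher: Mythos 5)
Your proof is correct, and it takes a genuinely different route from the paper's. The paper argues by induction in steps of two on $n$: it observes that the map $f_{c,d}(x)=\dfrac{1}{c+\frac{1}{d+x}}$ has derivative $\dfrac{1}{(cx+cd+1)^2}\in[0,\tfrac14]$ for $c,d\in\N^+$, hence is a $\tfrac14$-contraction, and then nests two partial quotients at a time: $x=f_{a_1,a_2}(x')$, $\eta=f_{a_1,a_2}(\eta')$, so $|\eta-x|\le\tfrac14|\eta'-x'|\le 2^{-n-2}$ (with the $n=0,1$ cases checked directly). You instead invoke the standard convergent recurrences $p_k=a_kp_{k-1}+p_{k-2}$, $q_k=a_kq_{k-1}+q_{k-2}$, identify the interval $I_n$ as exactly $\left\{\frac{p_n+tp_{n-1}}{q_n+tq_{n-1}}:t\in[0,1]\right\}$, deduce $|I_n|=\dfrac{1}{q_n(q_n+q_{n-1})}$ from the determinant identity, verify $x\in I_n$ by writing $x=\frac{\alpha_{n+1}p_n+p_{n-1}}{\alpha_{n+1}q_n+q_{n-1}}$ with $\alpha_{n+1}\ge1$, and then bound $q_n(q_n+q_{n-1})\ge 2^n$ by a separate induction. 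Your inductive bound is correct: $q_n\ge q_{n-1}+q_{n-2}$ and $q_n+q_{n-1}\ge 2q_{n-1}$ give $q_n(q_n+q_{n-1})\ge 2\,q_{n-1}(q_{n-1}+q_{n-2})$, with base case $a_1(a_1+1)\ge2$. The paper's method is more self-contained (no convergent machinery, just a one-line derivative estimate) while yours reproves the standard, sharper estimate $|\eta_n-x|\le\frac{1}{q_n(q_n+q_{n-1})}$ which can be much better than $2^{-n}$ when the partial quotients are large; either suffices for the lemma as stated.
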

\begin{proof}
    We prove by induction of $n$.
    \begin{itemize}
        \item $n=0$: Obviously $ \eta \in [0,1], \ x \in [0,1],\ |\eta - x| \le 1 $.
        \item $n=1$: Obviously the length of the interval $[\frac{1}{a_1+1}, \frac{1}{a_1}]$ is no more than $1/2$.
        \item $n \to n+2$: Consider function
        $$f_{c,d} (x) = \frac{1}{c+\frac{1}{d+x}} \quad(c,d \in \N^+, x \in [0,1])$$
        then
        $$\frac{\mathrm{d}}{\mathrm{d} x} f_{c,d} (x) = \frac{1}{(cx+cd+1)^2} \in \left[ 0, \frac{1}{4}\right]$$
        so for any $c, d \in \N^+$, $f_{c,d}$ is a contraction map with contraction constant $\frac{1}{4}$.
        Now rewrite
        $$x = \frac{1}{a_1+\frac{1}{a_2+\frac{1}{x'}}} = f_{a_1,a_2} (x')$$
        $$\eta = \frac{1}{a_1+\frac{1}{a_2+\frac{1}{\eta'}}} = f_{a_1,a_2} (\eta')$$
        By induction hypothesis, we have $|\eta' - x'| \le 2^{-n}$.
        So 
        $$|\eta - x| = |f_{a_1,a_2} (\eta') - f_{a_1,a_2} (x')| \le \frac{1}{4} |\eta' - x'| \le 2^{-n-2}$$
    \end{itemize}
    which finishes the induction.
\end{proof}

In the following few lemmas, we need to discuss more than one index sequence. We need a notation template for convenience, where $\square$ is a template:

$$
\begin{aligned}
    \alpha_n^{[\square]} &:= [0;\  \square_{n+1},\  \square_{n+2},\  \cdots] \\
    \beta_n^{[\square]} &:= [0;\  \square_{n-1},\  \square_{n-2},\  \cdots]
\end{aligned}
$$
And the Perron's formula as follows:
$$
\begin{aligned}
    P_n^{[\square]} &:= \frac{1}{1+\alpha_{n-1}^{[\square]}\beta_{n}^{[\square]}}
\end{aligned}
$$

\begin{lemma}
    \label{lem6bookanalog}
    For a bi-infinite sequence $\{a_n\}$, if it is uniformly bounded, i.e., $ \forall n \quad a_n \le C$, then there exists a sequence $\{b_n\}$ such that
    $$ \sup_{n \in \Z} P_n^{[a]} = \sup_{n \in \Z} P_n^{[b]} = P_0^{[b]}$$
\end{lemma}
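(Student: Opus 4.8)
The plan is to run the standard compactness argument (as for the Markov spectrum in \cite{Cusick_Flahive_Markoff_Lagrange}), using the uniform bound precisely to guarantee that a sequence of shifts of $\{a_n\}$ has a convergent subsequence.

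First I would set $S := \sup_{n \in \Z} P_n^{[a]}$, which is finite (indeed $P_n^{[a]} < 1$ for all $n$ since $\alpha_{n-1}^{[a]},\beta_n^{[a]} > 0$). Pick indices $(n_k)_{k \in \N}$ with $P_{n_k}^{[a]} \to S$ and define the shifted sequences $c^{(k)} : \Z \to \N^+$ by $c^{(k)}_m := a_{m+n_k}$. Each $c^{(k)}$ takes values in the finite set $\{1,2,\dots,C\}$, so $(c^{(k)})_k$ lives in the compact space $\{1,\dots,C\}^{\Z}$ with the product topology. By Tychonoff (or a diagonal extraction over the coordinates), after passing to a subsequence we may assume $c^{(k)} \to b$ pointwise for some $b : \Z \to \{1,\dots,C\} \subset \N^+$; that is, for each $m$ there is $K(m)$ with $c^{(k)}_m = b_m$ whenever $k \ge K(m)$.

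Next I would prove the continuity statement: for each fixed $n$, $P_n^{[c^{(k)}]} \to P_n^{[b]}$ as $k \to \infty$. Since $P_n^{[\square]} = (1 + \alpha_{n-1}^{[\square]}\beta_n^{[\square]})^{-1}$, it is enough to show $\alpha_{n-1}^{[c^{(k)}]} \to \alpha_{n-1}^{[b]}$ and $\beta_n^{[c^{(k)}]} \to \beta_n^{[b]}$. Fix $N$; for $k$ large the entries $c^{(k)}_n,\dots,c^{(k)}_{n+N-1}$ agree with $b_n,\dots,b_{n+N-1}$, so $\alpha_{n-1}^{[c^{(k)}]} = [0;c^{(k)}_n,c^{(k)}_{n+1},\dots]$ and $\alpha_{n-1}^{[b]} = [0;b_n,b_{n+1},\dots]$ are continued fractions sharing their first $N$ partial quotients, hence both lie in the interval between $[0;b_n,\dots,b_{n+N-1}]$ and $[0;b_n,\dots,b_{n+N-1}+1]$. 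Lemma \ref{localcontrol} then gives $|\alpha_{n-1}^{[c^{(k)}]} - \alpha_{n-1}^{[b]}| \le 2^{-N}$; letting $N \to \infty$ yields convergence, and the same argument applied to the reversed sequence handles $\beta_n^{[\square]}$. Therefore $P_n^{[c^{(k)}]} \to P_n^{[b]}$.

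Finally I would assemble the pieces. From $P_0^{[c^{(k)}]} = P_{n_k}^{[a]} \to S$ and $P_0^{[c^{(k)}]} \to P_0^{[b]}$ we get $P_0^{[b]} = S$. For any fixed $n$, $P_n^{[c^{(k)}]} = P_{n+n_k}^{[a]} \le S$ for all $k$, so in the limit $P_n^{[b]} \le S$; hence $\sup_n P_n^{[b]} \le S = P_0^{[b]} \le \sup_n P_n^{[b]}$, forcing equality throughout. Combined with $\sup_n P_n^{[a]} = S$ by definition, this gives $\sup_n P_n^{[a]} = \sup_n P_n^{[b]} = P_0^{[b]}$. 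The only delicate point is the continuity step: one must be sure the limit object $b$ is again a genuine positive-integer bi-infinite sequence (this is exactly what the uniform bound $C$ buys, by keeping the shifts in a compact set) and that the left-infinite continued fraction $\beta_n^{[\square]}$ is as stable under coordinatewise perturbation as $\alpha_n^{[\square]}$ is in Lemma \ref{localcontrol}; the rest is routine compactness-and-continuity bookkeeping.
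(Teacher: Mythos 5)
Your proposal is correct and follows essentially the same route as the paper: shift the sequence along a maximizing sequence of indices, use the uniform bound to run a diagonal/compactness extraction landing on a limit sequence $b$, invoke Lemma~\ref{localcontrol} for coordinatewise continuity of $\alpha_{n-1}^{[\square]}$ and $\beta_n^{[\square]}$, and then pass to the limit in both the maximizing relation ($P_0^{[b]}=S$) and the bound $P_n^{[b]}\le S$. The only differences are cosmetic: the paper handles the case where the supremum is attained at a finite index separately (you silently cover it by a constant choice of $n_k$) and proves $\sup_n P_n^{[b]}\le P_0^{[b]}$ by contradiction rather than by the direct limiting inequality you use.
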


\begin{proof}
    This proof mainly follows from Lemma 6 in the book \cite{Cusick_Flahive_Markoff_Lagrange}. 
    We discuss two cases:
    \begin{itemize}
        \item There exists $i$ such that
        $$\sup_{n \in \Z} P_n^{[a]} = P_{i}^{[a]}$$
        then simply let $b_n = a_{n+i}$.
        \item Such $i$ does not exist.
        By the definition of the supremum, we can find a sequence $\{m_j\}_j$ such that
        $$\sup_{n \in \Z} P_n^{[a]} = \lim _{j \to \infty} P_{m_j}^{[a]}$$

        WLOG, we can also assume that $P_{m_j}^{[a]}$ converges monotonically to $ \sup_{n \in \Z} P_{n}^{[a]}$. 

        We define a new set of sequences as $a(j)_k := a_{m_j + k}$ for $j \in \N^+$ and $k \in \Z$. We construct $b_k$ by induction.

        \begin{itemize}
            \item $k=0$: Since $\{a_n\}$ is bounded, there exists a number that appears infinitely times in $a(j)_0$. Let $b_0$ be the number.
            \item $k-1 \to k$: Assume we have fixed $(b_{-k+1}, \cdots, b_0, \cdots, b_{k-1})$, There exists $b_{-k}$ and $b_{k}$ such that 
            $$ (b_{-k}, \cdots, b_0, \cdots, b_{k}) = (a(j)_{-k}, \cdots,  a(j)_0, \cdots, a(j)_{k})$$ 
            for infinitely many $j$.
        \end{itemize}
        
        We show that the sequence of $\{b_k\}$ is just as we desired.

        First, we prove that 
        $$ \sup_{n \in \Z} P_n^{[a]} = P_0^{[b]}$$
        Which is equivalent to 
        $$ \lim _{j \to \infty} P_{m_j}^{[a]} = \lim _{j \to \infty} P_{0}^{[a(m_j)]} = P_0^{[b]}$$
        For any $\epsilon > 0$, let $k = \lfloor - \ln \epsilon \rfloor + 2$, there exists $j$ such that
        $$(b_{-k}, \cdots, b_0, \cdots, b_{k}) = (a(m_j)_{-k}, \cdots,  a(m_j)_0, \cdots, a(m_j)_{k})$$
        Consider this formula
        $$
        \begin{aligned}
            \left\lvert P_0^{[a(m_j)]} - P_0^{[b]} \right\rvert &= \left\lvert \frac{1}{1 + \alpha_{-1}^{[a(m_j)]} \beta_{0}^{[a(m_j)]}} - \frac{1}{1 + \alpha_{-1}^{[b]} \beta_{0}^{[b]}} \right\rvert \\
            & \le \left\lvert - \alpha_{-1}^{[a(m_j)]} \beta_{0}^{[a(m_j)]} +  \alpha_{-1}^{[b]} \beta_{0}^{[b]} \right\rvert \\
            & = \left\lvert \left(\alpha_{-1}^{[b]} - \alpha_{-1}^{[a(m_j)]} \right) \beta_{0}^{[a(m_j)]} +  \alpha_{-1}^{[b]} \left(\beta_{0}^{[b]} - \beta_{0}^{[a(m_j)]}\right) \right\rvert \\
            & \le \left\lvert \alpha_{-1}^{[b]} - \alpha_{-1}^{[a(m_j)]} \right\rvert + \left\lvert \beta_{0}^{[b]} - \beta_{0}^{[a(m_j)]} \right\rvert
        \end{aligned}
        $$
        Now, since the first $k$ terms of the continued fraction expansions of both $\beta_{0}^{[b]}$ and $\beta_{0}^{[a(m_j)]}$ are $[0; b_{-1}, \cdots, b_{-k}]$, we obtain by Lemma \ref{localcontrol} that 
        $$\left\lvert \beta_{0}^{[b]} - \beta_{0}^{[a(m_j)]} \right\rvert \le 2^{-k} \le \frac {\epsilon}{2}$$
        Similarly
        $$\left\lvert \alpha_{-1}^{[b]} - \alpha_{-1}^{[a(m_j)]} \right\rvert \le 2^{-k-1} \le \frac {\epsilon}{2}$$
        So
        $$\left\lvert P_0^{[a(m_j)]} - P_0^{[b]} \right\rvert \le \frac {\epsilon}{2} + \frac {\epsilon}{2} = \epsilon $$

        By monotonicity (as assumed before), any $N \ge j$ satisfies
        $$\left\lvert P_0^{[a(m_N)]} - P_0^{[b]} \right\rvert \le \epsilon $$
        which finishes the proof for this part.
        
        The next part is to prove that
        $$\sup_{n \in \Z} P_n^{[b]} = P_0^{[b]}$$
        equivalently,
        $$\forall n \in \Z \quad P_n^{[b]} \le  P_0^{[b]}$$
        Suppose the other way, $P_n^{[b]} >  P_0^{[b]} + \epsilon$.
        The proof is similar to above, but we choose $k = |n| + \lfloor - \ln \epsilon \rfloor + 4$. Then it can be proved that
        $$\left\lvert P_n^{[a(m_j)]} - P_n^{[b]} \right\rvert \le \frac {\epsilon}{4}$$
        However, 
        $$P_n^{[a(m_j)]} \le  \sup_{n \in \Z} P_n^{[a]} = P_0^{[b]}$$
        This contradicts with $P_n^{[b]} >  P_0^{[b]} + \epsilon$.
    \end{itemize}
\end{proof}

Now it's ready to prove the closedness.

\begin{theorem}
    \label{mg2closed}
    The Mordell-Gruber spectrum $\mathrm{MG}_2$ is a closed set.
\end{theorem}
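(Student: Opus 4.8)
The plan is to run the closedness argument from the Lagrange/Markov theory (Lemma~6 of \cite{Cusick_Flahive_Markoff_Lagrange}, packaged here as Lemma~\ref{lem6bookanalog}) in the $\sup$-regime. Let $s_k \in \mathrm{MG}_2$ with $s_k \to s$; by Theorem~\ref{fourinone} together with the correspondence of Theorem~\ref{corre3}, write $s_k = \sup_{n \in \Z} P_n^{[a(k)]}$ (formula (\ref{eq:mg2})) for bi-infinite positive-integer index sequences $a(k)$. The goal is to build one bi-infinite sequence $\{c_n\}$ with $\sup_n P_n^{[c]} = s$, whose associated lattice then witnesses $s \in \mathrm{MG}_2$.

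If $s = 1$ there is nothing to prove, since $1 \in \mathrm{MG}_2$ (realised by $a_n = |n|+1$, as in the proof of Theorem~\ref{fourinone}). So assume $s < 1$, fix $\delta > 0$ with $s < 1 - \delta$, and work with $k$ large enough that $s_k < 1 - \delta$. For such $k$ the sequence $a(k)$ must be bounded: an unbounded index sequence makes $\inf_n \alpha_{n-1}\beta_n$ vanish and hence $\kappa = 1$. Thus Lemma~\ref{lem6bookanalog} applies, and we may replace $a(k)$ by a sequence $b(k)$ with $\sup_n P_n^{[b(k)]} = P_0^{[b(k)]} = s_k$.

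The substantive step — and the one I expect to carry the weight of the argument — is a bound on the entries of $b(k)$ that is uniform both in $n$ and in $k$. From $P_n^{[b(k)]} \le s_k < 1 - \delta$ and the elementary estimate $\alpha_{n-1}^{[b(k)]}\beta_n^{[b(k)]} < 1/(b(k)_n\, b(k)_{n-1})$, one reads off $b(k)_n\, b(k)_{n-1} < (1-\delta)/\delta$ for every $n$, hence $b(k)_n \le M := \lceil (1-\delta)/\delta \rceil$ for all $n$ and all large $k$. A standard diagonal extraction then produces a subsequence along which, for each fixed $j$, the digit $b(k)_j$ is eventually constant, equal to some $c_j \in \{1, \dots, M\}$; this $\{c_n\}$ is a genuine bi-infinite index sequence.

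It remains to check $\sup_n P_n^{[c]} = s$, which is pure continuity via Lemma~\ref{localcontrol}: along the chosen subsequence $b(k)$ agrees with $c$ on arbitrarily long blocks around any fixed index, so $\alpha_{n-1}^{[b(k)]} \to \alpha_{n-1}^{[c]}$ and $\beta_n^{[b(k)]} \to \beta_n^{[c]}$, whence $P_n^{[b(k)]} \to P_n^{[c]}$ for each $n$. Taking $n = 0$ gives $P_0^{[c]} = \lim_k s_k = s$, so $\sup_n P_n^{[c]} \ge s$; taking general $n$ gives $P_n^{[c]} = \lim_k P_n^{[b(k)]} \le \lim_k s_k = s$, so $\sup_n P_n^{[c]} \le s$. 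Hence $s = \kappa(\Lambda(\{c\})) \in \mathrm{MG}_2$, and $\mathrm{MG}_2$ is closed. The only delicate point is the uniform entry bound and its compatibility with the reduction through Lemma~\ref{lem6bookanalog}; the rest is bookkeeping with the correspondence of Section~2 and the contraction estimate.
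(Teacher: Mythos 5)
Your argument is essentially the paper's own proof: truncate $s_k$ away from $1$ to obtain a uniform bound on the index entries, invoke Lemma~\ref{lem6bookanalog} to normalise the supremum to offset~$0$, perform a diagonal extraction, and pass to the limit via the contraction estimate of Lemma~\ref{localcontrol}. The only cosmetic difference is that you derive the uniform bound on $b(k)$ directly from the Perron inequality $P_n^{[b(k)]}<1-\delta$, whereas the paper observes that the bound on $b(j)$ is automatically inherited from the bound on $a(j)$ because Lemma~\ref{lem6bookanalog} builds $b$ out of digits of $a$; both routes are correct.
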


\begin{proof}
    We prove that $\mathrm{MG}_2$ is closed in $[0, 1)$, since we have already proved that $1 \in \mathrm{MG}_2$. Specifically, we prove that, for any set of index sequences $\{a(j)_k\}$, where $j = 1,2, \cdots$, and $k \in \Z$, if
    $$ \lim _{j \to \infty} \sup_{k \in \Z} P_{k}^{[a(j)]} = y <1$$
    then there exists a sequence $\{c_l\}_{l \in \Z}$ such that
    $\sup_{k \in \Z} P_{k}^{[c]} = y$.

    WLOG, we also suppose that $\sup_{k \in \Z} P_{k}^{[a(j)]} < \frac{1+y}{2}$.
    Then $a(j)_k$ are uniformly bounded: 
    $$\forall j \in \N^+ \quad \forall k \in \Z \quad a(j)_k \le C $$
    (where $C = \frac{1+y}{1-y}$ is enough in this case). 

    From the last lemma, we can obtain another set of sequences $\{b(j)_k\}$ such that $\forall j \in \N^+$, 
    $$\sup_{k \in \Z} P_{k}^{[a(j)]} = \sup_{k \in \Z} P_{k}^{[b(j)]} = P_{0}^{[b(j)]}$$
    Then obviously, all terms of $\{b(j)_k\}$ are still bounded by $C$.
    
    Now construct $c_l$ by induction:
    \begin{itemize}
        \item $l=0$: Since $\{b(j)_k\}$ is bounded, there exists a number that appears infinitely times in $b(j)_0$. Let $c_0$ be the number.
        \item $l-1 \to l$: Assume we have fixed $(c_{-l+1}, \cdots, c_0, \cdots, c_{l-1})$, There exists $c_{-l}$ and $c_{l}$ such that 
        $$ (c_{-l}, \cdots, c_0, \cdots, c_{l}) = (b(j)_{-l}, \cdots,  b(j)_0, \cdots, b(j)_{l})$$ 
        for infinitely many $j$.
    \end{itemize}
    The rest steps are very similar to the proof of the above lemma. In fact, one can show that
    $$P_0^{[c]} = \lim _{j \to \infty} P_{0}^{[b(j)]} = y$$
    and also
    $$P_0^{[c]} = \sup_{l \in \Z} P_l^{[c]}$$
    Therefore $y \in \mathrm{MG}_2$, so $\mathrm{MG}_2$ is closed in the region $[0,1)$. So $\mathrm{MG}_2$ is a closed set.
\end{proof}

\begin{corollary}
     $\mathrm{MG}_2$ is a compact set.
\end{corollary}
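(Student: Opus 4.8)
The plan is to deduce compactness from closedness together with boundedness, via the Heine--Borel theorem. The closedness of $\mathrm{MG}_2$ as a subset of $\R$ is exactly the content of Theorem~\ref{mg2closed}, so nothing further is needed there. For boundedness, recall that by the Minkowski convex body theorem one has $\mathrm{MG}_2 \subset (0,1]$, and more precisely the lower-part analysis gives $\min(\mathrm{MG}_2) = \tfrac{5+\sqrt{5}}{10}$; in particular $\mathrm{MG}_2 \subset [\tfrac{5+\sqrt{5}}{10}, 1]$, a bounded set. Thus $\mathrm{MG}_2$ is a closed and bounded subset of $\R$, hence compact.

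The only step carrying any real content is the closedness, which has already been established; the remaining ingredients (the Minkowski bound $\mathrm{MG}_2 \subset (0,1]$ and the Heine--Borel characterization of compact subsets of $\R$) are standard and require no further argument. I would simply state these two facts and conclude. No genuine obstacle is expected here, since the corollary is an immediate consequence of results proved above.
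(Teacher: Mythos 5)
Your proposal is correct and matches the paper's (implicit) reasoning exactly: the compactness of $\mathrm{MG}_2$ follows immediately from Theorem~\ref{mg2closed} (closedness) together with the boundedness $\mathrm{MG}_2 \subset [\tfrac{5+\sqrt{5}}{10}, 1]$ and the Heine--Borel theorem. Nothing is missing.
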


\begin{corollary}
     $\mathrm{MG}_2$ is a Borel measurable set.
\end{corollary}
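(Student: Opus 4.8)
The plan is to deduce this immediately from the closedness established in Theorem \ref{mg2closed}. Recall that the Borel $\sigma$-algebra on $\R$ is, by definition, the smallest $\sigma$-algebra containing all open subsets of $\R$; equivalently, since a $\sigma$-algebra is closed under complementation, it contains all closed subsets of $\R$ as well. So the only thing to invoke is that $\mathrm{MG}_2$ is closed, which is exactly the content of Theorem \ref{mg2closed} (or of the preceding corollary stating that $\mathrm{MG}_2$ is compact, since a compact subset of $\R$ is in particular closed).

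Concretely, I would argue as follows. By Theorem \ref{mg2closed}, the complement $\R \setminus \mathrm{MG}_2$ is an open subset of $\R$, hence a Borel set. Since the collection of Borel sets is closed under taking complements, $\mathrm{MG}_2 = \R \setminus (\R \setminus \mathrm{MG}_2)$ is Borel, and in particular Lebesgue measurable. One could alternatively note that, being compact, $\mathrm{MG}_2$ is a countable intersection of open sets (indeed $\mathrm{MG}_2 = \bigcap_{n\ge 1} \{x : d(x,\mathrm{MG}_2) < 1/n\}$), exhibiting it explicitly as a $G_\delta$, hence Borel; but this refinement is unnecessary for the stated claim.

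There is essentially no obstacle here: the entire force of the statement is carried by Theorem \ref{mg2closed}, and what remains is the standard fact that closed sets are Borel. The only thing worth a sentence is to make explicit which ambient space we regard $\mathrm{MG}_2$ as living in (namely $\R$, where it is in fact contained in the compact interval $[\tfrac{5+\sqrt5}{10},1]$), so that "open", "closed", and "Borel" are unambiguous; with that fixed, the corollary follows in one line.
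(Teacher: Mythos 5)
Your proof is correct and is exactly the (implicit) argument the paper intends: $\mathrm{MG}_2$ is closed by Theorem \ref{mg2closed}, and closed subsets of $\R$ are Borel. Nothing more is needed.
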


Our results of Mordell-Gruber spectrum can be concluded in a figure.

\begin{figure}
    \centering
    \includegraphics[width=\textwidth]{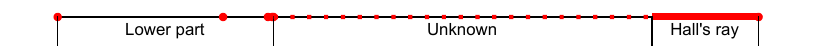}
    \caption{The structure of Mordell-Gruber spectrum, concluded in this chapter, drawn proportional to numerical values.}
    \label{fig:MG2}
\end{figure}

It would be interesting to study the total measure or the Hausdorff dimension of the spectrum.

\begin{problem}
    Compute the total Lebesgue measure of $\mathrm{MG}_2$.
\end{problem}

\begin{corollary}
    The lower and upper bound of the Lebesgue measure is given by
    $$ \int_{\R} \mathbf{1}_{\mathrm{MG}_2} \ \mathrm{d}x \in \left[  \frac{4}{87+18\sqrt{2}}, \ \frac{3-\sqrt{5}}{4} \right]$$
\end{corollary}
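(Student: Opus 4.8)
The plan is to sandwich $\mathrm{MG}_2$ between two explicit sets whose Lebesgue measures are precisely the two endpoints of the claimed interval, using only the structural results already proved. Since $\mathrm{MG}_2$ is closed (Theorem \ref{mg2closed}), it is Lebesgue measurable and $\int_{\R}\mathbf{1}_{\mathrm{MG}_2}\,\mathrm{d}x$ equals its Lebesgue measure, which I will write $|\mathrm{MG}_2|$.

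For the lower bound, I would simply invoke the Hall's segment established above, $\bigl[\tfrac{83+18\sqrt{2}}{87+18\sqrt{2}},\,1\bigr]\subseteq\mathrm{MG}_2$. Monotonicity of Lebesgue measure then gives
\[
|\mathrm{MG}_2|\ \ge\ 1-\frac{83+18\sqrt{2}}{87+18\sqrt{2}}\ =\ \frac{4}{87+18\sqrt{2}}.
\]

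For the upper bound, I would combine two facts proved earlier. First, by Minkowski's convex body theorem, $\mathrm{MG}_2\subseteq\bigl[\tfrac{5+\sqrt{5}}{10},\,1\bigr]$. Second, the computation of the lower part shows that $\mathrm{MG}_2\cap\bigl(\tfrac{5+\sqrt{5}}{10},\tfrac{1+\sqrt{5}}{4}\bigr)$ is the countable set $\bigl\{\,(1+\sqrt{F_{2t}/F_{2t+2}})/2 : t\ge 1\,\bigr\}$, hence Lebesgue-null; adjoining the two single points $\tfrac{5+\sqrt{5}}{10}$ and $\tfrac{1+\sqrt{5}}{4}$ keeps the set null. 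Thus $\mathrm{MG}_2$ is, up to a null set, contained in $\bigl[\tfrac{1+\sqrt{5}}{4},\,1\bigr]$, so
\[
|\mathrm{MG}_2|\ \le\ 1-\frac{1+\sqrt{5}}{4}\ =\ \frac{3-\sqrt{5}}{4}.
\]

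There is essentially no obstacle: both inclusions are immediate consequences of already-proven statements — the Hall's segment theorem and the theorem describing $\mathrm{MG}_2\cap(\tfrac{5+\sqrt5}{10},\tfrac{1+\sqrt5}{4})$ — combined with elementary properties of Lebesgue measure (monotonicity, subadditivity, and the nullity of countable sets). The only point requiring mild care is to phrase the containments with closed intervals so that the boundary points $\tfrac{5+\sqrt5}{10}$, $\tfrac{1+\sqrt5}{4}$ and $1$ are correctly accounted for; since each is a single point, this does not affect either estimate.
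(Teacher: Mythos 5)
Your proof is correct and is the natural argument implicit in the paper: the lower bound comes from the Lebesgue measure of the Hall's segment $\bigl[\tfrac{83+18\sqrt{2}}{87+18\sqrt{2}},1\bigr]\subset\mathrm{MG}_2$, and the upper bound from the fact that $\mathrm{MG}_2\subset\bigl[\tfrac{5+\sqrt5}{10},1\bigr]$ with the portion below $\tfrac{1+\sqrt5}{4}$ being countable (hence null). The paper states the corollary without proof, and your reasoning is precisely the one-line justification the author intended.
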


\begin{problem}
    Study the Hausdorff dimension of $\mathrm{MG}_2$ at a certain point or region.
\end{problem}

\section{Generalized View of Spectra: Inclusion and Closedness}

\subsection{Generalized Perron's Formula and Spectrum}

We generalize Perron's formula in the following way. Let $\{a_n\}_{n \in \Z}$ denote the index sequence, and $\alpha_k^{[a]}$ and $\beta_k^{[a]}$ inherited from the previous definition. The generalized Perron's formula has the following form:
$$P_k^{[a]} = f(\alpha_{k+l-1}^{[a]}, \ \beta_{k}^{[a]},\  \  a_{k}, a_{k+1}, \cdots, a_{k+l-1})$$
Where $f: [0,1]^2 \times \left(\N^+ \cup \{\infty\} \right)^{l} \to (\R \cup \pm \infty )$ or $[0,1]^2 \times \left(\N^+\right)^{l} \to (\R \cup \pm \infty )$ is a general function. The part of $a_{k}, a_{k+1}, \cdots, a_{k+l-1}$ is called integral parameters, and $\alpha_{k+l-1}^{[a]}, \ \beta_{k}^{[a]}$ is called fractional parameters. The count of integral parameters $l \in \N$ is a non-negative integer, and it is possible that $l=0$.

We define "one side" of the sequence $\{a_n\}$ to be $(a_0, a_1, \cdots)$, and "the other side" is $(a_{-1}, a_{-2}, \cdots)$.
We specially allow $\infty$ to be a possible value in the sequence for discussing special lattices. When there is only one $\infty$ at one side in the sequence, the corresponding lattice is mono-infinite; when there are two terms equaling to $\infty$ from both sides in the sequence, the lattice is therefore finite. Also, we do note define terms beyond $\infty$ when $\infty$ appears in a side of the sequence. As an example, If $a_0, a_1, \cdots, a_k$ are defined such that $a_k$ is the first occurrence of $\infty$, we do not define $a_{k+1}$ or terms beyond that. This is the same of "expansion sequence" appeared in Definition \ref{expansionseq}.

Note that the limit of $\inf$ and $\sup$ can be defined on both finite and infinite sequences (i.e., sequence with and without occurrence of $\infty$), but $\liminf$ and $\limsup$ should only be defined on infinite sequences.

The induced spectrum takes the following form:
$$\sigma_{f, \mathrm{limit}} := \left\{ \mathrm{limit}_k  \left( P_k^{[a]} \right) \ \middle| \ a:\Z \to  \left(\N^+ \cup \{\infty\} \right) \ \right\}$$
Where the limit takes one of the following types:
$$ \inf, \quad \sup$$
and
$$\sigma'_{f, \mathrm{limit}} := \left\{ \mathrm{limit}_k  \left( P_k^{[a]} \right) \ \middle| \ a:\Z \to   \N^+ \ \right\}$$
Where the limit takes one of the following types:
$$ \liminf, \quad \limsup$$

\subsection{Good Continuity}
Since such a generalized Perron's function $f$ is too general to deduce anything meaningful, we add some continuity conditions on $f$:

\begin{definition}
    \label{goodcontinuity}
    We consider two kinds of continuity of Perron function $f$:
    \begin{itemize}
        \item Uniform limit at infinity for integral parameters: There exists a limit $I \in [-\infty, \infty]$, for any $\epsilon > 0$, if $\max(n_0, n_1, \cdots, n_{l-1}) > C $, then 
        $$f(\alpha, \beta, n_0, n_1, \cdots, n_{l-1}) \in B(I, \epsilon)$$
        where $B(I, \epsilon)$ is the neighborhood of $I$ and convergence rate is only dependent of $C$. Moreover, $f(\alpha, \beta, n_0, n_1, \cdots, n_{l-1}) = I$ if one of $n_0, n_1, \cdots, n_{k-1}$ takes $\infty$.
        \item Continuity for fractional parameters: For each set of $n_0, \cdots, n_{l-1} < \infty $, $f(\alpha, \beta, n_0, n_1, \cdots, n_{l-1})$ is continuous with respect to $(\alpha, \beta) \in [0,1] \times [0,1]$. This also implies uniform continuity since $[0,1] \times [0,1]$ is a compact set.
    \end{itemize}
    If a Perron function $f$ satisfies two conditions above, we say that $f$ has a good continuity.
\end{definition}
We will later show that good continuity induces many interesting properties, such as closedness.

\subsection {Bidirectional Accumulation Sequence}
Recall the topology of $\N^+ \cup \{\infty\}$:
\begin{lemma}
    The set $\N^+ \cup \{\infty\} \subset [-\infty, \infty]$ is closed and compact. Moreover, for any $n \in \N^+$, $\left(\N^+ \cup \{\infty\} \right)^n$ is a closed and compact set. So any sequence on it admits a converging subsequence.
\end{lemma}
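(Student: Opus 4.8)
The plan is to reduce everything to standard facts about the extended real line. First I would recall that $[-\infty,\infty]$, equipped with the order topology (equivalently, with a metric such as $d(x,y)=|\arctan x-\arctan y|$, where $\arctan(\pm\infty)=\pm\pi/2$), is a compact metric space. To see that $\N^+\cup\{\infty\}$ is closed, I would simply exhibit its complement
$$[-\infty,\infty]\setminus\left(\N^+\cup\{\infty\}\right)=[-\infty,1)\ \cup\ \bigcup_{n\in\N^+}(n,n+1)$$
as a union of open intervals, hence open; therefore $\N^+\cup\{\infty\}$ is closed in $[-\infty,\infty]$, and a closed subset of a compact space is compact. (Equivalently: the only accumulation point of $\N^+$ inside $[-\infty,\infty]$ is $\infty$, and it belongs to the set.)

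For the product statement I would invoke that a finite product of compact spaces is compact — for finitely many factors this is elementary and does not require Tychonoff's theorem — so $\left(\N^+\cup\{\infty\}\right)^n$ is compact in the product topology. Moreover, since $\N^+\cup\{\infty\}$ is metrizable, the finite power $\left(\N^+\cup\{\infty\}\right)^n$ is again metrizable (with, say, the maximum or sum metric on the coordinates), and it is closed, being a product of closed subsets of the compact metrizable space $[-\infty,\infty]^n$.

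Finally, in a metric space compactness is equivalent to sequential compactness, so every sequence in $\left(\N^+\cup\{\infty\}\right)^n$ admits a convergent subsequence, whose limit again lies in the set since the set is closed. I do not expect a genuine obstacle here: the only points deserving a word of care are (i) confirming that the topology used on $\N^+\cup\{\infty\}$ is the subspace topology inherited from $[-\infty,\infty]$, so that coordinatewise convergence means precisely that each coordinate sequence either is eventually constant at a finite value or tends to $\infty$, and (ii) justifying the passage from compactness to sequential compactness, which is exactly the metrizability of the finite power noted above.
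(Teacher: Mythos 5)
Your proof is correct. The paper in fact gives no proof of this lemma at all (it is presented as a standard topological fact and immediately followed by the remark that the compactness is used later), so there is nothing to compare against; your argument -- closedness via an explicit open complement in the compact metrizable space $[-\infty,\infty]$, elementary finite-product compactness, and metrizability of the finite power to pass from compactness to sequential compactness -- is a complete and appropriate justification of exactly what the lemma asserts.
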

This compact property is used later in our construction.

Assume that $\{a(j)_k\} : (j:\N^+) \to (k: \Z) \to \left(a(j)_k: \N^+ \cup \{ \infty \} \right) $ is a family of sequences, where each $a(j)$ is a bidirectional "expansion sequence" which takes the form appeared in Theorem \ref{surjcorre}. We construct its bidirectional accumulation sequence $\{c_r\}_{r \in \Z}$ by recursive definition, where $c_r \in \N^+ \cup \{\infty\}$, in the following steps:
\begin{enumerate}
    \item $r=0$: 
    We construct $c_{0}$ and $c_{-1}$ simultaneously. Note that $(a(j)_{0}, a(j)_{-1}) \in \left(\N^+ \cup \{ \infty \} \right)^2 $ is a point sequence in a compact set. We set $(c_0, c_-1)$ to an accumulation point of $\{(a(j)_{0}, a(j)_{-1})\}_j$. Since $\left(\N^+ \cup \{ \infty \} \right)^2$ is a closed set, $(c_{0}, c_{-1}) \in \left(\N^+ \cup \{ \infty \} \right)^2 $.
    \begin{itemize}
        \item If one of $c_0, c_{-1}$ attains $\infty$, directly jump to step 3.
        \item If not, then $(c_{-1}, c_0) = (a(j)_{-1}, a(j)_{0})$ occurs infinitely many times for $j$.
    \end{itemize}
    \item $r-1 \to r$:
    The induction assumption is that the set
    $$\left\{ j \ \middle| \  c_{-r} = a(j)_{-r} < \infty,\ \ \cdots, \ \ c_{r-1} = a(j)_{r-1} < \infty\right\}$$
    is infinite.
    \begin{itemize}
        \item We take $c_{r}$ such that $$(c_{-r}, \cdots, c_0, \cdots, c_{r}) = (a(j)_{-r}, \cdots, a(j)_{0}, \cdots, a(j)_{r})$$ occurs infinitely many times for $j$, and $\infty$ if otherwise. Formally, we can set
        $$c_r = \liminf_{\left\{ j \ \middle| \  c_{-r} = a(j)_{-r} < \infty,\ \ \cdots, \ \ c_{r-1} = a(j)_{r-1} < \infty\right\}} a(j)_r$$
        When $c_r = \infty$, jump out of the loop towards step 3.
        \item If $c_r \ne \infty$, we take $c_{-r-1}$ such that $$(c_{-r-1}, \cdots, c_0, \cdots, c_{r}) = (a(j)_{-r-1}, \cdots, a(j)_{0}, \cdots, a(j)_{r})$$ occurs infinitely many times for $j$, and $\infty$ if otherwise. When $c_{-r} = \infty$, jump out of the loop towards step 3.
    \end{itemize}
    \item When one direction reaches $\infty$, WLOG $c_{-r-1} = \infty$, 
    we take $s$ from $r+1$ to $\infty$ to construct $c_s$. The induction assumption is that the set
    $$\left\{ j \ \middle| \  c_{-r} = a(j)_{-r} < \infty,\ \ \cdots, \ \ c_{s-1} = a(j)_{s-1} < \infty\right\}$$
    is infinite.
    We take $c_s$ such that 
    $$(c_{-r}, \cdots, c_0, \cdots, c_{s}) = (a(j)_{-r}, \cdots, a(j)_{0}, \cdots, a(j)_{s})$$ occurs infinitely many times for $j$, or $\infty$ if otherwise. Formally,
    $$c_s = \liminf_{\left\{ j \ \middle| \  c_{-r} = a(j)_{-r} < \infty,\ \ \cdots, \ \ c_{s-1} = a(j)_{s-1} < \infty\right\}} a(j)_s$$
    When $c_{s} = \infty$, jump out of the loop, and finish the construction.
\end{enumerate}
The bidirectional accumulation sequence $\{c_r\}_{r \in \Z}$ has one of the following forms:
\begin{enumerate}
    \item Finite: 
    $$c_{-s-1} = \infty,\ \   c_{-s}, \cdots, c_0, \cdots, c_{t-1}, \ \  c_{t} = \infty \quad(s, t \in \N)$$
    \item Mono-infinite (extends in one direction): 
    $$c_{-s-1} = \infty, c_{-s}, \cdots \quad(s \in \N)$$ or $$\cdots, c_{t-1}, c_{t} = \infty \quad(t \in \N)$$
    \item Bi-infinite (extends in both directions).
\end{enumerate}
In any of these cases, $P_0^{[c]}$ is well-defined. Note that $c$ is also an "expansion sequence" like the ones in Theorem \ref{arbitraryreconstruction}.

We prove the following lemma, which is useful in proving the closedness of certain spectra:
\begin{lemma}
    \label{accumulation}
    Assume that $\{a(j)_k\} : (j:\N^+) \to (k: \Z) \to \left(a(j)_k: \N^+ \cup \{ \infty \} \right) $ is a series of sequences, with bidirectional accumulation sequence $\{c_r\}_{r \in \Z}$. If $c_k < \infty$ for $k \in [-s, t]$, then for any Perron function with good continuity and any $k \in [-s-l, t+1]$, and if the limit $\lim_{j \to \infty} P_k^{[a(j)]}$ exists, we have
    $$ \lim_{j \to \infty} P_k^{[a(j)]} = P_k^{[c]}$$
\end{lemma}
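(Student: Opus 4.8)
The plan is to leverage the two halves of good continuity — uniform limit at infinity for the integral parameters, and (uniform) continuity in the fractional parameters — together with the defining property of the bidirectional accumulation sequence, namely that arbitrarily long central blocks of $c$ agree with $a(j)$ for infinitely many $j$. Fix $k \in [-s-l, t+1]$; we want $\lim_{j\to\infty} P_k^{[a(j)]} = P_k^{[c]}$, assuming the left-hand limit exists. By the form of the generalized Perron formula, $P_k^{[a]}$ depends only on $\alpha_{k+l-1}^{[a]}$, $\beta_k^{[a]}$, and the integral parameters $a_k, \dots, a_{k+l-1}$; since $k \ge -s-l$ and $k+l-1 \le t$, all of these integral slots lie in the range $[-s,t]$ where $c_m < \infty$, so for $j$ in an infinite set $J_N$ the integral parameters of $a(j)$ at positions $k,\dots,k+l-1$ literally equal those of $c$ (take $N$ large enough that the central block $[-N,N] \supseteq [k, k+l-1]$). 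So along $J_N$, $f$ is evaluated with the same finite integral arguments for both $a(j)$ and $c$, and by the continuity-in-fractional-parameters clause it suffices to control $|\alpha_{k+l-1}^{[a(j)]} - \alpha_{k+l-1}^{[c]}|$ and $|\beta_k^{[a(j)]} - \beta_k^{[c]}|$.

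The second step is exactly the estimate from Lemma \ref{localcontrol}: if $a(j)$ and $c$ share the first $N'$ partial quotients of a tail continued fraction, then the corresponding $\alpha$'s (resp. $\beta$'s) differ by at most $2^{-N'}$, \emph{provided} those first $N'$ partial quotients are all finite. Here one must be slightly careful about the case where $c$ is finite or mono-infinite: if, say, $c_{t} = \infty$, then $\beta_{t+1}^{[c]}$ or $\alpha_t^{[c]}$ is genuinely a \emph{finite} continued fraction (a rational), and for $j \in J_N$ with $N$ large, $a(j)$ agrees with $c$ up through position $t$ and then has $a(j)_{t+1} \ge 1$ arbitrary — but by construction of the accumulation sequence the only way position $t+1$ got value $\infty$ in $c$ is that $a(j)_{t+1} \to \infty$ along the relevant subsequence, so $a(j)_{t+1}$ can be taken $> C$. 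Then the uniform-limit-at-infinity clause forces $f(\alpha, \beta, \dots, a(j)_{t+1}, \dots)$ (if that slot is an integral parameter) to be within $\epsilon$ of $I = f(\dots, \infty, \dots)$; and if that slot sits inside a fractional parameter, one instead uses that a continued fraction with a huge partial quotient $a(j)_{t+1} > C$ is within $1/C$ of the truncation, i.e. within $1/C$ of the corresponding rational value appearing in $\alpha^{[c]}$ or $\beta^{[c]}$. Either way the discrepancy is $O(1/C) + O(2^{-N})$, which is $\le \epsilon$ once $C$ and $N$ are large.

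Putting it together: given $\epsilon > 0$, choose $C$ from the good-continuity data and the fractional-continuity modulus, choose $N$ large compared to $|k|, l, s, t, C$, and pick $j \in J_N$. Then
$$
\bigl| P_k^{[a(j)]} - P_k^{[c]} \bigr| \le \bigl| f(\alpha_{k+l-1}^{[a(j)]}, \beta_k^{[a(j)]}, \mathbf{n}) - f(\alpha_{k+l-1}^{[c]}, \beta_k^{[c]}, \mathbf{n}) \bigr| + (\text{boundary/}\infty\text{ error}) < \epsilon,
$$
where $\mathbf{n}$ denotes the common finite integral arguments. Since this holds for all $j$ in an infinite set $J_N$ and for every $\epsilon$, and since we assumed $\lim_j P_k^{[a(j)]}$ exists, that limit must equal $P_k^{[c]}$.

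I expect the main obstacle to be the bookkeeping around the \emph{boundary cases} where $c$ is finite or mono-infinite and some $c_m = \infty$, in particular matching the two clauses of good continuity to whichever role ($\alpha$-tail, $\beta$-tail, or integral parameter) the position of the large/infinite partial quotient plays in the formula $P_k^{[a]} = f(\alpha_{k+l-1}^{[a]}, \beta_k^{[a]}, a_k, \dots, a_{k+l-1})$ — this is where the careful choice of the index range $[-s-l, t+1]$ in the statement is doing its work, ensuring the integral slots stay in the finite block while allowing the fractional tails to bump against an $\infty$. The purely bi-infinite case, by contrast, is an immediate consequence of Lemma \ref{localcontrol} plus fractional continuity with no $\infty$-handling needed.
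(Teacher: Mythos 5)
Your overall approach matches the paper's — split into an $\infty$ case handled by the uniform-limit clause and a finite case handled by fractional continuity plus the $2^{-n}$ contraction of Lemma~\ref{localcontrol}. But the opening step rests on a false arithmetic claim that gives a backwards picture of what the index range $[-s-l, t+1]$ is for. You write that since $k \ge -s-l$ and $k+l-1 \le t$, the integral slots $a_k, \dots, a_{k+l-1}$ all lie in $[-s,t]$; in fact $k \ge -s-l$ only gives $k+l-1 \ge -s-1$, and $k \le t+1$ only gives $k+l-1 \le t+l$, so for $l \ge 1$ and $k$ near either endpoint of the range the integral slots escape $[-s,t]$. This is deliberate. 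The range $[-s-l, t+1]$ is chosen precisely so the integral slots \emph{can} include position $-s-1$ or $t+1$, where $c$ equals $\infty$, and the paper's proof splits on exactly this: (i) if some $c_v = \infty$ for $v \in \{k, \dots, k+l-1\}$ (i.e.\ $k \in [-s-l, -s-1] \cup [t-l+2, t+1]$), then $a(j)_v \to \infty$ along the relevant subsequence and the uniform-limit-at-infinity clause forces both $P_k^{[a(j)]}$ and $P_k^{[c]}$ to $I$; (ii) otherwise $k \in [-s, t-l+1]$, the integral slots are all finite and equal $c$'s along the subsequence, and continuity of $f$ in $(\alpha,\beta)$ applies once $\alpha_{k+l-1}^{[a(j)]} \to \alpha_{k+l-1}^{[c]}$ and $\beta_k^{[a(j)]} \to \beta_k^{[c]}$ are established.

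Your later boundary discussion does reach for the uniform-limit clause ``if that slot is an integral parameter,'' so the right ingredient appears somewhere, but you then close by reasserting that the index range ``ensur[es] the integral slots stay in the finite block,'' which contradicts the case you have just handled. The proposal is therefore internally inconsistent even though its components are correct. Fix the index arithmetic, promote the integral-slot-equals-$\infty$ situation to a clean top-level branch rather than a boundary afterthought, and you recover the paper's argument.
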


\begin{proof}
    We consider two different cases:
    \begin{itemize}
    \item If one of $c_k, \cdots, c_{k+l-1}$ attains $\infty$ (WLOG $c_v = \infty$, this happens when $k \in [s-l, s-1] \cup [t-l+2, t+1]$): then $a(j)_{v} \to \infty$ and by uniform limit we know that $$\lim _{j \to +\infty} P_{k}^{[a(j)]} = P_k^{[c]} = I$$
    \item If none of $c_k, \cdots, c_{k+l-1}$ attains $\infty$ (this happens when $k \in [s, t-l+1]$), then 
    $$
    \begin{aligned}
        & \lim _{j \to +\infty} P_{k}^{[a(j)]} \\
        &= \lim _{j \to +\infty} f(\alpha_{k+l-1}^{[a(j)]}, \ \beta_{k}^{[a(j)]},\  \  a(j)_k, a(j)_{k+1}, \cdots, a(j)_{k+l-1})\\
        &= \lim _{j \to +\infty} f(\alpha_{k+l-1}^{[a(j)]}, \ \beta_{k}^{[a(j)]},\  \  c_k, c_{k+1}, \cdots, c_{k+l-1})\\
        &=  f(\lim _{j \to +\infty} \alpha_{k+l-1}^{[a(j)]}, \ \lim _{j \to +\infty} \beta_{k}^{[a(j)]},\  \ c_k, c_{k+1}, \cdots, c_{k+l-1}) \\
        &= f(\alpha_{k+l-1}^{[c]}, \ \beta_{k}^{[c]},\  \  c_k, c_{k+1}, \cdots, c_{k+l-1})\\
        &= P_k^{[c]}\\
    \end{aligned}
    $$
    \end{itemize}
\end{proof}

\subsection{Generalized Properties}
We generalize the proof appeared in \cite{Cusick_Flahive_Markoff_Lagrange} to prove some key properties related to the generalized spectra. The only condition needed is the good continuity condition.
\subsubsection{Directional Equivalence}
We consider sequences that are infinite in one direction (mono-infinite), and in both directions (bi-infinite), under the limit of $\liminf$ or $\limsup$, but not $\inf$ or $\sup$. Mono-infinite sequences are usually related to continued fraction expansions in number theory.

\begin{theorem}
    Assume continuity for fractional parameters of $f$, then these two are equal:
    $$\left\{ \liminf_{k \in \Z}  \left( P_k^{[a]} \right) \ \middle| \ a:\Z \to  \N^+  \ \right\} = \left\{ \liminf_{k \in \N}  \left( P_k^{[b]} \right) \ \middle| \ b:\N \to  \N^+  \ \right\}$$
    The same relation holds if $\liminf$ is replaced with $\limsup$.
\end{theorem}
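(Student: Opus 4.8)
The plan is to prove the two inclusions separately, driven in both cases by a single ``locality engine.'' Combining the contraction estimate of Lemma~\ref{localcontrol} with the fractional-continuity hypothesis gives the following: if two index sequences $a,a'$ agree on the block of indices $[\,k-m,\ k+l-1+m\,]$, then $\alpha_{k+l-1}^{[a]}$ and $\alpha_{k+l-1}^{[a']}$ share their first $m$ continued-fraction digits (and likewise for the two $\beta_k$'s), hence differ by at most $2^{-m}$; since the integral parameters $a_k,\dots,a_{k+l-1}=a'_k,\dots,a'_{k+l-1}$ coincide, uniform continuity of $f(\,\cdot\,,\,\cdot\,,a_k,\dots,a_{k+l-1})$ on the compact square $[0,1]^2$ forces $\lvert P_k^{[a]}-P_k^{[a']}\rvert\le\omega(2^{-m})\to 0$. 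In other words, each $P_k$ is, up to a vanishing error, a function of a finite window of the sequence around $k$; so truncating a bi-infinite sequence to a half-line, or splicing half-lines together, perturbs $P_k$ by an amount tending to $0$ as $k$ recedes from the cut.

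For the inclusion $\subseteq$ (passing from a bi-infinite $a$ to a one-sided $b$), write $\liminf_{k\in\Z}P_k^{[a]}=\min\bigl(\liminf_{k\to+\infty}P_k^{[a]},\ \liminf_{k\to-\infty}P_k^{[a]}\bigr)=\ell$, so at least one of the two one-sided liminfs equals $\ell$. If it is the one over $k\to+\infty$, take $b:=(a_0,a_1,a_2,\dots)$: here $\alpha^{[b]}_{k+l-1}=\alpha^{[a]}_{k+l-1}$ exactly and $\beta^{[b]}_k$ agrees with $\beta^{[a]}_k$ in its first $k$ digits, so the locality engine gives $\liminf_{k\in\N}P_k^{[b]}=\liminf_{k\to+\infty}P_k^{[a]}=\ell$. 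If instead the liminf over $k\to-\infty$ is the one equal to $\ell$, one builds $b$ by the general \emph{window transplant}: fix $\varepsilon_j\downarrow 0$ and positions $k_1>k_2>\dots\to-\infty$ with $P_{k_j}^{[a]}\to\ell$, chosen to avoid the finitely many indices where $P_k^{[a]}<\ell-\varepsilon_j$, and let $b$ be the concatenation of the windows $(a_{k_j-m_j},\dots,a_{k_j+m_j})$, in their natural (increasing-index) orientation, with $m_j\to\infty$ and buffer blocks inserted between consecutive windows. Deep inside the $j$-th transplanted window the locality engine makes $P^{[b]}$ within $o(1)$ of the corresponding $P_k^{[a]}$, which yields $\liminf_{k\in\N}P_k^{[b]}\le\ell$. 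The same construction, applied to a one-sided $b$ and producing a bi-infinite $a$ (a shift of $b$ on the nonnegative indices, transplanted far-out windows of $b$'s tail on the negative ones), handles the inclusion $\supseteq$.

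The hard part is the reverse estimate, $\liminf_{k\in\N}P_k^{[b]}\ge\ell$ (respectively $\limsup\le\ell$ in the $\limsup$ version), which needs control of $P$ at the positions of $b$ near the junctions between consecutive transplanted windows — and, in the $\supseteq$ direction, near where the copy of $b$ meets its far tail. At such a position $f$ is evaluated on a \emph{mismatched} pair of fractional arguments (the $\beta$-part inherited from the end of one window, the $\alpha$-part from the start of the next), so its value is not automatically on the correct side of $\ell$. One copes with this by: (i) using only windows centered at ``good'' positions, exploiting that for each $\varepsilon$ only finitely many indices have $P_k$ on the wrong side of $\ell\pm\varepsilon$, and arranging the window lengths $m_j$ to exhaust all good positions; and (ii) inserting between consecutive windows a fixed, bounded-entry buffer block chosen long enough that the locality engine already pins $P$ down throughout the mixed zone, with those values forced to within $\varepsilon$ of an already-controlled part of the spectrum. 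Producing such buffers for which the junction values provably land on the correct side of $\ell$ is the technical crux; once that is in place, the remaining assembly is a diagonal extraction in the spirit of Lemma~\ref{accumulation} and of the proof of Theorem~\ref{mg2closed}. The $\limsup$ statement is proved verbatim with every inequality reversed.
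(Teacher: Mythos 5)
Your ``locality engine'' is exactly the right mechanism (Lemma~\ref{localcontrol} plus fractional continuity of $f$), and your first case ($\liminf$ attained along $k\to+\infty$, take $b_n=a_n$) coincides with the paper's. But the proposal then stops short of being a proof at precisely the point you yourself flag as ``the technical crux'': you describe a window-transplant construction with buffer blocks between consecutive far-out windows of $a$, and you correctly observe that the values of $P$ at positions near the junctions must be shown to land on the correct side of $\ell$ --- but you never produce those buffer blocks, nor the estimate that controls the junction values. Without that, the central inequality $\liminf_{k\in\N}P_k^{[b]}\ge\ell$ (and its $\limsup$ mirror, and the analogous control in the $\supseteq$ direction) is not established. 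That is a genuine and explicitly acknowledged gap.

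For comparison, the paper's proof is two lines: take $b_n=a_n$ for $\subseteq$ (after a ``WLOG'' placing the relevant subsequence along $+\infty$) and the symmetric doubling $a_n=b_{|n|}$ for $\supseteq$, using only the locality estimate $\lvert\beta_k^{[a]}-\beta_k^{[b]}\rvert\le 2^{-k}$. This is far simpler than your transplant machinery. It is, however, worth noting that the paper's ``WLOG'' and the symmetric doubling quietly elide exactly the configuration your construction was designed for --- a $\liminf$ attained only along $k\to-\infty$, or the behaviour of $\liminf_{k\to-\infty}P_k^{[a]}$ when $a_n=b_{|n|}$ --- which is not automatic for a Perron function $f$ lacking the reflection symmetry $f(\alpha,\beta,n_0,\dots,n_{l-1})=f(\beta,\alpha,n_{l-1},\dots,n_0)$. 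So the concern that motivated your detour is real; the problem is that you identified the difficulty without overcoming it.
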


\begin{proof}
    The proof is simple. For $a: \Z \to \N^+$, WLOG $\{n_j\} \to +\infty$ satisfies $$\liminf_{k \in \Z}  \left( P_k^{[a]} \right) = \lim _{j \to +\infty} P_{n_j}^{[a]}$$
    consider $b: \N \to \N^+$ such that $b_n = a_n$, then 
    $$\liminf_{k \in \Z}  \left( P_k^{[a]} \right) = \liminf_{k \in \N}  \left( P_k^{[b]} \right)$$
    Conversely, $b: \N \to \N^+$, we simply construct $a$ as $a_n = b_{|n|}$ would work.
\end{proof}

\subsubsection{Inclusion}
We also prove that the $\liminf$ spectrum is included in the $\inf$ spectrum, and similarly that $\limsup$ spectrum is included in the $\sup$ spectrum.
\begin{theorem}
    \label{liminclusion}
    Let $f$ be a Perron function with good continuity. Then the following holds:
    $$\sigma'_{f, \liminf} \subset \sigma_{f, \inf}$$
    Similarly
    $$\sigma'_{f, \limsup} \subset \sigma_{f, \sup}$$
\end{theorem}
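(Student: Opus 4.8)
The plan is to carry over the classical Lagrange-into-Markov argument of \cite{Cusick_Flahive_Markoff_Lagrange}, now routed entirely through Lemma \ref{accumulation}. I will describe the inclusion $\sigma'_{f,\liminf}\subset\sigma_{f,\inf}$ in detail; the inclusion $\sigma'_{f,\limsup}\subset\sigma_{f,\sup}$ is obtained by reversing every inequality.

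First I would fix $\lambda\in\sigma'_{f,\liminf}$, witnessed by a bi-infinite index sequence $a\colon\Z\to\N^+$ with $\liminf_{k\in\Z}P_k^{[a]}=\lambda$; here $\liminf_{k\in\Z}$ means $\lim_{N\to\infty}\inf_{|k|\ge N}P_k^{[a]}$, so for every $\varepsilon>0$ there is $N$ with $P_k^{[a]}>\lambda-\varepsilon$ whenever $|k|\ge N$, and simultaneously there are offsets $m_j$ with $|m_j|\to\infty$ and $P_{m_j}^{[a]}\to\lambda$. The structural fact I would use is shift-equivariance of the Perron function: setting $a(j)_k:=a_{m_j+k}$ one has $\alpha_k^{[a(j)]}=\alpha_{m_j+k}^{[a]}$ and $\beta_k^{[a(j)]}=\beta_{m_j+k}^{[a]}$, hence $P_k^{[a(j)]}=P_{m_j+k}^{[a]}$ for all $j,k$. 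Let $c=\{c_r\}_{r\in\Z}$ be the bidirectional accumulation sequence of the family $\{a(j)\}$ built in the previous subsection; it is an expansion sequence, so it lies in the index class defining $\sigma_{f,\inf}$ and $P_k^{[c]}$ is defined for every admissible offset $k$.

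The whole proof then reduces to showing $\inf_k P_k^{[c]}=\lambda$. For ``$\le$'': thin the $j$'s once more so that $P_0^{[a(j)]}$ converges in $[-\infty,\infty]$ by compactness, and apply Lemma \ref{accumulation} at offset $0$ to get $P_0^{[c]}=\lim_j P_0^{[a(j)]}=\lim_j P_{m_j}^{[a]}=\lambda$, so $\inf_k P_k^{[c]}\le\lambda$. For ``$\ge$'': fix an admissible offset $k$, pass to a subsequence of $j$ along which $P_k^{[a(j)]}$ converges, and invoke Lemma \ref{accumulation} again — it covers at once the case where $c_k,\dots,c_{k+l-1}$ are all finite, via continuity in the fractional parameters, and the case where some $c_v=\infty$, via the uniform limit $I$ at infinity — obtaining $\lim_j P_k^{[a(j)]}=P_k^{[c]}$. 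But $P_k^{[a(j)]}=P_{m_j+k}^{[a]}$ and $|m_j+k|\ge|m_j|-|k|\to\infty$, so this limit is at least $\liminf_{|n|\to\infty}P_n^{[a]}=\lambda$. Hence $P_k^{[c]}\ge\lambda$ for every admissible $k$, and combined with $P_0^{[c]}=\lambda$ this gives $\inf_k P_k^{[c]}=\lambda$, i.e.\ $\lambda\in\sigma_{f,\inf}$.

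The substantive work all lives inside Lemma \ref{accumulation} and the accumulation-sequence construction; what remains here is only bookkeeping, and that is where I expect the care to go: making the successive subsequence extractions (first for offset $0$, then one per admissible offset $k$) mutually consistent, observing that it is precisely the choice $|m_j|\to\infty$ that makes a \emph{bi-infinite} $\liminf$ coerce both one-sided liminfs to be $\ge\lambda$, and confirming that the degenerate outcomes — $c$ finite or mono-infinite, and $\lambda=\pm\infty$ — are absorbed by working throughout in the compact spaces $[-\infty,\infty]$ and $\N^+\cup\{\infty\}$, so that no continuity hypothesis beyond good continuity is ever needed.
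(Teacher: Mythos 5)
Your proposal matches the paper's proof essentially line for line: both shift to $a(j)_k := a_{m_j+k}$ along offsets realizing the $\liminf$, take the bidirectional accumulation sequence $c$, and invoke Lemma \ref{accumulation} twice — once at offset $0$ to identify $P_0^{[c]}$ with $\lambda$, and once per admissible offset $k$ (along a further subsequence) to force $P_k^{[c]}\ge\lambda$ via $|m_j+k|\to\infty$. Your added bookkeeping (explicitly noting $|m_j|\to\infty$, shift-equivariance of $P$, compactifying to $[-\infty,\infty]$ and $\N^+\cup\{\infty\}$) spells out what the paper leaves implicit but does not change the argument.
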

\begin{proof}
    We only prove the first line concerning $\liminf$ and $\inf$, and the second line is done analogously.
    Suppose that the sequence of $\{ n_j\}$ satisfies that
    $$\liminf_{k \in \Z}  \left( P_k^{[a]} \right) = \lim _{j \to +\infty} P_{n_j}^{[a]}$$
    We define $a(j)_k := a_{n_j+k}$ and let $\{c_r\}$ be its bidirectional accumulation sequence. Directly applying Lemma \ref{accumulation} we have
    $$\liminf_{k \in \Z}  P_k^{[a]} = \lim _{j \to +\infty} P_{n_j}^{[a]} = P_0^{[c]}$$
    Then we show that $P_0^{[c]} = \inf_{k \in \Z} P_k^{[c]}$, i.e., 
    $$\forall k \quad P_0^{[c]} \le P_k^{[c]}$$
    This is because that similarly we have a subsequence $\{j_t\}$ such that
    $\lim_{t \to \infty} P_k^{[a(j_t)]}$ exists:
    $$P_k^{[c]} = \lim_{t \to \infty} P_k^{[a(j_t)]} = \lim_{t \to \infty} P_{n_{j_t} + k}^{[a]} \ge \liminf_{k \in \Z}  P_k^{[a]} = P_0^{[c]}$$
    The above tells us that any $\liminf_{k \in \Z}  \left( P_k^{[a]} \right) \in \sigma'_{f,\liminf}$ is contained in $\sigma_{f,\inf}$, proving the inclusion.
\end{proof}

\subsubsection{Closedness}
We also show that closedness is a universal property across the four spectra:
$$\sigma'_{f,\liminf}, \ \sigma'_{f,\limsup},\quad  \sigma_{f,\inf},\  \sigma_{f,\sup}$$
The proof relies on only the good continuity of $f$, so this is a very general result. 

\begin{lemma}
    \label{seqconstruct}
    Assume good continuity of $f$, for sequence $\{a_n\}$, there exists a sequence $\{c_n\}$ such that
    $$ \inf_{n \in \Z} P_n^{[a]} = \inf_{n \in \Z} P_n^{[c]} = P_0^{[c]}$$
    The $\inf$ in the statement can be safely replaced with with $\sup$, and the proposition still holds.
\end{lemma}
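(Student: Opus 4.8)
The plan is to mimic the proof of Lemma \ref{lem6bookanalog}, replacing the "bounded sequence plus diagonal subsequence" argument by the bidirectional accumulation sequence of Section~3.3, so that unbounded index values (and the symbol $\infty$) are absorbed into the uniform-limit-at-infinity half of the good continuity hypothesis. I will only treat the $\inf$ statement; the $\sup$ statement follows verbatim after reversing every inequality and replacing monotone-decreasing selections by monotone-increasing ones. Throughout, values are allowed to lie in $[-\infty,\infty]$, so limits like $\lim_j P_{m_j}^{[a]}$ always "exist" after passing to a subsequence.

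Set $v := \inf_{n\in\Z} P_n^{[a]}$. The easy case is when $v = P_i^{[a]}$ for some $i$: take $c_n := a_{n+i}$, and then $\inf_{n} P_n^{[c]} = v = P_0^{[c]}$. So assume the infimum is not attained. Pick indices $m_j$ with $P_{m_j}^{[a]} \to v$, and after passing to a subsequence assume this convergence is strictly monotone decreasing. Put $a(j)_k := a_{m_j+k}$ and let $\{c_r\}_{r\in\Z}$ be the bidirectional accumulation sequence of the family $\{a(j)\}_j$; it is an expansion sequence (finite, mono-infinite, or bi-infinite), so $P_n^{[c]}$ is well defined for every $n$. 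Since the construction produces $c_0$ and $c_{-1}$ first, in the notation of Lemma~\ref{accumulation} we have $s\ge 0$ and $t\ge 0$, so $0 \in [-s-l,\,t+1]$; as $P_0^{[a(j)]} = P_{m_j}^{[a]} \to v$, Lemma~\ref{accumulation} gives $P_0^{[c]} = \lim_j P_0^{[a(j)]} = v$.

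It remains to show $P_n^{[c]} \ge v$ for every $n\in\Z$; combined with $P_0^{[c]}=v$ this gives $\inf_n P_n^{[c]} = v = P_0^{[c]} = \inf_n P_n^{[a]}$. Fix $n$. For each $r$ the construction of the accumulation sequence produces an infinite set $J_r$ of indices $j$ along which $(a(j)_{-r},\dots,a(j)_r)$ agrees with $(c_{-r},\dots,c_r)$ (or, once one side has reached $\infty$, the corresponding one-sided agreement together with $a(j)_{r}\to\infty$ on the truncated coordinate). Choosing $r$ large enough to cover the window of coordinates feeding $P_n^{[c]}$ and arguing as in Lemma~\ref{lem6bookanalog}: if all relevant entries of $c$ are finite, the integral parameters of $P_n^{[a(j)]}$ and $P_n^{[c]}$ literally coincide for $j\in J_r$, the fractional parameters converge by Lemma~\ref{localcontrol}, and continuity for fractional parameters yields $\lim_{j\in J_r} P_n^{[a(j)]} = P_n^{[c]}$; if instead some relevant entry of $c$ is $\infty$, then along $J_r$ the corresponding coordinate of $a(j)$ tends to $\infty$, and uniform limit at infinity (with the convention $f=I$ when a parameter equals $\infty$) gives $\lim_{j\in J_r} P_n^{[a(j)]} = I = P_n^{[c]}$. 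Either way $P_n^{[c]}$ is a limit point of $\{P_n^{[a(j)]}\}_j$, and since $P_n^{[a(j)]} = P_{m_j+n}^{[a]} \ge v$ for all $j$, we get $P_n^{[c]} \ge v$, as required.

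The main obstacle is the bookkeeping around $\infty$: one must verify that at every position of the window entering $P_n^{[c]}$ — whether that position feeds an integral parameter or instead tails off into the continued fraction producing $\alpha^{[c]}$ or $\beta^{[c]}$ — an $\infty$ entry of the accumulation sequence is compatible with the two clauses of good continuity, so that the convergence statement of Lemma~\ref{accumulation} still holds along the sets $J_r$. Once this case split is organized (finite window $\Rightarrow$ use continuity in $(\alpha,\beta)$ and exact matching of the integral arguments; an $\infty$ appears $\Rightarrow$ the value collapses to $I$), the remainder is the same diagonalization as in the bounded case of Lemma~\ref{lem6bookanalog}.
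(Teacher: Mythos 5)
Your proposal follows essentially the same argument as the paper's own proof: shift in the attained case, otherwise take a minimizing sequence $\{m_j\}$, form the translated family $a(j)_k = a_{m_j+k}$, let $c$ be its bidirectional accumulation sequence, and invoke Lemma~\ref{accumulation} to get $P_0^{[c]} = v$ and $P_n^{[c]} \ge v$ along a subsequence. Your treatment is a bit more explicit about the $\infty$-bookkeeping than the paper's (and you correctly write $c_n = a_{n+i}$ where the paper has a harmless sign slip), but there is no substantive difference in method.
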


\begin{proof}
    This proof is analogous to Theorem \ref{lem6bookanalog} above, or Lemma 6 in book \cite{Cusick_Flahive_Markoff_Lagrange}.

    If there exists $n_0$ such that $\inf_{n \in \Z} P_n^{[a]} = P_{n_0}^{a}$, we simply set $c_n = a_{n-n_0}$. Otherwise, the sequence $a$ is at least infinite in one direction.

    Assume the sequence $\{n_j\}$ satisfies that
    $$ \inf_{n \in \Z} P_n^{[a]} = \lim _{j \to \infty}  P_{n_j}^{[a]}$$
    and moreover, we assume that $P_{n_j}^{[a]}$ is monotonically decreasing for $j \in \N^+$. We define $a(j)_k := a_{n_j+k}$ and define the sequence $\{c_n\}$ as its bidirectional accumulation sequence.
    
    \begin{itemize}
        \item First, using Lemma \ref{accumulation},
        $$\lim _{j \to \infty}  P_{n_j}^{[a]} = P_0^{[c]}$$.
        \item Then we prove that
        $$P_0^{[c]} = \inf_{k \in \Z} P_k^{[c]}$$
        This is because that similarly we can show for a subsequence $\{j_t\}$ that 
        $$\lim _{t \to +\infty} P_{n_{j_t}+k}^{[a]} = P_k^{[c]}$$
        also
        $$\lim _{t \to +\infty} P_{n_{j_t}+k}^{[a]} \ge \inf_{n \in \Z} P_n^{[a]} = P_0^{[c]}$$
    \end{itemize}
    The similar arguments can be made for $\sup$ spectrum.
\end{proof}

\begin{theorem}
    \label{infclosed}
    Under the assumption of good continuity of $f$, these 2 spectra are closed sets:
    $$\sigma_{f,\inf}, \quad \sigma_{f,\sup}$$
\end{theorem}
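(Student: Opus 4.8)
\subsection*{Proof proposal}

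The plan is to mirror the proof of Theorem \ref{mg2closed}, but with the ad hoc $\mathrm{MG}_2$-specific estimates replaced by the general machinery of Lemmas \ref{seqconstruct} and \ref{accumulation}. I will treat only $\sigma_{f,\inf}$; the case $\sigma_{f,\sup}$ is identical after reversing the relevant inequalities (and noting that $\pm\infty$ is an allowed value, so no endpoint is lost). Let $y$ be a limit point of $\sigma_{f,\inf}$, say $y=\lim_{j\to\infty} y_j$ with $y_j=\inf_{k\in\Z}P_k^{[a(j)]}\in\sigma_{f,\inf}$, and after passing to a subsequence assume $y_j\to y$ monotonically. By Lemma \ref{seqconstruct}, for each $j$ there is a sequence $b(j)$ with $\inf_{k\in\Z}P_k^{[b(j)]}=P_0^{[b(j)]}=y_j$; replacing $a(j)$ by $b(j)$, I may assume the infimum of the $j$-th sequence is attained at index $0$ with value $y_j$.

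Next I form the bidirectional accumulation sequence $\{c_r\}_{r\in\Z}$ of the family $\{b(j)\}$, using the construction preceding Lemma \ref{accumulation}; $c$ is an expansion sequence, hence a legitimate element of the index set defining $\sigma_{f,\inf}$. The first claim is $P_0^{[c]}=y$: if the integral parameters $c_0,\dots,c_{l-1}$ are all finite this is immediate from Lemma \ref{accumulation}, since $\lim_j P_0^{[b(j)]}=\lim_j y_j=y$ exists; if one of them equals $\infty$ then the corresponding $b(j)$-entry tends to $\infty$ along the realizing subsequence, and the uniform-limit-at-infinity clause of good continuity forces $y=I=P_0^{[c]}$. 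The second claim is $P_0^{[c]}=\inf_{k\in\Z}P_k^{[c]}$, i.e. $P_k^{[c]}\ge y$ for every $k$: fixing $k$, pass to a subsequence $\{j_t\}$ along which $\lim_t P_k^{[b(j_t)]}$ exists; Lemma \ref{accumulation} (again with a split according to whether the window around index $k$ meets $\infty$) identifies this limit with $P_k^{[c]}$, while $P_k^{[b(j_t)]}\ge\inf_m P_m^{[b(j_t)]}=y_{j_t}\to y$. Combining, $y=P_0^{[c]}=\inf_k P_k^{[c]}\in\sigma_{f,\inf}$, so the spectrum is closed; unlike in Theorem \ref{mg2closed}, the accumulation construction already captures the ``boundary'' value $I$, so no separate endpoint argument is required.

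I expect the only real friction to be the bookkeeping around $\infty$-entries in the accumulation sequence. Lemma \ref{accumulation} is phrased with the hypothesis that $c$ is finite on a window $[-s,t]$ around the relevant index, so every invocation must be preceded by a case split into ``the relevant integral parameters are finite, apply continuity for fractional parameters'' versus ``one of them is $\infty$, so the value is the uniform limit $I$.'' Keeping the index ranges $[-s-l,t+1]$ consistent under shifts of the base point, and checking that the monotonicity reduction on $\{y_j\}$ survives the successive subsequence extractions, are the places where care is needed; conceptually the argument is just ``extract an accumulation subsequence, push limits through $f$ using good continuity, and use that the infimum is attained at the base point both before and after taking the limit.''
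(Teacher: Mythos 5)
Your proposal reproduces the paper's argument: reduce via Lemma \ref{seqconstruct} to sequences whose infimum is attained at index $0$, pass to the bidirectional accumulation sequence $\{c_r\}$, and use Lemma \ref{accumulation} to identify $P_0^{[c]}=y$ and to bound $P_k^{[c]}\ge y$ for all $k$ along a realizing subsequence. The only difference is that you spell out the finite/$\infty$ case split and the monotonicity reduction that the paper leaves implicit, which is a welcome clarification rather than a change of route.
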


\begin{proof}
    This theorem is proved by applying Lemma \ref{seqconstruct}, and is analogous to Theorem \ref{mg2closed}.

    For any set of index sequences $\{a(j)_k\}$, where $j = 1,2, \cdots$, and $k \in \Z$, if
    $$ \lim _{j \to \infty} \inf_{k \in \Z} P_{k}^{[a(j)]} = y$$
    we prove that there exists a sequence $\{c_k\}_{k \in \Z}$ such that
    $\inf_{k \in \Z} P_{k}^{[c]} = y$.
    
    Using Lemma \ref{seqconstruct}, we can further assume that
    $$ \inf_{k \in \Z} P_{k}^{[a(j)]} = P_0^{[a(j)]}$$

    We let the sequence $\{c_k\}_{k \in \Z}$ be the bidirectional accumulation sequence of $a(j)_k$. All we need is to verify that
    \begin{itemize}
        \item The Perron's formula for sequence $c$ at offset $0$ is exactly $y$: $P_{0}^{[c]} = y$. This is obvious by Lemma \ref{accumulation}.
        \item Verify $\inf_{k \in \Z} P_{k}^{[c]} = P_{0}^{[c]}$.
        By Lemma \ref{accumulation}, it's direct to show for a subsequence
        $$ P_{k}^{[c]} = \lim_{t \to \infty} P_{k}^{[a(j_t)]} \ge \lim_{t \to \infty} P_{0}^{[a(j_t)]} = y $$
    \end{itemize}
    The rest (proving the $\sup$ spectrum) can be done with same steps, or just replacing the Perron function $f$ with $-f$.
\end{proof}

The proof of closedness for the other 2 spectra requires explicit construction of closure.

\begin{definition}
    We define some periodic spectral sets as follows:
    $$\tau'_{f, \liminf} := \left\{ \liminf_k  \left( P_k^{[a]} \right) \ \middle| \ a:\Z \to   \N^+ \text{ is a periodic sequence} \right\}$$
    $$\tau'_{f, \limsup} := \left\{ \limsup_k  \left( P_k^{[a]} \right) \ \middle| \ a:\Z \to   \N^+ \text{ is a periodic sequence} \right\}$$
\end{definition}
Since $a$ is a periodic sequence, $P_k^{[a]}$ has only finitely many attainable values for offset $k$. Thus it is equivalent to replace $\liminf$ with $\inf$ or $\limsup$ with $\sup$ in the above definition.

\begin{theorem}
    \label{liminfclosed}
    Assume good continuity of $f$. The spectra $\sigma'_{f, \liminf}$ and $\sigma'_{f, \limsup}$ are respectively the closure of $\tau'_{f, \liminf}$ and $\tau'_{f, \limsup}$, or
    $$\sigma'_{f, \liminf} = \mathrm{cl} \left( \tau'_{f, \liminf} \right), \quad \sigma'_{f, \limsup} = \mathrm{cl} \left( \tau'_{f, \limsup} \right)$$
\end{theorem}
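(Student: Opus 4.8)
The plan is to prove the two inclusions $\sigma'_{f,\liminf}\subseteq\mathrm{cl}(\tau'_{f,\liminf})$ and $\mathrm{cl}(\tau'_{f,\liminf})\subseteq\sigma'_{f,\liminf}$; since $\tau'_{f,\liminf}\subseteq\sigma'_{f,\liminf}$ is trivial (a periodic sequence is a sequence, and the functional $\liminf_k P_k$ is the same on it), these two inclusions already force equality. The $\limsup$ statement then follows by applying the $\liminf$ statement to $-f$, which again has good continuity (now with limit $-I$) and interchanges the roles of $\liminf$/$\inf$ with $\limsup$/$\sup$.

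For $\sigma'\subseteq\mathrm{cl}(\tau')$, I would start from $y=\liminf_k P_k^{[a]}$, pick $n_j\to\infty$ (WLOG along the $+\infty$ side) with $P_{n_j}^{[a]}\to y$, so that $P_m^{[a]}>y-\epsilon$ for all $m$ outside some window $[-K,K]$, form the shifted family $a(j)_k:=a_{n_j+k}$, and let $c$ be its bidirectional accumulation sequence. Lemma~\ref{accumulation} gives $P_0^{[c]}=y$, and the sub-subsequence argument used in Theorem~\ref{infclosed} gives $\inf_k P_k^{[c]}=y$. The key point is that for every $R$ the word $(c_{-R},\dots,c_R)$ occurs in $a$ at infinitely many of the positions $n_j$, hence at two positions $p<q$ with $p,q>K$ and $P_p^{[a]}\in[y-\epsilon,y+\epsilon]$. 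Taking $b$ periodic with period $(a_p,\dots,a_{q-1})$, the coincidence $a_{p+s}=a_{q+s}$ for $|s|\le R$ forces $b$ to agree with $a$ on the whole interval $[p-R,q+R]$, which exceeds one period by $2R$; so every period position has a matched context of radius $\gtrsim R$ on both sides, and Lemma~\ref{localcontrol} together with uniform continuity of $f$ yields $P_k^{[b]}=P_k^{[a]}+o_R(1)$ for each such $k$. Hence $\liminf_k P_k^{[b]}=\min_k P_k^{[b]}$ lies within $2\epsilon+o_R(1)$ of $y$; letting $R\to\infty$ then $\epsilon\to 0$ (and, if $c$ carries $\infty$ entries, first replacing them by large finite values and invoking the uniform limit at infinity) shows $y\in\mathrm{cl}(\tau')$.

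For $\mathrm{cl}(\tau')\subseteq\sigma'$, I would take $y=\lim_i\mu_i$ with $\mu_i=\min_k P_k^{[b^{(i)}]}$, $b^{(i)}$ periodic, and shift each $b^{(i)}$ so the minimum is attained at $k=0$. If the periods stay bounded we pass to a constant subsequence and $y=\mu_i\in\tau'$; otherwise, by compactness of $(\N^+\cup\{\infty\})^{\Z}$ extract $b^{(i)}\to b^{(\infty)}$ coordinatewise, so that good continuity gives $P_0^{[b^{(\infty)}]}=\lim\mu_i=y$ and $P_k^{[b^{(\infty)}]}=\lim_i P_k^{[b^{(i)}]}\ge\lim_i\mu_i=y$ for all $k$. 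Then I would build $a$ by concatenating, on either side of the origin, very long powers of the periods of $b^{(1)},b^{(2)},\dots$, aligning each junction to sit at ``coordinate $0$ of $b^{(\infty)}$'': because $b^{(i)}$ agrees with $b^{(\infty)}$ on a window about $0$ whose length tends to $\infty$, consecutive blocks agree with each other on a long window at their junction, so Lemma~\ref{localcontrol} makes that junction value $P_0^{[b^{(\infty)}]}+o(1)=y+o(1)$. The interior of each $b^{(i)}$-power contributes the values $P_k^{[b^{(i)}]}\ge\mu_i$, including one $\approx\mu_i$; since the block index $i\to\infty$ as $k\to\pm\infty$, we get $\liminf_k P_k^{[a]}=\lim_i\mu_i=y$, i.e.\ $y\in\sigma'_{f,\liminf}$. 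As in Theorem~\ref{infclosed}, the bidirectional-accumulation machinery absorbs the finite/mono-infinite/$\infty$ cases uniformly.

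The hard part throughout is the control of the $P$-values at the \emph{seams} where two periodic pieces are joined: naively, a seam position carries $P_k=f(\alpha,\beta,\dots)$ with $\alpha,\beta$ mismatched continued fractions, a quantity that can drop far below $y$ (for instance near $f(0,0,1,\dots)$ for a Lagrange-type $f$), and because there are infinitely many seams going to infinity this would destroy the $\liminf$. The whole trick is to force every seam to lie inside a long block matching a genuine context of a sequence known to have $P$-value $\ge y-\epsilon$ — produced, in the first inclusion, by two occurrences of an accumulation window, and in the second by the coordinatewise convergence $b^{(i)}\to b^{(\infty)}$ — after which Lemma~\ref{localcontrol} and good continuity do the real work, and the remaining estimates are bookkeeping entirely parallel to Lemma~\ref{lem6bookanalog} and Theorem~\ref{mg2closed}.
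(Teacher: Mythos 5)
Your proposal is correct and takes essentially the same route as the paper's proof: periodic approximants whose period lies between two accumulation-matched positions for $\sigma'_{f,\liminf}\subseteq\mathrm{cl}(\tau'_{f,\liminf})$, concatenation of period blocks with accumulation-aligned junctions for the reverse inclusion, both organized via the bidirectional accumulation sequence and Lemma~\ref{accumulation}, and the $-f$ trick for the $\limsup$ statement. The one place you are more schematic than the paper is the control of seams when the accumulation sequence $c$ has $\infty$ entries (mono-infinite and finite cases), where the estimate does not follow from Lemma~\ref{localcontrol} alone---the matching depth there is capped at $s$---but also requires forcing the first mismatched entry on the short side to be huge (the paper imposes $a_{d_j-s-1}>2^j$ and argues both offending tails are close to the same finite convergent); your phrase ``invoking the uniform limit at infinity'' is the right idea but compresses the case analysis the paper carries out explicitly.
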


\begin{proof}
    The proof mainly follows Theorem 2 in Chapter 3 of book \cite{Cusick_Flahive_Markoff_Lagrange}.

    For simplicity, we can further assume that the Perron function $f$ takes value between the interval $[-1, 1]$ (which is topologically homeomorphic to $[-\infty, \infty]$). This simplifies the discussion about neighborhoods.

    First, for any sequence $a$ and arbitrarily small $\epsilon > 0$, we construct periodic sequence $p$ such that
    $$ \left\lvert \liminf_k  \left( P_k^{[a]} \right) -   \liminf_k  \left( P_k^{[p]} \right) \right\rvert < \epsilon$$
    Like in the proof of Theorem \ref{liminclusion}, we assume that
    $$ \liminf_k  \left( P_k^{[a]} \right) = \lim_{j\to \infty}  P_{n_j}^{[a]}$$
    We construct bidirectional accumulation sequence $\{c_k\}$ of $\{a(j)_k := a_{n_j+k}\}$ according to Lemma \ref{accumulation}, and choose a subsequence of $\{n_j\}$, denoted by $\{d_j\}$ that satisfies the following:
    \begin{itemize}
        \item $d_{j+1} - d_j > 3j+3$;
        \item $d_{j+1} - d_j$ is an increasing sequence for $j$.
    \end{itemize}
    The periodic sequence, $\{p(j)\}_k$, is defined as:
    $$p(j)_k = a_{d_j + (k \!\! \mod (d_{j+1} - d_j))}$$
    where $p(j)$ has the period of
    $$(\overline{a_{d_j}, a_{d_j+1}, \cdots, a_{d_{j+1} - 1}})$$
    We will later show that if $j$ is large enough, then 
    $$ \left\lvert \liminf_k  \left( P_k^{[a]} \right) -   \liminf_k  \left( P_k^{[p(j)]} \right) \right\rvert < \epsilon$$
    The choice of $d_j$ requires further dependencies on the structure of $\{c_k\}$, and we will discuss by cases below:
    \begin{itemize}
        \item (Bi-infinite) $\{c_k\}$ extends in both directions.
        We further require that for any $j$, the following are equal
        $$\left(a_{d_j - j}, \cdots, a_{d_j + j}\right) = \left(c_{-j}, \cdots, c_j\right)$$
        For this part, the proof coincides with Theorem 2 in Chapter 3 of book \cite{Cusick_Flahive_Markoff_Lagrange}.
        
        \item (Mono-infinite) $\{c_k\}$ extends in only one direction. WLOG, we consider the case where 
        $$c_{-s-1} = \infty, c_{-s}, \cdots, c_0, \cdots$$
        and further assume that for any $j$, the following holds:
        $$\left(a_{d_j - s}, \cdots, a_{d_j + j}\right) = \left(c_{-s}, \cdots, c_j\right)$$
        $$a_{d_j-s-1} > 2^j$$
        We show that for $j$ sufficiently large, and for $u = 0, 1, \cdots, d_{j+1} - d_j$, 
        $$\left\lvert P_{d_j+u}^{[a]} -  P_{u}^{[p(j)]}  \right\rvert < \epsilon / 2$$
        Since $a_{d_j + u} = p(j)_{u}$ for $u = -s, \cdots, 0, \cdots, d_{j+1} - d_j +j$, we can show that $\left\lvert \alpha_{d_j+u+l-1}^{[a]} - \alpha_{u+l-1}^{[p(j)]}\right\rvert < 2^{l-j}$ and $\left\lvert \beta_{d_j+u}^{[a]} - \alpha_{u}^{[p(j)]}\right\rvert < 2^{-j}$, controlling the term of $\left\lvert P_{d_j+u}^{[a]} -  P_{u}^{[p(j)]}  \right\rvert$ (by uniform continuity of $f$ to $(\alpha, \beta)$).
        
        \item (Finite) $\{c_k\}$ has finitely many terms, which are:
        $$c_{-s-1} = \infty,\ \   c_{-s}, \cdots, c_0, \cdots, c_{t}, \ \  c_{t+1} = \infty$$
        so we can further assume that for any $j$, the following holds:
        $$\left(a_{d_j - s}, \cdots, a_{d_j + t}\right) = \left(c_{-s}, \cdots, c_t\right)$$
        $$a_{d_j-s-1} > 2^j,\ \  a_{d_j+t+1} > 2^j$$
        We also show that for $j$ sufficiently large, and for $u = 0, 1, \cdots, d_{j+1} - d_j$, 
        $$\left\lvert P_{d_j+u}^{[a]} -  P_{u}^{[p(j)]}  \right\rvert < \epsilon / 2$$
        This is done though discussing two branches:
        \begin{itemize}
            \item For $u \in [0, t-l+1] \cup [t+2, (n_{j+1} - n_j) -s-l-1]$, the proof follows from above. 
            \item For $u \in [t-l+2, t+1] \cup [(n_{j+1} - n_j) -s-l, (n_{j+1} - n_j)]$, by uniform continuity of $f$'s integral parameters, $P_{d_j+u}^{[a]}$ and $P_{u}^{[p(j)]}$ converges to $I$ uniformly.
        \end{itemize}
        
    \end{itemize}
    The above shows that $\sigma'_{f, \liminf} \subset \mathrm{cl} \left( \tau'_{f, \liminf} \right)$. 
    
    Now we prove the reverse inclusion.

    Suppose that $p(j)_k$ is a series of periodic sequences, such that
    $P_0^{[p(j)]} = \liminf_{k} P_k^{[p(j)]} $, with $\lim_{j \to \infty} P_0^{[p(j)]} = y$. We construct a sequence $\{a_k\}$ such that $\liminf_{k \to \infty} P_k^{[a]} = y$.

    We construct bidirectional accumulation sequence $\{c_k\}$ of $\{p(j)_k\}$ according to Lemma \ref{accumulation}, and discuss by cases of $\{c_k\}$:

    \begin{itemize}
        \item (Bi-infinite) $\{c_k\}$ extends in both directions.
        By resetting $\{p(j)\}$ to be a subsequence of itself, we further require that for any $j$, the following are equal
        $$\left(p(j)_{-j}, \cdots, p(j)_{j}\right) = \left(c_{-j}, \cdots, c_j\right)$$
        we also require that the period of $p(j)$ should be increasing and greater than $3j$.
        
        This part coincides with Theorem 2 in Chapter 3 of book \cite{Cusick_Flahive_Markoff_Lagrange}.
        
        \item (Mono-infinite) $\{c_k\}$ extends in only one direction. We further assume that for any $j$,
        $$\left(p(j)_{- s}, \cdots, p(j)_{j}\right) = \left(c_{-s}, \cdots, c_j\right)$$
        $$p(j)_{-s-1} > 2^j$$
        we also require that the period of $p(j)$ should be increasing and greater than $2j+s+1$.
        
        \item (Finite) $\{c_k\}$ has finitely many terms, which are:
        $$c_{-s-1} = \infty,\ \   c_{-s}, \cdots, c_0, \cdots, c_{t}, \ \  c_{t+1} = \infty$$
        so we assume that:
        $$\left(p(j)_{-s}, \cdots, p(j)_{t}\right) = \left(c_{-s}, \cdots, c_t\right)$$
        $$p(j)_{-s-1} > 2^j,\ \  p(j)_{t+1} > 2^j$$
        we also require that the period of $p(j)$ should be increasing and greater than $j+s+t+2$.
    \end{itemize}

    We construct $\{d_j\}$ and $\{a_n\}$ as:
    $$
    \begin{aligned}
        d_1 &= 1, \\
        d_{j+1} &= d_j + (\text{period length of } p(j)), \\
        \left(a_{d_j}, \cdots, a_{d_{j+1} - 1} \right) &= \left(p(j)_0, \cdots, p(j)_{d_{j+1} - d_{j} - 1} \right) \\
    \end{aligned}
    $$
    It's by analogy from above to show that
    $$\left\lvert P_{d_j+u}^{[a]} -  P_{u}^{[p(j)]}  \right\rvert < \epsilon / 2 \quad (u = 0, \cdots, d_{j+1} - d_{j} - 1)$$
    and 
    $$\liminf_{k \to \infty} P_k^{[a]} = \lim_{j \to \infty} P_{d_j}^{[a]} = \lim_{j \to \infty} P_0^{[p(j)]} = y$$
    
    

    
\end{proof}

\begin{corollary}
    The Perron's formula for Lagrange and Markov spectra is:
    $$f(\alpha, \beta, a_0) = \alpha + \beta + a_0$$
    which satisfies the good continuity condition. Thus, the Lagrange spectrum is contained in the Markov spectrum, and they are both closed.
\end{corollary}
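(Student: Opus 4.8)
The plan is to check that $f(\alpha,\beta,a_0)=\alpha+\beta+a_0$ satisfies Definition \ref{goodcontinuity} and then feed this into the general theorems already proved, after matching the classical spectra to the sequence-indexed ones. Here there is a single integral parameter ($l=1$), and $P_k^{[a]}=f(\alpha_k^{[a]},\beta_k^{[a]},a_k)=a_k+\alpha_k^{[a]}+\beta_k^{[a]}$ is exactly Perron's classical expression (see \cite{Perron_Irrationalzahlen,Cusick_Flahive_Markoff_Lagrange}) for the Lagrange/Markov value at offset $k$; together with Theorem \ref{fourinone} this gives $\mathrm{L}=\sigma'_{f,\limsup}$ and $\mathrm{M}=\sigma_{f,\sup}$, the value $\infty\in\mathrm{M}$ corresponding precisely to the sequences in $\sigma_{f,\sup}$ that carry an $\infty$-entry.

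Next I would verify good continuity by checking the two clauses of Definition \ref{goodcontinuity} directly. For the uniform limit at infinity take $I=+\infty$: if $a_0>C$ then $\alpha+\beta+a_0>C$ for every $(\alpha,\beta)\in[0,1]^2$, so $f$ lies in the neighbourhood $(C,+\infty]$ of $+\infty$ with a threshold depending only on the prescribed neighbourhood, and the formula forces $f=+\infty$ when $a_0=\infty$. For continuity in the fractional parameters, for each fixed finite $a_0$ the map $(\alpha,\beta)\mapsto\alpha+\beta+a_0$ is affine, hence uniformly continuous on the compact square $[0,1]^2$. Thus $f$ has good continuity.

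With good continuity established the three assertions are immediate: Theorem \ref{liminclusion} gives $\sigma'_{f,\limsup}\subset\sigma_{f,\sup}$, i.e.\ $\mathrm{L}\subset\mathrm{M}$; Theorem \ref{infclosed} gives that $\sigma_{f,\sup}=\mathrm{M}$ is closed; and Theorem \ref{liminfclosed} identifies $\sigma'_{f,\limsup}=\mathrm{L}$ with $\mathrm{cl}(\tau'_{f,\limsup})$, hence closed. I expect no genuine obstacle in this argument — all the real work sits in the earlier general theorems — and the only point requiring a word of care is the bookkeeping that translates the quadratic-form and badly-approximable definitions of $\mathrm{M}$ and $\mathrm{L}$ into the sets $\sigma_{f,\sup}$ and $\sigma'_{f,\limsup}$, which Theorem \ref{fourinone} together with \cite{Cusick_Flahive_Markoff_Lagrange} supplies.
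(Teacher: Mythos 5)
Your proposal is correct and follows precisely the route the paper intends: identify $\mathrm{L}=\sigma'_{f,\limsup}$ and $\mathrm{M}=\sigma_{f,\sup}$ for $f(\alpha,\beta,a_0)=a_0+\alpha+\beta$ (via Lemma \ref{compute}, Theorem \ref{fourinone}, and the classical Perron formulas), verify good continuity with $I=+\infty$ since $f$ is affine in $(\alpha,\beta)$ for each finite $a_0$ and uniformly large once $a_0>C$, and then invoke Theorems \ref{liminclusion}, \ref{infclosed}, and \ref{liminfclosed}. The paper leaves this corollary unproved because it is exactly this routine verification; you have supplied it accurately.
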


\begin{corollary}
    The Perron's formula for Dirichlet and Mordell-Gruber spectra is:
    $$f(\alpha, \beta) = \frac{1}{1+\alpha\beta}$$
    which satisfies the good continuity condition. Thus, the Dirichlet spectrum is contained in the Mordell-Gruber spectrum, and they are both closed.
\end{corollary}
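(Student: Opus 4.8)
The plan is to recognise $f(\alpha,\beta)=\tfrac{1}{1+\alpha\beta}$ as exactly the generalised Perron function attached to the Dirichlet and Mordell--Gruber spectra — one with \emph{no} integral parameters, i.e.\ $l=0$ — and then to invoke the three structural results already established for Perron functions of good continuity: Theorem~\ref{liminclusion} for the inclusion $\sigma'_{f,\limsup}\subset\sigma_{f,\sup}$, Theorem~\ref{infclosed} for closedness of $\sigma_{f,\sup}$, and Theorem~\ref{liminfclosed} for closedness of $\sigma'_{f,\limsup}$.

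First I would check good continuity in the sense of Definition~\ref{goodcontinuity}. Since $l=0$ there are no integral parameters, so the first clause (uniform limit at infinity for integral parameters) is vacuous. For the second clause, $f(\alpha,\beta)=\tfrac{1}{1+\alpha\beta}$ is a rational function whose denominator satisfies $1+\alpha\beta\ge 1>0$ on $[0,1]^2$; hence $f$ is continuous, and in fact uniformly continuous, on the compact square $[0,1]^2$. Thus $f$ has good continuity and all of Theorems~\ref{liminclusion}, \ref{infclosed}, \ref{liminfclosed} apply to it.

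Next I would identify the two classical spectra with the generalised ones. By Lemma~\ref{compute}(2), the local maxima of the log-systole function of a bi-infinite lattice with index sequence $\{a_n\}$ are exactly $\ln\tfrac{1}{\sqrt{1+\beta_j\alpha_{j-1}}}$, so the value $\exp(2W)$ at the $j$-th local maximum equals $P_j^{[a]}=\tfrac{1}{1+\alpha_{j-1}^{[a]}\beta_j^{[a]}}$; between consecutive local maxima $W(\_;\Lambda)$ is monotone (Lemma~\ref{sysfun}), so $\sup_t\exp\bigl(2W(t;\Lambda)\bigr)=\sup_j P_j^{[a]}$ and $\limsup_t\exp\bigl(2W(t;\Lambda)\bigr)=\limsup_j P_j^{[a]}$. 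Combining this with Theorem~\ref{fourinone}, the correspondence of Theorem~\ref{corre3}, and the fact that $x\mapsto\exp(2x)$ is a homeomorphism of $\R$ onto $(0,\infty)$ (hence preserves inclusions, limit points, and closedness), one gets $\mathrm{D}=\sigma'_{f,\limsup}$ and $\mathrm{MG}_2=\sigma_{f,\sup}$. The sequences containing an $\infty$ that appear in $\sigma_{f,\sup}$ correspond to non-bi-infinite lattices, for which $\kappa=1$, and $1\in\mathrm{MG}_2$ already, so admitting $\infty$ does not enlarge the spectrum; likewise the directional equivalence theorem lets one pass freely between $a:\Z\to\N^+$ and $a:\N\to\N^+$ for the $\limsup$ spectrum, matching the number-theoretic definition of $\mathrm{D}$. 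With these identifications in hand, Theorem~\ref{liminclusion} gives $\mathrm{D}\subset\mathrm{MG}_2$; Theorem~\ref{infclosed} gives that $\mathrm{MG}_2=\sigma_{f,\sup}$ is closed (recovering Theorem~\ref{mg2closed}); and Theorem~\ref{liminfclosed} gives $\mathrm{D}=\sigma'_{f,\limsup}=\mathrm{cl}\bigl(\tau'_{f,\limsup}\bigr)$, hence $\mathrm{D}$ is closed.

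The main obstacle is not any of the continuity verifications — these are essentially trivial precisely because $l=0$ — but the bookkeeping that identifies the classical $\mathrm{D}$ and $\mathrm{MG}_2$ with the generalised spectra $\sigma'_{f,\limsup}$ and $\sigma_{f,\sup}$: one must carry the $\exp(2\cdot)$ rescaling through, justify replacing $\sup_t$ (resp.\ $\limsup_t$) over $\R$ by $\sup_j$ (resp.\ $\limsup_j$) over the local maxima, and account for the $\infty$-entries coming from mono-infinite and finite lattices. All of this is already contained in, or a direct consequence of, the proof of Theorem~\ref{fourinone}, after which the corollary is immediate.
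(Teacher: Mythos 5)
Your proposal is correct and reproduces the argument the paper silently relies on (the corollary is stated without proof, as an immediate application of Theorems~\ref{liminclusion}, \ref{infclosed}, and \ref{liminfclosed} together with the identifications built in Theorem~\ref{fourinone}). You correctly observe that with $l=0$ the integral-parameter clause of good continuity is vacuous and the fractional clause is just continuity of $\tfrac{1}{1+\alpha\beta}$ on the compact square; and you carry the $\exp(2\cdot)$ rescaling and the $\infty$-entry bookkeeping through cleanly, noting that any sequence containing $\infty$ yields some $\alpha_{k-1}^{[a]}=0$ or $\beta_k^{[a]}=0$, whence $\sup_k P_k^{[a]}=1$, so the $\sup$-spectrum over $\N^+\cup\{\infty\}$-valued sequences agrees with $\mathrm{MG}_2$ as defined over all unimodular lattices.
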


\section {Generalized Hall's Segment or Ray}
It's also feasible to study whether a specific spectrum contains an interval, which we call it Hall's segment if it has a nonzero length but does not tend to infinity, and Hall's ray if it tends to infinity.

The proof of Hall's interval is more challenging and specific than the proof of closedness.
Although the existence of Hall's interval is a general result for spectra, its proof requires specific structures of the Perron function. In this chapter, we prove a lemma that extends the work of \cite{Hall_Sum_Product_CF}, which still allows us to handle more functions than just sums and products.

\subsection{Aperture Ratio for Cantor Sets}

Recall that a general Cantor set $C \subset [0,1]$ is constructed by repeating the process:
\begin{itemize}
    \item $C_0 = [0,1]$;
    \item Assume $C_n = \bigcup_{k=1}^{2^n} [a_{n,k}, d_{n,k}]$, we construct 
    $$C_{n+1} = \bigcup_{k=1}^{2^n} \left( [a_{n,k}, b_{n,k}] \cup [c_{n,k}, d_{n,k}] \right)$$
    where $a_{n,k} < b_{n,k} < c_{n,k} < d_{n,k}$.
    \item The $C$ is the limit of $\{C_n\}$:
    $$C = \bigcap_{n=0}^{\infty} C_n = \lim_{n \to \infty} C_n$$
\end{itemize}

We introduce the definition of \emph{aperture ratio} for a Cantor set: 
\begin{definition}
    For a general Cantor set $C \subset [0,1]$ constructed from the process above, the \emph{aperture ratio}, denoted by $\mathrm{Ap}(C)$, is given by the following formula:
    $$\mathrm{Ap}(C) = \sup _{n,k} \left( \max \left( \frac{c_{n,k} - b_{n,k}} {b_{n,k} - a_{n,k}} , \frac{c_{n,k} - b_{n,k}} {d_{n,k} - c_{n,k}} \right)\right)$$
\end{definition}
Note that the definition takes the supremum, which is dependent of the global structure of the Cantor set, and does not directly relate to measures or Hausdorff dimensions. To better comprehend the definition, recall that the typical Cantor ternary set has an aperture ratio of 1, since in the Cantor ternary set
$$c_{n,k} - b_{n,k} = b_{n,k} - a_{n,k} = d_{n,k} - c_{n,k} = \frac{d_{n,k} - a_{n,k}}{3}$$
In the paper of \cite{Hall_Sum_Product_CF}, it is proved that the sum of general Cantor set with itself with aperture ratio not greater than 1 contains an interval. Namely, if $C \subset [0,1]$ is a general Cantor set with $0 \le \mathrm{Ap}(C) \le 1$, then the sum with itself satisfies $$C+C = [0,2]$$

Note: we found that a concept of "lateral thickness" is defined in \cite{honary2005stable}, which is exactly the reciprocal of aperture ratio defined in this section. However, only affine Cantor sets are discussed in that paper, which is less general than piecewise $C^1$ increasing functions in the theorem below. We decide to keep the concept and notation of aperture ratio unchanged in fear of introducing inconsistencies.

\subsection{Hall's Segment Existence Theorem}
In the next step, we extend the formula of $C+C$ to the following general form
$$\left\{ g(\alpha,\beta) \ \middle | \ \alpha \in C, \ \beta \in D \right \}$$

\begin{theorem}
    \label{interval}
    Let $C, D \subset [0,1]$ with endpoints $0$ and $1$ be two general Cantor sets, whose aperture ratio is denoted as $\mathrm{Ap}(C)$ and $\mathrm{Ap}(D)$ respectively. Let $g : [0,1] \times [0,1] \to \R$ be a continuous function with piecewise $C^1$ differentiability.\footnote{i.e. the set where differentiability fails is at most a finite union of differentiable arcs. An arc is either a straight segment or intersects at most finitely many times with any segment. This ensures that $f$ restricted on a straight segment is a piecewise $C^1$ differentiable function on a closed interval, satisfying the Newton-Leibniz formula.} Furthermore, $g(\alpha, \beta)$ has positive partial derivatives at every differentiable region
    $$\frac{\partial g}{\partial\alpha} > 0, \quad \frac{\partial g}{\partial\beta} > 0$$
    so that $g$ is monotonic increasing for both $\alpha$ and $\beta$, and $g(0,0)$ is the minimum and $g(1,1)$ the maximum. Consider the level set of $\{ (\alpha, \beta)\  | \ g(\alpha, \beta) = h \}$ where $h \in (g(0,0), g(1,1))$, which is a piecewise $C^1$ differentiable curve, on which we have
    $$ \mathrm{d} g = \frac{\partial g}{\partial \alpha} \mathrm{d} \alpha + \frac{\partial g}{\partial \beta} \mathrm{d} \beta = 0$$
    If for any such $(\alpha, \beta) \in [0,1]^2$ (except piecewise discontinuities) we have
    $$ \left\lvert \frac{\partial g}{\partial \alpha} \middle/ \frac{\partial g}{\partial \beta} \right\rvert \in \left[\mathrm{Ap}(C), \mathrm{Ap}(C)^{-1} \right] \cap \left[\mathrm{Ap}(D), \mathrm{Ap}(D)^{-1} \right]$$
    Then the set
    $$G = \left\{ g(\alpha,\beta) \ \middle | \ \alpha \in C, \ \beta \in D \right \}$$
    saturates the whole interval: $G = [g(0,0), g(1,1)]$. 

    It's worth noting that the interval $[0,1]$ is solely for convenience, and it's possible to substitute it with any $[x_1, x_2]$ such that $x_1 < x_2$. 
\end{theorem}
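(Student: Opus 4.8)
The plan is to fix a target value $h\in\bigl(g(0,0),g(1,1)\bigr)$ and produce a point $(\alpha^*,\beta^*)\in C\times D$ with $g(\alpha^*,\beta^*)=h$; the endpoints of the interval are attained because $0,1\in C$ and $0,1\in D$, so $g(0,0),g(1,1)\in G$, while $G\subseteq[g(0,0),g(1,1)]$ is immediate from monotonicity. Since $g$ has strictly positive partials and $|g_\alpha/g_\beta|$ stays in the fixed interval $[\lambda,\lambda^{-1}]$ with $\lambda=\max(\mathrm{Ap}(C),\mathrm{Ap}(D))\in(0,1]$ (this interval being exactly $[\mathrm{Ap}(C),\mathrm{Ap}(C)^{-1}]\cap[\mathrm{Ap}(D),\mathrm{Ap}(D)^{-1}]$), the level set $\Gamma_h:=\{g=h\}\cap[0,1]^2$ is a single strictly decreasing arc crossing the square from its ``upper-left'' boundary $\{\alpha=0\}\cup\{\beta=1\}$ to its ``lower-right'' boundary $\{\alpha=1\}\cup\{\beta=0\}$; equivalently it is the graph $\beta=\phi_h(\alpha)$ of a decreasing, absolutely continuous function with $|\phi_h'|=|g_\alpha/g_\beta|\in[\lambda,\lambda^{-1}]$ almost everywhere (the finitely many corners produced by the piecewise-$C^1$ hypothesis do not disturb this, as $g$ obeys Newton--Leibniz along any straight segment). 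So the goal becomes $C\cap\phi_h^{-1}(D)\neq\emptyset$, which one recognizes as a Gap-Lemma-type intersection statement: $\phi_h^{-1}$ distorts aperture ratios by a factor bounded exactly so that the relevant thickness product is $\ge1$.

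I would prove it by a self-contained Cantor-scheme / nested-rectangle argument in the spirit of Hall's original proof and of Newhouse's Gap Lemma. Starting from $R_0=[0,1]^2$, build a nested sequence $R_0\supseteq R_1\supseteq\cdots$, $R_k=I_k\times J_k$, where $I_k$ is an interval occurring at some stage of the construction of $C$ and $J_k$ one occurring at some stage of $D$, subject to: $R_k\cap\Gamma_h\neq\emptyset$, and $|I_k|,|J_k|$ stay comparable, $\tfrac1K|J_k|\le|I_k|\le K|J_k|$ for a fixed $K=K(\mathrm{Ap}(C),\mathrm{Ap}(D))$ slightly larger than $2$. Pass from $R_k$ to $R_{k+1}$ by subdividing the longer of $I_k,J_k$ one further stage and keeping a surviving half-rectangle that still meets $\Gamma_h$. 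Since general Cantor sets have stage-$n$ interval lengths tending to $0$, subdividing the longer side repeatedly forces both stage-parameters to infinity, hence $\operatorname{diam}(R_k)\to0$; then $\bigcap_k R_k=\{(\alpha^*,\beta^*)\}$ with $\alpha^*\in C$, $\beta^*\in D$, and $R_k\cap\Gamma_h\neq\emptyset$ with $\operatorname{diam}(R_k)\to0$ forces $g(\alpha^*,\beta^*)=h$.

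The crux — the step I expect to be the main obstacle — is showing the subdivision step always succeeds while preserving comparability, and this is precisely where the slope hypothesis enters. The key fact is that $\Gamma_h$ can never be confined, inside $R_k$, to the ``gap strip'' of the side being subdivided: if subdividing $I_k=[a,d]$ into children $[a,b],[c,d]$ left $\Gamma_h\cap R_k$ entirely over the gap $(b,c)$, then (by the boundary analysis of $R_k$) that sub-arc must span the full height of $J_k$, so integrating $|\mathrm{d}\beta|=|\phi_h'|\,|\mathrm{d}\alpha|\le\mathrm{Ap}(C)^{-1}|\mathrm{d}\alpha|$ along it gives $|J_k|<\mathrm{Ap}(C)^{-1}(c-b)$, while the defining inequality of the aperture ratio gives $c-b\le\mathrm{Ap}(C)\min(b-a,d-c)$; combining, each child of $I_k$ is strictly longer than $|J_k|$ and the gap exceeds $\mathrm{Ap}(C)|J_k|$, so $|I_k|>(2+\mathrm{Ap}(C))|J_k|$, contradicting the comparability invariant once $K\le2+\mathrm{Ap}(C)$ (and symmetrically for subdividing $J_k$, using $|g_\alpha/g_\beta|\ge\mathrm{Ap}(D)$ and $K\le2+\mathrm{Ap}(D)$). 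Hence at least one child-rectangle meets $\Gamma_h$; a length estimate for the two children of a general Cantor interval then lets one select such a child with $|I_{k+1}|\asymp|J_k|$ at constant $K$. The delicate bookkeeping is in pinning down the right invariant — I anticipate one must strengthen ``$R_k$ meets $\Gamma_h$'' to a linking condition recording which sides of $R_k$ the arc enters and leaves, mirroring the ``linked bridges'' in the Gap Lemma, so that a valid subdivision and the comparability bound can be secured simultaneously — and in calibrating $K$ as a function of $\mathrm{Ap}(C),\mathrm{Ap}(D)$.

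Two final remarks. First, the inequalities above remain strict even when $|g_\alpha/g_\beta|$ is allowed to equal an endpoint of its interval — in particular when $\mathrm{Ap}(C)=1$ or $\mathrm{Ap}(D)=1$, where the admissible slope collapses to $\{1\}$ and the theorem specializes to Hall's $C+C=[0,2]$ — because the strictness comes from the sub-arc's horizontal extent being strictly less than the gap width, not from the slope bound; thus no separate limiting treatment of the boundary case is needed beyond keeping the comparability constant non-strict, though if desired it also follows from the interior case by a compactness argument (widen the admissible slope interval slightly, pass to the limit, use closedness of $G$). Second, to make the level-curve description rigorous one records that $\{g<h\}$ is an open down-set containing $(0,0)$ and $\{g>h\}$ an open up-set containing $(1,1)$, so $\Gamma_h$ is the graph of $\alpha\mapsto\sup\{\beta:g(\alpha,\beta)\le h\}$ on the interval where this lies in $[0,1]$, strict positivity of $g_\beta$ making $g\equiv h$ there; all the estimates use only that this graph is Lipschitz with slope a.e.\ in $[\lambda,\lambda^{-1}]$.
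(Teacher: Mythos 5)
Your plan is recognizably the same skeleton as the paper's: a nested sequence of product rectangles, subdividing the longer side at each stage, with the slope bound $\lvert g_\alpha/g_\beta\rvert\in[\lambda,1/\lambda]$ used to show the level curve cannot be entirely swallowed by a gap of the Cantor construction. The central slope-versus-gap estimate you isolate (if $\Gamma_h\cap R_k$ sat only over the gap $(b,c)$, it would span the full height of $J_k$, so $\lvert J_k\rvert\le\mathrm{Ap}(C)^{-1}(c-b)$, hence both children exceed $\lvert J_k\rvert$) is exactly the inequality $(c_s-b_s)>r(d'_s-a'_s)\Rightarrow (b_s-a_s)>(d'_s-a'_s)$ that drives the paper's induction step. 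So the ``main obstacle'' you identify is the right one, and you have correctly recognized Hall's gap-lemma flavor.

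That said, there are two concrete gaps in the proposal that prevent it from closing as written. First, the \emph{comparability invariant} $\tfrac1K\lvert J_k\rvert\le\lvert I_k\rvert\le K\lvert J_k\rvert$ is not maintainable, and in fact the paper deliberately does \emph{not} maintain any such ratio bound. The aperture ratio controls the size of a gap relative to its two siblings, but puts no lower bound on the length of a child relative to its parent or relative to the other axis; it is entirely possible that after a split only the much shorter child meets $\Gamma_h$ (for instance when $\Gamma_h$ enters $R_k$ through the short left edge and leaves through the bottom before reaching the gap), forcing $\lvert I_{k+1}\rvert\ll\lvert J_k\rvert$. Your claim that ``a length estimate for the two children\ldots lets one select such a child with $\lvert I_{k+1}\rvert\asymp\lvert J_k\rvert$'' has no visible source; you yourself flag that the invariant must be strengthened to a linking condition, but that is precisely where the difficulty lies. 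The paper's invariant is instead a \emph{corner-square linking condition}: $\Gamma_h$ passes one of the two corner squares of $R_k$ of side $e=\min(\lvert I_k\rvert,\lvert J_k\rvert)$. That invariant propagates through the subdivision by the same slope estimate, \emph{without} any bound on the side ratio, and it is what substitutes for your would-be comparability.

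Second, your claim ``general Cantor sets have stage-$n$ interval lengths tending to $0$'' is not part of the definition in this paper (only $a_{n,k}<b_{n,k}<c_{n,k}<d_{n,k}$ is required). Consequently $\operatorname{diam}(R_k)$ need not tend to $0$, and $\bigcap_k R_k$ can be a nondegenerate rectangle even if every step strictly shrinks one side. The paper handles exactly this by a second, interior induction (its $B_{\infty,t}$ construction): after the primary sequence stabilizes one side, it keeps subdividing the now-frozen side against the limiting interval on the other side, which it can do because the frozen side is by then the shorter one. Your proposal does not address this case at all; under your stated invariant it would simply never arise, but since the invariant fails, the case cannot be ruled out. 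If you wish to rescue the comparability framing you would need either to add a hypothesis that the Cantor intervals shrink, or to reproduce something like the paper's end-game construction.
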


\begin{proof}
    This proof does not really resemble the steps of  \cite{Hall_Sum_Product_CF}, as we develop different techniques.  
    The essence of this proof is to construct nested rectangular regions, denoted by $B_0, B_1, \cdots$ that satisfies $B_0 = [0,1]^2 \supset B_1 \supset \cdots$, and prove nonempty intersection that
    $$\forall s \in \N \quad B_s \cap g^{-1}(h) \ne \emptyset$$
    so that
    $$B_\infty = \bigcap_{s=0}^{\infty} B_s \ne \emptyset, \quad B_\infty \cap g^{-1}(h) \ne \emptyset, \quad B_\infty \cap g^{-1}(h) \cap (C \times D)\ne \emptyset$$
    
    For simplicity, we first define $r = \max(\mathrm{Ap}(C), \mathrm{Ap}(D))$, so that 
    $$ \left\lvert \frac{\partial g}{\partial \alpha} \middle/ \frac{\partial g}{\partial \beta} \right\rvert \in \left[r, 1/r\right]$$
    
    We inductively construct the rectangles $B_{s} \ (s \in \N)$, where $B_s = [a_s, d_s] \times [a'_s, d'_s]$. Suppose that $$C_n = \bigcup_{k=1}^{2^n} [a_{n,k}, d_{n,k}]$$ where $[a_s, d_s]$ is one of its intervals, and $$D_m = \bigcup_{k=1}^{2^m} [a'_{m,k}, d'_{m,k}]$$ where $[a'_s, d'_s]$ is one of its intervals. we call $n$ and $m$ respectively the "level" of $[a_s, d_s]$ and $[a'_s, d'_s]$.

    We need some auxiliary induction hypotheses to help us with inductive construction:
    \begin{itemize}
        \item $B_s \cap g^{-1}(h) \ne \emptyset$;
        \item Let $e = \min(d_s-a_s, d'_s-a'_s)$, then $g^{-1}(h)$ passes one of the two squares:
        $$Q_1 = [a_s, a_s+e] \times [a'_s, a'_s+e], \quad  Q_2 = [d_s-e, d_s] \times [d'_s-e, d'_s]$$
    \end{itemize}
    
    Now we begin our construction of $\{B_s\}$ alongside with inductively proving the auxiliary hypotheses:
    \begin{itemize}
        \item $s=0$: $B_0 = [0,1] \times [0,1]$. 
        Obviously: $B_s \cap g^{-1}(h) = [0,1]^2 \cap g^{-1}(h) \ne \emptyset$, also $[a_0, a_0+e] \times [a'_0, a'_0+e] = [0,1] \times [0,1]$
        which satisfies our induction hypotheses.
        \item $s \to s+1$: 
        Without losing of generality, we assume $d_s - a_s \ge d'_s - a'_s$, so that $e = d'_s - a'_s$. We further assume that $g^{-1}(h)$ passes $Q_1 = [a_s, a_s+e] \times [a'_s, a'_s+e] = [a_s, a_s+d'_s-a_s] \times [a'_s, d'_s]$.
        
        Suppose that $[a_s, d_s]$ is further divided into $[a_s, b_s] \cup [c_s, d_s] \subset C_{n+1}$. We take these 2 sub-rectangles of $B_s$:
        $$
        \begin{aligned}
            V_1 &= [a_s, b_s] \times [a'_s, d'_s] \quad \text{(left),} \\
            V_2 &= [c_s, d_s] \times [a'_s, d'_s] \quad \text{(right)} \\
        \end{aligned}
        $$

        We show that $g^{-1}(h)$ intersects one of $V_1$ or $V_2$. By the intermediate value theorem of continuous functions, if $g(b_s, d'_s) \ge h$ or $g(c_s, a'_s) \le h$, then $g^{-1}(h)$ intersects $V_1$ or $V_2$, respectively. If it fits none of the two cases, then $g(b_s, d'_s) < g(c_s, a'_s)$. 

        Now let $$\gamma: [0,1] \to \R^2, \quad \gamma(t) = (t c_s + (1-t) b_s, ta'_s+(1-t)d'_s)$$
        so that
        $$
        \begin{aligned}
            \frac{\mathrm{d}}{\mathrm{d}t} g(\gamma(t)) = \frac{\partial g}{\partial \alpha} \frac{\mathrm{d} \gamma_x}{\mathrm{d} t} + \frac{\partial g}{\partial \beta} \frac{\mathrm{d} \gamma_y}{\mathrm{d} t} &= \frac{\partial g}{\partial \alpha} (c_s-b_s)  + \frac{\partial g}{\partial \beta} (a'_s - d'_s) \\
        \end{aligned}
        $$
        If $(c_s-b_s) \le r (d'_s - a'_s)$, then 
        $$
        \begin{aligned}
            \frac{\mathrm{d}}{\mathrm{d}t} g(\gamma(t)) &= \frac{\partial g}{\partial \alpha} (c_s-b_s)  + \frac{\partial g}{\partial \beta} (a'_s - d'_s) \le 0\\
        \end{aligned}
        $$
        which implies that $g(b_s, d'_s) \ge g(c_s, a'_s)$, contradiction; so 
        $(c_s-b_s) > r (d'_s - a'_s)$.
        Since by aperture ratio, ${(c_s-b_s)}/{(b_s-a_s)} < r$, which implies $(b_s - a_s) > (d'_s - a'_s)$, and $(d_s - c_s) > (d'_s - a'_s)$. So $Q_1 \subset V_1$ and $Q_2 \subset V_2$. By our assumption, $g^{-1}(h) \cap Q_1 \subset g^{-1}(h) \cap V_1 \ne \emptyset$, which is a contradiction.

        Now, we let $e_1 = \min(b_s - a_s, d'_s-a'_s)$, $e_2 = \min(d_s - c_s, d'_s-a'_s)$ and take four smaller squares
        $$
        \begin{aligned}
            Q_3 &:= [a_s, a_s+e_1] \times [a'_s, a'_s+e_1], \quad  Q_4 := [b_s-e_1, b_s] \times [d'_s-e_1, d'_s] \\
            Q_5 &:= [c_s, c_s+e_2] \times [a'_s, a'_s+e_2], \quad  Q_6 := [d_s-e_2, d_s] \times [d'_s-e_2, d'_s] \\
        \end{aligned}
        $$
        so that $Q_3, Q_4 \subset V_1$ and $Q_5, Q_6 \subset V_2$. Their relationships are visualized in the following figure \ref{fig:visualcantorset}.

        \begin{figure}
            \centering
            \includegraphics[width=0.5\textwidth]{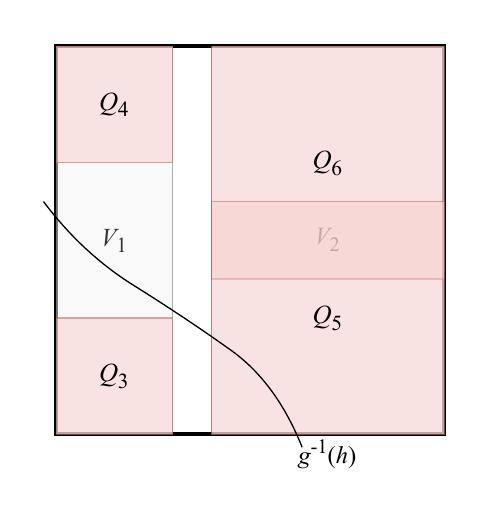}
            \caption{A visualization of the relationships between objects in this proof.}
            \label{fig:visualcantorset}
        \end{figure}
        
        \begin{itemize}
            \item For $b_s - a_s \ge d'_s-a'_s$, then $Q_3 = Q_1$. We take $B_{s+1} = V_1$.
            
            \item For $b_s - a_s < d'_s-a'_s$, but $d_s - c_s \ge d'_s-a'_s$, we show that $g^{-1}(h)$ passes $Q_3$ or $Q_5$. In other words, by the intermediate value theorem, we need to show
            $$h \in \left[g(a_s, a'_s),g(a_s+e_1, a'_s+e_1)\right] \cup \left[g(c_s, a'_s),g(c_s+e_2, a'_s+e_2)\right]$$
            Note that
            $$g^{-1}(h) \cap C_1 \ne \emptyset \iff h \in [g(a_s, a'_s), g(a_s+e, a'_s+e)]$$
            also $$g(a_s+e, a'_s+e) < g(c_s+e_2, a'_s+e_2)=g(c_s+e, a'_s+e)$$ so we only need to show that 
            $$g(a_s+e_1, a'_s+e_1) = g(b_s, a'_s+e_1) \ge g(c_s, a'_s)$$
            Since $c_s - b_s \le re_1 = r(b_s-a_s)$, this is proved by constructing a segment $\gamma: [0,1] \to \R^2, \gamma(0)=(b_s, a'_s+e_1), \gamma(1) = (c_s, a'_s)$ and showing that $(g \circ \gamma)' \le 0$.

            If $g^{-1}(h)$ passes $Q_3$, we take $B_{s+1} = V_1$. Otherwise, $g^{-1}(h)$ passes $Q_5$, so we take $B_{s+1} = V_2$.
            
            \item For $b_s - a_s < d'_s-a'_s$, and $d_s - c_s < d'_s-a'_s$, we show that $g^{-1}(h)$ passes one of $Q_3, Q_4, Q_5, Q_6$. 

            Consider $g(Q_3)$ and $g(Q_5)$:
            $$\left[g(a_s, a'_s),g(a_s+e_1, a'_s+e_1)\right] \cup \left[g(c_s, a'_s),g(c_s+e_2, a'_s+e_2)\right]$$
            Exactly same from above, $g(a_s+e_1, a'_s+e_1) = g(b_s, a'_s+e_1) \ge g(c_s, a'_s)$.
            
            Similarly, considering $g(Q_4)$ and $g(Q_6)$, it can be shown that $g(b_s, d'_s) \ge g(d_s -e_2, d'_s-e_2) = g(c_s, d'_s-e_2)$. Now the only thing remaining is to show that
            $$g(c_s+e_2, a'_s+e_2) = g(d_s, a'_s+e_2) \ge g(b_s - e_1, d'_s - e_1) = g(a_s, d'_s - e_1) $$
            This is simple, since 
            $$
            \begin{aligned}
                d'_s - e_1 - (a'_s+e_2) &\le d_s - e_1 - a_s - e_2 \\
                &= c_s - b_s \le r(b_s - a_s) \\
                &< r(d_s - a_s)
            \end{aligned}
            $$
            the final step is to construct the segment $\gamma$ from $(d_s, a'_s+e_2)$ to $g(a_s, d'_s - e_1)$ and show that $(g \circ \gamma)' \ge 0$.

            If $g^{-1}(h)$ passes $Q_3$ or $Q_4$, we take $B_{s+1} = V_1$. Otherwise, $g^{-1}(h)$ passes $Q_5$ or $Q_6$, so we take $B_{s+1} = V_2$.
        \end{itemize}
    \end{itemize}
    We obtain $B_\infty$ as the intersection of all $B_s$, which is a closed rectangle by the nested interval theorem. 
    
    However, please note that we have not really finished our proof, since not necessarily $B_\infty \subset C\times D$. Now, denote that
    $$B_\infty = B_{\infty, 0} = [a_{\infty, 0}, d_{\infty, 0}] \times [a'_{\infty, 0}, d'_{\infty, 0}]$$
    Recalling our definition of "level" above, we are not sure if both the level of $[a_{s}, d_{s}]$ and $[a'_{s}, d'_{s}]$ tends to infinity. Actually, if they both tend to infinity, then we are done by $B_\infty \subset C\times D$ and $B_\infty \cap g^{-1}(h) \ne \emptyset$. If not, we assume that the level of $[a_{s}, d_{s}]$ does not tend to infinity. Then:
    \begin{itemize}
        \item When $s$ is sufficiently large, $[a_{s}, d_{s}]$ remains unchanged:
        $$\exists s_0 \in \N^+ \quad \forall s\ge s_0 \quad [a_{s}, d_{s}] = [a_{s_0}, d_{s_0}] = [a_{\infty, 0}, d_{\infty, 0}]$$
        \item The level of $[a'_{s}, d'_{s}]$ tends to infinity.
        \item The following inequality holds: $$d'_{\infty, 0} - a'_{\infty, 0} \ge d_{\infty, 0} - a_{\infty, 0}$$
        \item The intersection:
        $$(C \times D) \cap B_{\infty} = \left([a_{s_0}, d_{s_0}] \cap C\right) \times [a'_{\infty,0}, d'_{\infty,0}]$$
    \end{itemize}

    We resolve this case by inductively constructing $B_{\infty, t}$:
    \begin{itemize}
        \item $t=0$: $B_{\infty, 0} = B_\infty$;
        \item $t \to t+1$: Assume that 
        $$B_{\infty, t} = [a_{\infty, t}, d_{\infty, t}] \times [a'_{\infty, 0}, d'_{\infty, 0}]$$
        such that $B_{\infty, t} \cap g^{-1}(h) \ne \emptyset$. We break the interval
        $$[a_{\infty, t}, d_{\infty, t}] \to [a_{\infty, t}, b_{\infty, t}] \cup [c_{\infty, t}, d_{\infty, t}]$$
        and let
        $$ 
        \begin{aligned}
            R_1 &= [a_{\infty, t}, b_{\infty, t}] \times [a'_{\infty, 0}, d'_{\infty, 0}] \\ R_2 &= [c_{\infty, t}, d_{\infty, t}] \times [a'_{\infty, 0}, d'_{\infty, 0}]
        \end{aligned}
        $$
        To verify that $g^{-1}(h)$ passes $R_1$ or $R_2$, one just show that
        $$g(b_{\infty, t}, d'_{\infty, 0}) \ge g(c_{\infty, t}, a'_{\infty, 0})$$
        This is easily solved because 
        $$c_{\infty, t} - b_{\infty, t} < r(d_{\infty, t} - a_{\infty, t}) \le r(d_{\infty, 0} - a_{\infty, 0}) \le r(d'_{\infty, 0} - a'_{\infty, 0}) $$
        And finally, let $B_{\infty, t+1}$ be one of $R_1$ or $R_2$ which is intersected by $g^{-1}(h)$.
    \end{itemize}

    The very last step is obvious: Let $B_{\infty, \infty} := \bigcap _{t=0}^{\infty} B_{\infty, t}$, which is a rectangle contained in $C \times D$, and $B_{\infty, \infty} \cap g^{-1}(h) \ne \emptyset$, which finishes our proof.

\end{proof}

\subsection{The Set $F(4)$}
What is the aperture ratio of the following set researched in \cite{Hall_Sum_Product_CF}?
$$F(4) = \{ [0; a_1, a_2, \cdots] | a_i \in \{1,2,3,4\}\}$$
According co the computations in \cite{Hall_Sum_Product_CF}, we have
$$
\begin{aligned}
    \mathrm{Ap}(F(4)) \le \max &\left( \frac{2\sqrt{2} - 1}{3}, \ \frac{6\sqrt{2} - 2}{17},\  \frac{2\sqrt{2} - 1}{3}, \ \right. \\
    &\left. \frac{11\sqrt{2} -2}{34},\  \frac{2\sqrt{2} - 1}{3},\  \frac{10\sqrt{2} +2}{21} \right)
\end{aligned}
$$
\begin{lemma}
    The set $F(4)$ has aperture ratio
    $$
        \mathrm{Ap}(F(4)) \le \frac{10\sqrt{2} +2}{21} < 0.7687
    $$
\end{lemma}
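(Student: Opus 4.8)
The plan is to treat the six-term bound
\[
\mathrm{Ap}(F(4)) \le \max\!\left( \tfrac{2\sqrt2-1}{3},\ \tfrac{6\sqrt2-2}{17},\ \tfrac{2\sqrt2-1}{3},\ \tfrac{11\sqrt2-2}{34},\ \tfrac{2\sqrt2-1}{3},\ \tfrac{10\sqrt2+2}{21} \right)
\]
imported from \cite{Hall_Sum_Product_CF} as a black box and reduce the lemma to elementary arithmetic. (Those six numbers are the suprema of the local gap-to-flanking-interval ratios over the finitely many self-similar ``configuration types'' occurring in the Cantor structure of $F(4)$; each arises by extremizing a fractional-linear function of a $\{1,2,3,4\}$-periodic tail, and that analysis is the content of Hall's computation that the excerpt invokes.) It then remains only to (i) show the last entry dominates the other five and (ii) bound it strictly below $0.7687$.

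For step (i) I would compare $\tfrac{10\sqrt2+2}{21}$ against each competitor by cross-multiplying (all denominators and all numerators here are positive, so the direction of the inequality is preserved), each comparison collapsing to a one-line inequality in $\sqrt2$. Indeed $\tfrac{10\sqrt2+2}{21} > \tfrac{2\sqrt2-1}{3}$ is equivalent to $10\sqrt2+2 > 14\sqrt2-7$, i.e.\ $9 > 4\sqrt2$, which holds since $(4\sqrt2)^2 = 32 < 81$; next $\tfrac{10\sqrt2+2}{21} > \tfrac{6\sqrt2-2}{17}$ reduces to $170\sqrt2 + 34 > 126\sqrt2 - 42$, i.e.\ $44\sqrt2 + 76 > 0$; and $\tfrac{10\sqrt2+2}{21} > \tfrac{11\sqrt2-2}{34}$ reduces to $340\sqrt2 + 68 > 231\sqrt2 - 42$, i.e.\ $109\sqrt2 + 110 > 0$. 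The repeated entry $\tfrac{2\sqrt2-1}{3}$ is handled once. Hence the six-fold maximum equals $\tfrac{10\sqrt2+2}{21}$, so $\mathrm{Ap}(F(4)) \le \tfrac{10\sqrt2+2}{21}$.

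For step (ii) I would use the crude rational estimate $\sqrt2 < 1.41422$ — justified by squaring, since $1.41422^2 > 2$ — to obtain
\[
\mathrm{Ap}(F(4)) \;\le\; \frac{10\sqrt2+2}{21} \;<\; \frac{10(1.41422)+2}{21} \;=\; \frac{16.1422}{21} \;<\; 0.76868 \;<\; 0.7687,
\]
which is exactly the assertion of the lemma. The only genuine difficulty lies not in the two steps above but in the provenance of the six algebraic constants: verifying them independently would require re-running Hall's case analysis of the continued-fraction intervals $[0;a_1,\dots,a_n]$ with $a_i\in\{1,2,3,4\}$ and checking that the worst-case tails are precisely the periodic ones yielding these $\sqrt2$-expressions. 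For the purposes of this lemma I would simply cite \cite{Hall_Sum_Product_CF} for that computation, as the excerpt already does, and keep the written proof at the level of the pairwise comparison and the numerical estimate.
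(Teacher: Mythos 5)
Your proof is correct and takes the same route the paper implicitly intends: the paper states the six-term maximum from \cite{Hall_Sum_Product_CF} immediately before the lemma and offers no further argument, so the content of the lemma is precisely the pairwise comparisons and the numerical estimate you carry out. Your cross-multiplications and the bound $\sqrt2 < 1.41422$ all check; this is just the elementary verification the paper elides.
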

This means that set $F(4)$ of bounded continued fractions is available for proving the existence of Hall's segment for certain Perron's functions.

\section{Some Applications}
\subsection{Application 1: Analog of $q_{k+m} \lVert q_k \alpha \rVert$ Spectra} 
In this application, we research the following spectra for arbitrary $m$:
$$ I_m = \left\{ \liminf_{k \to \infty} q_{k+m} \lVert q_k \alpha \rVert \ \middle| \ \alpha \in [0,1] \setminus \Q \right\}  \quad (\text{for } m \le 0)$$
$$ S_m = \left\{ \limsup_{k \to \infty} q_{k+m} \lVert q_k \alpha \rVert \ \middle| \ \alpha \in [0,1] \setminus \Q \right\} \quad (\text{for } m \ge 1)$$
Where $q_k$ means the denominator of the $k$-th best approximant of $\alpha$.

When $m=0$, $I_0$ is the reciprocal of the Lagrange spectrum. When $m=1$, $S_1$ is the Dirichlet spectrum.

The question rises that how can we compute the Perron's formula for $q_{k+m} \lVert q_k \alpha \rVert$ spectrum efficiently.
\begin{itemize}
    \item For $m > 0$, we have the following identity:
    $$
    \begin{aligned}
        q_{k+m} \lVert q_k \alpha \rVert &= \frac{\prod_{i=k}^{k+m-1}  q_{i+1} \lVert q_i \alpha \rVert}{\prod_{i=k+1}^{k+m-1}  q_{i} \lVert q_i \alpha\rVert}
    \end{aligned}
    $$
    \item For $m \le 0$, we have the following identity:
    $$
    \begin{aligned}
        q_{k+m} \lVert q_k \alpha \rVert &= \frac{\prod_{i=k+m}^{k} q_{i} \lVert q_i \alpha\rVert }{\prod_{i=k+m}^{k-1}  q_{i+1} \lVert q_i \alpha \rVert}
    \end{aligned}
    $$
\end{itemize}
The above tools directly enables us to prove that $I_m$ and $S_m$ are closed and contain intervals.

\begin{theorem}
    Both $I_m (m \le 0)$ and $S_m (m \ge 1)$ are closed.
\end{theorem}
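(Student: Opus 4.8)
The plan is to recognize each $q_{k+m}\lVert q_k\alpha\rVert$ as a generalized Perron function of the template shape $P_n^{[a]}=f(\alpha_{n+l-1}^{[a]},\beta_n^{[a]},a_n,\dots,a_{n+l-1})$ and then invoke the abstract closedness results of Section~3. Starting from $\lVert q_k\alpha\rVert=(q_{k+1}+q_k\alpha_{k+1}^{[a]})^{-1}$ one has
\[
q_{k+m}\lVert q_k\alpha\rVert=\frac{q_{k+m}/q_{k+1}}{\,1+\alpha_{k+1}^{[a]}\,(q_k/q_{k+1})\,},
\]
and expanding $q_{k+m}/q_{k+1}$ (for $m\ge1$), respectively $q_{k+1}/q_{k+m}$ (for $m\le0$), through the continuant recursion shows that the only data involved are finitely many partial quotients near index $k$ together with the reversed tail $q_k/q_{k+1}=[0;a_{k+1},a_k,\dots]$ playing the role of a $\beta$. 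Writing $K(\cdot)$ for the continuant polynomial and sliding indices via $1/\alpha_i^{[a]}=a_{i+1}+\alpha_{i+1}^{[a]}$ and $\beta_{i+1}^{[a]}=(a_i+\beta_i^{[a]})^{-1}$ — so that the ``boundary'' partial quotient is absorbed into the $\beta$-slot rather than kept as an explicit window parameter — one obtains $q_{k+m}\lVert q_k\alpha\rVert=P_n^{[a]}$ with $l=|m|+1$ when $m\le0$ and $l=m$ when $m\ge1$, where $f$ is a ratio of continuant polynomials in the integer parameters with coefficients affine in $(\alpha,\beta)$. Passing between one-sided $\alpha$ and two-sided index sequences is handled by the directional-equivalence theorem. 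Then $I_m=\sigma'_{f,\liminf}$ and $S_m=\sigma'_{f,\limsup}$, so closedness will follow from Theorem~\ref{liminfclosed} once $f$ is shown to have good continuity (Definition~\ref{goodcontinuity}).

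For $I_m$ with $m\le0$ this is routine. Here $q_{k+m}\lVert q_k\alpha\rVert\in(0,1]$ and has the shape $\big(K(\text{window quotients})\cdot(\text{an }a{+}\alpha{+}\beta\text{-type factor})\big)^{-1}$, in which \emph{every} window integer parameter appears inside the continuant $K$, and $K$ dominates each of its arguments. Hence if some window parameter exceeds $C$ the denominator is $\ge C$, so $f\in B(0,1/C)$ at a rate depending only on $C$, and $f=0$ whenever a window parameter is $\infty$; continuity in $(\alpha,\beta)$ is immediate since the denominator is $\ge1$. Thus $f$ has good continuity and Theorem~\ref{liminfclosed} gives that $I_m=\sigma'_{f,\liminf}$ is the closure of its periodic sub-spectrum, hence closed. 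The case $m=0$ is the reciprocal of the Lagrange spectrum (covered by the same statement), and $m=1$ is the Dirichlet spectrum, where $f=(1+\alpha\beta)^{-1}$ and closedness is the corollary following Theorem~\ref{liminfclosed}.

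The genuinely delicate case — and the one I expect to be the main obstacle — is $S_m$ with $m\ge2$: here $q_{k+m}\lVert q_k\alpha\rVert=\dfrac{K(a_{k+2},\dots,a_{k+m})+\beta_{k+2}^{[a]}K(a_{k+3},\dots,a_{k+m})}{1+\alpha_{k+1}^{[a]}\beta_{k+2}^{[a]}}$ is \emph{unbounded}, and its numerator carries the continuant $K(a_{k+2},\dots,a_{k+m})$, which depends on the interior window quotients but not on the leading one $a_{k+1}$. With limit $I=+\infty$ the uniform-limit clause of Definition~\ref{goodcontinuity} is satisfied whenever an interior quotient blows up, but when only $a_{k+1}$ blows up one has $f\to K(a_{k+2},\dots,a_{k+m})$, a finite value still depending on the other window parameters, so good continuity fails in the letter and Theorem~\ref{liminfclosed} cannot be quoted verbatim. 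I would get around this by reproving closedness directly, splitting off the point at infinity. First, $\infty\in S_m$: any $\alpha$ with unbounded partial quotients satisfies $\limsup_k q_{k+m}\lVert q_k\alpha\rVert\ge\tfrac12\limsup_k K(a_{k+2},\dots,a_{k+m})=\infty$. Second, for a finite accumulation point $y=\lim_j\big(\limsup_k q_{k+m}\lVert q_k\alpha^{(j)}\rVert\big)<\infty$, the indices realizing each $\limsup_k$ must have uniformly bounded windowed partial quotients (otherwise that $\limsup$ would be $\infty$ by the same estimate), so the normalization step (as in Lemma~\ref{lem6bookanalog}) and the bidirectional accumulation sequence of Lemma~\ref{accumulation} can both be carried out inside a compact block $\{a_n\le C\}$, on which $f$ is honestly continuous and bounded; there the argument of Theorem~\ref{liminfclosed} (equivalently Theorem~\ref{infclosed}, since $\limsup=\sup$ on periodic sequences) goes through unchanged, producing a witness $c$ with $\limsup_k P_k^{[c]}=y$. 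Combining the two cases yields that $S_m$ is closed. The point that needs care — and the step I would spend the most effort on — is the claim that no finite accumulation point of $S_m$ escapes the bounded-quotient regime, together with checking that restricting the accumulation-sequence machinery to a fixed bounded block causes no loss.
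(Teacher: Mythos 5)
Your overall strategy is right, and the $I_m$ branch is essentially the paper's: recognize $q_{k+m}\lVert q_k\alpha\rVert$ as a generalized Perron function, verify good continuity, and quote Theorem~\ref{liminfclosed}. But the obstacle you locate for $S_m$, $m\ge 2$, is not actually there: it is an artifact of your choice of window size $l=m$, which double-counts the partial quotient $a_{k+1}$. In your own expression
$$q_{k+m}\lVert q_k\alpha\rVert=\frac{K(a_{k+2},\dots,a_{k+m})+\beta_{k+2}^{[a]}\,K(a_{k+3},\dots,a_{k+m})}{1+\alpha_{k+1}^{[a]}\beta_{k+2}^{[a]}}\,,$$
the quotient $a_{k+1}$ occurs only as the leading partial quotient of $\beta_{k+2}^{[a]}=[0;a_{k+1},a_k,\dots]$. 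Once you place the Perron offset at $k'=k+2$ with $\beta$-slot $\beta_{k+2}^{[a]}$, the quotient $a_{k+1}$ is already part of the ``past'' data that $\beta$ encodes; declaring it a window parameter as well is what produces the degenerate $a_{k+1}\to\infty$, $f\to K(a_{k+2},\dots,a_{k+m})$ limit. With the honest window size $l=m-1$ (window $a_{k+2},\dots,a_{k+m}$, $\alpha$-slot $\alpha_{k+m}^{[a]}$, $\beta$-slot $\beta_{k+2}^{[a]}$), every remaining window quotient appears linearly, with nonnegative coefficient, in the continuant $K(a_{k+2},\dots,a_{k+m})$, while the denominator lies in $[1,2]$; hence $f\ge a_i/2$ whenever some window quotient equals $a_i$, and $f\to+\infty$ uniformly with $I=+\infty$. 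Good continuity in the sense of Definition~\ref{goodcontinuity} then holds and Theorem~\ref{liminfclosed} applies verbatim. This is precisely what the paper does: after a short telescoping it arrives at $f_m=\prod_{i=k+1}^{k+m-1}(a_i+\beta_i^{[a]})\big/(1+\alpha_k^{[a]}\beta_{k+1}^{[a]})$ with $l=m-1$ window quotients, each of which sits in a factor $(a_i+\beta_i^{[a]})$ of the numerator, so blowing up any one of them drives $f_m$ to $+\infty$, and continuity in $(\alpha,\beta)$ is automatic because the denominator is bounded below by~$1$. Your fallback argument (separate off $\infty$, argue that any finite accumulation point forces a bounded block of partial quotients, then rerun the accumulation/normalization machinery on that block) is probably salvageable, but it re-derives the closedness mechanism by hand exactly where the good-continuity abstraction was built to avoid that — and it is entirely unnecessary once the window is sized correctly. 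In short: decrement $l$ by one, fold $a_{k+1}$ into $\beta$, and the obstacle you were planning to spend most of your effort on disappears.
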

\begin{proof}
    Below we denote $\alpha$'s continued fraction expansion as $[0; a_1, a_2, \cdots]$.
    For $S_m(m \ge 1)$ part, we have already proved the case where $m=1$. Now consider $S_m (m \ge 2)$. Using the identity above, we rewrite
    $$
    \begin{aligned}
        \limsup_{k \to \infty}  \left( q_{k+m} \lVert q_k \alpha \rVert \right) &= \limsup \frac{\prod_{i=k}^{k+m-1} \frac{1}{1+ \alpha_i^{[a]} \beta_{i+1}^{[a]}} }{\prod_{i=k+1}^{k+m-1} \frac{1}{\alpha_i^{[a]} + a_i + \beta_{i}^{[a]}}} \\
        &= \limsup_{k \to \infty} \frac{\prod_{i=k+1}^{k+m-1} \left(\alpha_i^{[a]} + a_i + \beta_{i}^{[a]} \right) }{\prod_{i=k}^{k+m-1} \left(1+ \alpha_i^{[a]} \beta_{i+1}^{[a]} \right) } \\
        &= \limsup_{k \to \infty} \frac{\prod_{i=k+1}^{k+m-1} \left( a_i + \beta_{i}^{[a]} \right) }{\left(1+ \alpha_k^{[a]} \beta_{k+1}^{[a]} \right) } \\
        &= \limsup_{k \to \infty} f_m \left( \alpha_{k+m-1}^{[a]},\beta_{k+1}^{[a]}, \ \ a_{k+1}, \cdots,  a_{k+m-1} \right)
    \end{aligned}
    $$

    Claim: $f_m$ satisfies good continuity.
    \begin{itemize}
        \item If one of $a_i \in \{ a_{k+1}, \cdots,  a_{k+m-1}\}$ is greater than $C$, then one term $\left(\alpha_i^{[a]} + a_i + \beta_{i}^{[a]} \right)$ is greater than $C$, so that 
        $$f_m \left( \alpha_{k+m-1}^{[a]},\beta_{k+1}^{[a]}, \ \ a_{k+1}, \cdots,  a_{k+m-1} \right) > C/2$$
        Indeed, $f_m$ uniformly tends to infinity as one of $\{ a_{k+1}, \cdots,  a_{k+m-1}\}$ tends to infinity.
        \item If we fix $\{ a_{k+1}, \cdots,  a_{k+m-1}\}$, then $\alpha_i^{[a]}$ is a smooth function over $\alpha_{k+m-1}^{[a]} \in [0,1]$ by
        $$\alpha_i^{[a]} = 1/\left(a_{i+1} + 1/\left(\cdots + 1/\left(a_{k+m-1} + \alpha_{k+m-1}^{[a]}\right)\cdots \right)\right)$$
        Similarly, $\beta_i^{[a]}$ is a smooth function over $\beta_{k+1}^{[a]} \in [0,1]$. This means that $f_m$ is uniformly continuous for $\left(\alpha_{k+m-1}^{[a]}, \beta_{k+1}^{[a]}\right)$.
    \end{itemize}
    By Theorem \ref{liminfclosed}, we show that $S_m$ is a closed set for any $m>0$. Same methods can be used to prove that $I_m (m \le 0)$ is a closed set.
\end{proof}

\begin{theorem}
    Both $I_m (m \le 0)$ and $S_m (m \ge 1)$ contain intervals.
\end{theorem}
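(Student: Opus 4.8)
The plan is to obtain both containments from Theorem~\ref{interval}, following the template by which Hall's ray for $\mathrm{L}$ is extracted from $F(4)+F(4)\supseteq[\text{interval}]$. I will set things up for $S_m$ with $m\ge 1$ (the case $m=1$ being the Dirichlet spectrum, for which an interval is already known); the argument for $I_m$, $m\le 0$, is the mirror image, built on the $m\le 0$ product identity, with the roles of numerator and denominator — hence of ``largest'' and ``smallest'' — interchanged throughout. First I would freeze the $m-1$ integral parameters of the Perron function $f_m$ at a fixed tuple — most of them equal to a large constant $A$, one of them to an even larger ``marker'' $N$ — so that $f_m$ at the offset $0$ collapses to a function $g(\alpha,\beta)$ of the two fractional parameters $\alpha=\alpha_{m-1}^{[a]}$, $\beta=\beta_{1}^{[a]}$ alone. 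Because, once the integral parameters are fixed, each intermediate $\alpha_{j}^{[a]}$ and $\beta_{j}^{[a]}$ is a strictly monotone smooth function of $\alpha$ resp. $\beta$, a short computation shows that, after replacing $\alpha$ by $1-\alpha$ and/or $\beta$ by $1-\beta$ if needed to fix the signs (the necessary reflection depends only on the parity of $m$), $g$ is $C^{1}$ and strictly increasing in each variable on $[0,1]^{2}$, with extreme values at two opposite corners.

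Next I would choose the two Cantor sets required by Theorem~\ref{interval}. Take $C=D$ to be the set of continued fractions $[0;c_{1},c_{2},\dots]$ with every $c_{i}$ in a block $\{A,A+1,A+2,A+3\}$ of four consecutive integers, rescaled to its own convex hull so that its endpoints play the roles of $0$ and $1$. Combinatorially this set looks exactly like $F(4)$ — four daughter intervals at each step with comparable proportions — so its aperture ratio stays below a constant strictly less than $1$, uniformly in $A$ (the estimate from \cite{Hall_Sum_Product_CF} carries over). On the other hand all of $\alpha,\beta$ and the derived quantities now lie in an interval of length $O(A^{-2})$, so $\bigl|\tfrac{\partial g}{\partial\alpha}\big/\tfrac{\partial g}{\partial\beta}\bigr|$ is within $O(A^{-1})$ of $1$; hence for $A$ large it lies inside $[\mathrm{Ap}(C),\mathrm{Ap}(C)^{-1}]\cap[\mathrm{Ap}(D),\mathrm{Ap}(D)^{-1}]$, and Theorem~\ref{interval} yields that $J:=\{g(\alpha,\beta):\alpha\in C,\ \beta\in D\}$ is a whole closed interval.

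Then I would realize $J$ (and, for $m\ge 2$, a ray obtained by letting $N\to\infty$) as achievable limit superiors. Given $v\in J$, write $v=g(\alpha^{*},\beta^{*})$ with $\alpha^{*},\beta^{*}\in C$, and construct a bi-infinite sequence $a$ having, at a sparse set of ``special'' positions, the frozen window flanked by the partial quotients of $\alpha^{*}$ to the right and of $\beta^{*}$ to the left, with long spacers built from $C$ in between. By Lemma~\ref{accumulation} the value $P_{k}^{[a]}$ at a special position $k$ is $v$ up to a controlled error, and the presence of the marker $N$ makes it larger, by a factor of order $N$, than the value at any deep spacer position; the only positions that might compete are the at most $m-1$ ``transition'' positions whose window overlaps a marker, and for those one chooses the spacer quotients adjacent to each special block so that $P_{k}^{[a]}$ stays below $v$ — using that a very large partial quotient sitting inside a continued-fraction tail forces the corresponding fractional part to be tiny, which here works in our favour. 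Consequently $\limsup_{k}P_{k}^{[a]}=v$, so $J\subseteq S_{m}$; equivalently one may run this through the periodic description $S_{m}=\mathrm{cl}\bigl(\tau'_{f_{m},\limsup}\bigr)$ of Theorem~\ref{liminfclosed}, approximating $v$ by the maximum of $P_{k}$ over one long period.

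The hard part will be the two ``thinness versus thickness'' reconciliations. The derivative-ratio hypothesis of Theorem~\ref{interval} demands a Cantor set thin enough that $\partial g/\partial\alpha$ and $\partial g/\partial\beta$ are nearly equal, yet with small aperture ratio — which would ordinarily pull in opposite directions, and is reconciled only by the $A\to\infty$ regime above; and in the gluing step the bookkeeping that prevents the transition positions from overshooting $v$ (so that the limit superior really is attained along the special positions) needs the spacers to be tailored, block by block, much as in \cite{Hall_Sum_Product_CF} and Chapter~3 of \cite{Cusick_Flahive_Markoff_Lagrange}. Everything else — the good continuity of $f_{m}$, and hence the legitimacy of the limit passages — has already been established.
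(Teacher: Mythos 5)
You have identified the right framework---Theorem~\ref{interval} applied after freezing the integral parameters, plus a gluing step to realize values as limit superiors---but there is a genuine error in the choice of Cantor set, and it is fatal as written. You take $C=D$ to be the set of continued fractions with every partial quotient in $\{A,\dots,A+3\}$ and assert that its aperture ratio stays bounded by a constant strictly less than $1$, uniformly in $A$, so that sending $A\to\infty$ simultaneously drives the derivative ratio toward $1$ and preserves the hypothesis of Theorem~\ref{interval}. The uniform bound is false: for large $A$, the level-one daughter interval corresponding to $a_1=A$ has length on the order of $A^{-4}$, while the gap separating it from the $a_1=A+1$ branch has length on the order of $A^{-2}$, so $\mathrm{Ap}$ of this set grows like $A^{2}/3$ (and its Hausdorff dimension tends to $0$). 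Already for $A=10$ the ratio of the first gap to the $a_1=10$ branch is about $40$. Rescaling to the convex hull does not help, since aperture ratio is scale invariant, and the $F(4)$ estimate from \cite{Hall_Sum_Product_CF} does not ``carry over''---it relies on the partial quotients starting at $1$, which is what makes the branch lengths comparable to the gaps. So the two constraints you flagged as ``pulling in opposite directions'' really do so in your $A\to\infty$ regime, and your setup does not reconcile them.

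The paper resolves the same tension by a different device: it freezes all $m-1$ integral parameters at the \emph{same} constant $10$, uses the reflection symmetry $f_m(\alpha,\beta,c_1,\dots,c_{m-1})=f_m(\beta,\alpha,c_{m-1},\dots,c_1)$ to make the derivative ratio exactly $1$ on the diagonal $\alpha=\beta$, and then localizes to a small square around $\alpha=\beta=\sqrt2-1\in F(4)$. Self-similarity of $F(4)$ keeps the aperture ratio under $0.77$ on any such small cylinder, and continuity of the derivatives keeps the ratio in $[4/5,5/4]$ on a sufficiently small square. Note that by inserting an asymmetric marker $N$ you gave up precisely the symmetry the paper exploits, and the marker is unnecessary anyway---the fixed value $10$ in the window together with spacer quotients $\le 4$ already pins the supremum at the special position. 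Your gluing step and good-continuity remarks are in the spirit of the paper's final step, but the central interval-containment claim is not established by your Cantor set.
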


\begin{proof}
    We only prove for $S_m (m \ge 1)$. If $m=1$, we have already solved the Dirichlet spectrum. If $m \ge 2$, we first construct bidirectional sequence of $\{c_n\}$ such that
    $$P_1^{[c]} =  f_m \left( \alpha_{m-1}^{[c]},\beta_{1}^{[c]}, \ \ c_{1}, \cdots,  c_{m-1} \right) = \max_{k \in \Z} P_k^{[c]}$$
    We fix $c_1 = c_2 = \cdots = c_{m-1} = 10$, and $c_j \le 4$ for $j \ne 1,2, \cdots, m-1$, so that
    $$\left( c_i + \beta_{i}^{[c]} \right)_ {i \in \{1,2, \cdots, m-1\}} \ge 10 \ge 2\left( c_i + \beta_{i}^{[c]} \right)_ {i \notin \{1,2, \cdots, m-1\}}$$
    So
    $$\prod_{i=1}^{m-1} \left( c_i + \beta_{i}^{[c]} \right) \ge 2\prod_{i=k+1}^{k+m-1} \left( c_i + \beta_{i}^{[c]} \right)$$
    and therefore $P_1^{[c]} \ge P_k^{[c]}$ for all $k \in \Z$.
    
    Since $f_m$ has the reflection symmetry:
    $$f_m (\alpha, \beta, c_1, \cdots, c_{m-1}) = f_m (\beta, \alpha, c_{m-1}, \cdots, c_2, c_{1})$$
    Rename $g(\alpha, \beta) := f_m (\alpha, \beta, 10, 10, \cdots, 10)$ for convenience, we have $g(\alpha, \beta) = g(\beta, \alpha)$, so
    $$\frac{(\partial g/ \partial \alpha)_{\alpha=\beta}} {(\partial g/ \partial \beta)_{\beta=\alpha}} = 1$$
    as long as $(\partial g/ \partial \alpha)_{\alpha=\beta} \ne 0$, which is obvious since
    $$\frac{\partial}{\partial \alpha_k^{[c]}} \frac{\prod_{i=k}^{k+m-1} \left( 10 + \beta_{i}^{[c]} \right) }{\left(1+ \alpha_k^{[c]} \beta_{k+1}^{[c]} \right)} < 0$$
    Since $g$ is smooth over $\alpha$ and $\beta$, consider a small square $[x-\epsilon, x+\epsilon]$ containing $\alpha = \beta = x$, where we can let $x=\sqrt(2) - 1 = [0; \overline{2}]$ such that for any $\epsilon$, $[x-\epsilon, x+\epsilon] \cap F(4) \ne \emptyset$.
    We further choose $\epsilon$ to control the term
    $$\frac{(\partial g/ \partial \alpha)} {(\partial g/ \partial \beta)} \in [4/5, 5/4]  \quad (\forall \alpha, \beta \in [x-\epsilon, x+\epsilon])$$
    
    Then, we can apply Theorem \ref{interval} on function $-g$ (since $g$ is decreasing for $\alpha$ and $\beta$) to show that the attainable values of $P_1^{[c]}$ contains an interval.

    The final step is to obtain sequence $\{a_n\}_{n \in \N^+}$ from $\{c_n\}_{n \in \Z}$. We do so in the following manner:
    $$ a = (c_0, \ \  c_{-1}, c_0, c_1, \ \  c_{-2}, c_{-1}, c_0, c_1, c_2, \ \  \cdots , \ \  c_{-m}, \cdots, c_0, \cdots, c_m, \ \ \cdots)$$
    We can show that
    $$\lim_{n \to \infty} P_{n^2+n+2}^{[a]} = P_1^{[c]} = \limsup_{k \to \infty} P_{k}^{[a]}$$
    and we show that $S_m$ contains an interval.
\end{proof}

\subsection{Application 2: The First Quadrant Mordell-Gruber Spectrum}

In this application, we will also consider a modified Mordell-Gruber spectrum where rectangles are limited to the first quadrant.

Let $C$ be a rectangle located in the first quadrant in $\R^2$, where $ C = [0, x] \times [0, y]$. $C$ is called admissible in a unimodular lattice $\Lambda$ if $C \cap \Lambda = \{(0,0)\}$, and redefine modified Mordell constant as 
$$\kappa^+(\Lambda) := \sup_{[0, x] \times [0, y] \text{ admissible in }\Lambda} xy$$

We obtain the first quadrant Mordell-Gruber spectrum by
\begin{definition}
    The first quadrant Mordell-Gruber spectrum is
    $$\mathrm{MG}^+_2 := \left\{ \kappa^+(\Lambda)\  \middle| \ \Lambda \text{ is a unimodular lattice in } \mathbb{R}^2\right\}$$
\end{definition}
we aim to prove some key properties relating to $\mathrm{MG}^+_2$, including closedness and the existence of Hall's ray
$$ \exists \ K > 0, \quad [K, \infty] \subset \mathrm{MG}^+_2$$

\begin{lemma}
    The Perron's formula for the first quadrant Mordell-Gruber spectrum is 
    $$
    \begin{aligned}
        P_0^{[a]} &= f(\alpha_0, \beta_0, \ a_0)\\
        &= \left\{ 
        \begin{aligned}
            &\frac{(a_0 + 2\beta_0 + 1) (a_0 + 2\alpha_0+ 1)}{4(a_0+\alpha_0+\beta_0)}, \quad &(a_0 \text{ is odd})\\
            & \frac{(a_0 + 2\beta_0 + 2) (a_0 + 2\alpha_0)}{4(a_0+\alpha_0+\beta_0)}, \quad &(a_0 \text{ is even and } \alpha_0 \ge \beta_0)\\
            & \frac{(a_0 + 2\beta_0) (a_0 + 2\alpha_0 + 2)}{4(a_0+\alpha_0+\beta_0)}, \quad &(a_0 \text{ is even and } \alpha_0 \le \beta_0)\\
        \end{aligned}
        \right.
    \end{aligned}
    $$
\end{lemma}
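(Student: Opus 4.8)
The plan is to read $P_0^{[a]}$ off the geometry of first-quadrant admissible rectangles, exactly as the ordinary Perron formula $\tfrac1{1+\alpha\beta}$ was read off in Lemma \ref{compute}. First I would record the invariance: the diagonal flow $g_t = \diag(e^t,e^{-t})$ maps the first quadrant to itself and preserves the area $xy$ of any rectangle $[0,x]\times[0,y]$ (as $\det g_t = 1$), so $\kappa^+$ is $g_t$-invariant. Hence it suffices to compute the contribution of the ``position-$0$ block'': by Theorem \ref{corre}/Lemma \ref{recall} place the position-$0$ pivot $A_0$ on the ray $\{y=-x,\ x<0\}$, so that with $D = a_0+\alpha_0+\beta_0$,
$$A_0 = \tfrac1{\sqrt D}(-1,1),\qquad A_{-1} = \tfrac1{\sqrt D}(a_0+\alpha_0,\ \beta_0),\qquad A_1 = A_{-1}+a_0A_0 = \tfrac1{\sqrt D}(\alpha_0,\ a_0+\beta_0).$$
By the alternating principle $A_0$ lies in the second quadrant while the two pivots $A_{-1},A_1$ flanking it lie in the first quadrant.

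Next I would identify the relevant lattice points. A rectangle $[0,x]\times[0,y]$ is admissible iff it contains no nonzero lattice point, and since $\Lambda$ is bi-infinite the obstructing points are exactly those with both coordinates positive; these form a ``staircase'' of Pareto-minimal points $S_m$ with $x(S_m)$ strictly decreasing and $y(S_m)$ strictly increasing, and an elementary argument (take $x\nearrow x(S_m)$, $y\nearrow y(S_{m+1})$) gives $\kappa^+(\Lambda) = \sup_m x(S_m)\,y(S_{m+1})$. The structural claim I must establish is that in the block between the first-quadrant pivots $A_{-1}$ and $A_1$, the Pareto-minimal points are precisely $A_{-1}+jA_0 = \tfrac1{\sqrt D}(a_0+\alpha_0-j,\ \beta_0+j)$ for $j=0,1,\dots,a_0$ (with $A_{-1}+a_0A_0 = A_1$). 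These all lie in the first quadrant with $x$-coordinate strictly decreasing and $y$-coordinate strictly increasing in $j$, hence are mutually non-dominating; that no other first-quadrant lattice point is Pareto-minimal follows from $\{A_{-1},A_0\}$ being a basis together with $A_0$ lying in the second quadrant and $\mathrm{Int}\,R(A_0)\cap\Lambda = \{(0,0)\}$, which forces every first-quadrant point $mA_{-1}+nA_0$ to be dominated by some $A_{-1}+n'A_0$ with $0\le n'\le a_0$. Consequently the position-$0$ contribution is
$$P_0^{[a]} \;=\; \max_{0\le j\le a_0-1}\ x(A_{-1}+jA_0)\cdot y\!\left(A_{-1}+(j+1)A_0\right) \;=\; \max_{0\le j\le a_0-1}\ \frac{(a_0+\alpha_0-j)(\beta_0+j+1)}{a_0+\alpha_0+\beta_0}.$$

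Finally I would carry out the optimization. In the real variable $j$ the numerator $h(j) = (a_0+\alpha_0-j)(\beta_0+j+1)$ is a downward parabola with vertex at $j^\ast = \tfrac12(a_0+\alpha_0-\beta_0-1)$; since $\alpha_0,\beta_0\in(0,1)$, the integer maximizer in $\{0,\dots,a_0-1\}$ is the nearest integer to $j^\ast$, namely $\tfrac{a_0-1}{2}$ when $a_0$ is odd, $\tfrac{a_0}{2}$ when $a_0$ is even and $\alpha_0\ge\beta_0$, and $\tfrac{a_0}{2}-1$ when $a_0$ is even and $\alpha_0\le\beta_0$ (the two even sub-cases agreeing at $\alpha_0=\beta_0$). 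Substituting each $j$ back into $h(j)/(a_0+\alpha_0+\beta_0)$ and simplifying — e.g. for $a_0$ odd one gets $a_0+\alpha_0-j = \tfrac{a_0+1}{2}+\alpha_0$, $\beta_0+j+1 = \tfrac{a_0+1}{2}+\beta_0$, hence $\tfrac{(a_0+2\alpha_0+1)(a_0+2\beta_0+1)}{4(a_0+\alpha_0+\beta_0)}$, and the even cases analogously — yields the three displayed expressions.

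The main obstacle is the structural claim of the second paragraph: that the first-quadrant Pareto frontier in the position-$0$ block is exactly $\{A_{-1}+jA_0\}_{j=0}^{a_0}$ with no stray Pareto-minimal point. One must show the interval of admissible $n'$ (of length at least $a_0+\alpha_0+\beta_0>1$) always meets $[0,a_0]$, using the first-quadrant constraints $-m\beta_0\le n\le m(a_0+\alpha_0)$ on $mA_{-1}+nA_0$, and must control the block boundaries via the box properties of the neighbouring pivots $A_{-2},A_2$ (equivalently, invoke the alternating principle globally: intermediate staircase points arise only by adding multiples of a second-quadrant pivot to a first-quadrant pivot, exactly as the recursion $A_{j+1}=a_jA_j+A_{j-1}$ dictates). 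Everything after that — the parabola optimization and the case bookkeeping — is routine.
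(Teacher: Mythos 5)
Your proposal is correct and essentially recapitulates the paper's proof: the paper likewise places $A_0=\tfrac1{\sqrt D}(-1,1)$ on the ray, identifies the obstructing lattice points between the consecutive first-quadrant pivots $A_{-1},A_1$ as $K_t=A_{-1}+tA_0$ for $t=0,\dots,a_0$, bounds the admissible area by $\max_{0\le t\le a_0-1}(a_0+\alpha_0-t)(\beta_0+t+1)/(a_0+\alpha_0+\beta_0)$, and resolves the parabola maximum into the same three parity cases. The only stylistic difference is that you frame the obstruction set as a Pareto frontier argued via $mA_{-1}+nA_0$ bookkeeping, whereas the paper argues directly from an arbitrary admissible rectangle $[0,x]\times[0,y]$ and its bracketing $x_1\le x\le x_{-1}$; both rely on the same alternating-principle/pivot structure and one could tighten the ``staircase is exactly $\{K_t\}$'' step in either write-up by noting that a first-quadrant lattice point lies on some line $y=-x+k\sqrt D$ with $k\ge 1$ and that $k\ge 2$ forces $y_P+x_P\ge 2\sqrt D$, hence $y_P>y_1$ once $x_P\le x_{-1}$.
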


\begin{proof}
    First, $ C = [0, x] \times [0, y]$ doesn't contain other lattice points than the origin. Consider two consecutive pivots in the first quadrant, and by alternating principle, we further know that they have the form $\{A_{2n-1}, A_{2n+1}\}\ (n \in \Z)$. let $n$ be the largest number such that the point $A_{2n-1} = (x_{2n-1}, y_{2n-1})$ has an $x$-coordinate no smaller than $x$, so that for $A_{2n+1} = (x_{2n+1}, y_{2n+1})$ we have $0 \le x_{2n+1} \le x \le x_{2n-1}$. Also, since $A_{2n+1}$ does not fall in $C$, its $y$-coordinate is bounded by $y_{2n+1} \ge y$.

    Without losing of generality, we assume that $n=0$. Since the area of $C$ is invariant under diagonal flow action, we apply Lemma \ref{recall} to restrict $A_0$ to simplify computation. Using the computation of Theorem \ref{corre}, we have
    $$
    \begin{aligned}
        A_0 &= \left(\frac{-1}{\sqrt{a_0+\beta_0+\alpha_0}}, \frac{1}{\sqrt{a_0+\beta_0+\alpha_0}}\right) \\
        A_{1} &= \left(\frac{\alpha_0}{\sqrt{a_0+\beta_0+\alpha_0}}, \frac{a_0+\beta_0}{\sqrt{a_0+\beta_0+\alpha_0}}\right) \\
        A_{-1} &= \left(\frac{a_0+\alpha_0}{\sqrt{a_0+\beta_0+\alpha_0}}, \frac{\beta_0}{\sqrt{a_0+\beta_0+\alpha_0}}\right) \\
    \end{aligned}
    $$
    The rectangle $C$ is then bounded by points of $\Lambda$ on the segment between $A_{-1}$ and $A_1$, which are
    $$K_t = \left(\frac{(a_0-t)+\alpha_0}{\sqrt{a_0+\beta_0+\alpha_0}}, \frac{t+\beta_0}{\sqrt{a_0+\beta_0+\alpha_0}}\right) \quad (t = 0, 1, \cdots, a_0)$$
    Since we already have $x_{1} \le x \le x_{-1}$, let $t$ be the largest number such that 
    $$\frac{(a_0-t)+\alpha_0}{\sqrt{a_0+\beta_0+\alpha_0}} \ge x > \frac{(a_0-t-1)+\alpha_0}{\sqrt{a_0+\beta_0+\alpha_0}} $$
    then, $y$ is controlled by $K_{t+1}$:
    $$y \le \frac{(t+1) + \beta_0}{\sqrt{a_0+\beta_0+\alpha_0}}$$
    So the area of $C$ is bounded by
    $$xy \le \max_{t = 0,1,\cdots, a_0-1} \frac{(a_0-t+\alpha_0)(t+1 + \beta_0)}{a_0+\beta_0+\alpha_0}$$
    The numerator is a quadratic equation for $t$, which is largest when $t$ is closest to $(a_0+\alpha_0-\beta_0-1)/2$. Furthermore, the bound is tight, because we can always use $C$ to approximate the rectangle determined by $K_t$ and $K_{t+1}$.

\end{proof}

\begin{theorem}
    The first quadrant Mordell-Gruber spectrum is a closed set.
\end{theorem}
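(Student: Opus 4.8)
The plan is to recognize this as a direct instance of the general closedness machinery of Section~3. By the preceding lemma, every unimodular lattice $\Lambda$ carries the Perron formula $P_0^{[a]} = f(\alpha_0,\beta_0,a_0)$ with the three‑branch function $f$ displayed above, and $\kappa^+(\Lambda)$ equals the supremum of the values $P_k^{[a]}$ over the offsets whose pivot configuration puts the separating (second‑quadrant) pivot at index $k$ — that is, over every other offset, once the correspondence of Theorem~\ref{corre} is used to place $A_0$ on the ray $\{y=-x,\ x<0\}$. Through the surjection of Theorem~\ref{surjcorre} this exhibits $\mathrm{MG}^+_2$ as a set of the form $\{\sup_k P_k^{[a]}\}$ governed by $f$, so the statement will follow from Theorem~\ref{infclosed} once I check that $f$ has \emph{good continuity} in the sense of Definition~\ref{goodcontinuity}.

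Verifying good continuity is the first concrete step, with $I=+\infty$. For the uniform limit in the integral parameter: in each of the three branches every factor of the numerator is $\ge a_0$ (since $\alpha_0,\beta_0\in[0,1]$) while the denominator is $\le 4(a_0+2)$, so $f(\alpha_0,\beta_0,a_0)\ge a_0^2/(4a_0+8)$ uniformly in $(\alpha_0,\beta_0)$, which tends to $+\infty$ at a rate depending only on $a_0$; the convention $f(\alpha_0,\beta_0,\infty)=+\infty$ is consistent with this. For continuity in the fractional parameters with $a_0<\infty$ fixed: each branch is rational with denominator $a_0+\alpha_0+\beta_0\ge a_0\ge 1$, hence smooth on $[0,1]^2$, and the only place the piecewise definition could break is the seam $\alpha_0=\beta_0$ in the even case, where both even branches reduce to $(a_0+2\alpha_0+2)/4$; thus $f(\cdot,\cdot,a_0)$ is (piecewise smooth, hence) continuous on $[0,1]^2$. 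So $f$ has good continuity.

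The remaining point — requiring a little care rather than a bare invocation — is that $\kappa^+$ is a supremum over \emph{alternate} offsets, not all of them. This is harmless: the proof of Theorem~\ref{infclosed} extracts a subsequence $\{n_j\}$ along which $P_{n_j}^{[a(j)]}$ approaches the relevant supremum, normalizes via Lemma~\ref{seqconstruct}, and forms the bidirectional accumulation sequence of $\{a(j)_{n_j+\,\cdot}\}$ (Lemma~\ref{accumulation}). Choosing all $n_j$ of the admissible parity (possible, since the supremum is attained along that parity), offset $k$ of the accumulation sequence $c$ corresponds to offset $n_j+k$ of $a(j)$, of parity $k$; hence $P_0^{[c]}$ is the limiting value and $P_k^{[c]}=\lim_t P_{n_{j_t}+k}^{[a]}\le \lim_t P_{n_{j_t}}^{[a]}=P_0^{[c]}$ for every admissible $k$. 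So the limiting value is attained, giving closedness of $\mathrm{MG}^+_2$ in $[-\infty,+\infty]$ (and since $f\ge 1$ everywhere, the only possible limit point outside $\mathbb{R}$ is $+\infty$, which the paper already includes in the spectrum).

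I expect the main obstacle to be bookkeeping rather than analysis: correctly matching $\kappa^+(\Lambda)$ to the generalized‑spectrum formalism in the presence of the every‑other‑offset phenomenon, and confirming that the piecewise Perron function $f$ genuinely satisfies good continuity both at the $\alpha_0=\beta_0$ seam and in the $a_0\to\infty$ limit. Once these are settled, the conclusion is immediate from Theorem~\ref{infclosed}.
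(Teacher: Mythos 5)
Your proposal is correct and takes essentially the same route as the paper: recognize $\kappa^+(\Lambda)$ as a supremum of the Perron values over alternate offsets, invoke the machinery of Theorem~\ref{infclosed} (via Lemma~\ref{seqconstruct} and the bidirectional accumulation sequence), and adjust for the fact that the supremum ranges only over even shifts. In fact you supply the two details the paper's brief proof elides --- an explicit check that the three-branch $f$ has good continuity (including continuity across the $\alpha_0=\beta_0$ seam and the $a_0\to\infty$ rate), and the parity bookkeeping in the choice of the $n_j$ --- so your write-up is, if anything, more complete than the paper's.
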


\begin{proof}
    Note that the modified Mordell constant is obtained through
    $$\kappa^+(\Lambda) = \sup_{n \in \Z} P_{2n}^{[a]}$$
    where $a$ is the index sequence of $\Lambda$. Notice that, the subscript here is $2n$, not $n$, so Theorem \ref{infclosed} is not directly applicable. Nevertheless, we make only slight modifications to Theorem \ref{infclosed} in this proof.
    
    Assume the sequence $\{n_j\}$ satisfies that
    $$ \sup_{n \in \Z} P_{2n}^{[a]} = \lim _{j \to \infty}  P_{n_j}^{[a]}$$
    and WLOG, we assume that $P_{n_j}^{[a]}$ is monotonically increasing for $j \in \N^+$. We define $a(j)_k := a_{n_j+k}$ and define the sequence $\{c_n\}$ as its bidirectional accumulation sequence.

    The rest steps follow Theorem \ref{infclosed}.
\end{proof}

\begin{theorem}
    The first quadrant Mordell-Gruber spectrum contains an entire Hall's ray:
    $$ \exists \ K > 0, \quad [K, \infty] \subset \mathrm{MG}^+_2$$
\end{theorem}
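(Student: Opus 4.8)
The plan is to realise every sufficiently large number as $\kappa^+(\Lambda)$ for a bi-infinite lattice whose index sequence $\{a_n\}$ has one very large entry placed at the even position $a_0$, with all other entries in $\{1,2,3,4\}$. By the Perron formula of the previous lemma, $\kappa^+(\Lambda)=\sup_{n}P_{2n}^{[a]}$ and $P_0^{[a]}=f(\alpha_0,\beta_0,a_0)$; confining the tails to $\{1,2,3,4\}$ forces $\alpha_0,\beta_0\in F(4)$, so for a fixed value $a_0=N$ the quantity $P_0^{[a]}$ is exactly the value of $\phi_N(\alpha,\beta):=f(\alpha,\beta,N)$ on the square $F(4)\times F(4)$, and Theorem~\ref{interval} becomes applicable.

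First I would put $\mu_1:=\min F(4)=[0;\overline{4,1}]=\tfrac{\sqrt2-1}{2}$ and $\mu_2:=\max F(4)=[0;\overline{1,4}]=2\sqrt2-2$, so that $\mu_2-\mu_1=\tfrac{3(\sqrt2-1)}{2}>\tfrac12$, and recall $\mathrm{Ap}(F(4))\le\tfrac{10\sqrt2+2}{21}<1$. Going through the three branches of $f$, one checks that $\phi_N$ is continuous on $[\mu_1,\mu_2]^2$, is $C^1$ off the straight segment $\{\alpha=\beta\}$ (which occurs only for even $N$ and is an admissible arc), has $\partial\phi_N/\partial\alpha>0$ and $\partial\phi_N/\partial\beta>0$ once $N$ is large, and attains its minimum at $(\mu_1,\mu_1)$ and maximum at $(\mu_2,\mu_2)$. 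Differentiating the branches also shows $\bigl|(\partial\phi_N/\partial\alpha)/(\partial\phi_N/\partial\beta)\bigr|\to1$ uniformly on $[\mu_1,\mu_2]^2$ as $N\to\infty$, so there is $N_1$ with this ratio in $[\mathrm{Ap}(F(4)),\mathrm{Ap}(F(4))^{-1}]$ for all $N\ge N_1$. Theorem~\ref{interval} (with $[0,1]$ replaced by $[\mu_1,\mu_2]$, which it permits) then gives, for every $N\ge N_1$, that $J_N:=\{\phi_N(\alpha,\beta):\alpha,\beta\in F(4)\}$ is precisely the interval $[\phi_N(\mu_1,\mu_1),\phi_N(\mu_2,\mu_2)]$.

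Next I would show $\bigcup_{N\ge N_1}J_N$ contains a half-line. On the diagonal a short computation gives $\phi_N(\gamma,\gamma)=\tfrac N4+\tfrac12+\tfrac\gamma2+O(N^{-1})$ for every $N$ (with zero error when $N$ is even), whence $\min J_{N+1}-\max J_N=\tfrac14-\tfrac{\mu_2-\mu_1}{2}+O(N^{-1})$; since $\tfrac{\mu_2-\mu_1}{2}>\tfrac14$ strictly, for $N$ large $J_N$ and $J_{N+1}$ overlap, so $\bigcup_{N\ge N_0}J_N=[\inf J_{N_0},\infty)$ for a suitable $N_0\ge N_1$. Finally, given $h\ge\inf J_{N_0}$, choose $N\ge N_0$ with $h\in J_N$, use Theorem~\ref{interval} to get $(\alpha^\ast,\beta^\ast)\in F(4)^2$ with $\phi_N(\alpha^\ast,\beta^\ast)=h$, and build the lattice $\Lambda$ with index sequence determined by $a_0=N$, $[0;a_1,a_2,\dots]=\alpha^\ast$, $[0;a_{-1},a_{-2},\dots]=\beta^\ast$, all remaining $a_i\in\{1,2,3,4\}$. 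Then $P_0^{[a]}=h$, while for $n\ne0$ we have $a_{2n}\le4$ and $\alpha_{2n}^{[a]},\beta_{2n}^{[a]}\in[0,1]$, so $P_{2n}^{[a]}\le B:=\max\{f(\alpha,\beta,k):\alpha,\beta\in[0,1],\,k\le4\}$; after enlarging $N_0$ so that $\inf J_{N_0}\ge B$ we get $\kappa^+(\Lambda)=\sup_nP_{2n}^{[a]}=P_0^{[a]}=h$, hence $h\in\mathrm{MG}^+_2$. Together with $\infty\in\mathrm{MG}^+_2$ (realised, e.g., by any index sequence with unbounded entries) this yields $[K,\infty]\subset\mathrm{MG}^+_2$ with $K=\inf J_{N_0}$.

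The hard part will be making the intervals $J_N$ overlap. It is essential to let $a_0$ run through \emph{both} parities: a single parity moves $J_N$ by roughly $\tfrac12$ per step, which would require $\mu_2-\mu_1\ge1$, impossible because every set of continued fractions lies strictly inside $(0,1)$. With both parities the step is $\tfrac14$, and one must verify that the spread $\tfrac{3(\sqrt2-1)}{2}\approx0.62$ of $F(4)$ genuinely exceeds $\tfrac12$ with enough room to absorb the $O(N^{-1})$ corrections, while simultaneously the gradient-ratio estimate places $\phi_N$ within the scope of Theorem~\ref{interval} for all large $N$; both are delicate numerically but hold.
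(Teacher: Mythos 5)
Your proposal follows essentially the same route as the paper: fix a large $a_0$ and restrict all other entries to $\{1,2,3,4\}$ so that $\alpha_0,\beta_0\in F(4)$, verify the gradient--ratio hypothesis of Theorem~\ref{interval} using $\mathrm{Ap}(F(4))$, obtain the intervals $J_N=[\phi_N(\mu_1,\mu_1),\phi_N(\mu_2,\mu_2)]$, and let $a_0$ run over both parities so that successive $J_N$ overlap (the paper's Claim~2 is exactly your overlap inequality $\phi_{N+1}(\mu_1,\mu_1)\le\phi_N(\mu_2,\mu_2)$, and your asymptotic $\phi_N(\gamma,\gamma)=N/4+1/2+\gamma/2+O(N^{-1})$ gives the same conclusion). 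The only substantive differences are cosmetic: the paper pins down a concrete threshold $a_0\ge40$ for the gradient--ratio bound $[4/5,5/4]$ and extracts the explicit constant $K<10.61$, whereas you leave $N_0$ existential; conversely, you spell out the domination step via the bound $B=\max\{f(\alpha,\beta,k):k\le4\}$, which the paper handles more tersely via the estimates $f(\cdot,\cdot,a_0)\le2$ for $a_0\le4$ and $f(\cdot,\cdot,a_0)\ge2$ for $a_0\ge6$. In short, correct and the same approach.
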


\begin{proof}
    First, it's easy to verify that $f$ is symmetric: $f(\alpha_0,\beta_0, a_0) = f(\beta_0, \alpha_0, a_0)$.
    Also we have
    $$
    \begin{aligned}
        \frac{\partial f} {\partial \alpha_0}= \left\{ 
        \begin{aligned}
            &\frac{(a_0 + 2\beta_0 + 1) (a_0+2\beta_0-1) }{4(a_0+\alpha_0+\beta_0)^2}, \quad &(a_0 \text{ is odd})\\
            & \frac{(a_0 + 2\beta_0 + 2) (a_0 + 2\beta_0)}{4(a_0+\alpha_0+\beta_0)^2}, \quad &(a_0 \text{ is even and } \alpha_0 > \beta_0)\\
            & \frac{(a_0 + 2\beta_0) (a_0 + 2\beta_0 - 2)}{4(a_0+\alpha_0+\beta_0)^2}, \quad &(a_0 \text{ is even and } \alpha_0 < \beta_0)\\
        \end{aligned}
        \right.
    \end{aligned}
    $$
    and
    $$
    \begin{aligned}
        \frac{\partial f} {\partial \beta_0}= \left\{ 
        \begin{aligned}
            &\frac{(a_0 + 2\alpha_0 + 1) (a_0+2\alpha_0-1) }{4(a_0+\alpha_0+\beta_0)^2}, \quad &(a_0 \text{ is odd})\\
            & \frac{(a_0 + 2\alpha_0 - 2) (a_0 + 2\alpha_0)}{4(a_0+\alpha_0+\beta_0)^2}, \quad &(a_0 \text{ is even and } \alpha_0 > \beta_0)\\
            & \frac{(a_0 + 2\alpha_0) (a_0 + 2\alpha_0 + 2)}{4(a_0+\alpha_0+\beta_0)^2}, \quad &(a_0 \text{ is even and } \alpha_0 < \beta_0)\\
        \end{aligned}
        \right.
    \end{aligned}
    $$
    For $a_0 \ge 1$,
    $$ \frac{\partial f} {\partial \alpha_0} \ge 0,\quad \frac{\partial f} {\partial \beta_0} \ge 0 $$
    and for $a_0 \ge 3$,
    $$ \frac{\partial f} {\partial \alpha_0} > 0,\quad \frac{\partial f} {\partial \beta_0} > 0 $$
    For $a_{2n} \in \N^+$, we have
    $$f(\alpha_{2n}, \beta_{2n}, a_{2n}) \ge 1$$
    For $a_{2n} \le 4$, we have
    $$f(\alpha_{2n}, \beta_{2n}, a_{2n}) \le f(1,1,4) = 2$$
    For $a_{0} \ge 6$, we have
    $$f(\alpha_{0}, \beta_{0}, a_0) \ge f(0,0,6) = 2$$
    Therefore by taking $a_0 \ge 6, \ a_i \le 4$ for $i \ne 0$, we are certain that $P_0^{[a]}$ is the maximum.

    Claim 1: when $a_0 \ge 40$, we show that
    $$ \left. \frac{\partial f} {\partial \alpha_0} \middle/ \frac{\partial f} {\partial \beta_0} \right. \in [4/5,5/4] $$
    We only need to show the lower bound; the upper bound is automatically obtained by symmetry.

    Since
    $$
    \begin{aligned}
        \left. \frac{\partial f} {\partial \alpha_0} \middle/ \frac{\partial f} {\partial \beta_0} \right. = \left\{ 
        \begin{aligned}
            &\frac{(a_0 + 2\beta_0 + 1) (a_0+2\beta_0-1)}{(a_0 + 2\alpha_0 + 1) (a_0+2\alpha_0-1)}, \quad &(a_0 \text{ is odd})\\
            & \frac{(a_0 + 2\beta_0 + 2) (a_0 + 2\beta_0)}{(a_0 + 2\alpha_0 - 2) (a_0 + 2\alpha_0)}, \quad &(a_0 \text{ is even and } \alpha_0 \ge \beta_0)\\
            & \frac{(a_0 + 2\beta_0) (a_0 + 2\beta_0 - 2)}{(a_0 + 2\alpha_0) (a_0 + 2\alpha_0 + 2)}, \quad &(a_0 \text{ is even and } \alpha_0 \le \beta_0)\\
        \end{aligned}
        \right.
    \end{aligned}
    $$
    When $a_0$ is even and $\alpha_0 \le \beta_0$, 
    $$\frac{(a_0 + 2\beta_0) (a_0 + 2\beta_0 - 2)}{(a_0 + 2\alpha_0) (a_0 + 2\alpha_0 + 2)} \ge \frac{(a_0- 2)a_0}{(a_0+2) (a_0+4)} \ge \frac{38 \times 40}{42 \times 44} \ge \frac{4}{5}$$
    (the other two cases are proved similarly)

    Claim 2: consider
    $$
    \begin{aligned}
        t_1 &= [0; \ \overline{4, \ 1}] = \frac{\sqrt{2}-1}{2}\\
        t_2 &= [0; \ \overline{1, \ 4}] = 2\sqrt{2}-2\\
    \end{aligned}
    $$
    then $t_1 = \min F(4)$ and $t_2 = \max F(4)$. When $a_0 \ge 40$, we have $f(t_1, t_1, a_0 + 1) \le f(t_2, t_2, a_0)$.

    \begin{enumerate}
        \item If $a_0$ is even, then we show that
        $$\frac{(a_0 + 2t_2 + 2) (a_0 + 2t_2)}{4(a_0 + 2t_2)} \ge \frac{(a_0 + 2t_1 + 2)^2}{4(a_0 + 2t_1+1)}$$
        which is equivalent to
        $$(a_0 + 2t_2 + 2) (a_0 + 2t_1+1) \ge (a_0 + 2t_1 + 2)^2$$
        or
        $$(2t_2 + 2) (2t_1+1) + (2t_2) a_0 \ge (2t_1 + 2)^2 + (2t_1+1) a_0$$
        which is true when $a_0 > 3$.
        \item If $a_0$ is odd, then we show that
        $$\frac{(a_0 + 2t_2 + 1)^2}{4(a_0 + 2t_2)} \ge \frac{(a_0 + 2t_1 + 3)(a_0+2t_1+1)}{4(a_0 + 2t_1+1)}$$
        which is equivalent to
        $$(a_0 + 2t_2 + 1)^2 \ge (a_0 + 2t_1 + 3)(a_0+2t_2)$$
        or
        $$(2t_2+1)^2 + 2t_2 a_0 \ge 2t_2(2t_1 + 3) + (2t_1+1) a_0$$
        which is always true.
    \end{enumerate}

    Applying Theorem \ref{interval},
    $$\bigcup_{a_0=40}^{\infty} [f(t_1, t_1, a_0), f(t_2, t_2, a_0)] \subset \mathrm{MG}_2^+$$
    where
    $$\bigcup_{a_0=40}^{\infty} [f(t_1, t_1, a_0), f(t_2, t_2, a_0)] = [f(t_1, t_1, 40), \infty)$$
    so that
    $$f(t_1, t_1, 40) = \frac{42+2t_1}{4} < 10.61$$
    Also, $\infty$ is also in $\mathrm{MG}_2^+$ by taking the sequence of 
    $$a_{2n-1} = 1; \quad a_{2n} = |n| + 1$$
    so the conclusion is that
    $$[10.61, \infty] \subset\mathrm{MG}_2^+$$
\end{proof}

\subsection{Application 3: The $\ell^2$ Mordell-Gruber Spectrum}
We consider another setting in this application example: Let $\Lambda$ be a unimodular lattice, and let $g_t$ be the diagonal flow action. Instead of finding a rectangle, we try to find a maximal circle in $g_t \Lambda$ centered at the origin, that contains solely the origin point in its interior.

We define the $\ell^2$ Mordell constant as
$$\kappa_2(\Lambda) := \sup_{t\in \Z} \sup_{\{x^2+y^2< r^2\} \cap (g_t\Lambda) = \{(0,0)\}} r$$
and the $\ell^2$ Mordell-Gruber spectrum is the set of all possible values of $\ell^2$ Mordell constant
$$\mathrm{MGL}_2 := \left\{ \kappa_2(\Lambda)\  \middle|  \ \Lambda \subset \R^2 \text{ unimodular}\right\}$$
\begin{lemma}
    If $r_0$ attains the supremum, then at least one pivot on the circle $x^2+y^2=r_0^2$. 
\end{lemma}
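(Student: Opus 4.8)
The plan is to reduce to the single translate $g_{t_0}\Lambda$ at which the supremum is attained, locate a shortest nonzero lattice vector $A$ on the extremal circle, and then observe that the rectangle $R(A)$ is inscribed in that circle, which forces $A$ to be a pivot.

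First I would fix $t_0 \in \R$ with $\rho(t_0) = r_0 = \kappa_2(\Lambda)$, where $\rho(t) := \sup\{r \ge 0 : \{x^2+y^2<r^2\}\cap(g_t\Lambda) = \{(0,0)\}\}$, and set $\Lambda' = g_{t_0}\Lambda$. Because $\Lambda'$ is a discrete closed subgroup of $\R^2$, $\rho(t_0)$ equals the Euclidean length of a shortest nonzero vector of $\Lambda'$ and this length is attained; so there is $A = (x_A, y_A) \in \Lambda'\setminus\{(0,0)\}$ with $x_A^2 + y_A^2 = r_0^2$ and $\{x^2+y^2<r_0^2\}\cap\Lambda' = \{(0,0)\}$. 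Replacing $A$ by $-A$ if necessary, we may assume $y_A \ge 0$, so that $A$ is a candidate element of $\Pi(\Lambda')$.

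Next comes the key geometric observation: the rectangle $R(A) = [-|x_A|,|x_A|]\times[-|y_A|,|y_A|]$ is inscribed in the closed disk of radius $r_0$, since the points of $R(A)$ at maximal distance from the origin are its four corners $(\pm|x_A|,\pm|y_A|)$, each at distance $\sqrt{x_A^2+y_A^2} = r_0$. Hence every $(x,y) \in \mathrm{Int}(R(A))$ satisfies $x^2+y^2 < x_A^2 + y_A^2 = r_0^2$, so it lies in the open disk $\{x^2+y^2<r_0^2\}$ and therefore is not a nonzero point of $\Lambda'$. Thus $\mathrm{Int}(R(A))\cap\Lambda' = \{(0,0)\}$, which is exactly the defining property of a pivot, and $A$ lies on the circle $x^2+y^2=r_0^2$ by construction. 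This proves the claim.

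I expect the only friction to be the bookkeeping around non-bi-infinite translates. If $x_A = 0$ or $y_A = 0$, then $\Lambda'$ is mono-infinite (or finite) and $R(A)$ degenerates to a segment whose interior in $\R^2$ is empty, so the rectangle condition holds vacuously and $A$ is still a pivot in the extended sense of Section 2; reducing $\Lambda'$ to the canonical form used there only involves elements of $G$ of the shape $\mathrm{diag}(\pm1,\pm1)$, which preserve Euclidean norm, so $A$ stays on the circle. One could alternatively show that $\kappa_2(\Lambda)$ is attained only when $\Lambda$ is bi-infinite and thereby eliminate this case entirely, but that refinement is not needed for the statement.
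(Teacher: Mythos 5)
Your proof is correct and follows essentially the same approach as the paper's: a nonzero lattice point $A$ of shortest Euclidean norm $r_0$ exists on the extremal circle (otherwise $r$ could be increased), and since the rectangle $R(A)$ is inscribed in the closed disk of radius $r_0$, its interior lies in the empty open disk, so $A$ is a pivot. You add somewhat more detail — the explicit reduction to $g_{t_0}\Lambda$, the shortest-vector argument, and the degenerate mono-infinite case — but the core idea is identical to the paper's one-line justification.
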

\begin{proof}
    This is obvious, since there must be points $(a, b)$ on the circle, or $r$ can be larger. Note that the rectangle $[-a,a] \times [-b,b]$ is contained in the disk $x^2+y^2\le r^2$, so $(a,b)$ is a pivot.
\end{proof}
\begin{corollary}
    An alternative formula for $\ell^2$ Mordell constant is
    $$\kappa_2(\Lambda) = \sup_{t\in \Z} \min_{A \in \Pi(g_t\Lambda)} \lVert A \rVert _{2}$$
\end{corollary}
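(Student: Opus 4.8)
The plan is to reduce the corollary to a one-line geometric fact about shortest lattice vectors. First I would observe that for a fixed $t$ the inner supremum in the definition of $\kappa_2$ is nothing but the $\ell^2$-systole of $g_t\Lambda$: writing $m_t := \min_{A \in g_t\Lambda \setminus\{(0,0)\}} \lVert A \rVert_2$ (which exists and is positive since $g_t\Lambda$ is a discrete unimodular lattice), one checks that the open disk $\{x^2+y^2 < r^2\}$ meets $g_t\Lambda$ only in the origin precisely when $r \le m_t$ — for $r \le m_t$ every nonzero point has norm $\ge m_t \ge r$, while for $r > m_t$ the shortest vector lies strictly inside. Hence the inner supremum equals $m_t$, so $\kappa_2(\Lambda) = \sup_{t \in \Z} m_t$, and it suffices to prove, for every unimodular $\Lambda' \subset \R^2$, the identity $m(\Lambda') = \min_{A \in \Pi(\Lambda')} \lVert A \rVert_2$, where $m(\Lambda') := \min_{A \in \Lambda' \setminus\{(0,0)\}} \lVert A \rVert_2$.

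The inequality $m(\Lambda') \le \min_{A \in \Pi(\Lambda')} \lVert A \rVert_2$ is immediate, as every element of $\Pi(\Lambda')$ is a nonzero lattice point. For the reverse inequality, let $A = (a,b)$ realize $m(\Lambda')$. The closed box $R(A) = [-|a|,|a|] \times [-|b|,|b|]$ is inscribed in the closed disk of radius $\lVert A \rVert_2$, because any $(x,y)$ with $|x| \le |a|$ and $|y| \le |b|$ satisfies $x^2+y^2 \le a^2+b^2 = \lVert A \rVert_2^2$; hence $\mathrm{Int}(R(A))$ lies in the \emph{open} disk of radius $\lVert A \rVert_2$, which by the choice of $A$ contains no nonzero point of $\Lambda'$. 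Thus $\mathrm{Int}(R(A)) \cap \Lambda' = \{(0,0)\}$, i.e.\ $A$ is a pivot. Since $R(A) = R(-A)$, one of $A,-A$ lies in the half-plane $\{y \ge 0\}$ defining $\Pi(\Lambda')$, so $\min_{P \in \Pi(\Lambda')} \lVert P \rVert_2 \le \lVert A \rVert_2 = m(\Lambda')$, which is exactly the reasoning already used in the preceding lemma.

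Combining the two inequalities gives $m(\Lambda') = \min_{A \in \Pi(\Lambda')} \lVert A \rVert_2$; applying this to $\Lambda' = g_t\Lambda$ and taking $\sup_{t \in \Z}$ on both sides yields $\kappa_2(\Lambda) = \sup_{t \in \Z} \min_{A \in \Pi(g_t\Lambda)} \lVert A \rVert_2$, the claimed formula. I do not expect a genuine obstacle here; the only point requiring care is bookkeeping around the definition of $\Pi$ in the non-bi-infinite cases — the restriction to $y \ge 0$ and the forced special pivots $(1,0)$, $(-\alpha,1)$. When $\Lambda'$ is mono-infinite or finite a shortest vector may lie on an axis, but such a vector is (up to sign and the normalizing diagonal action) one of these forced pivots and hence still belongs to $\Pi(\Lambda')$, so the same bound holds. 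The essential content is the single observation that the axis-parallel rectangle spanned by a vector is inscribed in the $\ell^2$-ball of the same radius.
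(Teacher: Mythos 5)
Your proposal is correct and follows essentially the same route as the paper: the preceding lemma is precisely your key step (a nonzero lattice vector of minimal $\ell^2$ norm spans an axis-aligned rectangle inscribed in its $\ell^2$-ball, hence is a pivot), and the corollary is then just unwinding definitions and taking $\sup_t$, as you do.
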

The right side looks very similar to the log-systole function defined before, so we call it $\ell^2$ log-systole function
\begin{definition}
    The $\ell^2$ log-systole function, written as $W_2(t): \R \to \R$, is defined as
    $$W_2(t) := \ln \left(\min_{A \in \Pi(g_t\Lambda)} \lVert A \rVert _{2}\right)$$
\end{definition}
We shall first provide the upper and lower bounds for the $\ell^2$ Mordell-Gruber Spectrum:
\begin{lemma}
    \label{mgl2bound}
    The upper and lower bounds for $\ell^2$ Mordell constant:
    $$\kappa_2(\Lambda) \in \left[ 1, \ \ (4/3)^{1/4} \right]$$
    Moreover, both bounds are attainable.
\end{lemma}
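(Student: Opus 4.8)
The plan is to prove the two bounds separately and exhibit a lattice attaining each. Using the lemma just proved — that the $\ell^2$-shortest nonzero vector of a lattice is automatically a pivot — I may write $\kappa_2(\Lambda)=\sup_{t\in\R}\mathrm{sys}_2(g_t\Lambda)$, where $\mathrm{sys}_2(\Lambda'):=\min_{v\in\Lambda'\setminus\{0\}}\lVert v\rVert_2$. For the upper bound it then suffices to show $\mathrm{sys}_2(\Lambda')\le(4/3)^{1/4}$ for every unimodular $\Lambda'$, since each $g_t\Lambda$ is again unimodular. This is Hermite's inequality in dimension two: I would take a Gauss-reduced basis $v_1,v_2$ of $\Lambda'$ with $\lVert v_1\rVert_2=\mathrm{sys}_2(\Lambda')$, $\lVert v_2\rVert_2\ge\lVert v_1\rVert_2$, and $|\langle v_1,v_2\rangle|\le\tfrac12\lVert v_1\rVert_2^2$; then
$$1=\mathrm{covol}(\Lambda')^2=\lVert v_1\rVert_2^2\lVert v_2\rVert_2^2-\langle v_1,v_2\rangle^2\ge\lVert v_1\rVert_2^4-\tfrac14\lVert v_1\rVert_2^4=\tfrac34\lVert v_1\rVert_2^4,$$
so $\mathrm{sys}_2(\Lambda')^4\le 4/3$. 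Equality throughout the chain forces $\lVert v_2\rVert_2=\lVert v_1\rVert_2$ and a $60^\circ$ angle, i.e.\ the hexagonal lattice; scaled to be unimodular it has $\mathrm{sys}_2=(4/3)^{1/4}$, hence $\kappa_2\ge(4/3)^{1/4}$ there, and the upper bound is attained.

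For the lower bound I would study the continuous function $\psi(t):=\mathrm{sys}_2(g_t\Lambda)^2=\min_{v\in\Lambda\setminus\{0\}}\bigl(e^{2t}x_v^2+e^{-2t}y_v^2\bigr)$, which by the upper bound satisfies $\psi\le 4/3$ everywhere. On any $t$-interval where a single vector $v$ realizes the minimum, $\psi$ agrees with the strictly convex function $e^{2t}x_v^2+e^{-2t}y_v^2$, so $\psi$ has no interior local maximum there; hence every local maximum of $\psi$ occurs at a point $t_0$ where at least two linearly independent lattice vectors $v_1,v_2$ realize the minimum. Writing $w_i=g_{t_0}v_i=(\xi_i,\eta_i)$ we then have $\lVert w_1\rVert_2=\lVert w_2\rVert_2=\sqrt{\psi(t_0)}=:m$; being two linearly independent shortest vectors of a rank-$2$ lattice, $w_1,w_2$ are its successive $\ell^2$-minima and therefore form a basis, so $|\xi_1\eta_2-\xi_2\eta_1|=\mathrm{covol}(g_{t_0}\Lambda)=1$. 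Cauchy--Schwarz then yields
$$1=|\xi_1\eta_2-\xi_2\eta_1|\le|\xi_1|\,|\eta_2|+|\eta_1|\,|\xi_2|\le\sqrt{\xi_1^2+\eta_1^2}\,\sqrt{\eta_2^2+\xi_2^2}=m^2,$$
whence $m\ge1$ and $\kappa_2(\Lambda)\ge\sqrt{\psi(t_0)}=m\ge1$.

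What remains — and this is the main obstacle — is to guarantee that $\psi$ actually possesses a local maximum (equivalently $\sup_t\psi\ge1$), uniformly over the three lattice types. When $\Lambda$ is bi-infinite it is immediate: $\psi(t)\to0$ as $t\to\pm\infty$, so the bounded positive function $\psi$ attains an interior global maximum. When $\Lambda$ has a point on an axis (the mono-infinite and finite cases from Section 2), $\psi(t)\to0$ toward at least one cusp direction, while in the other direction the best approximants of the slope of $\Lambda$ give infinitely many "balance points" of the governing convergent vectors at which $\psi$ drops strictly below its supremum; hence $\psi$ cannot be eventually monotone and again has a local maximum. I would carry this out using the explicit canonical forms of Section 2 (basis $(1,0),(-\gamma,1)$, resp.\ $(1,0),(-p/q,1)$), where the convergent vectors and their flow-balance points can be written down directly. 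Finally, the endpoint $1$ is attained: for $\Lambda=\Z^2$ one has $\mathrm{sys}_2(g_t\Z^2)=\min(e^{t},e^{-t})$, with maximum $1$ at $t=0$, so $\kappa_2(\Z^2)=1$; together with the upper bound and the hexagonal example this gives $\kappa_2(\Lambda)\in\bigl[1,(4/3)^{1/4}\bigr]$ with both bounds attainable.
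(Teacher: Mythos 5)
Your upper‐bound argument (Hermite/Gauss reduction, hexagonal equality case) is correct and matches what the paper invokes as a well-known fact about admissible ellipses. Your lower-bound strategy — find a time where two linearly independent lattice vectors realize the $\ell^2$-minimum simultaneously, deduce they form a basis, and conclude $1 = \lvert\det\rvert \le m^2$ — is also sound in spirit and parallels the paper, which at its chosen time $t_0$ likewise produces two vectors $A,B$ of minimal norm, appeals to Pick's theorem to get $\lvert\det(A;B)\rvert = 1$, and concludes $\lVert A\rVert_2\lVert B\rVert_2 \ge 1$.

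The gap is in how you guarantee the existence of such a time. You split into cases and claim that when $\Lambda$ is bi-infinite, ``$\psi(t)\to 0$ as $t\to\pm\infty$,'' so $\psi$ attains an interior global max. This is backwards: bi-infinite means $\Lambda$ has no nonzero points on either axis, and for such lattices $\inf_t W(t;\Lambda)$ is finite (this is exactly what makes the Markov spectrum finite, cf.\ Theorem \ref{fourinone}), so $\psi = e^{2W_2} \ge e^{2W}$ is bounded \emph{away from zero} and does \emph{not} tend to $0$ in either direction — it oscillates (consider the golden-ratio lattice, where $\psi$ is periodic). The decay $\psi(t)\to 0$ at a cusp occurs precisely when $\Lambda$ \emph{does} have a point on an axis. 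So the case you dismissed as ``immediate'' is in fact the one where your argument is least complete: you still need to show the oscillating, bounded, piecewise strictly convex $\psi$ of a bi-infinite lattice has a local maximum, which is true but not free (corners where the controlling vector changes need not be local maxima a priori). Your treatment of the mono-infinite/finite cases is closer to correct but also informal. The paper sidesteps the entire case split: it fixes an arbitrary shortest vector $A$ at $t=0$, flows in the direction along which $\lVert g_t A\rVert_2$ increases, notes via Minkowski that $A$ cannot remain uniquely shortest forever, takes $t_0$ to be the first time a second vector $B$ ties it in length, and applies Pick's theorem to the triangle $0,A,B$ inside the open disk of that radius. That works uniformly for every unimodular $\Lambda$ with no discussion of cusps or of whether $\psi$ has a local max; you would do well to adopt that more direct construction of the balance time.
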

\begin{proof}
    For the lower part, we need to find a $t$ so that $W_2(t) \ge 0$. 
    Assume that at $t=0$, there is a point $A=(a,b)$ such that $\lVert A \rVert_2$ attains the minimum. Since we know that
    $$f(t) := \lVert g_t A \rVert_2 = \sqrt{a^2\exp(2t)+b^2 \exp(-2t)}$$
    WLOG assume $a^2\ge b^2$, so that $f(t)$ is monotonic increasing for $t\ge 0$.

    Claim: there exists a minimum $t_0 \ge 0$ such that $\lVert g_{t_0} A \rVert_2 = \lVert g_{t_0} B \rVert_2$.

    Proof: By Minkowski convex body theorem, we obtain a bound that if $\pi \lVert g_t A \rVert_2^2 > 4$ then there exists $B$ such that $\lVert g_t A \rVert_2 > \lVert g_t B \rVert_2$. Now set
    $$t_0 = \inf \left\{ t \ge 0 \ | \ \exists B \in g_t \Lambda \quad \lVert g_t A \rVert_2 \ge \lVert g_t B \rVert_2\right\}$$
    so, there is a decreasing sequence $\{t_n\}$ with limit $t_0$ such that for each $t_n$, there exists a point $B_n$ that satisfies $$\lVert g_{t_n} A \rVert_2 \ge \lVert g_{t_n} B_n \rVert_2$$
    Since $\{B_n\}$ is bounded, there is a subsequence converging to $B \in g_{t_0} \Lambda$, where $\lVert g_{t_0} A \rVert_2 \ge \lVert g_{t_0} B \rVert_2$. Also, by the definition of infimum, the equality must hold.

    Now for $g_t \Lambda$ consider the region $D=\{x^2+y^2<\lVert g_t A \rVert_2^2\}$, which does not contain other points of $g_t\Lambda$ than the origin. Three points $A, B, (0,0)$ determine a triangle whose interior and edges are contained in the region $D$, so there are no lattice points on its edge or in its interior. By the Pick's theorem, the triangle of $A, B, (0,0)$ has an area of $1/2$ and $\lvert \det(A;B) \rvert = 1$. This implies that 
    $$1=\lvert \det(A;B) \rvert = \lVert A \rVert_2 \cdot \lVert B \rVert_2 \cdot \lvert \sin \langle A, B \rangle \rvert \le \lVert A \rVert_2 \cdot \lVert B \rVert_2 = (f(t_0))^2$$
    So we have provided a $t_0$ that satisfies $W_2(t_0) \ge 0$.
    
    When $\Lambda = \Z^2$, the equality is obtained and lower bound is achieved.

    For the upper bound, it's equivalent to prove that the area of the circle is bounded by $2\pi/\sqrt{3}$. An equivalent formulation is that an ellipse $Ax^2+Bxy+Cy^2<1$ centered at origin containing no integral points (i.e., points in $\Z^2$) except the origin, has a maximum area of $2\pi/\sqrt{3}$, which is a well-known result. 

    The upper bound is obtained by setting $\Lambda = \Z (1,0) + \Z (-1/2, \sqrt{3}/2)$.
\end{proof}

Also, we prove some key properties related to the $\ell^2$ log systole function:
\begin{lemma}
    $W_2(t)$ is a continuous function for $t$. Moreover, it's $1$-Lipschitz.
\end{lemma}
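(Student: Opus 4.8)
The plan is to reduce $W_2$ to an infimum of explicit smooth functions, one for each nonzero lattice vector, and then invoke the elementary fact that an infimum of a family of uniformly $L$-Lipschitz functions is again $L$-Lipschitz, as soon as the infimum is finite somewhere.

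First I would observe that, exactly as in the $\ell^\infty$ situation of Lemma \ref{systole}, the minimum in the definition of $W_2$ may be taken over all nonzero lattice points rather than only over pivots: if $v=(v_1,v_2)\in g_t\Lambda$ realizes the shortest $\ell^2$-length, then its bounding box $[-|v_1|,|v_1|]\times[-|v_2|,|v_2|]$ lies inside the closed disk of radius $\|v\|_2$ centered at the origin, whose open interior meets $g_t\Lambda$ only at $0$; hence $v$ is a pivot, and
\[
W_2(t)=\ln\!\Big(\min_{A\in\Pi(g_t\Lambda)}\|A\|_2\Big)=\ln\!\Big(\min_{v\in g_t\Lambda\setminus\{0\}}\|v\|_2\Big)=\inf_{v\in\Lambda\setminus\{0\}}\ln\|g_t v\|_2 .
\]

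Next, for each fixed $v=(v_1,v_2)\in\Lambda\setminus\{0\}$ I would set $\phi_v(t):=\ln\|g_t v\|_2=\tfrac12\ln\!\big(v_1^2 e^{2t}+v_2^2 e^{-2t}\big)$. Since $v_1^2 e^{2t}+v_2^2 e^{-2t}>0$ for every $t$, $\phi_v$ is smooth, and a one-line computation gives $\phi_v'(t)=\big(v_1^2 e^{2t}-v_2^2 e^{-2t}\big)\big/\big(v_1^2 e^{2t}+v_2^2 e^{-2t}\big)\in[-1,1]$ (with slope exactly $\pm1$ in the degenerate cases $v_1=0$ or $v_2=0$), so each $\phi_v$ is $1$-Lipschitz. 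Now $W_2=\inf_{v}\phi_v$. Because $\Lambda$ is discrete we have $\lambda_1(\Lambda)>0$, so $W_2(0)$ is a finite real number; the bound $\phi_v(t)\ge\phi_v(0)-|t|$, valid for every $v$, then forces $W_2(t)\ge W_2(0)-|t|>-\infty$, so $W_2$ is finite everywhere. Finally, for any $s,t\in\R$ and any $v$ we have $\phi_v(s)\le\phi_v(t)+|s-t|$, hence $\inf_v\phi_v(s)\le\phi_v(t)+|s-t|$ for each $v$ and therefore $W_2(s)\le W_2(t)+|s-t|$; by symmetry $|W_2(s)-W_2(t)|\le|s-t|$. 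This is the $1$-Lipschitz bound, and continuity follows at once.

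The argument is entirely routine; the only point deserving attention — and the closest thing to an obstacle — is checking that $\inf_v\phi_v$ is a genuinely finite function rather than identically $-\infty$ on part of its domain, so that the ``Lipschitz-from-Lipschitz'' principle applies. This is not a consequence of Lemma \ref{mgl2bound}, which bounds $\sup_t e^{W_2(t)}$ and not $W_2$ pointwise; but finiteness at a single value such as $t=0$ suffices, and that is immediate from the discreteness of $\Lambda$.
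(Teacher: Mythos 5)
Your proof is correct and takes essentially the same route as the paper: both arguments rest on the fact that $t\mapsto\ln\lVert g_t v\rVert_2$ is $1$-Lipschitz for each fixed nonzero $v$, and the paper simply tracks the minimizing lattice vector at time $t$ directly rather than invoking the general ``infimum of $1$-Lipschitz functions is $1$-Lipschitz'' principle as you do. Your explicit reduction from the minimum over pivots to the minimum over all nonzero lattice vectors is a helpful clarification that the paper leaves implicit (it instead relies on equivariance of pivots under $g_u$), but the substance is identical.
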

\begin{proof}
    We prove that for any $t, u$ we have $W_2(t+u) - W_2(t)< |u|$. 
    
    Assume that at $t$, the $\ell^2$ log-systole is controlled by $g_{t}A$, and suppose $g_tA = (a,b)$, so that $W_2(t) = \ln\left(\sqrt{a^2+b^2}\right)$. Now, since $g_{t+u}A = g_u(a,b)$ does not fall inside the circle with radius $\exp(W_2(t+u))$, it gives 
    $$\ln\left(\sqrt{a^2\exp(2u)+b^2 \exp(-2u)}\right) \ge W_2(t+u)$$
    also $$\ln\left(\sqrt{a^2\exp(2u)+b^2 \exp(-2u)}\right) \le \ln\left(\sqrt{a^2+b^2}\right) + |u| = W_2(t)+|u|$$
\end{proof}

\begin{lemma}
    $W_2(t)$ is controlled both above and below by the $\ell^\infty$ log-systole function:
    $$W(t) \le W_2(t) \le W(t) + \frac{\ln(2)}{2}$$
\end{lemma}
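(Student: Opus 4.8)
The plan is to reduce the two inequalities to the elementary comparison of the $\ell^\infty$ and $\ell^2$ norms on $\R^2$, namely $\lVert v \rVert_\infty \le \lVert v \rVert_2 \le \sqrt{2}\,\lVert v \rVert_\infty$ for every $v \in \R^2$, combined with the fact that the lattice point realizing the $\ell^\infty$-systole is a pivot. Fix $t$ and abbreviate $\Lambda' := g_t\Lambda$, so that by definition $W(t) = \ln \mathrm{sys}(\Lambda') = \ln \min_{A \in \Lambda' \setminus \{(0,0)\}} \lVert A \rVert_\infty$ and $W_2(t) = \ln \min_{A \in \Pi(\Lambda')} \lVert A \rVert_2$.

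For the lower inequality $W(t) \le W_2(t)$, I would use that $\Pi(\Lambda') \subseteq \Lambda' \setminus \{(0,0)\}$ and $\lVert \cdot \rVert_\infty \le \lVert \cdot \rVert_2$, which gives
\[
\mathrm{sys}(\Lambda') = \min_{A \in \Lambda' \setminus \{(0,0)\}} \lVert A \rVert_\infty \le \min_{A \in \Pi(\Lambda')} \lVert A \rVert_\infty \le \min_{A \in \Pi(\Lambda')} \lVert A \rVert_2,
\]
and taking logarithms yields $W(t) \le W_2(t)$.

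For the upper inequality, I would pick a lattice point $B \in \Lambda'$ with $\lVert B \rVert_\infty = \mathrm{sys}(\Lambda')$. Writing $B = (x_B, y_B)$, the interior of its box $R(B) = [-|x_B|,|x_B|] \times [-|y_B|,|y_B|]$ consists of points $v$ with $\lVert v \rVert_\infty < \max(|x_B|,|y_B|) = \mathrm{sys}(\Lambda')$, hence lies in the open $\ell^\infty$-ball of radius $\mathrm{sys}(\Lambda')$, which by Lemma \ref{systole} meets $\Lambda'$ only at the origin; therefore $B$ is a pivot, i.e. $B \in \Pi(\Lambda')$ (this is exactly Lemma \ref{systole}(4)). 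Consequently
\[
\min_{A \in \Pi(\Lambda')} \lVert A \rVert_2 \le \lVert B \rVert_2 \le \sqrt{2}\, \lVert B \rVert_\infty = \sqrt{2}\, \mathrm{sys}(\Lambda'),
\]
and taking logarithms gives $W_2(t) \le W(t) + \frac{\ln 2}{2}$.

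The only point that needs care — and it is the sole, rather mild, obstacle — is the compatibility of the minimum defining $W_2$ (taken over pivots, not over all nonzero lattice points) with each direction: for the lower bound one uses that pivots form a subset of $\Lambda' \setminus \{(0,0)\}$, and for the upper bound one uses that the $\ell^\infty$-minimal point is itself a pivot. Both facts follow from Lemma \ref{systole}; the box argument above also shows directly, without the bi-infinitude hypothesis, that any $\ell^\infty$-minimal point of an arbitrary planar lattice is a pivot, so the statement holds in full generality. No genuinely new estimate beyond the norm equivalence is required.
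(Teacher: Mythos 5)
Your proof is correct and takes the same essential approach as the paper, namely the equivalence $\lVert v\rVert_\infty \le \lVert v\rVert_2 \le \sqrt{2}\,\lVert v\rVert_\infty$ on $\R^2$. You go a bit further than the paper's terse two-line argument by explicitly handling the fact that $W_2$ is a minimum over $\Pi(g_t\Lambda)$ rather than over all nonzero lattice points — in particular invoking that the $\ell^\infty$-minimizer is itself a pivot so that the upper bound goes through — which is a genuine (if small) gap in the paper's phrasing, and your remark that the pivot claim needs no bi-infinitude hypothesis is also accurate.
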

\begin{proof}
    The first inequality is obvious since for any $A$, $\lVert A \rVert_2 \ge \lVert A \rVert_\infty$. The second inequality comes from the equivalence of $\ell^2$ and $\ell^\infty$ norm of $\R^2$:
    $$\lVert A \rVert_\infty \le \lVert A \rVert_2 \le \sqrt{2} \lVert A \rVert_\infty $$
\end{proof}

\begin{figure}
    \centering
    \includegraphics[width=\textwidth]{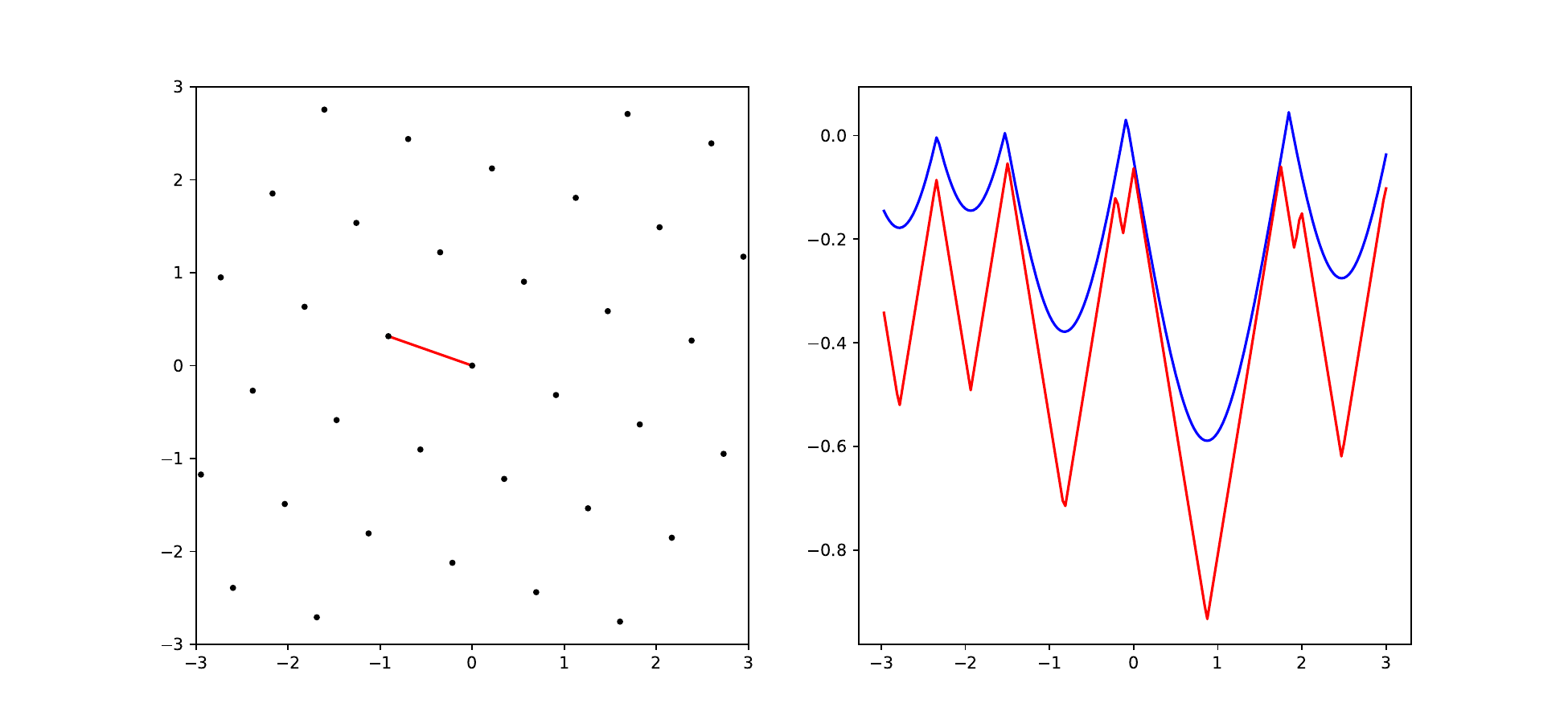}
    \caption{Visualization of a lattice and its corresponding log-systole functions $W(t)$ \textcolor{red}{(red)} and $W_2(t)$ \textcolor{blue}{(blue)} for $t \in [-3, 3]$. Note that the local minima of $W(t)$ and $W_2(t)$ does not strictly correspond to each other.}
    \label{fig:enter-label}
\end{figure}

The next step, we construct a Perron function without explicitly writing out the formula. 

\begin{lemma}
    For each local maximum $t_0$ of $W(t)$, consider the interval $[t_0-\ln (2), t_0+\ln (2)]$ and take 
    $$U = \bigcup_{t_0 \text{ is local maximum of } W(t)} [t_0-\ln (2), t_0+\ln (2)] $$
    Then
    $$\sup_{t \in \R} W_2(t) = \sup_{t \in U} W_2(t)$$
\end{lemma}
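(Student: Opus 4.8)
The plan is to prove only the non-trivial inequality. Since $U\subseteq\R$ one has $\sup_{t\in U}W_2(t)\le\sup_{t\in\R}W_2(t)$ automatically, so it suffices to show $W_2(t)\le\sup_{s\in U}W_2(s)$ for every $t\in\R$. The mechanism is that if $t$ is more than $\ln 2$ away from every local maximum of $W$, then the two-sided bound $W(t)\le W_2(t)\le \tfrac{\ln 2}{2}+W(t)$, together with $W$ being piecewise linear of slope $\pm1$ and bounded above by $0$, forces $W_2(t)$ to fall strictly below the value of $W_2$ at some nearby local maximum of $W$, which belongs to $U$.

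First I would assemble the ingredients: by Lemma \ref{sysfun}, $W$ is piecewise linear with slopes $\pm 1$, each breakpoint being a local maximum (slope changing from $+1$ to $-1$) or a local minimum (slope changing from $-1$ to $+1$); by Lemma \ref{systole}, $W(t)=\ln\mathrm{sys}(g_t\Lambda)\le 0$ for all $t$; we also have the comparison $W(t)\le W_2(t)\le \tfrac{\ln 2}{2}+W(t)$ established earlier; and $W$ has at least one local maximum, so $U\neq\emptyset$. Then I would prove the following \emph{uphill claim}: if $t$ is not itself a local maximum of $W$, there is a local maximum $\mu$ of $W$ with $W(t)=W(\mu)-|\mu-t|$. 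Indeed, since $t$ is not a local maximum, $W$ increases when one leaves $t$ in a suitable direction; moving in that direction $W$ grows at unit rate until the next breakpoint, which exists (otherwise a slope-$+1$ ray would make $W$ unbounded above, contradicting $W\le 0$) and which, being reached along a unit-increasing segment, must be a local maximum $\mu$ by the classification above; since $W$ has constant slope between $t$ and $\mu$, the identity $W(t)=W(\mu)-|\mu-t|$ follows.

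Granting the uphill claim, fix $t\in\R$. If $t\in U$ there is nothing to prove. If $t\notin U$, take $\mu$ as above; then in particular $t\notin[\mu-\ln 2,\mu+\ln 2]$, so $|\mu-t|>\ln 2$, whence
$$W_2(t)\ \le\ \tfrac{\ln 2}{2}+W(t)\ =\ \tfrac{\ln 2}{2}+W(\mu)-|\mu-t|\ <\ W(\mu)-\tfrac{\ln 2}{2}\ <\ W(\mu)\ \le\ W_2(\mu).$$
Since $\mu$ is a local maximum, $\mu\in U$, so $W_2(t)<\sup_{s\in U}W_2(s)$. As this holds for every $t\notin U$ and trivially for $t\in U$, we get $\sup_{t\in\R}W_2(t)\le\sup_{s\in U}W_2(s)$, and the reverse inequality is immediate, so the two suprema are equal.

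The one step requiring genuine care is the uphill claim: one must check that from an arbitrary $t$ one actually reaches a local maximum of $W$ at finite distance in the direction of increase, which uses both that $W$ is bounded above (ruling out an infinite increasing ray) and that $W$ is piecewise linear with locally finitely many breakpoints (so that the first breakpoint is attained and is genuinely a local maximum, not an accumulation point of breakpoints). Everything else is the short estimate displayed above, and there is comfortable slack — a half-width $\tfrac{\ln 2}{2}$ would already suffice against the bound $W_2\le \tfrac{\ln 2}{2}+W$, whereas the statement is phrased with the wider half-width $\ln 2$.
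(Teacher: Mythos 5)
Your proof is correct, and it takes a cleaner route than the paper at two points worth noting. First, the paper's proof establishes only that $W_2(t)<0$ whenever $t\notin U$, and then tacitly relies on the lower bound $\kappa_2(\Lambda)\ge 1$ (Lemma \ref{mgl2bound}), equivalently $\sup_{\R}W_2\ge 0$, to conclude that the supremum over $\R$ must be attained on $U$; your argument instead dominates $W_2(t)$ directly by $W_2(\mu)$ for a specific local maximum $\mu\in U$, so the lemma becomes self-contained and does not lean on the global estimate.

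Second, your ``uphill claim'' is the careful version of a step the paper treats informally. The paper chooses $t'$ to be the local maximum of $W$ closest to $t$ and asserts that $W$ is monotone on the interval between $t$ and $t'$, justifying this by saying otherwise there would be a closer local maximum. That justification is not quite right: a non-monotone piece forces a local \emph{minimum} of $W$ strictly between $t$ and $t'$, not a local maximum, so the closest-local-maximum choice need not produce a monotone path, and the claimed identity $W(t)=W(t')-|t'-t|$ can fail for that particular $t'$. (The paper's conclusion can still be salvaged by switching to the local maximum reached by going through the intervening minimum in the other direction, which is exactly what your construction does.) Your formulation — pick the direction in which $W$ increases and run to the first breakpoint, invoking $W\le 0$ to guarantee that breakpoint exists and local finiteness of breakpoints to guarantee it is attained — sidesteps the issue entirely and yields the exact identity $W(t)=W(\mu)-|\mu-t|$ you need. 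The rest of the computation is the same short estimate the paper uses, with the same slack.
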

\begin{proof}
    We prove that when $t \notin U$, $W_2(t) < 0$. 

    When $t \notin U$, we find the closest of $t'$ such that $t'$ is a local maximum of $W(t)$. then $W$ is monotonic increasing on $[t,t']$ or monotonic decreasing on $[t',t]$ (if not, there are other local maxima on the interval, and closet to $t$, contradicting the assumption). We use Lemma \ref{systole} to show that $W(t') < 0$, so $W(t) \le -\ln(2)$, and $W_2(t) \le W(t) + \ln(2)/2 < 0$. 
\end{proof}

For each local maximal point of $W(t)$, following the computations in the projective invariance, there are lattice points lying on four sides of the maximal square box, which is one of the two cases:
$$
\begin{aligned}
    A &= \frac{(-\alpha, 1)}{\sqrt{1+\alpha \beta}},\quad B= \frac{(1, \beta)}{\sqrt{1+\alpha \beta}},\quad g_t \Lambda = \Lambda(\alpha, \beta) = \Z A + \Z B \\
    A &= \frac{(\alpha, 1)}{\sqrt{1+\alpha \beta}},\quad B= \frac{(-1, \beta)}{\sqrt{1+\alpha \beta}},\quad g_t\Lambda  = \Lambda(\alpha, \beta) = \Z A + \Z B \\
\end{aligned}
$$
The two cases are equivalent by quotient of a negation over $x$ axis which does not affect either $W$ or $W_2$ function, so without losing of generality, we discuss the first case. 
We define the Perron function as
$$f(\alpha, \beta) := \sup_{u \in [-\ln (2), \ln(2)]} W_2(t+u)$$
or an equivalent definition:
$$f(\alpha, \beta) := \sup_{u \in [-\ln (2), \ln(2)]} W_2(u, \Lambda(\alpha, \beta))$$

\begin{lemma}
    The Perron function $f$ is uniformly continuous with respect to $(\alpha, \beta) \in [0,1] \times [0,1]$.
\end{lemma}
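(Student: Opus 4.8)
The plan is to work on the compact parameter box $K := [0,1]^2 \times [-\ln 2,\ln 2]$ and to reduce the computation of $W_2(u,\Lambda(\alpha,\beta))$ there to a minimum of finitely many manifestly continuous functions whose index set does not depend on the parameters, and then to pass to the supremum over the compact $u$-interval.

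First I would note that a shortest $\ell^2$-vector of any lattice is automatically a pivot (its rectangular box is inscribed in the corresponding disk), so $W_2(u,\Lambda) = \ln\min_{v\in\Lambda\setminus\{0\}}\left\|g_u v\right\|_2$. Writing a point of $\Lambda(\alpha,\beta)$ as $v = mA+nB$ with $A = A(\alpha,\beta) = (-\alpha,1)/\sqrt{1+\alpha\beta}$ and $B = B(\alpha,\beta) = (1,\beta)/\sqrt{1+\alpha\beta}$, which are continuous (indeed smooth) on $[0,1]^2$ because $\sqrt{1+\alpha\beta}\ge 1$, each
$$q_{m,n}(\alpha,\beta,u) := \left\| m\,g_u A(\alpha,\beta) + n\,g_u B(\alpha,\beta) \right\|_2$$
is jointly continuous on $K$.

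The key step is a uniform a priori bound. Since $\left\|A\right\|_2,\left\|B\right\|_2\le\sqrt2$ and the operator norm of $g_u$ is $e^{|u|}\le 2$ on $K$, the Gram matrix $G_u$ of $(g_uA,g_uB)$ has determinant $1$ (the covolume of $g_u\Lambda(\alpha,\beta)$ equals $1$) and trace at most $16$, so both eigenvalues of $G_u$ lie in $[\,1/16,\,16\,]$; in particular $q_{m,n}(\alpha,\beta,u)^2 \ge \frac1{16}(m^2+n^2)$ and $\mathrm{sys}_2(g_u\Lambda(\alpha,\beta))^2 \ge \frac1{16}$. Minkowski's convex body theorem gives $\mathrm{sys}_2(g_u\Lambda(\alpha,\beta)) \le \sqrt{4/\pi} < 2$, so any vector realizing the $\ell^2$-systole has $m^2+n^2 < 64$. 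Hence for every $(\alpha,\beta,u)\in K$,
$$W_2(u,\Lambda(\alpha,\beta)) = \ln \min_{(m,n)\in S} q_{m,n}(\alpha,\beta,u), \qquad S := \{(m,n)\in\Z^2 : 0 < m^2+n^2 < 64\},$$
with $S$ a fixed finite set. A finite minimum of functions continuous on the compact set $K$ is continuous, hence uniformly continuous; and since this minimum takes values in $[\,1/4,\,\sqrt{4/\pi}\,]$, composing with $\ln$ (Lipschitz on that interval) shows $W_2$ is uniformly continuous on $K$.

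Finally I would invoke the elementary fact that if $h$ is uniformly continuous on $X\times Y$ with $Y$ compact, then $x\mapsto\sup_{y\in Y}h(x,y)$ is uniformly continuous on $X$ with the same modulus in the $x$-variable; applying it with $h = W_2$, $X=[0,1]^2$ and $Y=[-\ln2,\ln2]$ yields the uniform continuity of $f$. The only genuinely delicate point is making the reduction to $S$ parameter-free: one must check that the coefficient bounds on the binary quadratic form $q_{m,n}^2$ and the Minkowski bound on $\mathrm{sys}_2$ hold simultaneously throughout $K$, which is exactly what eliminates all dependence of $S$ on $(\alpha,\beta,u)$; everything else is routine.
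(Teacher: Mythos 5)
Your proof is correct and follows essentially the same strategy as the paper's: show that only finitely many lattice vectors $mA+nB$ can realize the $\ell^2$-systole uniformly over the whole parameter box $[0,1]^2\times[-\ln2,\ln2]$, so that $W_2$ is a minimum of finitely many jointly continuous functions, and then pass to the supremum over the compact $u$-interval. The paper phrases the first step geometrically, bounding the relevant lattice points to lie in $[-4,4]^2$ and then bounding the coefficients by $|k|,|l|\le 40$ via ``a simple calculation''; you get the same parameter-free finite index set $S$ via the Gram-matrix eigenvalue bound $\lambda\in[1/16,16]$ combined with Minkowski, which is a cleaner and more quantitative way to reach the same conclusion. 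Your handling of the final step (uniform continuity of $\sup_{y\in Y}h(x,y)$ when $Y$ is compact) makes explicit something the paper leaves implicit, but no new idea is involved.
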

\begin{proof}
    Define another map $F$ from $[0,1]^2$ to the Banach space of $C^0[-\ln(2), \ln(2)]$ with supremum norm \footnote{It's well known that the space of continuous functions on a bounded interval equipped with supremum norm is a Banach space.} by:
    $$F(\alpha, \beta) := W_2(\_, \Lambda(\alpha, \beta))$$
    We show that $F$ is a continuous map (not really linear). specifically, we show that for any $\epsilon > 0$, there exists $\delta$ so that for arbitrary $\lvert \alpha' - \alpha \rvert \le \delta$, $\lvert \beta' - \beta \rvert \le \delta$, and for $u \in [-\ln(2), \ln(2)]$ uniformly have
    $$\left\lvert W_2(u, \Lambda(\alpha', \beta')) - W_2(u, \Lambda(\alpha, \beta)) \right\rvert \le \epsilon$$
    We show that by a perturbation of $\alpha$ and $\beta$, the points of $\Lambda(\alpha, \beta)$ over a bounded region changes slightly.

    First, all points affecting $ W_2(u, \Lambda(\alpha, \beta))$ for $u \in [\ln (2), \ln(2)]$ are in the region of $[-4,4] \times [-4,4]$. This relation is obtained through that for any $P$ such that $\lVert g_u P\rVert = W_2(u; \Lambda(\alpha, \beta))$, $\lVert g_u P\rVert_2 \le (4/3)^{1/4}$. 

    Second, all points in $[-4,4] \times [-4,4]$ are uniformly continuous with respect to changes of $\alpha, \beta$. Since we know that $A = \frac{(-\alpha, 1)}{\sqrt{1+\alpha \beta}}$ and $B=\frac{(1, \beta)}{\sqrt{1+\alpha \beta}}$ are uniformly continuous with respect to changes of $\alpha, \beta$, that is to say, there exists a constant $C$, for any point $P = kA+lB \in [-4,4] \times [-4,4]$, it holds that $|k|, |l| \le C$. We choose $C=40$ which is large enough. If somehow $|k| > 40$, by simple calculation we show that $kA+lB \notin [-4,4] \times [-4,4]$, contradiction.
\end{proof}

\begin{corollary}
    The Perron function is a uniformly continuous function:
    $$f(\alpha, \beta) = \sup_{u \in [-\ln (2), \ln(2)]} F(\alpha, \beta)$$
    By taking the sequence $\{a_n\}$ such that $\alpha = [0; a_0, a_1, \cdots],\ \beta = [0; a_{-1}, a_{-2}, \cdots]$, one may also rewrite
    $$\kappa^+ (\Lambda)= \sup_{n \in \Z \text{ such that } \alpha_n^{[a]}, \beta_n^{[a]} \text{ are well-defined}} P_n^{[a]}$$
    Therefore, using Theorem \ref{infclosed}, the $\ell^2$ Mordell-Gruber spectrum is closed.
\end{corollary}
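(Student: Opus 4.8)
\subsection*{Proof proposal}

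The plan is to identify $\mathrm{MGL}_2$ with the image under $\exp$ of a generalized $\sup$-spectrum $\sigma_{f,\sup}$ attached to a Perron function $f$ having no integral parameters ($l=0$), and then to quote Theorem \ref{infclosed}. The first step is to record the identity $\ln\kappa_2(\Lambda)=\sup_{t\in\R}W_2(t;\Lambda)$, which is immediate from the alternative formula $\kappa_2(\Lambda)=\sup_t\min_{A\in\Pi(g_t\Lambda)}\lVert A\rVert_2$ and the definition of $W_2$. Next, using the lemma that $\sup_t W_2(t)=\sup_{t\in U}W_2(t)$ with $U$ the union of the windows $[t_0-\ln2,\,t_0+\ln2]$ over local maxima $t_0$ of the $\ell^\infty$ log-systole $W$, I would reduce this supremum to the finitely-parametrized data of the boxes of consecutive pivots: every local maximum of $W$ puts $g_{t_0}\Lambda$ in one of the two canonical forms $\Lambda(\alpha,\beta)$ of Lemma \ref{arbitraryreconstruction}, the two forms being conjugate by the reflection $V_x$ which fixes both $W$ and $W_2$, so that $f(\alpha,\beta):=\sup_{\lvert u\rvert\le\ln2}W_2(u;\Lambda(\alpha,\beta))$ is well defined on $[0,1]^2$. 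By the computations of Lemma \ref{recall} and Lemma \ref{compute}, the box lying between consecutive pivots $A_{n-1},A_n$ of $\Lambda$ carries exactly the pair $(\alpha_{n-1}^{[a]},\beta_n^{[a]})$, where $\{a_n\}$ is the (possibly $\infty$-terminated) expansion sequence of $\Lambda$ supplied by Theorem \ref{surjcorre}; combining this with the $U$-reduction yields $\ln\kappa_2(\Lambda)=\sup_n P_n^{[a]}$ with $P_n^{[a]}=f(\alpha_{n-1}^{[a]},\beta_n^{[a]})$, the supremum ranging over those $n$ for which both fractional arguments are defined.

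Since $\kappa_2$ is invariant under the diagonal flow, it descends to the quotient on which Theorem \ref{surjcorre} furnishes a surjection, so the previous paragraph gives $\mathrm{MGL}_2=\exp\bigl(\sigma_{f,\sup}\bigr)$. It then remains to check that $f$ has good continuity in the sense of Definition \ref{goodcontinuity}. With $l=0$ there are no integral parameters, so the ``uniform limit at infinity'' clause is vacuous, and the only requirement is continuity of $f$ on the compact square $[0,1]^2$ --- this is precisely the preceding lemma, which shows that $(\alpha,\beta)\mapsto\sup_{\lvert u\rvert\le\ln2}W_2(u;\Lambda(\alpha,\beta))$ is uniformly continuous. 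Theorem \ref{infclosed} then asserts that $\sigma_{f,\sup}$ is closed, and since $\exp\colon[-\infty,\infty]\to[0,\infty]$ is a homeomorphism and, by Lemma \ref{mgl2bound}, $\sigma_{f,\sup}\subset\bigl[0,\tfrac14\ln(4/3)\bigr]$ is bounded, $\mathrm{MGL}_2=\exp(\sigma_{f,\sup})$ is a closed (indeed compact) subset of $\R$.

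I expect the main obstacle not to be the closedness argument itself --- which is borrowed wholesale from Theorem \ref{infclosed} once good continuity is in hand --- but rather the careful verification that the Perron-function formalism genuinely applies, since here $f$ is defined only implicitly, as a supremum of the implicitly-defined function $W_2$. The delicate bookkeeping items are: (i) confirming that the $U$-reduction remains valid for mono-infinite and finite lattices, so that $\sup_t W_2(t)$ is faithfully represented by $\sup_n P_n^{[a]}$ even when $\{a_n\}$ contains $\infty$; (ii) matching index conventions so that the box between $A_{n-1}$ and $A_n$ really corresponds to $(\alpha_{n-1}^{[a]},\beta_n^{[a]})$, including the degenerate offsets adjacent to an $\infty$ entry, where the relevant value is $f(0,\beta)$ or $f(\alpha,0)$; and (iii) checking that conjugating by $\exp$ causes no trouble at the endpoints $0$ and $\tfrac14\ln(4/3)$, which is exactly what the bound of Lemma \ref{mgl2bound} guarantees. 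With these in place, the closedness of $\mathrm{MGL}_2$ follows at once.
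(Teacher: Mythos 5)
Your proposal is correct and follows essentially the same argument the paper leaves implicit: you chain the $U$-reduction lemma, the canonical-form Lemma \ref{arbitraryreconstruction}, and the uniform-continuity lemma for $F$ to express $\ln\kappa_2(\Lambda)$ as a generalized $\sup$-spectrum with an $l=0$ Perron function, then invoke Theorem \ref{infclosed}. The bookkeeping items you flag at the end (treatment of mono-/finite expansion sequences, the exact offset convention $P_n^{[a]}=f(\alpha_{n-1}^{[a]},\beta_n^{[a]})$, and the harmlessness of conjugating by $\exp$ given Lemma \ref{mgl2bound}) are precisely the points the paper compresses, and your handling of them is sound.
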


One may compute the Perron function explicitly, but it's not necessary in the proof of closedness. It might help, however, to prove or disprove that $\mathrm{MGL}_2$ contains an interval.
\begin{problem}
    Determine whether the $\ell^2$ Mordell-Gruber spectrum contains an interval.
\end{problem}

\section{Future Work}
So long, we have developed many tools to study the general spectra concerning the two-dimensional lattices (including the Mordell-Gruber spectrum) and one-dimensional Diophantine approximations. We have also generalized Perron's formula and Hall's ray to show that closedness and Hall's segment is actually ubiquitous across spectra.

The spectra discussed in this paper might share connections with number theory and quantum mechanics, since many quantum operators have closed spectrum. Discrete parts of such operators are usually called energy levels, and intervals in the spectra are called energy bands. It would be interesting to discover them if such connections exist, though it's also possible that it's just some coincidence.

While we have tried to generalize our results to higher dimensional lattices, as the article \cite{Shapira_Weiss_Mordell_Gruber} concerns, we fall short due to some obstacles:
\begin{itemize}
    \item Continued fractions are no longer applicable in lattices with dimension higher than three. The case of two-dimensional lattices is a very specific case.
    \item Pivots are no longer linearly ordered in higher dimensional lattices, they scatter near the union of all coordinate hyperplanes.
    \item Log-systole functions become multivariate. The log-systole function of an $n$-dimensional lattice depends on $n-1\ge2$ variables. This greatly increases the complexity for discussion.
\end{itemize}

Solving spectra for high-dimensional lattices still remains a challenge. Some high-dimensional lattices are closely related to open conjectures, such as Littlewood conjecture concerns a special case of three-dimensional lattices. More theories and tools are needed to solve spectra for high-dimensional lattices.

\section*{Acknowledgement}
I would like to thank my supervisor, Professor Yitwah Cheung, for his patient guidance, encouragement and fruitful discussions during my work. I would also thank Professor Nikolay Moshchevitin for his valuable comments and recommendations on some of my reference papers and books. I would especially thank Qiuzhen College for the cultivation and support during my four years of university study.

\printbibliography

@article{Divis_Lagrange_Numbers,
    AUTHOR = {Diviš, Bohuslav},
    TITLE = {An Analog to the Lagrange Numbers},
    JOURNAL = {Journal of Number Theory},
    FJOURNAL = {Journal of Number Theory},
    VOLUME = {4},
    YEAR = {1972},
    PAGES = {274--285},
    DOI = {10.1016/0022-314X(72)90019-0},
    URL = {https://core.ac.uk/download/pdf/81150985.pdf},
}

@article{Hall_Sum_Product_CF,
    AUTHOR = {Hall, Jr., Marshall},
    TITLE = {On the Sum and Product of Continued Fractions},
    JOURNAL = {Annals of Mathematics},
    FJOURNAL = {Annals of Mathematics},
    VOLUME = {48},
    YEAR = {1947},
    PAGES = {966-993},
    DOI = {10.2307/1969389},
    URL = {https://www.jstor.org/stable/1969389},
}

@article{Szekeres_Lattice_Plane,
    AUTHOR = {Szekeres, George},
    TITLE = {On a Problem of the Lattice-Plane},
    JOURNAL = {Journal of the London Mathematical Society},
    FJOURNAL = {Journal of the London Mathematical Society},
    VOLUME = {12},
    YEAR = {1937},
    PAGES = {88--93},
    DOI = {10.1112/jlms/s1-12.2.88},
    URL = {https://doi.org/10.1112/jlms/s1-12.2.88},
}

@phdthesis{Lesca_Diophantine_Approximations,
    AUTHOR = {Lesca, J.},
    TITLE = {Sur les approximations diophantiennes à une dimension},
    SCHOOL = {University of Grenoble},
    YEAR = {1968},
    URL = {},
}

@article{Shapira_Weiss_Mordell_Gruber,
    AUTHOR = {Shapira, Uri and Weiss, Barak},
    TITLE = {On the Mordell-Gruber Spectrum},
    JOURNAL = {arXiv preprint},
    YEAR = {2012},
    VOLUME = {1207.6343},
    URL = {https://arxiv.org/abs/1207.6343},
    NOTE = {Preprint submitted on 26 July 2012},
}

@book{Cusick_Flahive_Markoff_Lagrange,
    AUTHOR = {Cusick, Thomas W. and Flahive, Mary E.},
    TITLE = {The Markoff and Lagrange Spectra},
    PUBLISHER = {American Mathematical Society},
    YEAR = {1989},
    VOLUME = {30},
    SERIES = {Mathematical Surveys and Monographs},
    ISBN = {978-0-8218-1531-1},
    URL = {https://www.ams.org/bookstore-getitem/item=SURV-30},
}

@mastersthesis{Knitter_Complexity_Domain_Approximation,
    AUTHOR = {Knitter, Oliver Y.},
    TITLE = {Understanding the Complexity of the Domain of Approximation of a Rational Tuple},
    SCHOOL = {San Francisco State University},
    YEAR = {2019},
    URL = {https://scholarworks.calstate.edu/downloads/pn89d819h},
}

@article{Divis_Novak_Diophantine_Approximations,
    AUTHOR = {Diviš, Bohuslav and Novák, Břetislav},
    TITLE = {A Remark on the Theory of Diophantine Approximations},
    JOURNAL = {Commentationes Mathematicae Universitatis Carolinae},
    FJOURNAL = {Commentationes Mathematicae Universitatis Carolinae},
    VOLUME = {11},
    YEAR = {1970},
    ISSUE = {3},
    PAGES = {589--592},
    ISSN = {0010-2628},
    URL = {https://eudml.org/doc/16387},
}

@book{Perron_Irrationalzahlen,
    AUTHOR = {Perron, Oskar},
    TITLE = {Approximation  Irrationaler  Zahlen  durch  rationale},
    year={1921},
    publisher={Winter, Heidelberg},
    VOLUME = {},
    SERIES = {},
    ISBN = {9783111337494},
    URL = {https://www.degruyter.com/document/doi/10.1515/9783111337494/html},
}

@book{freiman1975diophantine,
  title={Diophantine Approximations and Geometry of Numbers (Markov Task)},
  author={Freiman, G. A.},
  year={1975},
  publisher={Kaliningrad}
}

@misc{cerqueira2018continuityhausdorffdimensiongeneric,
      title={Continuity of Hausdorff dimension across generic dynamical Lagrange and Markov spectra}, 
      author={Aline Cerqueira and Carlos Matheus and Carlos Gustavo Moreira},
      year={2018},
      eprint={1602.04649},
      archivePrefix={arXiv},
      primaryClass={math.DS},
      url={https://arxiv.org/abs/1602.04649}, 
}

@misc{moreira2018geometricpropertiesmarkovlagrange,
      title={Geometric properties of the Markov and Lagrange spectra}, 
      author={Carlos Gustavo Moreira},
      year={2018},
      eprint={1612.05782},
      archivePrefix={arXiv},
      primaryClass={math.NT},
      url={https://arxiv.org/abs/1612.05782}, 
}

@mastersthesis{Smirnov2023Freiman,
  author       = {Smirnov, Oleg Sergeyevich},
  title        = {Freiman’s Constant of the Markov and Lagrange Spectra},
  school       = {National Research University, Faculty of Mathematics},
  year         = 2023,
  address      = {Moscow},
  type         = {Bachelor's thesis},
  advisor = {Nikolay Germanovich Moshchevitin}
}

@article{ivanov1982origin,
  title={Origin of the Ray in the Dirichlet Spectrum of a Problem in the Theory of Diophantine Approximations},
  author={Ivanov, V. A.},
  journal={Journal of Mathematical Sciences},
  volume={19},
  pages={1169--1183},
  year={1982},
  publisher={Springer},
  doi={10.1007/BF01085130}
}

@misc{akhunzhanov2021notedirichletspectrum,
      title={A note on Dirichlet spectrum}, 
      author={Renat K. Akhunzhanov and Nikolay G. Moshchevitin},
      year={2021},
      eprint={2111.12259},
      archivePrefix={arXiv},
      primaryClass={math.NT},
      url={https://arxiv.org/abs/2111.12259}, 
}

@misc{akhunzhanov2013twodimensionaldirichletspectrum,
      title={On two-dimensional Dirichlet spectrum}, 
      author={Renat Akhunzhanov and Denis Shatskov},
      year={2013},
      eprint={1306.1876},
      archivePrefix={arXiv},
      primaryClass={math.NT},
      url={https://arxiv.org/abs/1306.1876}, 
}

@article{honary2005stable,
  title={Stable Intersections of Affine Cantor Sets},
  author={Honary, Bahman and Moreira, Carlos G. and Pourbarat, Mahdi},
  journal={Bulletin of the Brazilian Mathematical Society, New Series},
  volume={36},
  number={3},
  pages={363--378},
  year={2005},
  publisher={Sociedade Brasileira de Matemática},
  doi={10.1007/s00574-005-0044-0}
}
\end{document}